\documentclass[11pt]{article}

\usepackage[dvipdfmx]{graphicx}

\textwidth=16truecm
\oddsidemargin=-0.1truecm
\textheight=22.6truecm
\topmargin=-1.5truecm

\usepackage{amsmath}
\usepackage{amsfonts}
\usepackage{amssymb}
\usepackage{amsthm}
\usepackage{ascmac}
\usepackage{cases}
\usepackage{color}
\usepackage{mathtools}
\usepackage{ulem}

\numberwithin{equation}{section}

\theoremstyle{plain}
\newtheorem{thm}{Theorem}[section]
\newtheorem{prop}[thm]{Proposition}
\newtheorem{lem}[thm]{Lemma}
\newtheorem{cor}[thm]{Corollary}

\theoremstyle{definition}
\newtheorem{df}[thm]{Definition}
\newtheorem{rem}[thm]{Remark}
\newtheorem{exmp}[thm]{Example}
\newtheorem{assumption}[thm]{Assumption}

\newcommand{\R}{\mathbb R}

\newcommand{\ep}{\varepsilon}
\newcommand{\restr}{\raisebox{0.1pt}[0pt]{$|$}}
\newcommand{\PR}{\mathcal{P}}

\title{Front propagation on a general metric graph}

\author{
Hiroshi Matano\footnote{Meiji Institute for Advanced Study of Mathematical Sciences, Meiji University, Tokyo, Japan}
\and 
Shuichi Jimbo\footnote{Department of Mathematics, Hokkaido University, Sapporo, Japan} 
}

\date{}


\begin{document}

\maketitle


\begin{abstract}
We consider a bistable reaction-diffusion equation on a metric graph that is a generalization of the so-called star graphs. More precisely, our graph $\Omega$ consists of a bounded finite metric graph $D$ of arbitrary configuration and a finite number of branches $\Omega_1,\ldots,\Omega_N\,(N\geq 2)$ of infinite length emanating from some of the vertices of $D$. Each $\Omega_i\,(i=1,\ldots,N)$ is called an ``outer path''. Our goal is to investigate the behavior of the front coming from infinity along a given outer path $\Omega_i$ and to discuss whether or not the front propagates into other outer paths $\Omega_j\,(j\ne i)$.  Unlike the case of star graphs, where $D$ is a single vertex, 
the dynamics of solutions can be far more complex and may depend sensitively on the configuration of the center graph $D$. We first focus on general principles that hold regardless of the structure of the center graph $D$. Among other things, we introduce the notion ``limit profile'', which allows us to define ``propagation'' and ``blocking'' without ambiguity, then we prove transient properties, that is, propagation $\Omega_i\to \Omega_j$ and $\Omega_j\to \Omega_k$ imply propagation $\Omega_i\to \Omega_k$.  
Next we consider perturbations of the graph $D$ while fixing the outer paths $\Omega_1,\ldots,\Omega_N$ and prove that if, for a given choice of $i,j$,  propagation 
$\Omega_i\to \Omega_j$ occurs for a graph $D$, then the same holds for any graph $D'$ that is sufficiently close to $D$ (robustness under perturbation). We also consider several specific classes of graphs, such as those with a ``reservoir'' type subgraph, and study their intriguing properties.
\end{abstract}


\footnotetext{\textbf{AMS subject classifications (2020).} 35R02; 35K57, 35K58, 35C07, 35B08.}

\footnotetext{\textbf{Keywords:} Metric graph, reaction-diffusion equation, front propagation, traveling wave, long-time behavior}

\tableofcontents

\section{Introduction}\label{s:introduction}

In recent years, reaction-diffusion equations on metric graphs are gaining growing attention. In this paper, we consider reaction-diffusion equations with bistable nonlinearities and discuss propagation and blocking of solution fronts. In the earlier papers \cite{JM2019, JM2021}, one of the present authors studied front propagation on star graphs and derived sharp conditions that tell whether or not the solution front can propagate beyond the junction point. In the present paper we first deal with a more general class of metric graphs and establish universal principles that hold regardless of the specific structure of the graphs, such as ``propagation/blocking dichotomy'' and ``transient properties''. We then consider perturbation of graphs and discuss whether or not the blocking and the non-blocking properties of the graph are robust under small perturbations. Next we present examples of metric graphs that exhibit such behavoirs as ``partial propagation'' and ``one-way propagation''. Finally we introduce the notion of ``complate invasion'' and ``incomplete invasion'', and show that incomplete invasion never occurs on star graphs or their small perturbartions, while it can occur if the graph possesses a ``reservoir'' type subgraph.

\subsection{Equation on a metric graph}\label{ss:metric-graph}

Roughly speaking, a metric graph is a directed graph in which each edge has a length scale. Thus the graph consists of vertices and directed edges, and each edge can be identified with an interval $[0,L]\subset\R$ for some $L>0$ if its length is finite or with $[0,\infty)\subset\R$ if its length is infinite. This identification gives local coordinates on each edge, hence the notion of neighborhoods around each point on the edge and around the vertices is well defined.

Let $G$ be a metric graph. We consider a reaction-diffusion equation on $G$ of the form
\begin{equation}\label{RD-G}
\partial_t u = \Delta_G u + f(u),\quad t\in(t_0,t_1),\  x\in G.
\end{equation}
Here, by a solution of \eqref{RD-G}, we mean a function $u(t,x)$ defined on $(t_0,t_1)\times G$ such that
\begin{itemize}\setlength\itemsep{0pt}
\item[(a)] $u$ is continuous on  $(t_1,t_2)\times G$;
\item[(b)] on each edge $E$ of $G$, when $E$ is identified with an interval in $\R$ (either with $[0,L]$ or with $[0,\infty)$), $u$ satisfies the one-dimensional equation $\partial_t u=\partial_x^2 u + f(u)$ on $E$ except at the endpoints, and $\partial_x u$ is continuous on $E$ up to the endpoints;
\item[(c)] at each vertex $V$ of $G$, $u$ satisfies the Kirchhoff condition.
\end{itemize}

Let us explain the meaning of Kirchhoff condition. Given a vertex $V$ of $G$, 
let $E_1,\ldots,E_m$ be the edges that have $V$ as an endpoint. (If there is a loop whose both endpoints are $V$, we count them twice as separate edges, as we are focusing on a local configuration of $G$ around $V$.)   
For each $i=1,\ldots,m$, let $\partial u/\partial \nu_i(V)$ denote the derivative of $u$ at $V$ along the edge $E_i$ (that is, with respect to the coordinates of $E_i$ when $E_i$ is identified with an interval on $\R$) 
in the direction pointing toward the interior of $E_i$, as shown by the arrows in Figure \ref{fig:Kirchhoff}. We say that $u$ satisfies the {\it Kirchhoff condition} at $V$ if the following holds:
\begin{equation}\label{Kirchhoff}
 \sum_{i=1}^m \frac{\partial u}{\partial \nu_i}(V) =0.
\end{equation}
\begin{figure}[h]
\begin{center}
\includegraphics[scale=0.41]
{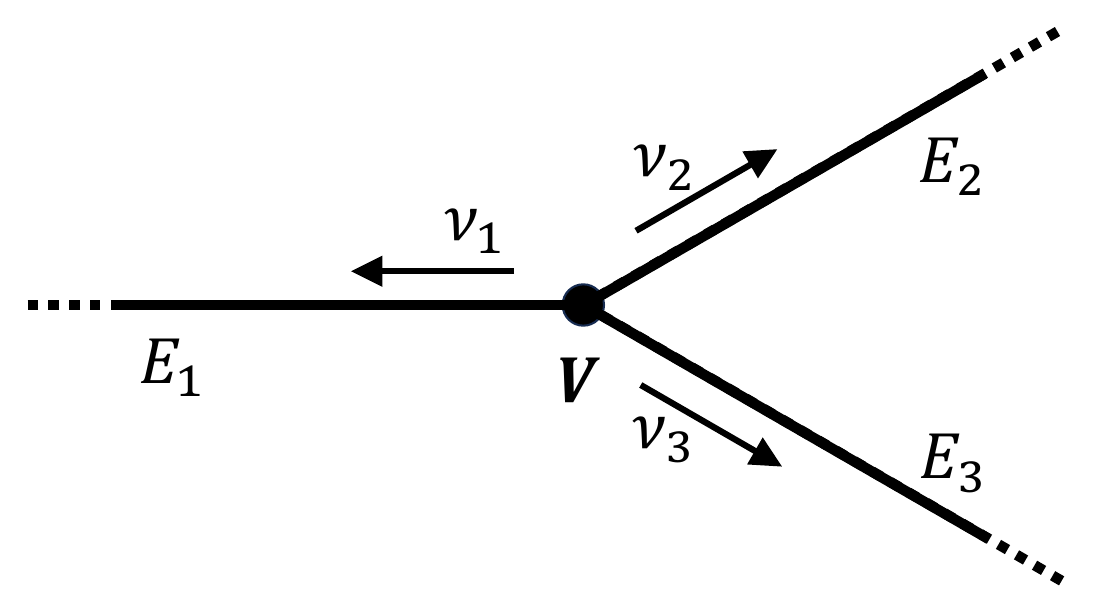}
\end{center}
\vspace{-10pt}
\caption{Kirchhoff condition.}
\label{fig:Kirchhoff}
\end{figure}

By Fick's law, the mass flux along each edge is proportional to $-\partial_x u$, therefore \eqref{Kirchhoff}  implies that the total mass flux flowing into $V$ is zero; in other words, mass is preserved at the junction point $V$. This condition is crucial when one considers diffusion equations on a metric graph. 

Note that, as we see from the conditions (a), (b), (c) above, the direction of the edges of $G$ does not play any role in the behavior of solutions of \eqref{RD-G}. The directions of the edges matter only when we express the solution using the local coordinates of the edges.

\begin{rem}\label{rem:ut-continuous}
If a function $u(t,x)$ satisfies the conditions (a), (b), (c) above, then it automatically holds that $\partial_t u$ and $\Delta_G u$ are continuous on $G$; see Lemma ~\ref{lem:ut-continuous}. \qed
\end{rem}

Incidentally, we allow vertices of degree $1$. In this case, the Kirchhoff condition reduces to the $0$ Neumann condition $\partial_x u=0$ at the degree 1 vertex (Figure \ref{fig:degree1}). 
\begin{figure}[h]
\begin{center}
\includegraphics[scale=0.48]
{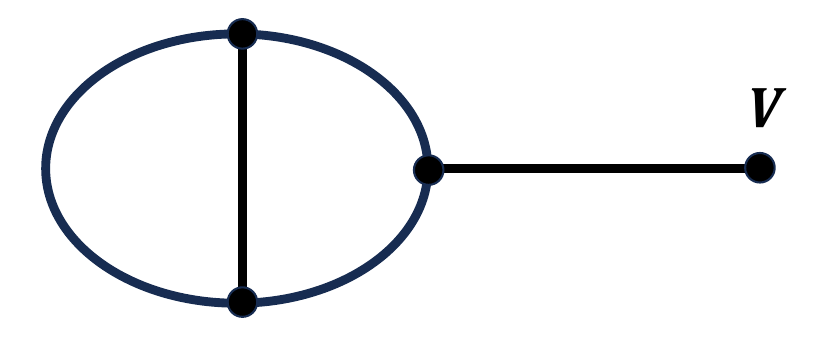}
\end{center}
\vspace{-10pt}
\caption{Graph with a degree 1 vertex.}
\label{fig:degree1}
\end{figure}

Next we consider the case where the edges of $G$ do not have uniform ``thickness''. By this we mean that 
the integral of a fucntion $h$ on a edge $E$ is given by the following form, 
when $E$ is identified with an interval $[0,L_E]\subset\R$, with $L_E$ being the length of $E$:
\begin{equation}\label{integral-E}
\int_{E} h(x)dx := \int_0^{L_E} \rho h(x) dx,
\end{equation}
where $\rho$ denotes the thickness of $E$. 
Accordingly, the mass flux on each edge is proportional to $-\rho\,\partial_x u$. 
For simplicity we assume that the thickness $\rho$ is constant on each edge. In this case, the Kirchhoff condition is given in the following form: 
\begin{equation}\label{Kirchhoff2}
 \sum_{i=1}^m \rho_i\frac{\partial u}{\partial \nu_i}(V) =0 \quad \hbox{(Kirchhoff condition for unequal thickness)},
\end{equation}
where $\rho_i$ denotes the thickness of the ege $E_i\,(i=1,\ldots,m)$. 
In this paper, unless otherwise stated, we assume that the edges of the graph may have unequal edge thickness.

\begin{rem}[Edges with unequal diffusion coefficients]\label{rem:unequal-diffusion}
We can also consider an equation of the form $\partial_t u = d\Delta_G u +f(u)$, in which the diffusion coefficient $d$ may take possibly different values among the edges. In this case, since the mass flux is proportional to $-d\hspace{1pt}\partial_x u$ on each edge, the Kirchhoff condition at each vertex $V$ is given by
\[
\sum_{i=1}^m d_i\frac{\partial u}{\partial \nu_i}(V) =0,
\] 
where $d_i$ denotes the diffusion coefficient of the edge $E_i\,(i=1,\ldots,m)$ in Figure \ref{fig:Kirchhoff}. However, by introducing a new variable $y=\sqrt{d}\hspace{1pt}x$ on each edge, we can normalize the diffusion coefficient to $1$ everywhere, thus the equation is converted to the form \eqref{RD-G}. With this new variable $y$, the above Kirchhoff condition is expressed in the following form:
\[
\sum_{i=1}^m \sqrt{d_i}\,\frac{\partial u}{\partial \nu_i}(V) =0.
\]
Therefore, considering an equation with unequal diffusion coefficients $d_i$ among edges is equivalent to considering \eqref{RD-G} with edges of unequal thickness $\sqrt{d_i}$. \qed
\end{rem}

Let us consider the Cauchy problem for \eqref{RD-G} under the initial condition
\begin{equation}\label{u0-G}
u(0,x)=u_0(x)\quad x\in G.
\end{equation}
In this paper, for the most part, we assume that $u_0$ is a bounded continuous function on $G$. In this case, the meaning of \eqref{u0-G} is understood as $\lim_{t\to 0}u(t,x)=u_0(x)$ uniformly on $G$ (or locally uniformly on $G$ if $G$ is unbounded). Occasionally we also consider the problem in the $C^1$ class, such as in Sections~\ref{ss:energy} and \ref{ss:estimates-reservoir}. In that case, we require that $u_0$ is continuous on $G$ and is $C^1$ on each edge of $G$ up to the end points, and that the Kirchhoff condition holds at every vertex. Then  $\lim_{t\to 0}u(t,x)=u_0(x)$ in the $C^1$ sense on every edge up to the end points.

Well-posedness of reaction-diffusion equations including \eqref{RD-G} or \eqref{RD-Omega} below can be shown by a standard approach that is known for semilinear diffusion equations on Euclidean domains. More precisely, using the heat kernel on the graph $G$ (or $\Omega$), we can convert the equation into an integral equation (Duhamel's formula), which can be solved by an iteration scheme or by the contraction mapping principle. See \cite{Ca1999} for heat kernels on metric graphs.

Another apporach for the well-posedness is given by von Below~\cite{Be1992}, who studied much more general quasilinear equations on metric graphs. The existence is shown by the Leray-Schauder principle, and the uniqueness is shown separately. Note that \cite{Be1992} only deals with bounded metric graphs, therefore we cannot apply its resluts directly to the graph $\Omega$ defined in \eqref{Omega}. However, one can easily modify the arguments of \cite{Be1992} to cover our case. (For example, existence of the solution can be shown by approximating $\Omega$ by a bounded graph and taking the limit.)

\subsection{Formulation of the problem}\label{ss:formulation}

We consider a metric graph $\Omega$ that consists of a bounded finite metric graph $D$ of an arbitrary configuration and a finite number of branches $\Omega_1,\Omega_2,\ldots,\Omega_N\,(N\geq 2)$ of infinite length that emanate from some of the vertices of $D$, denoted by ${\rm P}_1, {\rm P}_2, \ldots,{\rm P}_N$. We call these vertices ${\rm P}_1, {\rm P}_2, \ldots,{\rm P}_N$ the {\it exit points} and $\Omega_1,\Omega_2,\ldots,\Omega_N$ the {\it outer paths}; see Figure \ref{fig:Omega-examples}. Thus we have
\begin{equation}\label{Omega}
\Omega=D\cup \Omega_1 \cup \Omega_2 \cup \cdots \cup\Omega_N, 
\quad D\cap \Omega_i=\{{\rm P}_i\}\ (i=1,\ldots,N).
\end{equation}
Here ${\rm P}_1, {\rm P}_2, \ldots,{\rm P}_N$ are not necessarily distinct points. 
The coordinates of each outer path $\Omega_i\,(i=1,\ldots,N)$ are given by $x_i\in[0,\infty)$, with $x_i=0$ corresponding to the exit point ${\rm P}_i$.

\begin{figure}[h]
\begin{center}
\includegraphics[scale=0.49]
{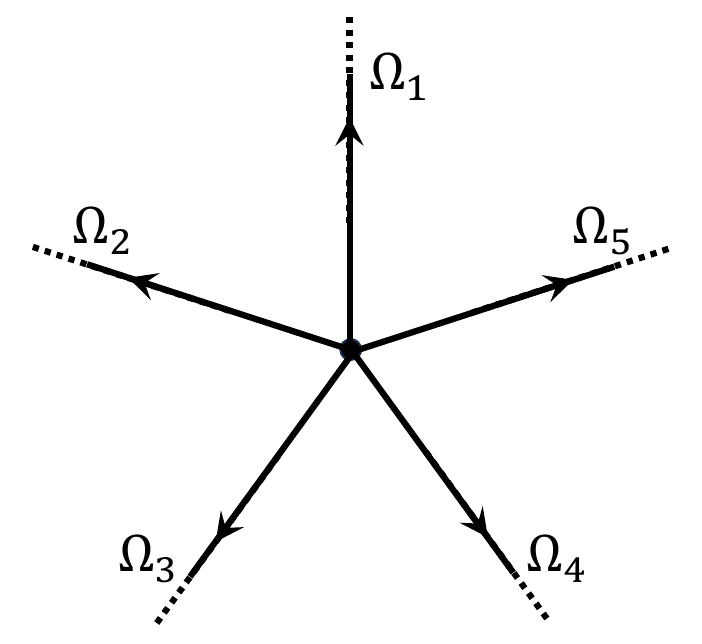}
\hspace{36pt}
\includegraphics[scale=0.49]
{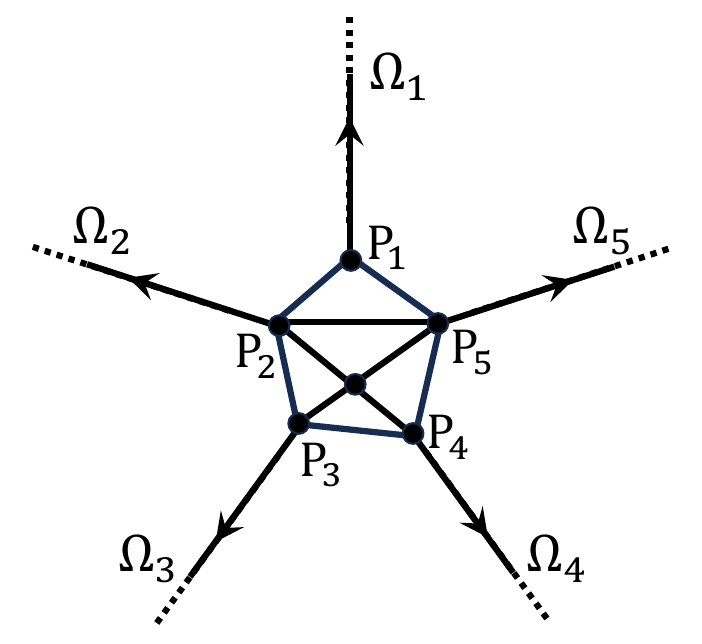}
\end{center}
\vspace{-5pt}
\caption{Examples of $\Omega$ for $N=5$. ({\it left}) a star graph;  ({\it right}) a more general graph.}
\label{fig:Omega-examples}
\end{figure}

We consider the following reaction-diffusion equation on the graph $\Omega$:
\begin{equation}\label{RD-Omega}
\partial_t u = \Delta_{\Omega} u + f(u).
\end{equation}
Here $f\in C^1$ is a bistable nonlinearity satifsying the following for some $0<a<1$:
\begin{equation}\label{f}
\begin{split}
& f(0)=f(a)=f(1)=0, \ \ \ f(s)<0 \ (0<s<a), \ \ f(s)>0\ (a<s<1),\\
& \hspace{40pt}f'(0)<0,\ f'(a)>0, \ f'(1)<0, \quad \int_0^1 f(s)ds>0.
\end{split}
\end{equation}
Let $F(s)=\int_0^s f(r)dr$. 
Then from \eqref{f} we see that $F$ satisfies the following:
\begin{equation}\label{F}
F(0)=0,\ \ F(a)<0<F(1), \ \ F(a)=\min_{s\in[0,1]} F(s),\ \ F(1)=\max_{s\in[0,1]} F(s).
\end{equation}
Furthermore, there exists a unique value $\beta$ with $a<\beta<1$ such that
\begin{equation}\label{beta}
F(\beta)=0.
\end{equation}
This value $\beta$ is equal to the maximum of the peaking solution $U(x)$ given in Figure \ref{fig:pulse-solution} in Section~\ref{ss:phase-portrait}, or, equivalently, the right-most position of the homoclinic orbit in Figure \ref{fig:phase-portrait}.  

Under the assumption \eqref{f}, it is well-known that the equation $\partial_t u=\partial_x^2 u+f(u)$ on $\R$ has a traveling wave solution of the form $\phi(x-ct)$ that connects $0$ to $1$. Here $c>0$ represents the speed of the traveling wave and the profile function $\phi(z)$ satisfies the following:
\begin{equation}\label{phi}
\begin{cases}
\,\phi''+c\phi'+f(\phi)=0
\ \ (z\in\R),\\
\,0<\phi(z)<1\ \ (z\in\R),\ \ \phi(-\infty)=1,\ \ \phi(+\infty)=0.
\end{cases}
\end{equation}
It is well known that such a traveling wave is unique up to translation and that $\phi'(z)<0$; see \cite{FM1977}. Hereafter we specify the value of $\phi$ at $0$ as follows, which makes $\phi$ uniquely determined.
\begin{equation}\label{phi(0)}
\phi(0)=a.
\end{equation}

The above traveling wave propagates toward the right direction, from $x=-\infty$ to $x=+\infty$, so we can call it a right-bound traveling wave. A left-bound traveling wave, which travels from $x=+\infty$ to $x=-\infty$ is given in the form $\phi(-x-ct)$.

In this paper, we discuss the behavior of a solution whose front travels along an arbitrarily chosen outer path $\Omega_i$ from the direction $x_i=+\infty$ like a left-bound traveling wave when $t$ is sufficiently negative. The following theorem confirms that such a solution exits uniquely:

\begin{thm}[front-like solution]\label{thm:u-hat}
For each $i\in\{1,\ldots,N\}$, there exists a unique entire solution $\widehat{u}_i(t,x)$ of \eqref{RD-Omega} such that $0<\widehat{u}_i(t,x)<1$ for $(t,x)\in\R\times\Omega$ and that
\begin{equation}\label{u-hat}
\lim_{t\to -\infty} \left(\sup_{x_i\in\Omega_i}\big|\widehat{u}_i{}_{\restr\Omega_i}(t,x_i)-\phi(-x_i-ct)\big|+ \sup_{x\in\Omega\setminus\Omega_i}|\widehat{u}_i(t,x)|\right)=0,
\end{equation}
where $\widehat{u}_i{}_{\restr\Omega_i}$ denotes the restriction of $\widehat{u}_i$ to $\Omega_i$. 
Furthermore, $\widehat{u}_i(t,x)$ is strictly monotone increasing in $t$. 
\end{thm}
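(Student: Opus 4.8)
The plan is to realize $\widehat u_i$ as a monotone limit of finite-time solutions, squeezed between sub- and supersolutions on $\Omega$ of Fife--McLeod type, and then to deduce \eqref{u-hat}, the strict monotonicity in $t$, and uniqueness from the comparison and strong maximum principles on $\Omega$. The first and decisive step is the construction of the comparison functions. For each small $\delta>0$ I would build a subsolution $\underline U_\delta$ and a supersolution $\overline U_\delta$ of \eqref{RD-Omega} that on the outer path $\Omega_i$ are the shifted-and-lifted profiles
\[
\underline U_\delta=\max\{0,\ \phi({-}x_i-ct+\xi_\delta(t))-q_\delta(t)-\theta_\delta(t)\},\qquad
\overline U_\delta=\max\{m_\delta(t),\ \phi({-}x_i-ct-\xi_\delta(t))+q_\delta(t)\},
\]
with $q_\delta,\xi_\delta,\theta_\delta,m_\delta>0$ tending to $0$ as $t\to-\infty$, nondecreasing in $t$, and the correction terms $\theta_\delta,m_\delta$ (of order $\phi({-}ct)$) chosen so that $\underline U_\delta$ vanishes and $\overline U_\delta$ is $x_i$-independent near the exit point ${\rm P}_i$; on $D\cup\bigcup_{j\ne i}\Omega_j$ I take $\underline U_\delta\equiv 0$ and $\overline U_\delta=m_\delta(t)\psi(x)$, where $\psi>0$ solves a linear elliptic problem on $D\cup\bigcup_{j\ne i}\Omega_j$ with Kirchhoff conditions at the interior vertices, $\psi({\rm P}_i)=1$, exponential decay along each infinite branch, and $\psi$ decreasing away from ${\rm P}_i$. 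One then verifies the edge inequalities (the Fife--McLeod computation together with $\psi''=k\psi$ for small $k>0$) and the Kirchhoff-type inequality \eqref{Kirchhoff2} at each vertex: at ${\rm P}_i$ these hold because $\overline U_\delta$ is flat there and $\psi$ decreases into $D$, and at the other vertices the functions are $0$ or proportional to $\psi$. As $t\to-\infty$, $\underline U_\delta\to\phi({-}x_i-ct+\xi_\delta({-}\infty))$ and $\overline U_\delta\to\phi({-}x_i-ct-\xi_\delta({-}\infty))$ uniformly on $\Omega_i$ (with $\xi_\delta({-}\infty)=O(\delta)$) and both tend to $0$ uniformly on $\Omega\setminus\Omega_i$. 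These functions need only serve as barriers for $t\le T_0$, a fixed time; for $t\ge T_0$ the constants $0$ and $1$ suffice.

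Next, for $n$ large let $u^{(n)}$ solve \eqref{RD-Omega} on $({-}n,\infty)\times\Omega$ with $u^{(n)}({-}n,\cdot)=\underline U_\delta({-}n,\cdot)$. Since $\underline U_\delta$ is a subsolution, $u^{(m)}\ge\underline U_\delta$, hence $u^{(m)}({-}n,\cdot)\ge u^{(n)}({-}n,\cdot)$ for $m>n$, so $\{u^{(n)}\}$ is nondecreasing in $n$, with $\underline U_\delta\le u^{(n)}\le\min\{\overline U_\delta,1\}$. Interior parabolic estimates on each edge together with the Kirchhoff conditions upgrade the pointwise limit to $C^{1,2}_{\mathrm{loc}}$-convergence to an entire solution of \eqref{RD-Omega} squeezed between $\underline U_\delta$ and $\overline U_\delta$, and the strong maximum principle on $\Omega$ gives strict bounds $0<\,\cdot\,<1$. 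Letting $\delta\to0$ along a sequence and extracting a locally convergent subsequence — or, equivalently, observing that the asymptotic phase stabilises (a Fife--McLeod stability argument) and normalising it by a time translation, which is allowed because \eqref{RD-Omega} is autonomous — produces an entire solution $\widehat u_i$ with $0<\widehat u_i<1$ satisfying \eqref{u-hat} exactly.

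For the strict monotonicity in $t$, the point is that $\underline U_\delta$ is itself nondecreasing in $t$ for $t$ very negative. Fix $h>0$ and $n$ large. Then $u^{(n)}(\cdot+h,\cdot)$ and $u^{(n+h)}$ both solve \eqref{RD-Omega} for $t\ge{-}n-h$, and at $t={-}n-h$ we have $u^{(n)}({-}n,\cdot)=\underline U_\delta({-}n,\cdot)\ge\underline U_\delta({-}n-h,\cdot)=u^{(n+h)}({-}n-h,\cdot)$; comparison gives $u^{(n)}(t+h,\cdot)\ge u^{(n+h)}(t,\cdot)$ for $t\ge{-}n-h$, and since $u^{(n+h)}\ge u^{(n)}$ on the common domain, $u^{(n)}(t+h,x)\ge u^{(n)}(t,x)$ for $t\ge{-}n$. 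Sending $n\to\infty$ gives $\widehat u_i(t+h,x)\ge\widehat u_i(t,x)$; strict inequality follows from the strong maximum principle applied to $\widehat u_i(\cdot+h,\cdot)-\widehat u_i\ge 0$, a solution of a linear parabolic equation with Kirchhoff conditions which is not identically zero by \eqref{u-hat}.

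Finally, for uniqueness, let $v$ be another entire solution with $0<v<1$ satisfying \eqref{u-hat}. Given $\ep>0$ there is $T_\ep$, with $T_\ep\to{-}\infty$ as $\ep\to0$, such that at $t=T_\ep$ both $v$ and $\widehat u_i$ are within $\ep$ of $\phi({-}x_i-ct)$ on $\Omega_i$ and within $\ep$ of $0$ off $\Omega_i$. Trapping both between Fife--McLeod sub- and supersolutions issued from $T_\ep$ whose profiles converge to the common limit $\phi({-}x_i-ct)$ as $\ep\to0$ (on $[T_\ep,T_0]$), and invoking continuous dependence on bounded time intervals (on $[T_0,t_*]$ for arbitrary fixed $t_*$), one obtains $\sup_{\Omega}|v(t,\cdot)-\widehat u_i(t,\cdot)|\to0$ as $\ep\to0$ for every fixed $t$, so $v\equiv\widehat u_i$. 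The hardest part of the whole argument is the construction of the comparison functions: the edge inequalities and the Kirchhoff-type inequalities \eqref{Kirchhoff2} must hold simultaneously, the delicate issue being the junction ${\rm P}_i$ and the control of the mass that leaks into $D\cup\bigcup_{j\ne i}\Omega_j$. This is feasible precisely because the leakage is exponentially small in $t$ as $t\to{-}\infty$: the flux leaving $\Omega_i$ at ${\rm P}_i$ is of order $|\phi'({-}ct)|\sim\lambda_0\phi({-}ct)$, which decays like $e^{\lambda_0 ct}$ with $\lambda_0=\tfrac12\big(c+\sqrt{c^2-4f'(0)\,}\big)>0$, and can therefore be absorbed by a supersolution of size of order $e^{\lambda_0 ct}$ on the remainder of the graph. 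The attendant bookkeeping — the cut-offs near ${\rm P}_i$ and the switch to the trivial barriers for $t\ge T_0$ — is routine but somewhat lengthy.
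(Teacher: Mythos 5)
Your overall strategy (ordered Fife--McLeod barriers for existence, then comparison arguments for monotonicity and uniqueness) is the right one, and the existence half matches the paper's, which is deferred to \cite{JM2019} and rests on exactly the super/subsolution pair you describe. The gap is in the monotonicity step. Your argument hinges on the inequality $\underline{U}_\delta(-n,\cdot)\ge \underline{U}_\delta(-n-h,\cdot)$, i.e.\ on the claim that the subsolution is nondecreasing in $t$, and this fails in the far field behind the front. For $x_i$ much larger than $-ct$ one has $\phi(-x_i-ct+\xi_\delta(t))=1-O\big(e^{-\lambda_1(x_i+ct)}\big)$, so $\underline{U}_\delta\approx 1-q_\delta(t)-\theta_\delta(t)$ up to a correction that is exponentially small in $x_i$ at fixed $t$; since $q_\delta$ must be positive, tend to $0$ as $t\to-\infty$, and satisfy $\xi_\delta'\ge C q_\delta$ with $\xi_\delta$ convergent (so $q_\delta$ is, with the natural choice $q_\delta=\delta e^{\mu t}$, strictly increasing), one gets $\partial_t\underline{U}_\delta\approx -q_\delta'(t)<0$ there: the compensating term $\phi'\cdot(-c+\xi_\delta')$ vanishes as $x_i\to\infty$ and cannot rescue the sign. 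Hence $u^{(n)}(t+h,\cdot)\ge u^{(n+h)}(t,\cdot)$ does not follow from comparison at the initial time, and the monotonicity of the limit is not established. A related problem affects your uniqueness step: off $\Omega_i$ the competitor $v(T_\ep,\cdot)$ is only known to be $\le\ep$ \emph{uniformly}, so it cannot be trapped under a supersolution of the form $m(t)\psi(x)$ with $\psi$ decaying at infinity along the other outer paths; a spatially uniform component is required, and inserting one disturbs the value matching at ${\rm P}_i$ in your construction.

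The paper gets around both difficulties with a different tool, the comparison lemmas for ancient solutions (Lemmas~\ref{lem:comparison-ancient} and \ref{lem:comparison-ancient2}): if a sub- and a supersolution are ordered on a moving interface $a(t)$ for all $t\le T$, and beyond the interface one of them remains in a zone where $f'\le-\sigma_0$ (near $0$ or near $1$), then the \emph{spatially uniform} barrier $\delta_0 e^{-\sigma_0(t-T_1)}$ dominates their difference on $[T_1,T]$, and letting $T_1\to-\infty$ propagates the strict ordering to the whole region. Applied to $\widehat{u}_i(t+\tau,\cdot)$ versus $\widehat{u}_i(t,\cdot)$ (and, for uniqueness, to $\widehat{U}_i(t+\tau,\cdot)$ versus $\widehat{u}_i(t,\cdot)$ followed by $\tau\to0$ and exchange of roles), with the strict ordering on the transition zone supplied by \eqref{u-hat} and the strict monotonicity of $\phi$, this yields monotonicity and uniqueness simultaneously, for \emph{any} entire solution satisfying \eqref{u-hat}, without a second global barrier construction and with no Kirchhoff bookkeeping, since the barrier is constant in $x$. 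You should either prove and use these two lemmas in place of your monotonicity and uniqueness arguments, or otherwise supply a substitute for the false monotonicity of $\underline{U}_\delta$ in $t$; your existence construction can remain as it is.
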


In the case where $\Omega$ is a star graph, the existence of such an entire solution $\widehat{u}_i$ is proved in \cite{JM2019}, though they did not discuss the uniqueness and time-monotonicity. 
Our proof of existence is similar to that of \cite{JM2019} and is based on a super-subsolution method and a limiting argument. The uniqueness and time-minotonicity follow from Lemma~\ref{lem:comparison-ancient} in Section \ref{ss:proof-front}. 

Since $\widehat{u}_i(t,x)$ is monotone increasing in $t$, the following limit exists, which we call the {\it limit profile}:
\begin{equation}\label{v-hat}
\widehat{v}_i(x):=\lim_{t\to\infty}\widehat{u}_i(t,x).\quad\hbox{(limit profile)}
\end{equation}
Since $\widehat{u}_i$ is unique, the profile $\widehat{v}_i$ is uniquely determined by $\Omega$, $f$ and $i$. 
By parabolic estimates, the derivatives of $\widehat{u}_i$ also converge to those of $\widehat{v}_i$, therefore $\widehat{v}_i$ is a stationary solution of \eqref{RD-Omega}:
\begin{equation}\label{eq:v-hat}
\Delta_\Omega \widehat{v}_i+f(\widehat{v}_i)=0\quad\hbox{on}\ \ \Omega.
\end{equation}
This means that, on each edge $E$ of $\Omega$, $\widehat{v}_i$ satisfies the one-dimensional equation $\partial_x^2 \widehat{v}_i+f(\widehat{v}_i)=0$ except at the endpoints, while it is continuous and piecewise $C^1$ on $\Omega$ and satisfies the Kirchhoff condition \eqref{Kirchhoff2} at every vertex of $\Omega$. 

\begin{rem}\label{rem:limit-profile}
The notion of ``limit profile'' is adopted from one of the present authors' earlier paper \cite{BHM2025}, which studies propagation of bistable fronts through a perforated wall. 
\end{rem}

Given an arbitrary pair of indices $i,j\in\{1,\ldots,N\}$ with $i\ne j$, we define the notion of propagation and blocking between the outer paths $\Omega_i$ and $\Omega_j$. This is done by looking at the behavior of $\widehat{v}_i$ along the outer path $\Omega_j$. In what follows, we let $0\leq x_j<\infty$ be the coordinates of $\Omega_j$ and let $\widehat{v}_i{}_{\restr\Omega_j}(x_j)$ be the restriction of $\widehat{v}_i$ onto $\Omega_j$. 

\begin{df}\label{def:propagation}
Let $i,j\in\{1,\ldots,N\}$, $i\ne j$. We say that {\it propagation} (resp. {\it blocking})  occurs from $\Omega_i$ to $\Omega_j$ if
\begin{equation}\label{i-j:propagation}
\lim_{x_j\to\infty}\widehat{v}_i{}_{\restr\Omega_j}(x_j)=1
\quad \hbox{(resp.}\  
\lim_{x_j\to\infty}\widehat{v}_i{}_{\restr\Omega_j}(x_j)=0 \hbox{\,)}.
\end{equation}
\end{df} 

As we shall see from Theorem \ref{thm:dichotomy} in Section \ref{ss:dichotomy}, $\widehat{v}_i{}_{\restr\Omega_j}(x_j)$ converges either to $1$ or $0$ as $x_j\to\infty$. Therefore, we have either propagation or blocking from $\Omega_i$ to $\Omega_j$, and there is no other intermediate behavior of $\widehat{v}_i$.  We define the {\it propagation index} $\PR(i,j)$ as follows:
\begin{equation}\label{Pij}
\PR(i,j)=
\begin{cases}
1 & \hbox{if propagation occurs from $\Omega_i$ to $\Omega_j$;}\\
0 & \hbox{if blocking occurs from $\Omega_i$ to $\Omega_j$.}
\end{cases}
\end{equation}
By the uniqueness of $\widehat{v}_i$, the value of $\PR(i,j)$ is well-defined.

Unlike the case of star graphs studied in \cite{JM2019,JM2021}, the behavior of solutions can be much more complicated depending on the configuration of the center graph $D$.  
For example, as we shall show in Section \ref{ss:examples}, it may happen that $\PR(i,j)=1$ for some $j$ while $\PR(i,j)=0$ for other $j$ (partial propagation). Also, it may happen that $\PR(i,j)=1$ while $\PR(j,i)=0$ (one-way propagation). 
Despite such complexity, there are certain properties that hold universally regardless of the structure of the center graph $D$, such as the following transient property (Theorem \ref{thm:transient}):
\begin{equation}\label{transient}
\PR(i,j)=1,\ \ \PR(j,k)=1 \ \ \Longrightarrow\ \ \PR(i,k)=1.
\end{equation}

In this paper, we first focus on such general properties as \eqref{transient} that hold for a large class of metric graphs. Later, we pick up various specific classes of graphs and study their properties, including star graphs, perturbed star graphs, graphs with a ``reservoir'', {\it etc}.

Let us mention some earlier works on propagation and blocking for reaction-diffusion equations on a metric graph. As we already mentioned, the case of star graphs has been studied in detail in \cite{JM2019, JM2021}. More precisely, they derived, among other things, a sharp criterion for propagation and blocking, which is given in \eqref{star-graph-equal} of the present paper. Roughly speaking, their result shows that the chances of blocking increase as $N$ becomes larger. 
Note that they also proved the existence of an entire solution satisfying \eqref{u-hat} for a star graph, though they did not show the uniqueness of such a solution as we stated in Theorem~\ref{thm:u-hat}.

We also mention a much earlier pioneering work of Pauwelussen \cite{Pa1981, Pa1982}, who studied propagation and blocking phenomena on networks with weighted edges. Among other things, the author derived a sharp criterion for propagation and blocking for a weighted 2 star graph, which is essentially the same condition as given in Corollary~\ref{cor:unequal-diffusion} of the present paper. 
In \cite{Pa1982}, the author also derived sufficient conditions for blocking in FitzHugh-Nagumo system.

In \cite{IJM2022}, the authors consider a graph $\Omega$ consisting of two $k$-star graphs whose center points ${\rm P}_1, {\rm P}_2$ are connected by an extra edge (Figure~\ref{fig:IJM-JM} {\it left}). In this case, $D$ consists of the edge ${\rm P}_1{\rm P}_2$ and the vertices ${\rm P}_1, {\rm P}_2$, and there are $N=2k$ outer paths. If the length of the edge ${\rm P}_1{\rm P}_2$ is small, $\Omega$ would look like a $2k$-star graph. As the length of ${\rm P}_1{\rm P}_2$ becomes larger, $\Omega$ would look more like two separate $k+1$ graphs, therefore one may suspect that the chances of propagation would increase. In \cite{IJM2022}, the authors confirm that it is indeed the case. In \cite{JM2024}, the authors consider a graph $\Omega$ as in Figure~\ref{fig:IJM-JM} ({\it right}). In this case, $D$ consists of the edges ${\rm P}_1{\rm P}_2$, ${\rm P}_1{\rm P}_3$ along with the vertices ${\rm P}_1, {\rm P}_2, {\rm P}_3$. When the length of the edges ${\rm P}_1{\rm P}_2$, ${\rm P}_1{\rm P}_3$ is very small, $\Omega$ would look like a 5-star graph, while, if the edges ${\rm P}_1{\rm P}_2$, ${\rm P}_1{\rm P}_3$ are long, it would look like three separate 3-star graphs. Consequently, one may speculate that the larger the length of $D$, the higher the chances of propagation, and \cite{JM2024} proves that this is indeed the case. These examples show that altering the length of the edges of $D$ can largely affect the propagation properties of $\Omega$. 

\begin{figure}[h]
\begin{center}
\includegraphics[scale=0.36]
{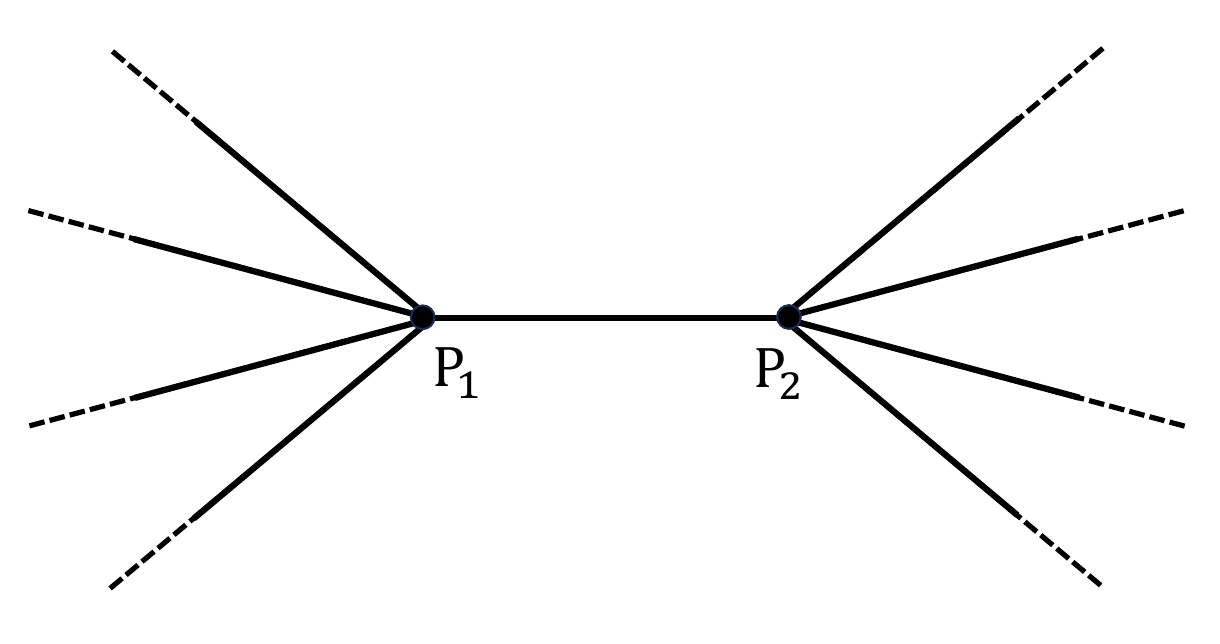}
\hspace{5pt}
\includegraphics[scale=0.36]
{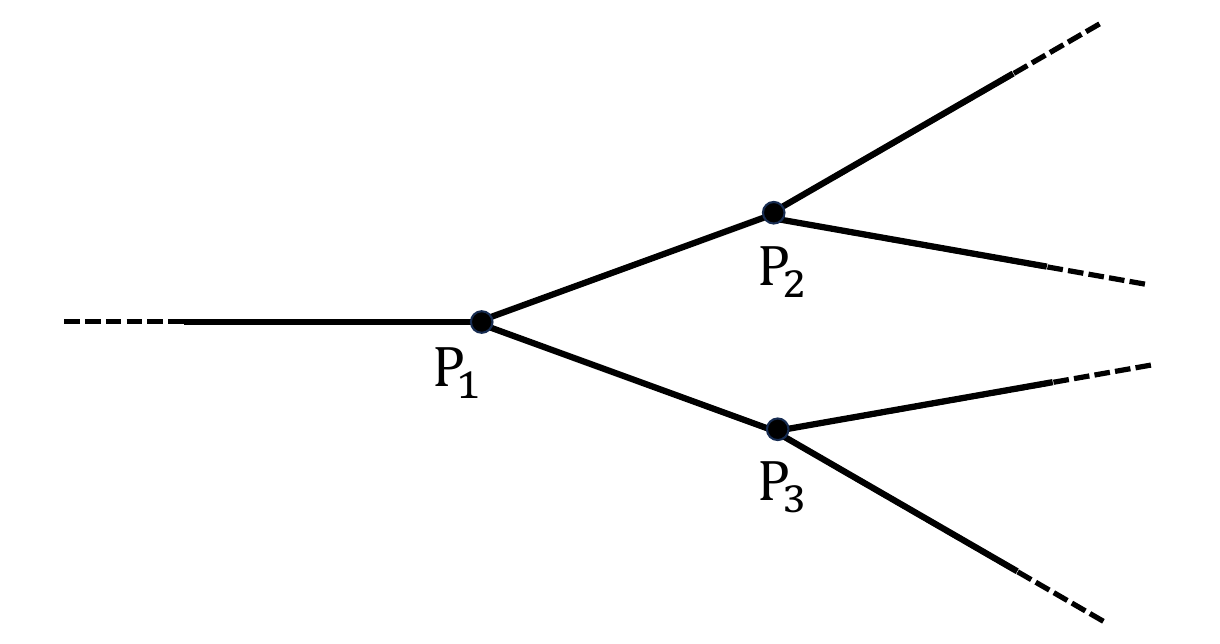}
\end{center}
\vspace{-5pt}
\caption{Two star graphs connected by an edge \cite{IJM2022} ({\it left}); double branching \cite{JM2024} ({\it right}).}
\label{fig:IJM-JM}
\end{figure}

In \cite{LM2024}, the authors consider a rather complex graph which they call a ``root-like graph''. This graph is obtained by letting the outer paths of a 3-star graph bifurcate into triple paths twice.  The resulting graph has 27 outer paths and the center graph $D$ possesses 12 quadruple junctions and one triple junction. The authors establish spreading-transition-vanishing trichotomy for solutions of bistable reaction-diffusion equations on this graph.

Lastly, we also mention the work \cite{MNO2023}, which considers bistable competition-diffusion systems on a star graph.and derive conditions for propagation and blocking.

\subsection{Organization of the paper}\label{ss:organization}

This paper is organized as follows. In Section~\ref{s:main}, we state our main theorems. First, in Section~\ref{ss:dichotomy}, we present our key result on the propagation-blocking dichotomy (Theorem~\ref{thm:dichotomy}). 
Next, in Section~\ref{ss:minimality}, we state important properties of the limit profile $\widehat{v}_i$, namely the minimality (Theorem~\ref{thm:minimal}) and stability (Theorem~\ref{thm:stability}). Then we state the transient properties of $\PR(i,j)$ (Theorem~\ref{thm:transient}). This last theorem is a direct consequence of Theorem~\ref{thm:minimal}.  In Section~\ref{ss:perturbation}, we consider perturbation of the graph and discuss the influence of the perturbations. Among other things, we show that the limit of a sequence of blocking graphs is again a blockign graph (Theorem~\ref{thm:limit-blocking}). As a contraposition of this theorem, we see that non-blocking property is robust under small perturbations (Corollary~\ref{cor:propagation-open}).

In Section~\ref{ss:star-graphs}, we first extend the results of \cite{JM2019, JM2021} on star graphs to those with possibly unequal edge thickness (Theorem~\ref{thm:star}). 
Then we consider small perturbation of star graphs and discuss whether or not non-blocking and blocking properties are preserved under small perturbations. In Section~\ref{ss:examples}, we present examples of partial and one-way propagation.

In Section~\ref{ss:reservoir}, we consider the case where $\Omega$ has a ``reservoir'' type subgraph such as the one in Figure~\ref{fig:3star-reservoir}, and show that the value of the limit profile $\widehat{v}_i$ is small in the reservoir, even if the front propagates from $\Omega_i$ to all other outer paths (Theorem~\ref{thm:reservoir}). The proof is based on the construction of an upper barrier, which is done by a variational argument (or an energy estimate)  A similar notion of reservoir is found in \cite[Theorem 9.1]{BHM2025}, which studies front propagation through a perforated wall. In Section~\ref{ss:general}, we show that a large class of solutions of equation \eqref{RD-Omega} whose initial support is contained in $\Omega_i$ converges to $\widehat{v}_i$ as $t\to\infty$. Therefore the function $\widehat{v}_i$ represents the long-time behavior of a large class of solutions.

In Section~\ref{s:preliminaries}, we recall some basic properties of equations on the graph, such as the comparison principle, Gauss--Green theorem, Poincar\'e inequality, and so on. The proof of the main results will be carried out in Section~\ref{s:proof-main}.

\section{Main results}\label{s:main}

In this section we present the main results of our paper. Unless otherwise stated, we allow the edges of  $\Omega$ to have unequal thickness, thus the Kirchhoff condition is given by \eqref{Kirchhoff2}.

\subsection{Propagation-blocking dichotomy}\label{ss:dichotomy}

Our first main result is the following dichotomy theorem.  The notion of propagation and blocking is based on this theorem, which will play a key role throughout the present paper.

\begin{thm}[Dichotomy theorem]\label{thm:dichotomy}
Let $i,j\in\{1,\ldots,N\}$, and let $\widehat{v}_i$ denote the limit profile for $i$ and $\widehat{v}_i{}_{\restr\Omega_j}(x_j)\,(0\leq x_j<\infty)$ its restriction on $\Omega_j$. Then either of the following holds:
\begin{equation}\label{dichotomy}
\lim_{x_j\to\infty}\widehat{v}_i{}_{\restr\Omega_j}(x_j)=1 \quad \hbox{or}\quad \lim_{x_j\to\infty}\widehat{v}_i{}_{\restr\Omega_j}(x_j)=0. 
\end{equation}
Furthemore, the former (resp. the latter) in \eqref{dichotomy} holds if and only if $\widehat{v}_i({\rm P}_j)>\beta$ (resp. $\leq\beta$), where $\beta$ is the value defined in \eqref{beta}. 
If the former (resp. the latter) holds, $\widetilde{v}_i{}_{\restr\Omega_j}$ is monotone increasing (resp. decreasing) on $\Omega_j$. 
\end{thm}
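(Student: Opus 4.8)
The plan is to analyze the restriction $w(x_j):=\widehat{v}_i{}_{\restr\Omega_j}(x_j)$ as a solution of the one-dimensional stationary problem $w''+f(w)=0$ on the half-line $[0,\infty)$, using the phase-plane for this ODE. Since $\widehat{v}_i$ is a bounded stationary solution of \eqref{RD-Omega} with $0<\widehat{v}_i<1$ (a strict inequality inherited from $0<\widehat{u}_i<1$ together with the strong maximum principle), the orbit $(w(x_j),w'(x_j))$ stays in the strip $0<w<1$ for all $x_j\ge 0$. The first step is to classify which orbits of $w''+f(w)=0$ remain in this strip for all forward time: inspecting the phase portrait (energy $\frac12 (w')^2+F(w)=$ const), the only bounded-for-all-forward-time behaviors compatible with $0<w<1$ are (i) convergence to the equilibrium $w\equiv 1$, (ii) convergence to $w\equiv 0$, (iii) convergence to the unstable equilibrium $w\equiv a$, or (iv) staying on a periodic orbit encircling $a$. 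I would rule out (iii) and (iv): a periodic orbit would force $w$ to take values on both sides of $a$ with $w'$ changing sign, and one can exclude this because $\widehat{v}_i$ is the limit profile and is \emph{minimal} (Theorem~\ref{thm:minimal}) — more directly, one shows $\widehat{v}_i{}_{\restr\Omega_j}$ is monotone, which immediately kills periodic orbits and convergence to $a$ from a non-constant orbit. So the crux is the monotonicity claim.

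To prove monotonicity of $w=\widehat{v}_i{}_{\restr\Omega_j}$, I would argue as follows. Recall $\widehat{u}_i(t,x)$ is strictly increasing in $t$ (Theorem~\ref{thm:u-hat}) and tends to $0$ as $t\to-\infty$ uniformly on $\Omega\setminus\Omega_i$, hence on $\Omega_j$; therefore along $\Omega_j$ we have $0\le \widehat{u}_i(t,x_j)\uparrow w(x_j)$. Apply the comparison/sliding argument on the half-line $\Omega_j$: consider the spatial translate $w(x_j+h)$ for $h>0$. The point is to show $w(x_j+h)\le w(x_j)$ if the orbit converges to $0$, and $w(x_j+h)\ge w(x_j)$ if it converges to $1$. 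This follows from the phase-plane structure once we know the limit as $x_j\to\infty$ exists and equals $0$ or $1$: an orbit of $w''+f(w)=0$ lying in $0<w<1$ and converging to $0$ must approach along the stable manifold of the saddle $0$, on which $w'<0$, and by the energy relation $\frac12(w')^2 = F(0)-F(w) = -F(w)$, which is single-valued in $w$ on $(0,\beta)$, the orbit cannot have turned around (a turning point would require $w'=0$, i.e. $F(w)=0$, i.e. $w\in\{0,\beta\}$, and crossing $w=\beta$ would put the orbit on a level set that is unbounded, contradicting boundedness); hence $w'<0$ throughout and $w$ is strictly decreasing. Symmetrically, if the orbit converges to $1$, then since $1$ is a saddle with $F(1)=\max F$, the approach is along its stable manifold where $w'>0$, and the same no-turning-point argument (the only zeros of $w'$ on $(\beta,1)$ occur where $F(w)=F(1)$, i.e. $w=1$, or $F(w)=0$, i.e. $w=\beta$, neither interior) gives $w'>0$ throughout.

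Next, the characterization via the value $\widehat{v}_i({\rm P}_j)$ versus $\beta$. Since $w$ is monotone and bounded, $L:=\lim_{x_j\to\infty}w(x_j)\in\{0,1\}$ exists (it is a nonnegative zero of $f$ that is a stable-along-the-half-line limit; $L=a$ is excluded because $w$ is eventually strictly monotone with nonzero $w'$ unless $w\equiv a$, and $w\equiv a$ is incompatible with $w(0)=\widehat{v}_i({\rm P}_j)$ being forced below; more cleanly, $L=a$ would make the orbit asymptotic to the center $a$, impossible for the ODE $w''=-f(w)$ whose linearization at $a$ has eigenvalues $\pm\sqrt{f'(a)}$, a saddle in $(w,w')$, so the only orbits converging to $(a,0)$ are the two separatrices, which are exactly the curves $\frac12(w')^2=-F(w)$ through $w=a$; these separatrices, continued, reach $w=0$ with $w'=0$... wait — rather, I would simply note the stable manifold of $(a,0)$ coincides with the homoclinic-type level set $F(w)=0$ which passes through $w=\beta$ with $w'=0$, and an orbit on it with $w(0)<\beta$ runs into $w=0$; with $w(0)=\beta$ it is the constant... }. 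Let me state it as: the energy $E:=\frac12 w'(0)^2+F(w(0))$ is conserved; if $L=1$ then $E=F(1)$, and since $F(w(0))\le F(1)$ with equality iff $w(0)=1$, and $w(0)<1$, we get $w'(0)^2>0$; more importantly $E=F(1)>0$ forces, combined with $\frac12 w'^2 = F(1)-F(w)\ge F(1)-F(1)=0$ and $F(w)\ge 0$ needing $w\ge\beta$ or $w=0$... Here I would instead run the clean argument: if $w(0)=\widehat{v}_i({\rm P}_j)\le \beta$, then $F(w(0))\ge 0 = F(0)=F(\beta)$ fails in general — rather $F(w(0))\le 0$ is false too. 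The correct dichotomy: define $E=\frac12 w'(0)^2 + F(w(0))$. If $L=0$, energy conservation at $x_j=\infty$ gives $E=F(0)=0$, so $\frac12 w'(0)^2=-F(w(0))\ge 0$, forcing $F(w(0))\le 0$, hence $w(0)\in[0,\beta]$, i.e. $\widehat{v}_i({\rm P}_j)\le\beta$. If $L=1$, then $E=F(1)>0$; since along the monotone increasing orbit $\frac12 w'^2=F(1)-F(w)$ and $F(w)\le F(1)$ always, this is consistent, and evaluating the constraint that $w$ increases through values down to $w(0)$: we need $F(1)-F(w(0))=\frac12 w'(0)^2\ge 0$, automatically true, so this alone does not pin down $w(0)$; instead I use that the \emph{decreasing} case is ruled out here, so the orbit through $(w(0),w'(0))$ with $w'(0)\ge 0$ must have $w'(0)>0$ and energy $F(1)$, and such an orbit, traced backward (decreasing $x_j$), would exit $w<1$... and traced to where $w'=0$ hits $w$ with $F(w)=F(1)$, i.e. $w=1$ only, so $w'(0)>0$ strictly and in particular the orbit came from $w$-values below $w(0)$; the key inequality $\widehat{v}_i({\rm P}_j)>\beta$ follows because an orbit with energy $F(1)$ lying in $\{0<w<1,\ w'>0\}$ satisfies $F(w)=F(1)-\frac12 w'^2<F(1)$, and to be on the branch approaching $1$ (rather than trapped) it must already have $F(w(0))>0$ — indeed if $F(w(0))\le 0$ then $w(0)\le\beta$ and $\frac12 w'(0)^2=F(1)-F(w(0))\ge F(1)$; but then running the orbit \emph{backward} it passes $w=\beta$ (where $F=0<F(1)$) still with $w'>0$ large, reaches $w=a$, continues to small $w$ where $F(w)\to 0$, so $w'\to\sqrt{2F(1)}>0$, meaning $w\to 0$ at finite "time" with nonzero slope, i.e. the orbit crosses $w=0$ — contradicting $0<\widehat{v}_i$ on the whole half-line. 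Hence $F(w(0))>0$, i.e. $\widehat{v}_i({\rm P}_j)>\beta$. Conversely, given $\widehat{v}_i({\rm P}_j)>\beta$ we are in the propagation case by elimination, and given $\widehat{v}_i({\rm P}_j)\le\beta$ we are in the blocking case; the monotonicity directions are as established above.

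The main obstacle I anticipate is not the phase-plane bookkeeping itself but establishing at the outset that the orbit $(w,w')$ genuinely has a limit as $x_j\to\infty$ and cannot be periodic or asymptotic to the center $(a,0)$ — in other words, proving the monotonicity of $\widehat{v}_i{}_{\restr\Omega_j}$ cleanly. The cleanest route is the sliding/comparison argument using that $\widehat{u}_i$ is time-monotone and its limit $\widehat{v}_i$ is the \emph{minimal} stationary state above the relevant initial data (Theorem~\ref{thm:minimal}); a non-monotone $w$ would contain a strict interior local maximum or minimum, and comparing with a spatial translate, or with a small shift of a ground-state-type subsolution/supersolution on $\Omega_j$, would contradict minimality or the strong maximum principle. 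I would make sure to handle the degenerate possibility $w\equiv a$ on $\Omega_j$ separately (it is excluded because then $\widehat{v}_i\equiv a$ near ${\rm P}_j$ and, by unique continuation on the graph through the Kirchhoff vertex, $\widehat{v}_i\equiv a$ on all of $\Omega$, contradicting $\widehat{v}_i\ge\widehat{u}_i(0,\cdot)$ which is somewhere close to $1$ on $\Omega_i$). Once monotonicity is in hand, everything else is the energy/phase-portrait computation sketched above.
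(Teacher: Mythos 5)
There is a genuine gap at exactly the point you flag as the crux. Your phase-plane classification is correct: the only orbits of $w''+f(w)=0$ that remain in $0<w<1$ for all $x_j\ge 0$ are the constants $a$ and $1$, the periodic orbits around $(a,0)$, the homoclinic orbit (limit $0$), and the increasing branch of the stable manifold of $(1,0)$ (limit $1$). The case $\widehat{v}_i({\rm P}_j)>\beta$ is then pure ODE, since every non-escaping orbit other than the stable manifold of $(1,0)$ has $w\le\beta$ throughout. But the case $\widehat{v}_i({\rm P}_j)\le\beta$ — excluding $w\equiv a$, the periodic orbits, and the stable-manifold solutions with small $w(0)$ — cannot be settled by ODE reasoning on the half-line, and your proposal never supplies a valid substitute. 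The "$L=1\Rightarrow w(0)>\beta$" argument fails because the backward continuation of the orbit reaches $w=0$ only at \emph{negative} $x_j$, outside the domain, so there is no contradiction with positivity; indeed the stable manifold of $(1,0)$ contains genuine half-line solutions with $w(0)<\beta$ and $w\to 1$, so this implication is simply not an ODE fact. Likewise, appealing to minimality (Theorem~\ref{thm:minimal}) gives upper bounds on $\widehat{v}_i$ only via global supersolutions tending to $1$ along $\Omega_i$, and producing one that tends to $0$ along $\Omega_j$ is equivalent to the blocking statement you are trying to prove; and a sliding argument on $\Omega_j$ has no anchor at the vertex ${\rm P}_j$. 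The missing idea is the paper's Lemma~\ref{lem:below-V}: compare the \emph{ancient solution} $\widehat{u}_i$ with the stationary pulse $V$ on $\Omega_j$ — first for $t\le T$ via the ancient-solution comparison (Lemma~\ref{lem:comparison-ancient}(ii), using $\widehat{u}_i\le\delta_0$ on $\Omega_j$ for $t\ll 0$), then for all $t$ using the time-monotonicity $\widehat{u}_i(t,{\rm P}_j)<\widehat{v}_i({\rm P}_j)\le\beta=V(0)$ as the boundary control. Letting $t\to\infty$ gives $\widehat{v}_i{}_{\restr\Omega_j}\le V\to 0$, which simultaneously kills the periodic orbits, $w\equiv a$, and the spurious stable-manifold solutions, and yields the monotonicity and the full "if and only if".

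Two further concrete errors: $(a,0)$ is a \emph{center} of $w''+f(w)=0$ (eigenvalues $\pm i\sqrt{f'(a)}$), not a saddle — the exclusion of convergence to $a$ from a non-constant orbit is correct but should be argued via energy conservation ($F(a)$ is the strict minimum of the energy, so its level set is the single point $(a,0)$). And the unique-continuation claim for $w\equiv a$ is false: if $\widehat{v}_i\equiv a$ on $\Omega_j$, the Kirchhoff condition at ${\rm P}_j$ only forces the \emph{sum} of the remaining directional derivatives to vanish, so $\equiv a$ does not propagate through a vertex of degree $\ge 3$; this degenerate case, too, is eliminated by the comparison with $V$.
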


The above theorem will be proved by the phase portrait analysis (Figure~\ref{fig:phase-portrait}) and a special comparison principle on ancient solutions (Lemmas~\ref{lem:comparison-ancient}, \ref{lem:comparison-ancient2}). 

\subsection{Minimality, stability and transient properties}\label{ss:minimality}

In this section, we present results on general properties of the limit profile $\widehat{v}_i$ that hold universally regardless of the structure of the center graph $D$, namely, minimality, stability and transient properties. We start with the minimality theorem.


\begin{thm}[Minimality of $\widehat{v}_i$]\label{thm:minimal}
Let $v$ be a supersolution of the stationary problem
\begin{equation}\label{stationary}
\Delta_\Omega v+ f(v)=0\quad\hbox{on}\ \ \Omega
\end{equation}
such that $0\leq v\leq 1$ and that 
\begin{equation}\label{Omega-i-1}
\lim_{x_i\to\infty}v{}_{\restr\Omega_i}(x_i)=1.
\end{equation}
Then $v\geq \widehat{v}_i$, where $\widehat{v}_i$ denotes the limit profile associated with $\Omega_i$. 
In particular, $\widehat{v}_i$ is minimal among all the solutions of \eqref{stationary} satisfying \eqref{Omega-i-1}.
\end{thm}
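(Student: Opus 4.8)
The plan is to realize $\widehat{v}_i$ as a monotone limit of a time-increasing orbit of the parabolic equation starting below the supersolution $v$, and then use the parabolic comparison principle together with the fact that $v$ is a stationary supersolution. First I would recall from Theorem~\ref{thm:u-hat} that $\widehat{u}_i(t,x)$ is the unique entire solution of \eqref{RD-Omega} with $0<\widehat{u}_i<1$ satisfying the asymptotics \eqref{u-hat} as $t\to-\infty$, and that it is strictly increasing in $t$, with $\widehat{v}_i = \lim_{t\to\infty}\widehat{u}_i(t,\cdot)$. The key observation is that for each fixed $s\in\R$, the time-translate $\widehat{u}_i(t+s,x)$ is again an entire solution with the same limit profile as $t\to\infty$, so it suffices to compare $v$ with $\widehat{u}_i(t,\cdot)$ on a suitable time interval and then pass to the limit.

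The main step is the comparison. Fix $s$ very negative. From \eqref{u-hat}, on $\Omega_i$ the function $\widehat{u}_i(s,\cdot)$ is uniformly close to $\phi(-x_i-cs)$, which is a front located far out along $\Omega_i$ and bounded above by a value slightly larger than $0$ on all of $\Omega\setminus\Omega_i$ and on the bounded part $\{x_i\le R\}$ of $\Omega_i$. On the other hand, $v$ satisfies $0\le v\le 1$ and, by \eqref{Omega-i-1}, $v{}_{\restr\Omega_i}(x_i)\to 1$ as $x_i\to\infty$; moreover $v$ cannot vanish identically on any edge because it is a supersolution with $f(0)=0$ — more carefully, if $v\not\equiv 0$ then by the strong maximum principle applied to the inequality $\Delta_\Omega v + f(v)\le 0$ (equivalently $\Delta_\Omega v \le -f(v)$, and near points where $v$ is small $-f(v)\ge 0$) one gets $v>0$ on the interior of $\Omega$, hence $v$ is bounded below by a positive constant on every compact subset of $\Omega$. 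Combining these two facts, for $s$ sufficiently negative we obtain $\widehat{u}_i(s,x)\le v(x)$ for all $x\in\Omega$: on the far part of $\Omega_i$ both functions are near the front/near $1$ and the front of $\widehat{u}_i(s,\cdot)$ sits further out than the region where $v$ has already risen; on the complementary compact region $\widehat{u}_i(s,\cdot)$ is uniformly small while $v$ is bounded below by a positive constant. (This ordering-at-one-time argument is the analogue of the standard trapping-between-a-supersolution argument; it is exactly the kind of estimate used to construct $\widehat{u}_i$ in the first place.) Once $\widehat{u}_i(s,\cdot)\le v$ at time $s$, the comparison principle for \eqref{RD-Omega} on the graph (Section~\ref{s:preliminaries}) — using that $v$ is a stationary supersolution — gives $\widehat{u}_i(t,x)\le v(x)$ for all $t\ge s$ and all $x\in\Omega$. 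Letting $t\to\infty$ yields $\widehat{v}_i\le v$ on $\Omega$.

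The minimality assertion is then immediate: any solution $v$ of the stationary problem \eqref{stationary} with $0\le v\le1$ and \eqref{Omega-i-1} is in particular a supersolution satisfying the hypotheses, so $v\ge\widehat{v}_i$; and $\widehat{v}_i$ itself is such a solution by \eqref{eq:v-hat} and by Theorem~\ref{thm:dichotomy} (which gives $\lim_{x_i\to\infty}\widehat{v}_i{}_{\restr\Omega_i}=1$, using $\widehat{v}_i({\rm P}_i)\ge\phi(-0)=a>\beta$? — more safely, this limit already follows because $\widehat{u}_i(t,\cdot)$ dominates a fixed left-bound traveling wave on $\Omega_i$ for each $t$, so $\widehat{v}_i{}_{\restr\Omega_i}\to1$). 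Hence $\widehat{v}_i$ is the minimal element of this class.

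The step I expect to be the main obstacle is the ordering-at-one-time claim $\widehat{u}_i(s,\cdot)\le v$ for $s$ negative enough. The delicate point is that \eqref{u-hat} only controls $\widehat{u}_i(s,\cdot)$ uniformly — on $\Omega\setminus\Omega_i$ it is uniformly small, which pairs well with the positive lower bound on $v$ there, but on the far part of $\Omega_i$ where $v$ has not yet risen close to $1$ (only the limit as $x_i\to\infty$ is controlled, not a uniform rate), one must exploit that the front of $\phi(-x_i-cs)$ recedes to $x_i=+\infty$ as $s\to-\infty$: given the (possibly slow) rate at which $v{}_{\restr\Omega_i}$ approaches $1$, choose $R$ so large that $v\ge a$ (say) for $x_i\ge R$, then choose $s$ negative enough that the front position $-cs$ of $\widehat{u}_i(s,\cdot)$ exceeds $R$ by a definite amount, so that $\widehat{u}_i(s,x_i)$ is $\le a$ for $x_i\le R$ and, for $x_i\ge R$, is dominated by the monotone front which is itself $\le$ the value $v$ has climbed to. Making this matching quantitative, and handling the transition region near $x_i=R$, is where the real work lies; once it is done, the comparison principle and the passage to the limit are routine.
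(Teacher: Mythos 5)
Your overall frame is the same as the paper's: put $\widehat{u}_i(t,\cdot)$ below the stationary supersolution $v$ at some time, invoke the parabolic comparison principle on $\Omega$, and let $t\to\infty$ to get $\widehat{v}_i\leq v$. The gap is in the step you yourself flag as delicate: the single-time ordering $\widehat{u}_i(s,\cdot)\leq v$ cannot be obtained from \eqref{u-hat} in the way you describe, and it fails at both ends of the graph. First, $\Omega\setminus\Omega_i$ is \emph{not} compact: it contains the unbounded outer paths $\Omega_j$, $j\ne i$, along which the supersolution $v$ may tend to $0$ — indeed this is the main case in which the theorem is used (blocking profiles as in Lemma~\ref{lem:star-v}, the pulse-type supersolutions of Theorems~\ref{thm:perturbation-star2}, \ref{thm:local-star1}, \ref{thm:unification}). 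So there is no positive lower bound for $v$ on $\Omega\setminus\Omega_i$, and uniform smallness of $\widehat{u}_i(s,\cdot)$ there does not yield a pointwise ordering against a function that itself decays to $0$ as $x_j\to\infty$. Second, on the far part of $\Omega_i$ the profile $\phi(-x_i-cs)$ tends to $1$ as $x_i\to\infty$, so it is \emph{not} ``dominated by the value $v$ has climbed to'': the hypothesis \eqref{Omega-i-1} only gives $v\to 1$, possibly from strictly below $1$ and possibly more slowly than the tail of $\phi$, so no pointwise ordering is available there either.

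The missing idea is the paper's ``comparison of ancient solutions'' (Lemmas~\ref{lem:comparison-ancient} and \ref{lem:comparison-ancient2}). One does not order $\widehat{u}_i(s,\cdot)$ and $v$ at a single time on all of $\Omega$; instead one orders them at a single cross-section point (namely $x_i=a_1$ with $v\geq 1-\delta_0$ for $x_i\geq a_1$, and the exit points ${\rm P}_j$) for \emph{all} $t\leq T$, and then uses $f'(s)\leq -\sigma_0$ on $[0,\delta_0]\cup[1-\delta_0,1]$ together with the barrier $\delta_0 e^{-\sigma_0(t-T_1)}$, $T_1\to-\infty$, to propagate the ordering into the region $\{x_i\geq a_1\}$ where $v\geq 1-\delta_0$ and into the regions where $\widehat{u}_i\leq\delta_0$ (including the paths $\Omega_j$ on which $v\to 0$, via the Lemma~\ref{lem:below-V}-type argument). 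Only on the genuinely compact piece $D\cup\Omega_i[a_1]$ does the ``$\widehat{u}_i$ small versus $v>0$'' argument you propose apply (and there it is correct, since $v>0$ by the strong minimum principle). Without the ancient-solution lemmas your proof only covers supersolutions bounded away from $0$ on $\Omega\setminus\Omega_i$ and away from $1$ nowhere on $\Omega_i$, which excludes essentially every application of the theorem.
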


Here by $v$ being a supersolution of \eqref{stationary} we mean that $v$ satisfies $\partial_x^2 v+f(v)\leq 0$ on each edge of $\Omega$ (at least in the weak sense) and that it satisfies super-Kirchhoff condition \eqref{super-Kirchhoff} at every vertex. See Definition~\ref{def:super-sub} for details.

The above theorem will play a key role in the proof of the transient properties (Theorem~\ref{thm:transient}) and in the proof of Theorem~\ref{thm:limit-blocking} concerning the limit of a sequence of blocking graphs. It is also used to study the behavior of more general solutions of the Cauchy problem (Theorem~\ref{thm:general}).

Next we discuss stability. Since $\widehat{v}_i(x)$ is the monotone increasing limit of the solution $\widehat{u}_i(t,x)$, it is natural to expect that $\widehat{v}_i(x)$ possesses some sort of stability properties. We show below that $\widehat{v}_i(x)$ is linearly stable with respect to compactly supported perturbations.

Let us first introduce some notation. For each $R>0$ and $i\in\{1,\ldots,N\}$, we define
\[
\Omega_i[R]:=\{x\in\Omega_i\mid 0\leq x_i\leq R\}, \quad {\rm P}_i^R:=\{x\in\Omega_i\mid x_i= R\}
\]
\begin{equation}\label{Omega-R}
\Omega^R:=D\cup\Omega_i[R]\cup\cdots\cup\Omega_i[R].
\end{equation}
We regard ${\rm P}_1^R,\ldots, {\rm P}_N^R$ as temporary vertices. Then $\Omega^R$ is a bounded subgraph of $\Omega$.  We define the boundary of $\Omega$ by $\partial\Omega^R:=\{{\rm P}_1^R,\ldots, {\rm P}_N^R\}$. Next we consider the following eigenvalue problem:
\begin{equation}\label{EP-OmegaR}
\begin{cases}
\,-\Delta_{\Omega^R}\varphi - f'\left(\widehat{v}_i\right)\varphi=\lambda \varphi & \hbox{in}\ \ \Omega^R,\\
\,\varphi=0\ \ & \hbox{on}\ \ \partial\Omega^R.
\end{cases}
\end{equation}
Let $\lambda^R$ denote the principal engenvalue of \eqref{EP-OmegaR}, and $\varphi^R(x)>0$ be the corresponding eigenfunction. The existence of the principal eigenvalue for such a problem is easily seen. Incidentally, \cite{Be1988} studies more general Sturm-Liouville eigenvalue problems on metric graphs and shows, among other thing, that all eigenvalues are discrete.

We say that the stationary solution $\widehat{v}_i$ is called {\it linearly stable} on $\Omega^R$ if $\lambda^R>0$, while it is called {\it linearly unstable} on $\Omega^R$ if $\lambda^R<0$. If $\lambda^R>0$, then $\widehat{v}_i$ is asymptotically stable as a stationary solution of \eqref{RD-Omega} under any small perturbation whose support is contained in $\Omega^R$. Note that, just as in the case of classical Dirichlet eigenvalue problems in Euclidean domains, $\lambda^R$ is stricly decreasing in $R$, which can easily be shown by the strong maximum principle.

\begin{thm}[Linear stability of $\widehat{v}_i$]\label{thm:stability}
$\lambda^R>0$ for all $R>0$.
\end{thm}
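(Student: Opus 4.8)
\textbf{Proof proposal for Theorem~\ref{thm:stability}.}

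The plan is to exploit the fact that $\widehat{v}_i$ is the \emph{monotone increasing} limit of the entire solution $\widehat{u}_i(t,x)$, which forces $\widehat{v}_i$ to be stable from below, and then to upgrade this ``one-sided'' stability to genuine linear stability on each bounded truncation $\Omega^R$ by a strong maximum principle argument. More precisely, since $\partial_t\widehat{u}_i>0$ for all $t$, the time derivative $w:=\partial_t\widehat{u}_i$ is a positive solution of the linearized equation $\partial_t w=\Delta_\Omega w+f'(\widehat{u}_i)w$ on $\R\times\Omega$; letting $t\to\infty$ and using parabolic estimates (as already invoked in the excerpt to pass from \eqref{v-hat} to \eqref{eq:v-hat}), one expects $\partial_t\widehat{u}_i\to 0$, so this limiting positive supersolution degenerates and cannot be used directly. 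Instead I would argue by contradiction: suppose $\lambda^R\le 0$ for some $R$. Because $\lambda^R$ is strictly decreasing in $R$ (stated in the excerpt), we may as well assume $\lambda^{R'}<0$ for all $R'>R$ close enough to $R$, so it suffices to rule out $\lambda^R<0$ for some $R$.

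So assume $\lambda^R<0$ with principal eigenfunction $\varphi^R>0$ on $\Omega^R$, vanishing on $\partial\Omega^R$. The standard consequence is that $\widehat{v}_i$ is a linearly unstable stationary solution under perturbations supported in $\Omega^R$: for small $\delta>0$, the function $\widehat{v}_i-\delta\varphi^R$ (extended by $\widehat{v}_i$ outside $\Omega^R$) is a strict subsolution of the stationary problem \eqref{stationary} in a suitable sense, and the solution of \eqref{RD-Omega} with this initial datum is strictly increasing in $t$ and stays below $\widehat{v}_i$; by monotone convergence it tends to a stationary solution $v_*$ with $\widehat{v}_i-\delta\varphi^R\le v_*\le\widehat{v}_i$ and $v_*\not\equiv\widehat{v}_i$. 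Now I want to compare $v_*$ with the front-like solution. Since $v_*\ge\widehat{v}_i-\delta\varphi^R$ and $\varphi^R$ is supported in the bounded set $\Omega^R$, we still have $\lim_{x_i\to\infty}v_*{}_{\restr\Omega_i}(x_i)=1$, i.e. $v_*$ satisfies hypothesis \eqref{Omega-i-1} of the Minimality Theorem~\ref{thm:minimal}. Hence $v_*\ge\widehat{v}_i$, which together with $v_*\le\widehat{v}_i$ gives $v_*\equiv\widehat{v}_i$ — contradicting $v_*\not\equiv\widehat{v}_i$. This contradiction shows $\lambda^R\ge 0$ for every $R$.

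It remains to exclude the borderline case $\lambda^R=0$. Here I would again use strict monotonicity of $R\mapsto\lambda^R$: if $\lambda^{R_0}=0$ then $\lambda^R<0$ for all $R>R_0$, which has just been excluded; therefore $\lambda^R>0$ for all $R$, as claimed. (Alternatively, one can rule out $\lambda^R=0$ directly: a zero principal eigenvalue would give a nontrivial kernel element $\varphi^R>0$, and a sweeping/sliding argument against $\widehat{v}_i$ combined with Theorem~\ref{thm:minimal} would again produce a stationary solution strictly below $\widehat{v}_i$ satisfying \eqref{Omega-i-1}, contradicting minimality; but the monotonicity shortcut is cleaner.)

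\textbf{Main obstacle.} The delicate point is the construction in the second paragraph: turning ``$\lambda^R<0$'' into an \emph{ordered} stationary solution $v_*$ strictly below $\widehat{v}_i$ that still satisfies the boundary behavior \eqref{Omega-i-1}. One must check that $\widehat{v}_i-\delta\varphi^R$ is a legitimate subsolution \emph{on the whole graph $\Omega$}, including at the temporary vertices ${\rm P}_j^R$ where $\varphi^R$ has a one-sided (inward) derivative — this is exactly where the Kirchhoff/super-Kirchhoff bookkeeping of Definition~\ref{def:super-sub} enters, since the kink of $\varphi^R$ at $\partial\Omega^R$ points the ``right way'' (the inward normal derivative of $-\varphi^R$ is positive there, which is favorable for the subsolution inequality). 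One also needs the comparison principle for ancient/entire solutions and monotone convergence on the unbounded graph $\Omega$, both of which are available from Section~\ref{s:preliminaries} and Lemma~\ref{lem:comparison-ancient}. Once these routine-but-careful ingredients are in place, the contradiction with Theorem~\ref{thm:minimal} closes the argument.
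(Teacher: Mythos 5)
Your overall strategy (perturb $\widehat v_i$ by the principal eigenfunction extended by zero, derive a contradiction by comparison, and dispose of the borderline case $\lambda^R=0$ via the strict monotonicity of $R\mapsto\lambda^R$) is close to the paper's, but the key step has the sign of the linearization backwards, and this breaks the argument. If $\lambda^R<0$ and $\varphi^R>0$, then
\[
\Delta\bigl(\widehat v_i-\delta\varphi^R\bigr)+f\bigl(\widehat v_i-\delta\varphi^R\bigr)=\delta\lambda^R\varphi^R+O\bigl(\delta^2(\varphi^R)^2\bigr)\le 0
\]
for small $\delta>0$, so $\widehat v_i-\delta\Phi^R$ is a \emph{supersolution}, not a subsolution; likewise the kink at the temporary vertices ${\rm P}_j^R$ goes the super-Kirchhoff way, since the derivative of $-\varphi^R$ pointing into $\Omega^R$ is negative there by the Hopf lemma, not positive as you assert. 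Consequently the solution emanating from $\widehat v_i-\delta\Phi^R$ is \emph{decreasing}, not increasing, and your intermediate stationary solution $v_*$ with $\widehat v_i-\delta\varphi^R\le v_*\le\widehat v_i$ and $v_*\not\equiv\widehat v_i$ is not produced by this construction. Note also that the configuration you describe (downward perturbation is a subsolution, orbit increases back toward $\widehat v_i$ from below) is exactly what happens in the \emph{stable} case $\lambda^R>0$, where the increasing limit is $\widehat v_i$ itself and no contradiction arises; instability is precisely what makes the downward perturbation a supersolution and pushes the orbit away from $\widehat v_i$. Even granting your setup, you never justify $v_*\not\equiv\widehat v_i$, which is the whole point.

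The repair is short and is what the paper does: keep the supersolution $w^+:=\widehat v_i-\ep\Phi^R$ with $\ep$ small enough that $w^+>0$, and compare it with the front-like entire solution rather than with a freshly built $v_*$. Since $\widehat u_i(t,\cdot)<\widehat v_i$ on all of $\Omega$ by time-monotonicity (so in particular $\widehat u_i(t,\cdot)<w^+$ outside $\Omega^R$, where $\Phi^R=0$), and since $\widehat u_i(t,\cdot)\to 0$ uniformly on the compact set $\Omega^R$ as $t\to-\infty$, there is $T$ with $\widehat u_i(T,\cdot)<w^+$ on all of $\Omega$. The comparison principle then keeps $\widehat u_i(t,\cdot)\le w^+$ for all $t\ge T$, and letting $t\to+\infty$ gives $\widehat v_i\le\widehat v_i-\ep\Phi^R$, which is false on $\Omega^R$. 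This yields $\lambda^R\ge 0$, and your final step (strict monotonicity of $\lambda^R$ in $R$) correctly upgrades this to $\lambda^R>0$; Theorem~\ref{thm:minimal} is not needed anywhere in the proof.
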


The above theorem implies, in particular, that $\widehat{v}_i$ is stable under small perturbations whose support is compact. Note that the above theorem holds for any graph $\Omega$ and nonlinearity $f$ satisfying \eqref{Omega} and \eqref{f}. As a corollary to the above theorem, we obtain the following:

\begin{cor}\label{cor:no-two-stationary}
Let $E$ by any edge of $\Omega$. Then $\widehat{v}_i$ does not posses more than one stationary point (that is, a point where $\partial_x \widehat{v}_i=0$) on $E$ unless $\widehat{v}_i\equiv a$ or $\widehat{v}_i\equiv 1$ on $E$..
\end{cor}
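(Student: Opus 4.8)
The plan is to deduce Corollary~\ref{cor:no-two-stationary} from the linear stability theorem (Theorem~\ref{thm:stability}) by a phase-plane argument combined with a Sturm-type oscillation comparison. Suppose, for contradiction, that $\widehat v_i$ has two distinct stationary points $p_1<p_2$ (in the local coordinate) on some edge $E$, and that $\widehat v_i\not\equiv a$ and $\widehat v_i\not\equiv 1$ on $E$. On $E$ the function $w:=\widehat v_i{}_{\restr E}$ solves the one-dimensional autonomous ODE $w''+f(w)=0$, so its orbit lies on a level set of the energy $\tfrac12 (w')^2 + F(w)$. Since $w'(p_1)=w'(p_2)=0$, both $(w(p_1),0)$ and $(w(p_2),0)$ lie on the same energy level. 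Analysing the phase portrait of $w''+f(w)=0$ (the one already drawn in Figure~\ref{fig:phase-portrait}), the only way an orbit can return to the $w'=0$ axis is if the segment of the orbit between $p_1$ and $p_2$ traverses a piece of a closed (periodic) orbit surrounding the center at $w=a$, or degenerates to a constant. The non-degenerate case forces $w$ to be strictly monotone on $(p_1,p_2)$ with $w(p_1),w(p_2)$ on opposite sides of $a$, and in particular $w$ passes through a neighbourhood of the center $a$; so $\widehat v_i$ restricted to $[p_1,p_2]$ is a genuine piece of a periodic solution.

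Once we know $\widehat v_i$ contains a full half-period of the nonlinear oscillation about $a$ on $[p_1,p_2]\subset E$, the idea is to contradict positivity of the principal eigenvalue $\lambda^R$ for $R$ large. Differentiating the ODE $w''+f(w)=0$ gives that $\psi:=w'$ satisfies the linearised equation $\psi''+f'(w)\psi=0$ on $E$, i.e. $-\psi''-f'(w)\psi=0$. Thus $\psi=\partial_x\widehat v_i$ is an eigenfunction-like solution on $E$ with eigenvalue $0$, and crucially $\psi$ vanishes at both $p_1$ and $p_2$ and is nonzero in between (because $w$ is strictly monotone on $(p_1,p_2)$). Now I would localise: consider the Dirichlet eigenvalue problem for $-\partial_x^2 - f'(\widehat v_i)$ on the subinterval $[p_1,p_2]$ of the single edge $E$. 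Since the equation has a solution ($\psi$) that is positive on $(p_1,p_2)$ and vanishes at the endpoints, $0$ is precisely the principal Dirichlet eigenvalue on $[p_1,p_2]$. By domain monotonicity of principal Dirichlet eigenvalues (strict, by the strong maximum principle), the principal eigenvalue of the same operator on the larger graph $\Omega^R$ with Dirichlet data on $\partial\Omega^R$ is strictly less than $0$ for every $R$ with $[p_1,p_2]\subset\Omega^R$; such $R$ exist because $E$ is a fixed edge of the bounded center graph (or, if $E$ lies partly in an outer path, we simply take $R$ large enough to contain $[p_1,p_2]$). This gives $\lambda^R<0$, contradicting Theorem~\ref{thm:stability}.

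For the degenerate cases I would argue directly. If the orbit of $w$ between $p_1$ and $p_2$ is constant, then $w\equiv w(p_1)$ there and, since $w''+f(w)=0$, we get $f(w(p_1))=0$, so $w(p_1)\in\{0,a,1\}$; by unique continuation for the second-order linear ODE the constant extends to all of $E$, giving $\widehat v_i\equiv 0,a,$ or $1$ on $E$. The value $0$ is excluded because $0<\widehat v_i<1$ is false only in the limit, but in fact $\widehat v_i>0$ on $\Omega$ (it is the increasing limit of the strictly positive $\widehat u_i$, hence $\widehat v_i\ge \widehat u_i(0,\cdot)>0$); so the only constant branches allowed are $a$ and $1$, which are exactly the exceptions listed in the statement. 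This handles all cases.

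The main obstacle is the phase-plane bookkeeping in the first paragraph: one must rule out, cleanly and without hand-waving, that two zeros of $w'$ on $E$ occur without $w$ traversing a half-period of the periodic family about $a$ — in particular one must make sure $w$ does not simply run along a heteroclinic/homoclinic piece and come back, which the energy level-set structure forbids, but this needs to be stated carefully using \eqref{F} and the location of $\beta$. The eigenvalue comparison in the second paragraph is then routine: it is the standard fact that a positive solution of $-\varphi''-q\varphi=0$ vanishing at both endpoints of an interval pins the principal Dirichlet eigenvalue at $0$, together with strict domain monotonicity, both of which hold verbatim on a metric graph since the relevant computations are one-dimensional on the edge where $\psi$ lives and the Dirichlet condition on $\partial\Omega^R$ only makes the Rayleigh quotient smaller.
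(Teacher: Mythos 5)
Your proposal is correct and follows essentially the same route as the paper: both arguments observe that $\psi:=\partial_x\widehat{v}_i$ solves the linearized equation $\psi''+f'(\widehat{v}_i)\psi=0$, vanishes at two stationary points while keeping one sign in between, hence pins the principal Dirichlet eigenvalue of $-\partial_x^2-f'(\widehat{v}_i)$ on that subinterval at $0$, and then strict domain monotonicity yields $\lambda^R<0$, contradicting Theorem~\ref{thm:stability}. The only (cosmetic) difference is in how the non-degenerate configuration is isolated: the paper first proves that the zero set of $\partial_x\widehat{v}_i$ is discrete (via an accumulation-point argument) and then picks two \emph{consecutive} zeros, whereas you invoke the phase portrait — your phrase ``strictly monotone on $(p_1,p_2)$'' should really be applied to a pair of consecutive stationary points, but this is exactly the adjustment the paper makes and does not affect the argument.
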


A result similar to the above corollary is found in \cite[Theorem 3.2]{Mag1978}, in which the author showed that a stable solution of the equation $\partial_x^2 u+f(u)=0$ on a finite interval in $\R$ under arbitrary Dirichlet boundary conditions do not have more than one stationary points.

Finally, we present our results on the transiend properties. 

\begin{thm}[Transient properties]\label{thm:transient}
Let $i,j,k\in\{1,\ldots,N\}$. If $\PR(i,j)=1$ and $\PR(j,k)=1$, then $\PR(i,k)=1$.
\end{thm}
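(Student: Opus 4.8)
The plan is to derive the statement directly from the minimality theorem (Theorem~\ref{thm:minimal}), with the dichotomy theorem (Theorem~\ref{thm:dichotomy}) available as a safety net. The key observation is that the limit profile $\widehat{v}_i$ is itself a solution of the stationary problem \eqref{stationary} taking values in $[0,1]$, hence in particular a supersolution; moreover, since $\PR(i,j)=1$ we have $\lim_{x_j\to\infty}\widehat{v}_i{}_{\restr\Omega_j}(x_j)=1$. Thus $\widehat{v}_i$ meets all the hypotheses imposed on the comparison function $v$ in Theorem~\ref{thm:minimal}, provided we read that theorem with the index $i$ there replaced by $j$.

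Carrying this out, Theorem~\ref{thm:minimal} gives $\widehat{v}_i\ge\widehat{v}_j$ everywhere on $\Omega$. Restricting to the outer path $\Omega_k$ yields $\widehat{v}_i{}_{\restr\Omega_k}(x_k)\ge\widehat{v}_j{}_{\restr\Omega_k}(x_k)$ for every $x_k\in[0,\infty)$. Since $\PR(j,k)=1$ means the right-hand side tends to $1$ as $x_k\to\infty$, we obtain $\liminf_{x_k\to\infty}\widehat{v}_i{}_{\restr\Omega_k}(x_k)\ge 1$; combined with $\widehat{v}_i\le 1$ this forces $\lim_{x_k\to\infty}\widehat{v}_i{}_{\restr\Omega_k}(x_k)=1$, which is exactly $\PR(i,k)=1$. (Alternatively, once the limit inferior is known to be at least $1$, the dichotomy of Theorem~\ref{thm:dichotomy} --- which asserts that the limit is $0$ or $1$ --- pins it down to $1$ at once.)

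I do not expect a genuine obstacle here, since all of the analytic work is packaged inside Theorem~\ref{thm:minimal}. The only step deserving a moment of care is the verification that $\widehat{v}_i$ indeed qualifies as the supersolution $v$ of Theorem~\ref{thm:minimal} relative to the index $j$; this is immediate from $\PR(i,j)=1$ and from the fact, recorded in \eqref{eq:v-hat}, that $\widehat{v}_i$ solves the stationary equation with $0\le\widehat{v}_i\le 1$. In short, transitivity of the propagation relation is just transitivity of the pointwise ordering of limit profiles, which the minimality property supplies.
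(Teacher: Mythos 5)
Your argument is correct and is essentially identical to the paper's own proof: both apply Theorem~\ref{thm:minimal} (with the index $j$) to the stationary solution $\widehat{v}_i$, which satisfies $\lim_{x_j\to\infty}\widehat{v}_i{}_{\restr\Omega_j}(x_j)=1$ by the hypothesis $\PR(i,j)=1$, to conclude $\widehat{v}_j\leq\widehat{v}_i$, and then pass the limit along $\Omega_k$ using $\PR(j,k)=1$. No gaps; your extra remark invoking the dichotomy theorem to pin down the limit is harmless but not needed.
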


As we shall see, the above theorem followis easily from Theorem~\ref{thm:minimal}.

\subsection{Perturbation and deformation of graphs}\label{ss:perturbation}

In this section we consider perturbation of the graph $\Omega$ and the nonlinearity $f$ and discuss the influence of the perturbation on the propagation and blocking properties. 
For the clarity of the arguments, we discuss the perturbation of $\Omega$ and that of $f$ separately, but the combination of the two types perturbations can be handled just similarly. 

We start with the perturbation of $\Omega$.  
Let $\Omega$ be as in \eqref{Omega} and $\Omega^{(m)}\,(m=1,2,3,\ldots)$ be the same type of metric graphs as $\Omega$ with $N$ outer paths $\Omega_{m,i}\,(i=1,\ldots,N)$ and a center graph $D_m$:
\[
\Omega^{(m)}=D_m\cup \Omega_{m,1} \cup \Omega_{m,2} \cup \cdots \cup\Omega_{m,N}, 
\quad D_m\cap \Omega_{m,i}=\{{\rm P}_{m,i}\}\ (i=1,\ldots,N).
\]
Here the center graph $D_m$ is a bounded finite metric graph that converges to $D$ as $m\to\infty$ in the sense specified in Assumption~\ref{ass:perturbation-D} below.

\begin{assumption}[Perturbation of $D$]\label{ass:perturbation-D} $D_m$ is a bounded finite metric graph that is obtained by perturbing $D$ by one of the (or combination of the) following operations:
\begin{itemize}\setlength{\itemsep}{0pt}
\item[(a)] Modify the length of the edges of $D$;
\item[(b)] Replace a point on an edge of $D$ by a small finite metric graph $\Sigma_m$ (Figure~\ref{fig:perturbation-edge} ({\it above}));
\item[(c)] Replace a vertex of $D$ by a small finite metric graph $\Sigma_m$ (Figure~\ref{fig:perturbation-edge} ({\it below})).
\end{itemize}
Furthermore, as $m\to\infty$, we assume that the total length of the edges of $\Sigma_m$ in operations (b), (c) tends to $0$, and that the change of the length of edges in operation (a) tends to $0$. 
\end{assumption}

\begin{figure}[h]
\begin{center}
\includegraphics[scale=0.43]
{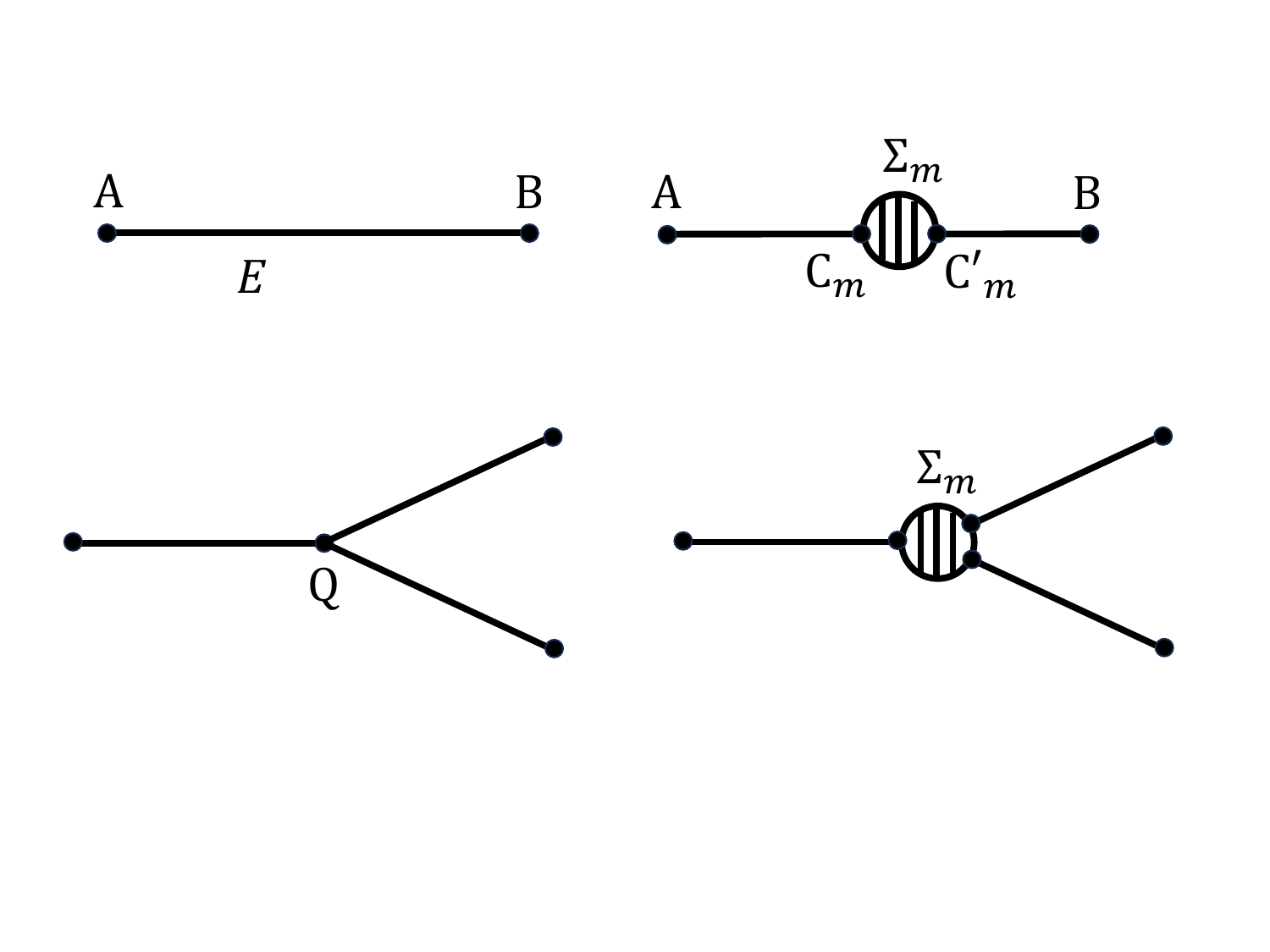}

\vspace{8pt}
\includegraphics[scale=0.43]
{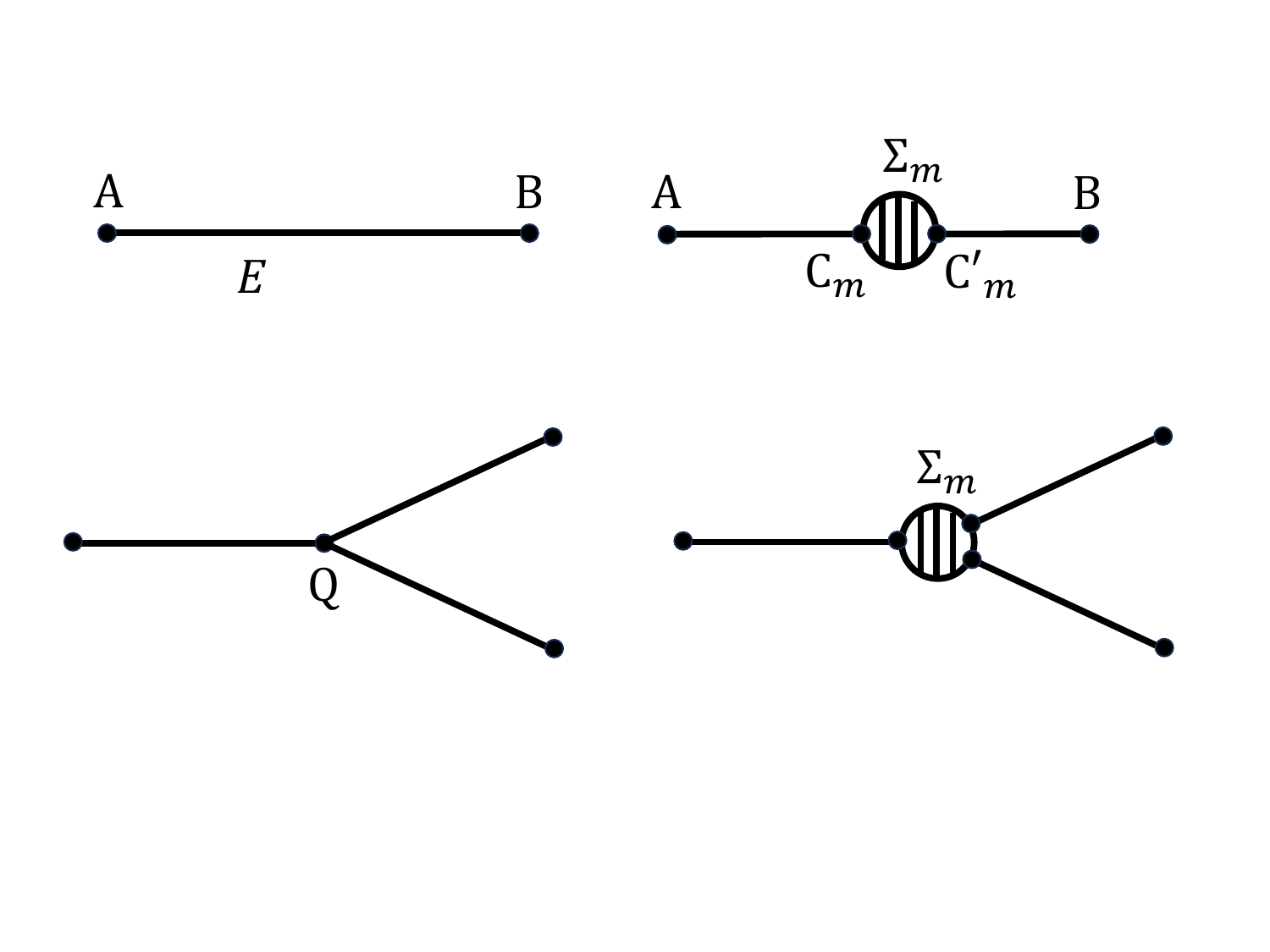}
\end{center}
\vspace{-5pt}
\caption{Replacing a point on an edge or a vertex by an arbitrary small graph $\Sigma_m$.}
\label{fig:perturbation-edge}
\end{figure}

An example of the operation (b) is shown in Figure~\ref{fig:perturbation-edge} ({\it above}). Here a point on the edge $E={\rm AB}$ is replaced by a small graph $\Sigma_m$. As $m\to\infty$, we assume that the total length of the edges of $\Sigma_m$, denoted by $|\Sigma_m|$, tends to $0$ and that the sum of the length of the edges ${\rm AC}_m$ and ${\rm BC}'_m$ converges to the length of ${\rm AB}$. 
Similarly, in the operation (c), a vertex ${\rm Q}$ is replaced by a small graph $\Sigma_m$, and we assume that $|\Sigma_m|\to 0$ as $m\to\infty$ and that the length of the edges emanating from $\Sigma_m$ converges to that of the correspondingl edges emanating from ${\rm Q}$.

Now we consider the same equation as \eqref{RD-Omega} on $\Omega^{(m)}$:
\begin{equation}\label{RD-m}
\partial_t u = \Delta_{\Omega^{(m)}} u + f(u).
\end{equation} 
For each pair $i,j\in\{1,\ldots,N\}$ with $i\ne j$, we define the propagation index $\PR_m(i,j)$ for equation \eqref{RD-m} in the same manner as \eqref{Pij}. More precisely, we define
\begin{equation}\label{Pij-m}
\PR_m(i,j)=
\begin{cases}
1 & \hbox{if propagation occurs from $\Omega_{m,i}$ to $\Omega_{m,j}$;}\\
0 & \hbox{if blocking occurs from $\Omega_{m,i}$ to $\Omega_{m,j}$.}
\end{cases}
\end{equation}
The following theorem states that the limit of blocking graphs is again a blocking graph:

\begin{thm}[Limit of blocking graphs]\label{thm:limit-blocking}
Let $i,j\in\{1,\ldots,N\}$ with $i\ne j$ and let $\PR_m(i,j)\,(m=1,2,3,\ldots)$ be as defined above. If $\PR_m(i,j)=0$ for all $m=1,2,3,\ldots$, then $\PR(i,j)=0$.
\end{thm}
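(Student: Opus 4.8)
The plan is to exploit the minimality theorem (Theorem~\ref{thm:minimal}) together with a compactness argument on the limit profiles. Suppose $\PR(i,j)=1$ for the limiting graph $\Omega$, aiming for a contradiction with $\PR_m(i,j)=0$ for all $m$. The key object is $\widehat{v}_{m,i}$, the limit profile for $\Omega^{(m)}$ associated with $\Omega_{m,i}$, and $\widehat{v}_i$, the limit profile for $\Omega$. First I would set up a common ambient structure: since each perturbation in Assumption~\ref{ass:perturbation-D} is localized and the total length of the inserted graphs $\Sigma_m$ (or the length change of edges) tends to $0$, one can realize $\Omega^{(m)}$ and $\Omega$ as metric graphs that coincide outside a shrinking neighborhood of finitely many points/vertices of $D$, and in particular the outer paths $\Omega_{m,i}$ are canonically identified with $\Omega_i$ up to a vanishing translation of the coordinate $x_i$. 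With this identification, $\Omega^{(m)}\to\Omega$ in a natural Gromov--Hausdorff-type sense.

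Next I would extract a limit of the $\widehat{v}_{m,i}$. Since $0\le\widehat{v}_{m,i}\le 1$ and each $\widehat{v}_{m,i}$ solves the elliptic equation $\partial_x^2\widehat{v}_{m,i}+f(\widehat{v}_{m,i})=0$ on edges with the Kirchhoff condition at vertices, interior elliptic estimates (plus the structure of the Kirchhoff junction, as in the preliminaries section) give uniform $C^1$, indeed $C^{2,\alpha}$, bounds on every edge away from the shrinking perturbed regions, and hence a subsequence converging locally uniformly on $\Omega$ to some $v_\infty$ that is a bounded stationary solution of $\Delta_\Omega v_\infty+f(v_\infty)=0$ on $\Omega$, with $0\le v_\infty\le 1$. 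The one subtle point is passing to the limit through the shrinking graphs $\Sigma_m$: on each such $\Sigma_m$ the solution is nearly constant (by the Harnack inequality or a simple maximum-principle/energy estimate, since $\mathrm{diam}(\Sigma_m)\to 0$ and $f$ is bounded), so in the limit $\Sigma_m$ collapses to a single point/vertex at which $v_\infty$ is continuous and satisfies the Kirchhoff condition obtained by summing the flux conditions — this is where the thickness weights $\rho_i$ must be tracked carefully, and I expect this to be the main technical obstacle.

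Then I would pin down the boundary behavior of $v_\infty$ on the outer paths. On $\Omega_{m,i}$, the limit profile satisfies $\widehat{v}_{m,i}\restr\Omega_{m,i}\to 1$ as $x_i\to\infty$; I want to conclude $v_\infty\restr\Omega_i(x_i)\to 1$ as $x_i\to\infty$ as well. This is not automatic from pointwise convergence, so I would instead use a lower barrier argument uniform in $m$: by Theorem~\ref{thm:u-hat} and the construction of $\widehat{u}_{m,i}$ via sub/supersolutions, the $\widehat{v}_{m,i}$ are bounded below along $\Omega_{m,i}$ by a single $m$-independent function tending to $1$ at infinity (e.g. a shifted traveling-wave-based subsolution whose construction only uses the half-line $\Omega_i$ and the uniform nonlinearity $f$), which survives in the limit. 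Hence $\lim_{x_i\to\infty}v_\infty\restr\Omega_i(x_i)=1$, so $v_\infty$ is a stationary solution satisfying hypothesis \eqref{Omega-i-1}, and Theorem~\ref{thm:minimal} yields $v_\infty\ge\widehat{v}_i$.

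Finally I would close the argument using the dichotomy. Since $\PR_m(i,j)=0$ means $\lim_{x_j\to\infty}\widehat{v}_{m,i}\restr\Omega_{m,j}(x_j)=0$, and by Theorem~\ref{thm:dichotomy} this is equivalent to $\widehat{v}_{m,i}(\mathrm{P}_{m,j})\le\beta$; passing to the limit (using the identification of $\mathrm{P}_{m,j}$ with $\mathrm{P}_j$ and local uniform convergence) gives $v_\infty(\mathrm{P}_j)\le\beta$. But $v_\infty\ge\widehat{v}_i$ gives $\widehat{v}_i(\mathrm{P}_j)\le v_\infty(\mathrm{P}_j)\le\beta$, and then Theorem~\ref{thm:dichotomy} (applied to $\Omega$) forces $\lim_{x_j\to\infty}\widehat{v}_i\restr\Omega_j(x_j)=0$, i.e. $\PR(i,j)=0$, as claimed. (Alternatively, one can avoid the minimality theorem and argue directly that $v_\infty=\widehat{v}_i$ by also comparing from above, but minimality makes the last step cleanest.) The only place where genuine care is needed is the collapse of $\Sigma_m$ and the verification that the limiting junction condition is exactly the weighted Kirchhoff condition \eqref{Kirchhoff2} for $\Omega$; everything else is a routine compactness-and-comparison argument.
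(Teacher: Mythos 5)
Your proposal is correct and follows essentially the same route as the paper: extract a limiting stationary solution $v_\infty$ of the profiles $\widehat{v}_{m,i}$ by compactness, check that $v_\infty\to 1$ along $\Omega_i$ and $v_\infty({\rm P}_j)\le\beta$, then invoke the minimality of $\widehat{v}_i$ (Theorem~\ref{thm:minimal}) and the dichotomy at ${\rm P}_j$ (Theorem~\ref{thm:dichotomy}) to conclude $\PR(i,j)=0$. The one obstacle you flag but leave open --- the collapse of $\Sigma_m$ and the recovery of the weighted Kirchhoff condition at the limiting vertex --- is handled in the paper exactly as you anticipate, namely via the $m$-uniform gradient bound of Proposition~\ref{prop:gradient-max} (which makes $v_m$ nearly constant on $\Sigma_m$) together with the Gauss--Green formula \eqref{Green1}; and your uniform lower barrier $H(x_i)$ for the behavior along $\Omega_i$ is an equally valid substitute for the paper's observation that each $\widehat{v}_{m,i}{}_{\restr\Omega_{m,i}}$ lies on the stable manifold of $(1,0)$.
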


Next we discuss perturbation of the nonlinearity $f$.  Let $f_m\,(m=1,2,3,\ldots)$ be a sequence of $C^1$ functions converging to $f$ in $C^1$ as $m\to\infty$ and satisfying the following for some $0<a_m<1$:
\begin{equation}\label{fm}
\begin{split}
& f_m(0)=f_m(a_m)=f_m(1)=0, \ \ \ f_m(s)<0 \ (0<s<a_m), \ \ f_m(s)>0\ (a_m<s<1),\\
& \hspace{40pt}f'_m(0)<0,\ f_m'(a_m)>0, \ f_m'(1)<0, \quad \int_0^1 f_m(s)ds>0.
\end{split}
\end{equation}
We then consider the following equation on $\Omega$ and define $\PR_m(i,j)$ as in \eqref{Pij} for \eqref{RD-m2}:
\begin{equation}\label{RD-m2}
\partial_t u = \Delta_{\Omega} u + f_m(u).
\end{equation} 
The following theorem is an analogue of Theorem~\ref{thm:limit-blocking} for the pertubation of the nonlinearity:

\begin{thm}[Limit of blocking nonlinearities]\label{thm:limit-blocking2}
Let $i,j\in\{1,\ldots,N\}$ with $i\ne j$ and let $\PR_m(i,j)\,(m=1,2,3,\ldots)$ be as above. If $\PR_m(i,j)=0$ for all $m=1,2,3,\ldots$, then $\PR(i,j)=0$.
\end{thm}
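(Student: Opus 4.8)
The plan is to adapt the proof of Theorem~\ref{thm:limit-blocking} to the setting where the graph $\Omega$ is fixed but the nonlinearities $f_m$ vary, the two arguments being essentially parallel once the right convergence machinery is in place. First I would recall that for each $m$ the hypothesis $\PR_m(i,j)=0$ means, by Definition~\ref{def:propagation} and the dichotomy Theorem~\ref{thm:dichotomy}, that the limit profile $\widehat v_i^{(m)}$ associated with the equation $\partial_t u=\Delta_\Omega u+f_m(u)$ satisfies $\widehat v_i^{(m)}({\rm P}_j)\le\beta_m$, where $\beta_m$ is the analogue of $\beta$ for $f_m$. Since $f_m\to f$ in $C^1$, one has $a_m\to a$ and $\beta_m\to\beta$ (the latter because $\beta_m$ is characterized by $F_m(\beta_m)=0$ with $F_m\to F$ in $C^2$ and $F'(\beta)=f(\beta)>0$, so the implicit function theorem applies). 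Hence it suffices to show $\limsup_{m\to\infty}\widehat v_i^{(m)}({\rm P}_j)\le\beta$ would give $\widehat v_i({\rm P}_j)\le\beta$, i.e. $\PR(i,j)=0$, provided we know $\widehat v_i^{(m)}\to\widehat v_i$ pointwise (or locally uniformly) on $\Omega$.

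The core of the argument is therefore the convergence $\widehat v_i^{(m)}\to\widehat v_i$. I would establish this in two halves using the minimality Theorem~\ref{thm:minimal}. For the upper bound: by the standard super/subsolution construction (the same one used to build $\widehat u_i$, sketched after Theorem~\ref{thm:u-hat}), for the limit nonlinearity $f$ one can produce, on any truncation $\Omega^R$, a family of supersolutions of the stationary problem for $f$ that dominate $\widehat v_i$; since $f_m\to f$ in $C^1$ these are, up to an arbitrarily small additive perturbation, supersolutions for $f_m$ as well, and by Theorem~\ref{thm:minimal} applied to the $m$-th equation they dominate $\widehat v_i^{(m)}$. Letting $m\to\infty$ and then shrinking the perturbation gives $\limsup_m\widehat v_i^{(m)}\le\widehat v_i$. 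For the lower bound I would instead pass to a limit: the $\widehat v_i^{(m)}$ are uniformly bounded in $[0,1]$ and, being stationary solutions, satisfy uniform elliptic estimates on every compact part of $\Omega$ (the edge equations are $\partial_x^2 v+f_m(v)=0$ with $f_m$ bounded in $C^1$, and the Kirchhoff condition is linear and $m$-independent since $\Omega$ is fixed), so along a subsequence $\widehat v_i^{(m)}\to v_*$ in $C^1_{loc}$, where $v_*$ is a stationary solution of the problem for $f$ with $0\le v_*\le 1$. It remains to check that $v_*$ still satisfies $\lim_{x_i\to\infty}v_*{}_{\restr\Omega_i}(x_i)=1$; this follows because each $\widehat v_i^{(m)}$ dominates a fixed lower barrier on $\Omega_i$ — e.g. a traveling-wave-type subsolution for $f_m$ that, thanks to $f_m\to f$ in $C^1$, can be taken uniformly in $m$ to approach $1$ at $x_i=\infty$. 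Then Theorem~\ref{thm:minimal} for the limit equation gives $v_*\ge\widehat v_i$, and combined with the upper bound $v_*=\widehat v_i$; since the limit is independent of the subsequence, the full sequence converges.

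With $\widehat v_i^{(m)}\to\widehat v_i$ locally uniformly on $\Omega$, in particular $\widehat v_i^{(m)}({\rm P}_j)\to\widehat v_i({\rm P}_j)$, and passing to the limit in $\widehat v_i^{(m)}({\rm P}_j)\le\beta_m$ with $\beta_m\to\beta$ yields $\widehat v_i({\rm P}_j)\le\beta$, hence $\PR(i,j)=0$ by Theorem~\ref{thm:dichotomy}. This completes the proof.

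The main obstacle I anticipate is the lower-barrier step: one must ensure that, uniformly in $m$, the restriction of $\widehat v_i^{(m)}$ to $\Omega_i$ genuinely tends to $1$ as $x_i\to\infty$, so that the limit $v_*$ inherits the boundary condition \eqref{Omega-i-1} and Theorem~\ref{thm:minimal} is applicable. Naively, a diagonal/subsequential limit of functions each converging to $1$ at infinity need not converge to $1$ at infinity, so one really needs an $m$-uniform subsolution on $\Omega_i$ that is independent of $m$ and approaches $1$; constructing it requires exploiting the $C^1$-closeness of $f_m$ to $f$ near $u=1$ (where $f'(1)<0$, so the linearization is uniformly stable for $m$ large) to get a uniform exponential approach rate. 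Once that uniformity is secured, the rest is a routine compactness-plus-minimality argument, mirroring Theorem~\ref{thm:limit-blocking}.
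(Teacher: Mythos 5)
Your core argument is the one the paper intends (the paper's proof simply says this theorem follows by the case-(a) argument of Theorem~\ref{thm:limit-blocking}, with the graph fixed and the nonlinearity varying): uniform bounds give a subsequential $C^1_{loc}$ limit $v_*$ of the $\widehat v_i^{(m)}$, which is a stationary solution of the limit problem with $v_*({\rm P}_j)\le\beta$, still tends to $1$ along $\Omega_i$, hence dominates $\widehat v_i$ by Theorem~\ref{thm:minimal}, forcing $\widehat v_i({\rm P}_j)\le\beta$ and thus $\PR(i,j)=0$ by Theorem~\ref{thm:dichotomy}. Two adjustments are needed. First, your entire ``upper bound'' half ($\limsup_m\widehat v_i^{(m)}\le\widehat v_i$) is superfluous: to pass to the limit in $\widehat v_i^{(m)}({\rm P}_j)\le\beta_m$ you only need the one-sided inequality $\widehat v_i({\rm P}_j)\le v_*({\rm P}_j)=\lim_m\widehat v_i^{(m)}({\rm P}_j)$, which is exactly what minimality provides. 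This is fortunate, because as written that half is not justified: the super/subsolution family ``used to build $\widehat u_i$'' after Theorem~\ref{thm:u-hat} consists of barriers for the parabolic problem converging to the traveling wave as $t\to-\infty$, not a family of stationary supersolutions decreasing to $\widehat v_i$, and $\widehat v_i+\ep$ is not a supersolution in general; this half should simply be deleted. Second, the ``main obstacle'' you flag (that $v_*$ inherits $\lim_{x_i\to\infty}v_*{}_{\restr\Omega_i}=1$) does not require constructing an $m$-uniform subsolution: each $\widehat v_i^{(m)}{}_{\restr\Omega_i}$ lies on the stable manifold of $(1,0)$ for $f_m$, i.e.\ satisfies the first integral $\tfrac12(\partial_x v)^2+F_m(v)=F_m(1)$, and this identity passes to the $C^1_{loc}$ limit; the only orbits on the level set $\tfrac12 v_x^2+F(v)=F(1)$ that remain in $[0,1]$ for all $x_i\ge 0$ tend to $1$ (or are identically $1$). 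This is precisely the phase-portrait argument used in case (a) of the proof of Theorem~\ref{thm:limit-blocking}. With these two modifications your proof coincides with the intended one.
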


In the proof of the above theorems, Theorem~\ref{thm:minimal} on the minimality of the limit profile will play a key role. The following corollary, which is a contraposition of Theorems~\ref{thm:limit-blocking} and \ref{thm:limit-blocking2}, asserts that propagation is robust under small perturbations.

\begin{cor}[Robustness of propagation]\label{cor:propagation-open}
Let $i,j\in\{1,\ldots,N\}$ with $i\ne j$ and and let $\PR_m(i,j)$ denote the propagation index for \eqref{RD-m} or \eqref{RD-m2}. 
Assume that $\PR(i,j)=1$ for the problem \eqref{RD-Omega} on $\Omega$. Then $\PR_m(i,j)=1$ for all sufficiently large $m$.
\end{cor}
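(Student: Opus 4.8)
The plan is to obtain the corollary as a direct contrapositive of Theorems~\ref{thm:limit-blocking} and \ref{thm:limit-blocking2}, via a subsequence argument. Suppose first that the conclusion fails for the perturbed graphs $\Omega^{(m)}$ in \eqref{RD-m}, so that $\PR_m(i,j)\ne 1$ for infinitely many $m$. By the dichotomy theorem (Theorem~\ref{thm:dichotomy}) applied on each $\Omega^{(m)}$, the limit $\lim_{x_j\to\infty}\widehat{v}_{m,i}{}_{\restr\Omega_{m,j}}(x_j)$ exists and equals either $0$ or $1$ (here $\widehat{v}_{m,i}$ is the limit profile associated with $\Omega_{m,i}$, well defined and unique by Theorem~\ref{thm:u-hat}), so $\PR_m(i,j)$ is well defined and $\PR_m(i,j)\ne 1$ forces $\PR_m(i,j)=0$. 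Hence there is a subsequence $m_1<m_2<\cdots$ with $\PR_{m_k}(i,j)=0$ for all $k$.

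Next I would observe that the standing convergence hypotheses are inherited by subsequences. Since $\{\Omega^{(m)}\}$ converges to $\Omega$ in the sense of Assumption~\ref{ass:perturbation-D}, the quantitative decay conditions — the total length of the perturbing graphs $\Sigma_m$ tending to $0$ and the change of edge lengths tending to $0$ — hold along any subsequence as well, so $\{\Omega^{(m_k)}\}$ converges to $\Omega$ in the same sense. Relabelling $\Omega^{(m_k)}$ as $\Omega^{(m)}$, we are exactly in the hypothesis of Theorem~\ref{thm:limit-blocking}, namely $\PR_m(i,j)=0$ for all $m$; that theorem yields $\PR(i,j)=0$, contradicting the assumption $\PR(i,j)=1$. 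This proves the claim in the case of perturbed graphs.

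The case of perturbed nonlinearities \eqref{RD-m2} is handled identically: a subsequence of a sequence $\{f_m\}$ converging to $f$ in $C^1$ still converges to $f$ in $C^1$ and still satisfies \eqref{fm} (with the corresponding $a_{m_k}$), so one applies Theorem~\ref{thm:limit-blocking2} in place of Theorem~\ref{thm:limit-blocking}. The combined perturbation of $\Omega$ and $f$ simultaneously is reduced to these two steps applied in succession. I do not expect any genuine obstacle here: the entire analytic content is already contained in Theorems~\ref{thm:limit-blocking} and \ref{thm:limit-blocking2}, and all that remains is the elementary logical passage from ``for all $m$'' to ``along a subsequence'' together with the trivial stability of the convergence hypotheses under passing to subsequences; the only point requiring a line of care is that, by Theorem~\ref{thm:dichotomy}, ``$\PR_m(i,j)\ne 1$'' is equivalent to ``$\PR_m(i,j)=0$'', which is what makes the contrapositive go through.
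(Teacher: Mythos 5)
Your proposal is correct and is exactly the argument the paper intends: the authors state that the corollary ``is just a contraposition'' of Theorems~\ref{thm:limit-blocking} and \ref{thm:limit-blocking2} and omit the details, and your subsequence extraction together with the observation that Theorem~\ref{thm:dichotomy} makes $\PR_m(i,j)\ne 1$ equivalent to $\PR_m(i,j)=0$ supplies precisely those details.
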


\begin{rem}\label{rem:perturbation}
Theorems~\ref{thm:limit-blocking} and \ref{thm:limit-blocking2} imply that the family of blocking graphs (or blocking nonlinearities) forms a closed set, while Corollary~\ref{cor:propagation-open} implies that the family of non-blocking graphs (or non-blocking nonlinearities) forms an open set. \qed
\end{rem}

Next we consider deformation of $\Omega$ by unifying several outer paths into a single path beyond certain points. More precisely, as shown in Figure~\ref{fig:path-binding}, we choose points ${\rm Q}_{j_1},\ldots,{\rm Q}_{j_m}$ on the outer paths ${\Omega}_{j_1},\ldots,{\Omega}_{j_m}$, where $i, j_1,\ldots, j_m$ are mutually distinct indices, and unify these paths beyond the points ${\rm Q}_{j_1},\ldots,{\rm Q}_{j_m}$, to form a single outer path $\widetilde{\Omega}_{j_0}$. 
Here, for simplicity, we assume that all the outer paths, including $\widetilde{\Omega}_{j_0}$, have the same thickness.  

\begin{figure}[h]
\begin{center}
\includegraphics[scale=0.35]
{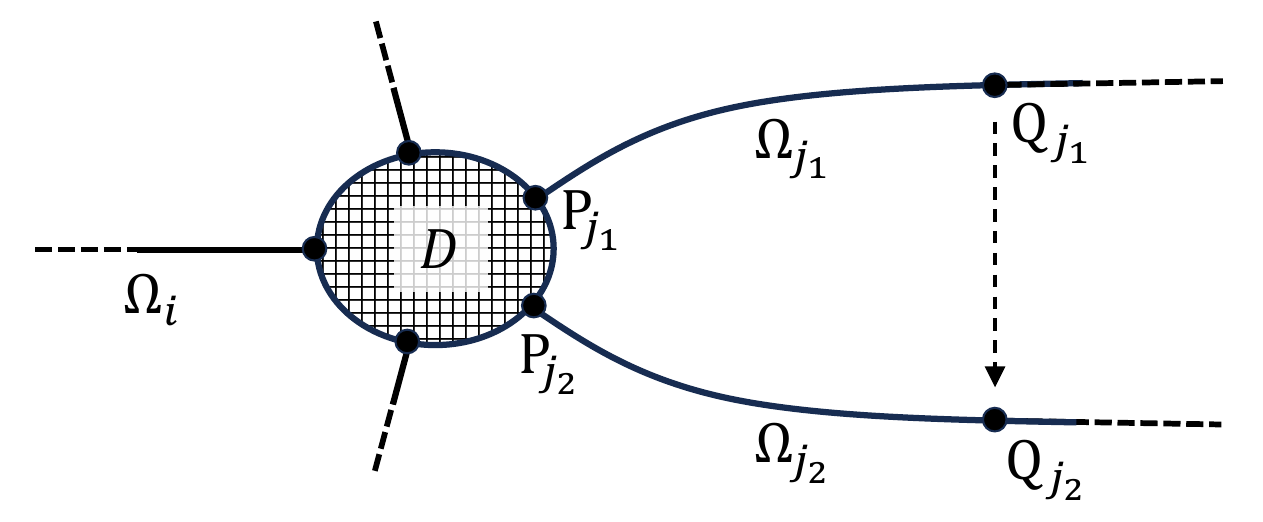}
\hspace{5pt}
\includegraphics[scale=0.35]
{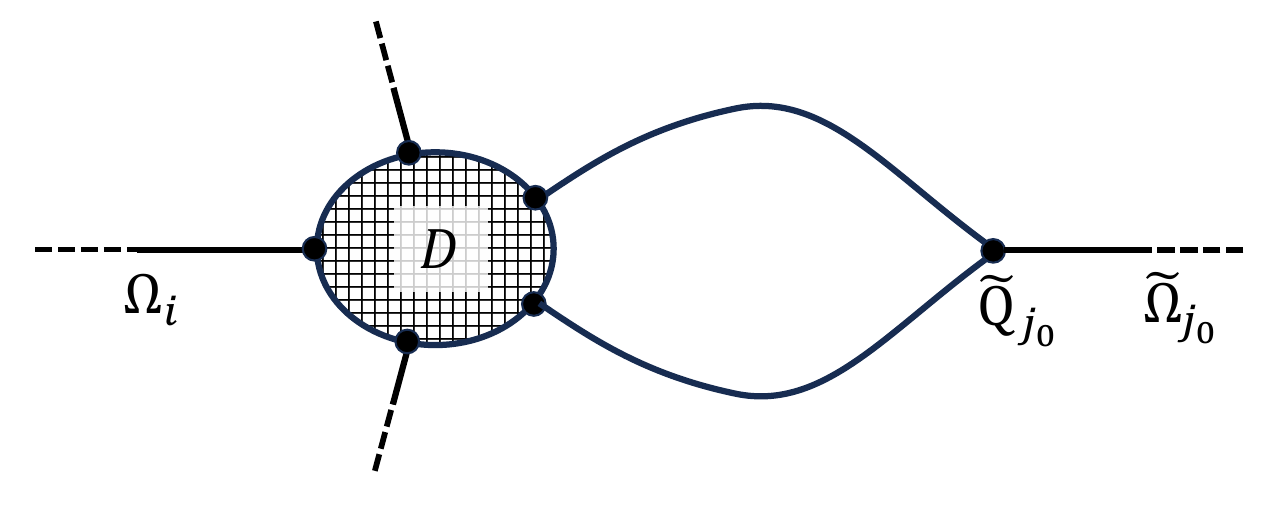}
\end{center}
\vspace{-5pt}
\caption{Unification of outer paths ${\Omega}_{j_1},\ldots,{\Omega}_{j_m}$ beyond the points ${\rm Q}_{j_1},\ldots,{\rm Q}_{j_m}$.}
\label{fig:path-binding}
\end{figure}

\begin{thm}[Unification of outer paths]\label{thm:unification}
Let $i, j_1,\ldots, j_m\in\{1,\ldots,N\}$ be mutually distinct indices and let ${\rm Q}_{j_1},\ldots,{\rm Q}_{j_m}$ be points on the outer paths ${\Omega}_{j_1},\ldots,{\Omega}_{j_m}$. Let $\widetilde{\Omega}_{j_0}$ denote a new outer path obtained by unifying ${\Omega}_{j_1},\ldots,{\Omega}_{j_m}$ beyond the points ${\rm Q}_{j_1},\ldots,{\rm Q}_{j_m}$ as shown in Figure~\ref{fig:path-binding}. Assume that $\PR(i,j_1)=\cdots=\PR(i,j_m)=1$. 
If the length of the portion of $\Omega_{j_k}$ between the points ${\rm P}_{j_k}$ and ${\rm Q}_{j_k}$ is sufficiently large for all $k=1,\ldots,m$, then propagation occurs from $\Omega_i$ to $\tilde{\Omega}_{j_0}$.
\end{thm}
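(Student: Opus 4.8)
The plan is to reduce the unification theorem to the minimality theorem (Theorem~\ref{thm:minimal}) together with the dichotomy theorem (Theorem~\ref{thm:dichotomy}) by constructing a suitable supersolution on the deformed graph $\widetilde\Omega$. Denote by $\widetilde{\Omega}$ the graph obtained after unification, so that $\widetilde{\Omega}$ has outer paths $\Omega_i$, the unified path $\widetilde\Omega_{j_0}$, and the remaining $\Omega_k$ with $k\notin\{i,j_1,\dots,j_m\}$. Write $\widetilde v_i$ for the limit profile of $\widetilde\Omega$ associated with $\Omega_i$. Since $\PR(i,j_k)=1$ on the original graph $\Omega$ for every $k$, the limit profile $\widehat v_i$ on $\Omega$ satisfies $\widehat v_i\restr{\Omega_{j_k}}(x_{j_k})\to 1$ as $x_{j_k}\to\infty$, and by Theorem~\ref{thm:dichotomy} it is monotone increasing along each $\Omega_{j_k}$; in particular $\widehat v_i(\mathrm Q_{j_k})>\beta$ provided $\mathrm Q_{j_k}$ is taken far enough out, and we may even make $\widehat v_i(\mathrm Q_{j_k})$ as close to $1$ as we wish by choosing the length $\ell_k$ of the segment $\mathrm P_{j_k}\mathrm Q_{j_k}$ large. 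This is where the hypothesis ``$\ell_k$ sufficiently large'' enters.

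The key step is to build a supersolution $w$ of the stationary problem $\Delta_{\widetilde\Omega}w+f(w)=0$ on $\widetilde\Omega$ with $0\le w\le 1$ and $w\restr{\Omega_i}(x_i)\to 1$, so that Theorem~\ref{thm:minimal} (applied on $\widetilde\Omega$) gives $w\ge\widetilde v_i$, and then to show conversely $\widetilde v_i$ propagates into $\widetilde\Omega_{j_0}$. Actually the cleaner route is the reverse: exhibit a \emph{subsolution} that forces $\widetilde v_i$ up, or simply compare $\widetilde v_i$ directly with $\widehat v_i$. Concretely, I would first observe that $\widehat v_i$, restricted to the ``common part'' $D\cup\Omega_i\cup\bigcup_k\Omega_{j_k}[\ell_k]\cup\bigcup_{k\notin\cdots}\Omega_k$, is a stationary solution on that piece, and that on $\widetilde\Omega$ one can glue, beyond the points $\mathrm Q_{j_k}$, a single copy of the traveling-wave-type connection or of a standing profile. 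The natural candidate for a supersolution on $\widetilde\Omega$ is: take $\widehat v_i$ on $D\cup\Omega_i\cup\bigcup_k\Omega_{j_k}[\ell_k]\cup(\text{other }\Omega_k)$ and, on $\widetilde\Omega_{j_0}$ beyond the junction, take the increasing solution of $\varphi''+f(\varphi)=0$ that equals the (common, after using that all the $\widehat v_i(\mathrm Q_{j_k})$ can be made equal to a value $>\beta$ by fine-tuning $\ell_k$) boundary value at the junction and tends to $1$; one checks the Kirchhoff (or super-Kirchhoff) condition at the new junction using that $m$ incoming fluxes each bounded below by the flux of an increasing-to-$1$ orbit add up to at least the single outgoing flux. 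This yields $w$ with $w\restr{\Omega_i}\to 1$, and Theorem~\ref{thm:minimal} gives $\widetilde v_i\le w$; but what we want is the lower bound, so instead I would use $w$ as an \emph{upper barrier for the flux mismatch} and argue that $\widetilde v_i(\mathrm{junction})>\beta$, whence Theorem~\ref{thm:dichotomy} on $\widetilde\Omega$ gives $\widetilde v_i\restr{\widetilde\Omega_{j_0}}\to 1$, i.e. propagation $\Omega_i\to\widetilde\Omega_{j_0}$.

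More precisely, the argument I expect to work is: let $\widetilde v_i$ be the limit profile on $\widetilde\Omega$. On the subgraph $\widetilde\Omega\setminus(\widetilde\Omega_{j_0}\setminus\{\text{junction}\})$, which is isomorphic to $D\cup\Omega_i\cup\bigcup_k\Omega_{j_k}[\ell_k]\cup(\text{others})$, the restriction of $\widetilde v_i$ is a stationary solution with $\widetilde v_i\restr{\Omega_i}\to1$. By the minimality theorem applied on the \emph{original} $\Omega$ (first extending $\widetilde v_i$ to all of $\Omega$ by solving the ODE on the tails $\Omega_{j_k}$ beyond $\mathrm Q_{j_k}$ with an appropriate boundary condition — here one must be careful, and I would instead compare on $\Omega^R$ and pass to the limit, using that $\widehat v_i$ is minimal), one gets that $\widetilde v_i\ge\widehat v_i$ on the common part, hence $\widetilde v_i(\mathrm Q_{j_k})\ge\widehat v_i(\mathrm Q_{j_k})>\beta$ for $\ell_k$ large. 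Now along $\widetilde\Omega_{j_0}$, the value of $\widetilde v_i$ at the unification junction is, by continuity, $\ge\widehat v_i(\mathrm Q_{j_k})>\beta$ for each $k$ — here one uses that the $m$ incoming segments meet at one point so the junction value is a single number $\ge\max_k\widehat v_i(\mathrm Q_{j_k})>\beta$. Then the dichotomy theorem on $\widetilde\Omega$, applied with the outer path $\widetilde\Omega_{j_0}$ and the criterion $\widetilde v_i(\mathrm{junction})>\beta$, yields $\widetilde v_i\restr{\widetilde\Omega_{j_0}}(x)\to1$, which is exactly propagation $\Omega_i\to\widetilde\Omega_{j_0}$.

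The main obstacle is the comparison step that transfers information from the original graph $\Omega$ to the deformed graph $\widetilde\Omega$: the minimality theorem is stated for solutions on a \emph{fixed} graph, and here the graph changes under unification, so one cannot apply it verbatim. I expect to handle this by working with the bounded truncations $\Omega^R$ and $\widetilde\Omega^R$, noting that for $R$ fixed and $\ell_k$ large these truncated graphs share a large common subgraph containing $D$, $\Omega_i[R]$, and long initial segments of the $\Omega_{j_k}$; one then uses the monotone increasing approximation $\widehat u_i(t,\cdot)\nearrow\widehat v_i$ together with a comparison principle on $\Omega^R$ versus $\widetilde\Omega^R$ (the Kirchhoff condition at the unification junction is ``more restrictive'' in a way that, via a reflection/doubling trick exploiting the equal thickness of the unified paths, makes the deformed solution dominate). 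Making this comparison rigorous — i.e. showing that merging $m$ equal-thickness outgoing paths into one cannot \emph{decrease} the limit profile on the common part — is the technical heart of the proof; everything else (choosing $\ell_k$ large via monotonicity of $\widehat v_i$ along $\Omega_{j_k}$, and invoking the $>\beta$ criterion of Theorem~\ref{thm:dichotomy}) is routine.
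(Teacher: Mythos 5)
Your overall target is the right one (show that the limit profile $\widetilde{V}_i$ on the unified graph exceeds $\beta$ at the unification junction and invoke the dichotomy theorem there), but the step your whole argument hinges on --- the comparison $\widetilde{V}_i\geq \widehat{v}_i$ on the common subgraph, i.e.\ that merging the $m$ tails into one cannot decrease the limit profile --- is exactly the part you leave unproven, and it is not a routine technicality. The natural way to attempt it, pulling $\widetilde{V}_i$ back to $\Omega$ via the folding map $h$ and invoking Theorem~\ref{thm:minimal}, does not work in general: the pulled-back function satisfies the super-Kirchhoff condition at each ${\rm Q}_{j_k}$ only if $\partial_{x_k}\widetilde{V}_i(\widetilde{\rm Q}_{j_0})\geq \partial_{x_0}\widetilde{V}_i(\widetilde{\rm Q}_{j_0})$, and since Kirchhoff at the junction gives $\sum_k \partial_{x_k}\widetilde{V}_i=\partial_{x_0}\widetilde{V}_i$, this inequality is automatic only when \emph{all} the incoming derivatives are negative; if some are positive it can fail. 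Your proposed ``reflection/doubling trick'' is not carried out, and minimality only ever yields one-sided bounds of the form $\widehat{v}_i\leq(\text{something built on }\Omega)$, so it cannot by itself deliver the lower bound $\widetilde{V}_i\geq\widehat{v}_i$ you need.

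The paper sidesteps this entirely with a two-step contradiction argument that never compares the two limit profiles directly. Step one: if $\widetilde{V}_i({\rm P}_{j_k})\leq\beta$ along a sequence of configurations with tail lengths tending to infinity, the corresponding limit profiles converge (by compactness) to a stationary solution on the \emph{original} $\Omega$ tending to $1$ along $\Omega_i$; Theorem~\ref{thm:minimal} then forces $\widehat{v}_i({\rm P}_{j_k})\leq\beta$, contradicting $\PR(i,j_k)=1$. This is where the hypothesis that the segments ${\rm P}_{j_k}{\rm Q}_{j_k}$ are long is actually used --- note that your statement ``$\widehat{v}_i({\rm Q}_{j_k})>\beta$ provided ${\rm Q}_{j_k}$ is far enough out'' is true for \emph{any} position of ${\rm Q}_{j_k}$ (since $\widehat{v}_i({\rm P}_{j_k})>\beta$ already and $\widehat{v}_i$ is increasing along $\Omega_{j_k}$), so you have misplaced where the length condition enters. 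Step two: assuming $\widetilde{V}_i(\widetilde{\rm Q}_{j_0})\leq\beta$ while $\widetilde{V}_i({\rm P}_{j_k})>\beta$, the phase portrait forces all incoming derivatives at the junction to be negative, which is precisely the case in which the pullback $\widetilde{V}_i\circ h$ \emph{is} a supersolution on $\Omega$; it tends to $0$ along each $\Omega_{j_k}$, so minimality gives $\widehat{v}_i\to 0$ there, again contradicting $\PR(i,j_k)=1$. You would need to either reproduce this contradiction structure or supply a genuine proof of your monotonicity-under-unification claim; as written, the proposal has a gap at its central step.
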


\begin{rem}\label{rem:symmetric-binding}
If there is symmetry among ${\Omega}_{j_1},\ldots,{\Omega}_{j_m}$, more precisely, if there is a set of isometries from $\Omega$ into itself that leave $\Omega_i$ invariant and interchange ${\Omega}_{j_1},\ldots,{\Omega}_{j_m}$ with arbitrary permutations, and if the length of the paths ${\rm P}_{j_k}{\rm Q}_{j_k}\,(k=1,\ldots,m)$ are qual, then the conclusion of Theorem~\ref{thm:unification} holds even if the lengh of the paths ${\rm P}_{j_k}{\rm Q}_{j_k}\,(k=1,\ldots,m)$ is not very long; 
see Remark~\ref{rem:symmetric-binding2}. Without such symmetry, we do not know if we can drop the length condition.
\qed
\end{rem}

\subsection{Star graphs and their perturbations}\label{ss:star-graphs}

In this section we consider star graphs and their perturbations. The first half of this section, in which we derive sharp criteria for propagation and blocking on star graphs  (Theorem~\ref{thm:star} and its corollaries), is for a large part restatement of known results that are reformulated in our framework of propagation/blocking dichotomy.  In the second half of this section, we consider perturbations of star graphs (Figure \ref{fig:star-perturbation}). We also discuss the case where an arbitrary graph is attached to a star graph at a far distance (Figure~\ref{fig:local-star}). 

Propagation and blocking on star graphs were studied in detail by Jimbo and Morita \cite{JM2019, JM2021}. The paper \cite{JM2019} deals with star graphs with equal edge thickness, while \cite{JM2021} deals with star graphs whose edges have unequal diffusion coefficients, which is equivalent to considering unequal edge thickness (after a suitable change of variables as mentioned in Remark~\ref{rem:unequal-diffusion}). 

Among other things, \cite{JM2019, JM2021} proved existence of a front-like entire solution on star graphs, which is basically the same as $\widehat{u}_i(t,x)$ of our Theorem~\ref{thm:u-hat}, though \cite{JM2019, JM2021} did not show the uniqueness nor time-monotonicity of this entire solution, which is crucial in the present paper in defining the notion of limit profile $\widehat{v}_i$ in \eqref{v-hat}.  Then \cite{JM2019, JM2021} derived a sharp criterion on the existence and non-existence of a barrier stationary solution (\cite[Theorem 3.2]{JM2021}). 
However, \cite{JM2019,JM2021} did not discuss whether or not their barrier stationary solution really blocks the front-like entire solution. In that sense, there is ambiguity in their notion of propagation and blocking. 
Our Theorem~\ref{thm:star} below is a restatement of \cite[Theorem 3.2]{JM2021} in a much clearer framework based on our notion of propagation/blocking dichotomy given in Definition~\ref{def:propagation}. 
On the other hand, the perturbation results in the second half of this section are totally new.

Before stating our results, we recall that
\[
F(a)<0<F(1),
\]
as stated in \eqref{F}. 
The following theorem gives a sharp criterion for propagation and blocking:

\begin{thm}[Propagation/blocking criterion]\label{thm:star}
Let $\Omega$ be a star graph and $\rho_i\,(i=1,\ldots,N)$ be the thickness of the outer path $\Omega_i$. Define
\[
R_i:=\frac{\sum_{k\ne i}\rho_k}{\rho_i}\ \ \ (i=1,\ldots,N).
\]
Then the following criterion holds:
\begin{subnumcases}{\label{star-graph-unequal}}
\label{star-graph-unequal-a} 
\ F(1)+\left(R_i^2-1\right)F(a)>0 \ \ \Rightarrow \ \PR(i,j)=1 \ \ \hbox{for all}\ j\ne i,\\[4pt]
\label{star-graph-unequal-b} 
\ F(1)+\left(R_i^2-1\right)F(a)\leq 0 \ \ \Rightarrow \ \PR(i,j)=0 \ \ \hbox{for all}\ j\ne i.
\end{subnumcases}
Furthermore, in the former case, $\widetilde{v}_i\equiv 1$ on $\Omega$. 
\end{thm}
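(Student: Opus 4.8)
\textbf{Proof proposal for Theorem~\ref{thm:star}.}

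The plan is to analyze the ODE $\partial_x^2 v + f(v)=0$ on each of the $N$ outer half-lines of the star graph by phase-plane methods, patched together by the Kirchhoff condition \eqref{Kirchhoff2} at the single junction point, and then to combine this with the characterization of propagation/blocking in terms of the value $\widehat v_i(\mathrm P_i)$ relative to $\beta$ provided by Theorem~\ref{thm:dichotomy}, together with the minimality of $\widehat v_i$ from Theorem~\ref{thm:minimal}. First I would set up the phase portrait for $w''+f(w)=0$: the conserved energy is $\tfrac12 (w')^2 + F(w)$, and since $F(a)<0<F(1)$ with the shape \eqref{F}, the level sets govern all bounded solutions on a half-line. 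A solution on $\Omega_j$ ($j\ne i$) that tends to $0$ or stays bounded and has the correct behavior at infinity must lie on (or inside) the level set through $(0,0)$; a solution that blocks corresponds to $v$ decreasing from $v(\mathrm P_j)\le\beta$ down to $0$ at infinity, and by energy conservation this forces a definite relation between $v(\mathrm P_j)$ and $v'(\mathrm P_j)$, namely $\tfrac12 v'(\mathrm P_j)^2 = -F(v(\mathrm P_j))$ (with $v'<0$). On the "input" path $\Omega_i$, propagation means $v{\restr\Omega_i}$ increases to $1$, which along the homoclinic-type trajectory forces $\tfrac12 v'(\mathrm P_i)^2 = F(1)-F(v(\mathrm P_i))$ (with $v'>0$ pointing into $\Omega_i$, hence contributing with the appropriate sign at the junction).

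Next I would impose the Kirchhoff balance at the junction. Writing $\alpha := \widehat v_i(\mathrm P_i) = \widehat v_i(\mathrm P_j)$ (all exit points coincide for a star graph, so the value at the junction is a single number $\alpha$), and using that in the blocking regime each outer path $\Omega_k$ with $k\ne i$ carries the minimal decaying solution while $\Omega_i$ carries the connecting-to-$1$ solution, the Kirchhoff condition \eqref{Kirchhoff2} reads
\[
\rho_i\,\sqrt{2\bigl(F(1)-F(\alpha)\bigr)} \;=\; \Bigl(\sum_{k\ne i}\rho_k\Bigr)\sqrt{-2F(\alpha)},
\]
where the left side is the (inward) derivative along $\Omega_i$ and the right side is minus the sum of the inward derivatives along the blocked paths — here one must be careful that on a blocked path the solution decreases away from the junction, so $\partial v/\partial\nu_k(\mathrm P)<0$. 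Squaring and dividing by $\rho_i^2$ gives $F(1)-F(\alpha) = -R_i^2 F(\alpha)$, i.e. $F(\alpha) = \dfrac{F(1)}{R_i^2-1}$ (assuming $R_i>1$; the case $R_i\le 1$, i.e. $N=2$ with $\rho_i\ge\sum_{k\ne i}\rho_k$, is handled separately and always gives propagation). One then checks: blocking is \emph{possible} exactly when this value of $\alpha$ can be realized with $\alpha\le\beta$, equivalently $F(\alpha)\ge 0$ is excluded and the relevant inequality is $F(1) + (R_i^2-1)F(a)\le 0$ — because $F(\alpha)\ge F(a)$ always, so the required $F(\alpha)=F(1)/(R_i^2-1)$ is attainable below $\beta$ iff $F(1)/(R_i^2-1)\ge F(a)$, i.e. iff $F(1)\le -(R_i^2-1)F(a)$ (both sides, note $F(a)<0$), which is precisely \eqref{star-graph-unequal-b}. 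In the complementary case \eqref{star-graph-unequal-a}, no such blocking stationary solution exists, so by Theorem~\ref{thm:dichotomy} we must have $\widehat v_i(\mathrm P_j)>\beta$ for every $j$, hence $\PR(i,j)=1$ for all $j\ne i$.

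Finally, for the last sentence, in the propagation case I claim $\widehat v_i\equiv 1$. Once we know $\widehat v_i{\restr\Omega_j}\to 1$ for every $j$ and $\widehat v_i$ solves \eqref{eq:v-hat} with $0<\widehat v_i\le 1$, I would argue that $\widehat v_i$ cannot dip strictly below $1$ anywhere: by minimality (Theorem~\ref{thm:minimal}), $\widehat v_i$ is the smallest supersolution above the prescribed boundary behavior, but the constant $1$ is itself a (super)solution satisfying the boundary condition on $\Omega_i$, so $\widehat v_i\le 1$ — that only gives the wrong direction. Instead, in the star-graph case the energy/phase-plane analysis on each arm shows directly that a bounded stationary solution on $[0,\infty)$ with limit $1$ and the Kirchhoff constraint forces, when \eqref{star-graph-unequal-a} holds, that the junction value exceeds $\beta$ on \emph{every} arm with derivative signs all consistent with monotone increase to $1$; then $\tfrac12 (v')^2 = F(1)-F(v) \ge 0$ with equality impossible in the interior unless $v\equiv 1$, and a short argument using the uniqueness of the entire solution $\widehat u_i$ and its strict monotonicity in $t$ (Theorem~\ref{thm:u-hat}) — comparing with the spatially constant supersolution $1$ and a subsolution that is already $\equiv 1$ on a large compact set and rises — upgrades $\widehat v_i\ge 1$, hence $\widehat v_i\equiv 1$. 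I expect the main obstacle to be the careful bookkeeping of signs in the Kirchhoff identity across the regime boundary (making sure the "blocking" branch is the minimal solution of Theorem~\ref{thm:minimal} and not some other decreasing trajectory) and the borderline case $F(1)+(R_i^2-1)F(a)=0$, where the blocking stationary solution exists with junction value exactly $\beta$; Theorem~\ref{thm:dichotomy}'s convention ($\le\beta$ counts as blocking) must be invoked to place this case correctly in \eqref{star-graph-unequal-b}.
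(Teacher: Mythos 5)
Your proposal follows essentially the same route as the paper: phase-plane energy identities on the stable manifold of $(1,0)$ and on the homoclinic orbit, the Kirchhoff balance $\rho_i\sqrt{2(F(1)-F(\xi))}=\bigl(\sum_{k\ne i}\rho_k\bigr)\sqrt{-2F(\xi)}$ at the junction, the observation that $F(\xi)$ ranges over $[F(a),0)$ for $\xi\in(0,\beta)$, and Theorems~\ref{thm:dichotomy} and \ref{thm:minimal} to convert existence/nonexistence of the blocking stationary profile into the value of $\PR(i,j)$, with the final $\widehat v_i\equiv 1$ claim settled by the sign of the junction derivatives plus Kirchhoff. The only blemish is an intermediate sign slip (the balance gives $F(\alpha)=-F(1)/(R_i^2-1)$, not $F(1)/(R_i^2-1)$), which self-corrects since you land on the right inequality $F(1)+(R_i^2-1)F(a)\le 0$.
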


Since $F(a)<0$, we have the following immediate corollary:

\begin{cor}\label{cor:star-propagation}
Assume $R_i\leq 1$. Then $\PR(i,j)=1$ for all $j\in\{1,\ldots,N\}$ with $j\ne i$.
\end{cor}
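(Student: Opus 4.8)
The plan is to reduce the statement directly to the propagation branch \eqref{star-graph-unequal-a} of Theorem~\ref{thm:star}; no new ingredient beyond the sign information already recorded in \eqref{F} is needed. First I would note that the hypothesis $R_i\leq 1$ gives $R_i^2-1\leq 0$, while \eqref{F} tells us that $F(a)<0$. Multiplying these two non-positive quantities yields $(R_i^2-1)F(a)\geq 0$. Adding $F(1)$, which is strictly positive by \eqref{F}, we obtain
\[
F(1)+(R_i^2-1)F(a)\geq F(1)>0,
\]
so the hypothesis of the implication \eqref{star-graph-unequal-a} is met. Invoking that implication then gives $\PR(i,j)=1$ for all $j\neq i$, which is exactly the assertion.

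The only place that warrants a word of care is the boundary case $R_i=1$: there the term $(R_i^2-1)F(a)$ vanishes identically rather than being strictly positive, but because $F(1)>0$ the combined quantity $F(1)+(R_i^2-1)F(a)$ is still strictly positive, and \eqref{star-graph-unequal-a} requires only this strict inequality on the combined expression (not strict positivity of the individual term). Hence there is no genuine obstacle: the corollary is an immediate consequence of Theorem~\ref{thm:star} once one uses the elementary sign facts $F(a)<0<F(1)$ from \eqref{F}.
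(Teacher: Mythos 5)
Your proof is correct and is exactly the argument the paper intends: the paper derives the corollary as an immediate consequence of Theorem~\ref{thm:star} from the sign facts $F(a)<0<F(1)$ in \eqref{F}, precisely as you do. Your remark on the boundary case $R_i=1$ is a reasonable clarification but adds nothing beyond what the strict positivity of $F(1)$ already guarantees.
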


The above corollary states that if the thickness of the outer path $\Omega_i$ is not smaller than the total sum of the thickness of other outer paths, then propagation occurs from $\Omega_i$ to all other outer paths regardless of the choice of the nonlinearity $f$ so long as it satisfies \eqref{f}.

\begin{rem}\label{rem:star-graph}
Theorem \ref{thm:star} implies, in particular, that partial propagation does not occur on star graphs, even if the the edges have mutually unequal thickness. If propagation (or blocking) occurs from $\Omega_i$ to $\Omega_j$ for some $j\ne i$, then the same holds from $\Omega_i$ to $\Omega_k$ for all $k\ne i$.
\qed
\end{rem}

\begin{rem}\label{rem:star-graph2}
As we see in \eqref{star-graph-unequal}, the condition for propagation is given by the strict inequality ``$>0$'', while that for blocking is given by ``$\leq 0$''. This difference reflects the fact that blocking nonlinearities form a closed set while non-blocking ones form an open set; see Remark~\ref{rem:perturbation}.
\qed
\end{rem}

In the special case where $\rho_1=\cdots=\rho_N$, the assertion \eqref{star-graph-unequal} is expressed as
\begin{subnumcases}{\label{star-graph-equal}}
\label{star-graph-equal-a} 
\ F(1)+\left((N-1)^2-1\right)F(a)>0 \ \ \Rightarrow \ \ \hbox{propagation},\\
\label{star-graph-equal-b} 
\ F(1)+\left((N-1)^2-1\right)F(a)\leq 0 \ \ \Rightarrow \ \ \hbox{blocking},
\end{subnumcases}
which is precisely the criterion given in \cite{JM2019}. (Note that \cite{JM2019} deals with the case where there are $N+1$ outer paths $\Omega_0,\Omega_1,\ldots,\Omega_N$, therefore $(N-1)^2$ is replaced by $N^2$ in \cite{JM2019,JM2021}.) 

Now we apply Theorem \ref{thm:star} to the case $N=2$ and derive conditions for one-way propagation. Let $\Omega$ be a 2-star graph as shown in Figure~\ref{fig:2star}. In this case, we have $R_1=\rho_2/\rho_1$, $R_2=\rho_1/\rho_2$. Without loss of generality, we may assume that $\rho_1\leq \rho_2$. Our result is the following:

\begin{figure}[h]
\vspace{8pt}
\begin{center}
\includegraphics[scale=0.46]
{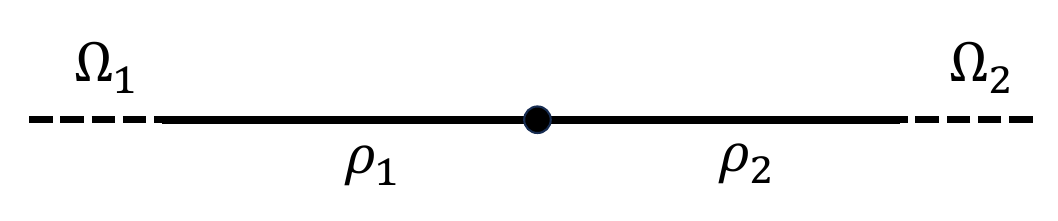}
\end{center}
\vspace{-8pt}
\caption{2-star graph.}
\label{fig:2star}
\end{figure}

\begin{cor}\label{cor:unequal-2graph}
Let $N=2$ and assume $\rho_1\leq \rho_2$. Then $\PR(2,1)=1$. Furthermore,
\begin{equation}\label{2star}
\PR(1,2)=0 \ \ \Longleftrightarrow\ \ F(1)+\left((\rho_2/\rho_1)^2-1\right)F(a)\leq 0,
\end{equation}
\end{cor}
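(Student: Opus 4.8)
The plan is to derive Corollary~\ref{cor:unequal-2graph} directly from Theorem~\ref{thm:star} applied to the $2$-star graph. First I would record the two relevant quantities: for $N=2$ we have $R_1 = \rho_2/\rho_1$ and $R_2 = \rho_1/\rho_2$, and by the normalization $\rho_1 \le \rho_2$ we get $R_2 \le 1 \le R_1$.

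Next I would handle $\PR(2,1)$. Since $R_2 = \rho_1/\rho_2 \le 1$, Corollary~\ref{cor:star-propagation} (or directly the fact that $R_2^2 - 1 \le 0$ together with $F(a) < 0$ gives $(R_2^2-1)F(a) \ge 0$, so $F(1) + (R_2^2-1)F(a) > 0$ since $F(1) > 0$) shows that the propagation branch \eqref{star-graph-unequal-a} applies with $i = 2$. Hence $\PR(2,1) = 1$. This is the easy half.

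For the equivalence \eqref{2star} I would apply Theorem~\ref{thm:star} with $i = 1$. The dichotomy in \eqref{star-graph-unequal} says that $F(1) + (R_1^2 - 1)F(a) > 0$ forces $\PR(1,2) = 1$, while $F(1) + (R_1^2 - 1)F(a) \le 0$ forces $\PR(1,2) = 0$. These two implications, taken together, are logically equivalent to the stated biconditional: $\PR(1,2) = 0$ holds if and only if $F(1) + (R_1^2-1)F(a) \le 0$, and substituting $R_1 = \rho_2/\rho_1$ gives exactly the condition in \eqref{2star}. (One should note that the two cases in \eqref{star-graph-unequal} are mutually exclusive and exhaustive, so the contrapositive of \eqref{star-graph-unequal-a} together with \eqref{star-graph-unequal-b} yields the ``only if'' direction, and \eqref{star-graph-unequal-b} together with the contrapositive of the conclusion gives ``if''.)

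There is essentially no obstacle here: the corollary is a routine specialization of Theorem~\ref{thm:star} to $N=2$, and the only mild point requiring care is keeping track of which index plays the role of $i$ in the two assertions ($i=2$ for the unconditional $\PR(2,1)=1$, and $i=1$ for the criterion on $\PR(1,2)$) and observing that the strict-versus-non-strict inequality split in \eqref{star-graph-unequal} is precisely what is needed to turn the one-directional implications into the biconditional \eqref{2star}.
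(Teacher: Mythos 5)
Your proposal is correct and is exactly the argument the paper intends: the corollary is stated as an immediate specialization of Theorem~\ref{thm:star} to $N=2$ (the paper gives no separate proof), with $\PR(2,1)=1$ following from $R_2\leq 1$ via Corollary~\ref{cor:star-propagation} and the biconditional \eqref{2star} following from the exhaustive, mutually exclusive dichotomy \eqref{star-graph-unequal} applied with $i=1$. Your bookkeeping of the indices and of the strict versus non-strict inequalities is accurate.
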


The above corollary gives a necessary and sufficient condition for a one-way propagation on a 2-star graph. With a suitable change of variables, we can apply the above corollary to the following problem on $\R$: 
\begin{equation}\label{RD-unequal-diffusion}
\partial_t u=
\begin{cases} 
\, d_1\partial_x^2 u + f(u) & (x<0),\\[2pt]
\, d_2\partial_x^2 u+ f(u) & (x>0),
\end{cases}
\ \quad 
u(t, -0)=u(t,+0),\ \ d_1\partial_x u(t,-0)=d_2\partial_x u(t,+0).
\end{equation}
The last condition in \eqref{RD-unequal-diffusion} implies continuity of mass flux at $x=0$.

By the change of coordinates $x\to \sqrt{d_1}\,x$ for $x<0$ and $x\to \sqrt{d_2}\,x$ for $x>0$, the above problem is converted into the following form (cf. Remark \ref{rem:unequal-diffusion}):
\[
\partial_t u= \partial_x^2 u + f(u) \ \  (x\ne0), \ \quad 
u(t,-0)=u(t,+0),\ \ \sqrt{d_1}\hspace{1pt}\partial_x u(t,-0)=\sqrt{d_2}\hspace{1pt}\partial_x u(t,+0).
\]
Consequently, by Corollary \ref{cor:unequal-2graph}, we obtain the following:

\begin{cor}\label{cor:unequal-diffusion}
Assume $d_1\leq d_2$ in \eqref{RD-unequal-diffusion}. Then propagation of fronts from the region $x>0$ to the region $x<0$ is not blocked. On the other hand, propagation of fronts from the region $x<0$ to the region $x>0$ is blocked if and only if the following condition holds:
\begin{equation}\label{diffusion-blocking}
F(1)+\left(d_2/d_1 -1\right)F(a)\leq 0.
\end{equation}
\end{cor}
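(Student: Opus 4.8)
The plan is to obtain Corollary~\ref{cor:unequal-diffusion} directly from Corollary~\ref{cor:unequal-2graph} by reading the problem \eqref{RD-unequal-diffusion} as equation \eqref{RD-Omega} on a $2$-star graph with unequal edge thickness. First I would record the reformulation already indicated in the paragraph preceding the statement: after the change of coordinates on $\{x<0\}$ and on $\{x>0\}$ (cf.\ Remark~\ref{rem:unequal-diffusion}), \eqref{RD-unequal-diffusion} becomes
\[
\partial_t u=\partial_x^2 u+f(u)\ \ (x\ne0),\qquad u(t,-0)=u(t,+0),\quad \sqrt{d_1}\,\partial_x u(t,-0)=\sqrt{d_2}\,\partial_x u(t,+0).
\]
I claim this is exactly \eqref{RD-Omega} on the $2$-star graph $\Omega$ whose center graph $D$ is a single point ${\rm P}$ (the common exit point ${\rm P}_1={\rm P}_2$), whose outer path $\Omega_1$ is identified with $\{x\le 0\}$ through the coordinate $x_1=-x$, whose outer path $\Omega_2$ is identified with $\{x\ge 0\}$ through $x_2=x$, and whose edge thicknesses are $\rho_1=\sqrt{d_1}$, $\rho_2=\sqrt{d_2}$. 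Indeed, the derivatives at ${\rm P}$ pointing into the interiors of $\Omega_1$ and $\Omega_2$ are $\partial u/\partial\nu_1({\rm P})=-\partial_x u(t,-0)$ and $\partial u/\partial\nu_2({\rm P})=\partial_x u(t,+0)$, so the transformed flux condition is precisely the Kirchhoff condition \eqref{Kirchhoff2} with weights $\rho_1,\rho_2$; continuity at ${\rm P}$ is the condition $u(t,-0)=u(t,+0)$; and on each half-line the equation is the normalized one $\partial_t u=\partial_x^2 u+f(u)$.

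Next I would translate the notions. Since the transformed equation is literally \eqref{RD-Omega}, the whole framework developed so far applies verbatim: Theorem~\ref{thm:u-hat} furnishes the front-like entire solutions, \eqref{v-hat} their limit profiles, and Definition~\ref{def:propagation} the propagation indices $\PR(1,2)$, $\PR(2,1)$. Under the change of variables the front coming from $x=+\infty$ in \eqref{RD-unequal-diffusion} corresponds, up to the monotone rescaling of the space variable on $\{x>0\}$, to the front associated with $\Omega_2$; hence ``propagation of fronts from the region $x>0$ to the region $x<0$'' means $\PR(2,1)=1$, and this propagation being ``blocked'' means $\PR(1,2)=0$ for the front coming from $x=-\infty$. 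Because $d_1\le d_2$ is equivalent to $\rho_1\le\rho_2$, Corollary~\ref{cor:unequal-2graph} applies and yields $\PR(2,1)=1$ together with
\[
\PR(1,2)=0\ \Longleftrightarrow\ F(1)+\big((\rho_2/\rho_1)^2-1\big)F(a)\le 0 .
\]
Since $(\rho_2/\rho_1)^2=d_2/d_1$, the right-hand inequality is exactly \eqref{diffusion-blocking}, which completes the argument.

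I do not expect a substantial obstacle here; the content is essentially a dictionary translation. The only point requiring care is the identification made in the first paragraph: one should verify that the change of variables sets up a bijection between bounded continuous solutions of the two problems that respects the $C^1$-matching/Kirchhoff structure at the junction, the monotonicity in $t$ of the front-like solution, and its travelling-wave asymptotics as $t\to-\infty$, so that the graph-theoretic notion in Definition~\ref{def:propagation} faithfully encodes the informal statement ``the front propagates (resp.\ is blocked)'' on the line. Once this correspondence is spelled out, nothing further is needed.
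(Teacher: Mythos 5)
Your proposal is correct and matches the paper's own treatment: the paper likewise derives this corollary by the change of variables $x\mapsto\sqrt{d_1}\,x$ on $\{x<0\}$ and $x\mapsto\sqrt{d_2}\,x$ on $\{x>0\}$, identifying \eqref{RD-unequal-diffusion} with \eqref{RD-Omega} on a $2$-star graph of thicknesses $\rho_i=\sqrt{d_i}$ and then invoking Corollary~\ref{cor:unequal-2graph} with $(\rho_2/\rho_1)^2=d_2/d_1$. Your extra care about the dictionary between the two formulations is sound but not developed further in the paper either.
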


Note that the same condition as \eqref{diffusion-blocking} is found in the pioneering work of Pauwelussen \cite{Pa1982}, thus \eqref{diffusion-blocking} is a restatement of the result of \cite{Pa1982} in our clearer framework. 

Next we discuss small perturbation of star graphs (Figure~\ref{fig:star-perturbation} ({\it right})). 
We consider two cases: the case where the base star graph allows propagation between the outer paths, and the case where blocking occurs in the star graph. We begin with the first case.

\begin{figure}[h]
\begin{center}
\includegraphics[scale=0.45]
{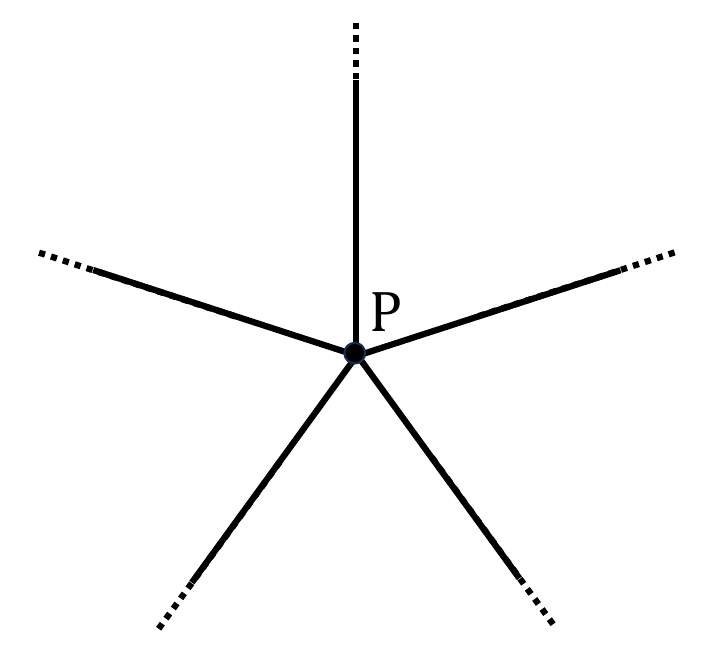}
\hspace{36pt}
\includegraphics[scale=0.45]
{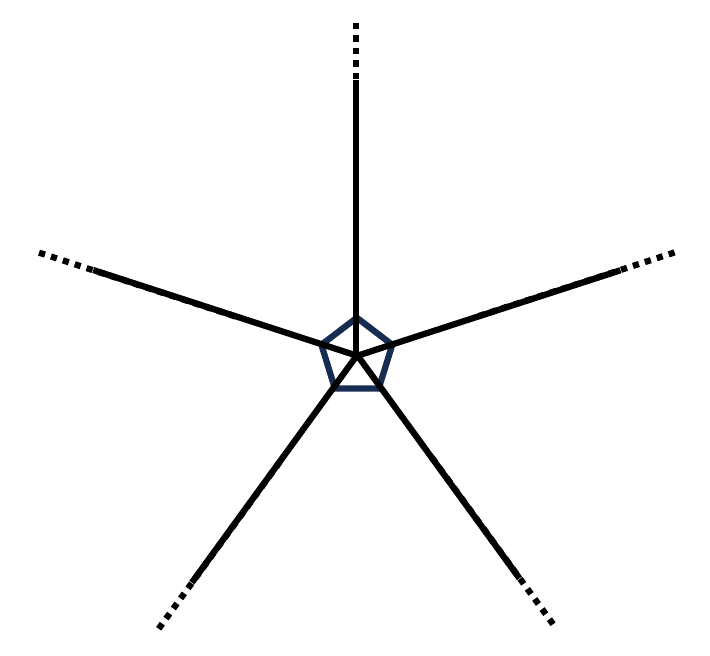}
\end{center}
\vspace{-5pt}
	\caption{An example of a star graph ({\it left}) and its small perturbation ({\it right}).}
\label{fig:star-perturbation}
\end{figure}

\begin{thm}[Perturbation of a non-blocking star graph]\label{thm:perturbation-star1}
Let $\Omega$ be a star graph with outer paths $\Omega_1,\ldots,\Omega_N$, and assume that \eqref{star-graph-equal-a} holds. Let $\Omega'$ be a graph which is obtained by replacing the center point ${\rm P}$ of $\Omega$ by a bounded finite graph $D$, and let $\PR'(i,j)$ be the propagation index for $\Omega'$ that is defined in the same manner as \eqref{Pij}.  Then $\PR'(i,j)=1$ for any $i\ne j$  
if the total length of the edges of $D$ is sufficiently small. 
\end{thm}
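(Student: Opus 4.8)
The plan is to use the robustness result, Corollary~\ref{cor:propagation-open}, after verifying that $\Omega'$ is, in the sense of Assumption~\ref{ass:perturbation-D}, a small perturbation of the unperturbed star graph $\Omega$. Indeed, replacing the center vertex $\mathrm{P}$ of $\Omega$ by a bounded finite graph $D$ is exactly operation (c) of Assumption~\ref{ass:perturbation-D}, with $\Sigma_m = D$; the hypothesis that the total length of the edges of $D$ is sufficiently small plays the role of $|\Sigma_m|\to 0$. So the first step is to set up a sequence: fix a sequence of center graphs $D_m$ with the same combinatorial structure as $D$ but with all edge lengths scaled down so that $|D_m|\to 0$ as $m\to\infty$ (one can simply take $D_m$ to be $D$ with all edge lengths multiplied by $1/m$, or any sequence whose total edge length tends to $0$), forming graphs $\Omega^{(m)} = D_m \cup \Omega_1\cup\cdots\cup\Omega_N$. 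For these we have $\PR_m(i,j)$ as in \eqref{Pij-m}.

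The second step is to check that $\PR(i,j)=1$ for the \emph{star graph} $\Omega$ itself, for every $i\ne j$. This is immediate from Theorem~\ref{thm:star}: since \eqref{star-graph-equal-a} is assumed (equivalently \eqref{star-graph-unequal-a} in the equal-thickness case, where $R_i = N-1$), we are in case \eqref{star-graph-unequal-a}, hence $\PR(i,j)=1$ for all $j\ne i$, and moreover $\widetilde v_i\equiv 1$ on $\Omega$. So the unperturbed graph is non-blocking for every ordered pair $(i,j)$.

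The third step is to invoke Corollary~\ref{cor:propagation-open} (robustness of propagation) for each of the finitely many ordered pairs $(i,j)$ with $i\ne j$: since $\PR(i,j)=1$ on $\Omega$, there exists $m_{ij}$ such that $\PR_m(i,j)=1$ for all $m\ge m_{ij}$. Taking $m_\ast := \max_{i\ne j} m_{ij}$ (a finite maximum, as there are only $N(N-1)$ pairs), we obtain $\PR_m(i,j)=1$ for all $i\ne j$ and all $m\ge m_\ast$. Re-reading this in terms of the original statement: if the total length of the edges of $D$ is small enough — specifically, small enough that $D$ can be realized as one of the $D_m$ with $m\ge m_\ast$ — then $\PR'(i,j)=1$ for all $i\ne j$, which is the assertion. (Alternatively, one argues directly by contradiction: if the conclusion fails, there is a sequence of admissible center graphs $D_m$ with $|D_m|\to 0$ and indices $i_m\ne j_m$ with $\PR_m(i_m,j_m)=0$; passing to a subsequence so that $(i_m,j_m)$ is constant equal to some $(i,j)$, Theorem~\ref{thm:limit-blocking} gives $\PR(i,j)=0$ for the limit star graph $\Omega$, contradicting Step~2.)

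The only genuinely delicate point is the one already built into Assumption~\ref{ass:perturbation-D} and Corollary~\ref{cor:propagation-open}, namely that the convergence $\Omega^{(m)}\to\Omega$ is strong enough for the minimality/limit-profile machinery (Theorem~\ref{thm:minimal}, Theorem~\ref{thm:limit-blocking}) to apply when the perturbation is the collapse of a whole subgraph $D$ to a point rather than a mere change of edge lengths; but this is precisely what operation (c) of Assumption~\ref{ass:perturbation-D} was designed to cover, so no new work is needed here — one only needs to note that collapsing $D$ to a point turns the exit vertices $\mathrm{P}_1,\dots,\mathrm{P}_N$ of $\Omega'$ into the single center vertex $\mathrm{P}$ of the star graph $\Omega$, with the Kirchhoff condition \eqref{Kirchhoff2} at $\mathrm{P}$ being the natural limit of the Kirchhoff conditions at the vertices of $D_m$. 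Thus the main content of the proof is bookkeeping: identifying the perturbation as admissible, quoting Theorem~\ref{thm:star} to get the non-blocking base case, and applying the already-established robustness corollary uniformly over the finitely many index pairs.
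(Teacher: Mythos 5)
Your proposal is correct and takes essentially the same route as the paper, which simply observes that Theorem~\ref{thm:perturbation-star1} is a special case of Corollary~\ref{cor:propagation-open} (itself the contraposition of Theorem~\ref{thm:limit-blocking}) and gives no further proof. You have in fact supplied more of the bookkeeping than the paper does --- identifying the perturbation as operation (c) of Assumption~\ref{ass:perturbation-D}, establishing the non-blocking base case via Theorem~\ref{thm:star}, and handling the uniformity over the finitely many index pairs (with the contradiction variant via Theorem~\ref{thm:limit-blocking} correctly covering center graphs of varying combinatorial structure).
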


The above theorem is a special case of Corollary~\ref{cor:propagation-open}, which asserts the robustness of propagation under small perturbations in a much more general setting. On the other hand, in the second case where blocking occurs in the base star graph, whether or not blocking occurs in the perturbed graph does not follow from a general theory. We therefore impose the following condition which is a slightly stronger version of \eqref{star-graph-unequal-b}:
\begin{equation}\label{star-graph-unequal-bb}
F(1)+\left(R_i^2-1\right)F(a)< 0
\end{equation}

\begin{thm}[Perturbation of a blocking star graph]\label{thm:perturbation-star2}
Let $\Omega$ be a star graph with outer paths $\Omega_1,\ldots,\Omega_N$, and assume that \eqref{star-graph-unequal-bb} holds. Let $\Omega'$ be a graph which is obtained by replacing the center point ${\rm P}$ of $\Omega$ by a bounded finite graph $D$, and let $\PR'(i,j)$ be the propagation index for $\Omega'$ that is defined in the same manner as \eqref{Pij}.  Then $\PR'(i,j)=0$ for any $i\ne j$ if the total length of the edges of $D$ is sufficiently small. 
\end{thm}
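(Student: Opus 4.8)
The plan is to construct, for all sufficiently small $D$, a stationary supersolution $v$ on $\Omega'$ that satisfies $0\le v\le 1$, $\lim_{x_i\to\infty}v_{\restr\Omega_i}(x_i)=1$, and $v({\rm P}_j)\le\beta$ for every $j\ne i$. Once such a $v$ is available, Theorem~\ref{thm:minimal} gives $\widehat v_i\le v$, hence $\widehat v_i({\rm P}_j)\le\beta$, and then the Dichotomy Theorem~\ref{thm:dichotomy} forces $\lim_{x_j\to\infty}\widehat v_i{}_{\restr\Omega_j}(x_j)=0$, i.e. $\PR'(i,j)=0$. So the whole argument reduces to producing the right barrier. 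The natural candidate is the barrier stationary solution that already exists on the base star graph $\Omega$ under the blocking condition, transplanted onto $\Omega'$ and corrected near $D$.

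\textbf{Step 1: the barrier on the base star graph.} Under \eqref{star-graph-unequal-bb} (strict blocking inequality), Theorem~\ref{thm:star}/\cite[Theorem~3.2]{JM2021} provides a stationary solution $V$ on the star graph $\Omega$ with $V_{\restr\Omega_i}$ decreasing from $V({\rm P})$ toward $a$ along $\Omega_i$ is the wrong monotonicity; rather $V$ is the minimal solution $\widehat v_i$ on $\Omega$, with $V_{\restr\Omega_i}$ increasing to $1$, $V({\rm P})<\beta$ (strictly, because the inequality in \eqref{star-graph-unequal-bb} is strict), and $V_{\restr\Omega_j}$ decreasing to $0$ on each $\Omega_j$, $j\ne i$. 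In particular $V({\rm P})=:\beta-\delta_0$ for some $\delta_0>0$, and $V$ restricted to each outer path lies strictly between $0$ and $1$.

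\textbf{Step 2: transplant and correct near $D$.} On $\Omega'$, keep $v=V$ on each outer path $\Omega_i'=\Omega_i$ (identifying the coordinates $x_k$), so the correct limit $v_{\restr\Omega_i}\to 1$ and the equation on the outer edges are automatic; it remains to define $v$ on the small graph $D$ and to check the super-Kirchhoff condition at the exit points ${\rm P}_k'\in D$. The idea is to take $v\equiv\beta-\tfrac{\delta_0}{2}$ (a constant just below $\beta$) on $D$, and then to reconcile it with the outer-path values by a thin boundary layer: on a short initial segment of each outer path $\Omega_k'$ near ${\rm P}_k'$ we replace $V$ by $\min\{V,\,\beta-\tfrac{\delta_0}{4}\}$ or by an explicit monotone interpolant, using that $V({\rm P})=\beta-\delta_0$ and $V$ is Lipschitz. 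Because the total length of $D$ goes to $0$, any continuous function on $D$ taking values in a tiny window automatically has small variation, so the Kirchhoff defect one must absorb at each ${\rm P}_k'$ is $O(|D|)$; one chooses the outer-path correction so that the net flux balance at ${\rm P}_k'$ becomes an inequality of the right sign (super-Kirchhoff), exploiting that on $\Omega_k'$, $k\ne i$, $V$ is decreasing (flux pointing outward, the favorable direction for a supersolution) while on $\Omega_i'$ one has a little room from the strict inequality $V({\rm P})<\beta$. On the interior edges of $D$ a constant trivially satisfies $\partial_x^2 v+f(v)=f(\beta-\tfrac{\delta_0}{2})\le 0$ provided $\beta-\tfrac{\delta_0}{2}$ can be taken in $(a,\beta]$ where $f>0$ — so in fact one should instead use a constant $\le a$, or equivalently shift: take the $D$-value to be some $\sigma\le a$ with $f(\sigma)\le 0$, glue to the outer solutions by short monotone layers, and note the glue cost is still $O(|D|)$. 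This gluing/flux-bookkeeping is the technical heart.

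\textbf{Step 3: pass to the limit / conclude.} For each fixed small $D$ the constructed $v$ is a genuine $0\le v\le 1$ supersolution of \eqref{stationary} on $\Omega'$ with $v_{\restr\Omega_i}\to 1$ and $v({\rm P}_j')\le\beta-\tfrac{\delta_0}{4}<\beta$. Theorem~\ref{thm:minimal} then yields $\widehat v_i\le v$ on $\Omega'$, in particular $\widehat v_i({\rm P}_j')<\beta$, and Theorem~\ref{thm:dichotomy} gives blocking $\Omega_i\to\Omega_j$, i.e. $\PR'(i,j)=0$, for every $i\ne j$ once $|D|$ is below the threshold fixed by $\delta_0$ and the Lipschitz constants of $V$ and $f$. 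Alternatively, one can argue by contradiction using Theorem~\ref{thm:limit-blocking}: if no such threshold existed there would be a sequence $D_m$ with $|D_m|\to0$ and $\PR_m(i,j)=1$; but the limit graph is the star graph $\Omega$, which is blocking by hypothesis, contradicting the openness of propagation (Corollary~\ref{cor:propagation-open}) — however this shortcut needs the base graph to be a limit point of the $D_m$'s in the sense of Assumption~\ref{ass:perturbation-D}, which is exactly operation (c) with $\Sigma_m=D_m$, so it applies and gives the cleanest proof.

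\textbf{Main obstacle.} The delicate point is Step~2: arranging the super-Kirchhoff inequality at each exit point ${\rm P}_k'$ simultaneously, using only the $O(|D|)$ budget and the strictness gap $\delta_0>0$ coming from \eqref{star-graph-unequal-bb}. One must verify that the flux the interior of $D$ can "absorb" (bounded by $\|f\|_\infty|D|$ in integrated form, via Gauss–Green on $D$) is dominated by the favorable outgoing flux already present in $V$ along the blocking outer paths plus the controllable correction along $\Omega_i$ — and that the strictness in \eqref{star-graph-unequal-bb}, as opposed to the non-strict \eqref{star-graph-unequal-b}, is precisely what provides the uniform gap needed to beat the perturbation. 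Showing the contradiction route of Step~3 is formally shorter, so in the writeup I would present that, falling back on the explicit barrier only if the hypotheses of Theorem~\ref{thm:limit-blocking} need the strict inequality to be promoted by hand.
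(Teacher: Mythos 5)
Your overall strategy (build a stationary supersolution on $\Omega'$ that tends to $1$ along $\Omega_i$ and to $0$ along the other outer paths, then invoke Theorem~\ref{thm:minimal} and Theorem~\ref{thm:dichotomy}) is exactly the paper's. But there are two genuine problems with your proposal.

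First, the ``cleanest proof'' you say you would actually write up --- the contradiction route via Theorem~\ref{thm:limit-blocking} / Corollary~\ref{cor:propagation-open} --- is logically backwards. Those results say that the \emph{limit of blocking graphs is blocking}, equivalently that \emph{propagation is open}; what your contradiction needs is that the limit of \emph{propagating} graphs $\Omega^{(m)}$ (with $\PR_m(i,j)=1$ and $|D_m|\to 0$) is propagating, i.e.\ that propagation is \emph{closed}. That is false in general (blocking is the closed property, per Remark~\ref{rem:perturbation}), and the paper explicitly states before Theorem~\ref{thm:perturbation-star2} that persistence of blocking ``does not follow from a general theory'' --- which is precisely why the strict inequality \eqref{star-graph-unequal-bb} and a bespoke barrier are needed. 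So the shortcut must be discarded.

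Second, the explicit construction in your Step~2 is underspecified at the one point that matters. Taking $v$ constant on $D$ (whether the constant is near $\beta$ or $\le a$) gives zero flux from $D$ into each exit point, while the outer-path piece on $\Omega_i$ carries an $O(1)$ incoming flux $\rho_i\partial_{x_i}v({\rm P}_i)=\rho_i\sqrt{2(F(1)-F(\xi))}>0$; the super-Kirchhoff inequality \eqref{super-Kirchhoff} at ${\rm P}_i$ is then violated by an $O(1)$ amount, which no $O(|D|)$ boundary-layer correction can absorb. The flux entering at ${\rm P}_i$ must be \emph{routed through $D$ and out the exits} ${\rm P}_j$, $j\ne i$; this is where the strictness of \eqref{star-graph-unequal-bb} enters, since it says the incoming flux $\rho_i\sqrt{2(F(1)-F(a))}$ is strictly smaller than the total outgoing flux $\sum_{j\ne i}\rho_j\sqrt{-2F(a)}$ that the homoclinic (pulse) pieces on $\Omega_j$ can carry. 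The paper implements this by solving the Neumann problem \eqref{harmonic-BVP} for a harmonic function $h$ on $D$ with boundary fluxes $b_i=\sqrt{2(F(1)-F(a))}$ and $b_j=-\sqrt{-2F(a)}+\delta$ normalized by $h({\rm P}_i)=a$, checking via the maximum principle and the gradient bound that $0<h\le a$ (so $f(h)\le 0$ and $h$ is a supersolution on $D$), and then matching $U$ on $\Omega_i'$ and shifted pulse solutions $V(\cdot+c_j)$ on $\Omega_j'$ so that Kirchhoff holds at ${\rm P}_i$ and super-Kirchhoff holds at ${\rm P}_j$ for $|D|$ small. You correctly identified in your ``main obstacle'' paragraph that the strict gap must beat the perturbation, but your proposed interpolation does not produce a function on $D$ carrying the required $O(1)$ flux, so the construction as written does not close.
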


The above theorem will be proved by constructing an upper barrier that blocks the fronts. The Gauss-Green formula \eqref{Green1} and harmonic functions will play a key role in the construction.

\vskip 8pt
\underbar{\bf Faraway perturbation of a star graph}

\vspace{5pt}
Next we consider a graph that is obtained by attaching an arbitrary graph $D_0$ to a star graph at a far distance, either in front of the center point or behind it (see Figure~\ref{fig:local-star}). 

\begin{figure}[h]
\begin{center}
\includegraphics[scale=0.35]
{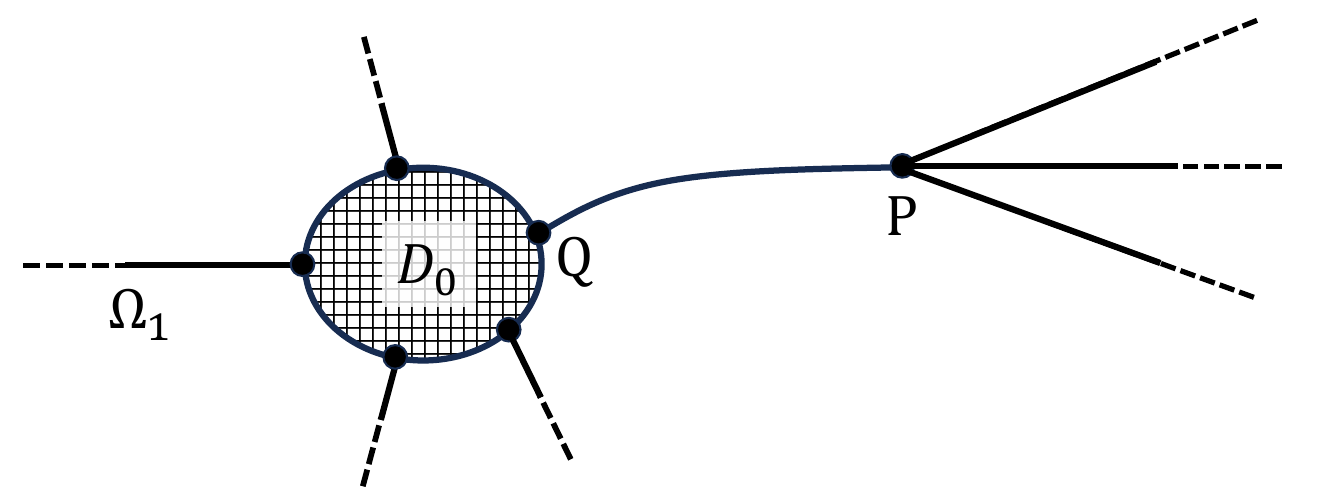}
\hspace{2pt}
\includegraphics[scale=0.35]
{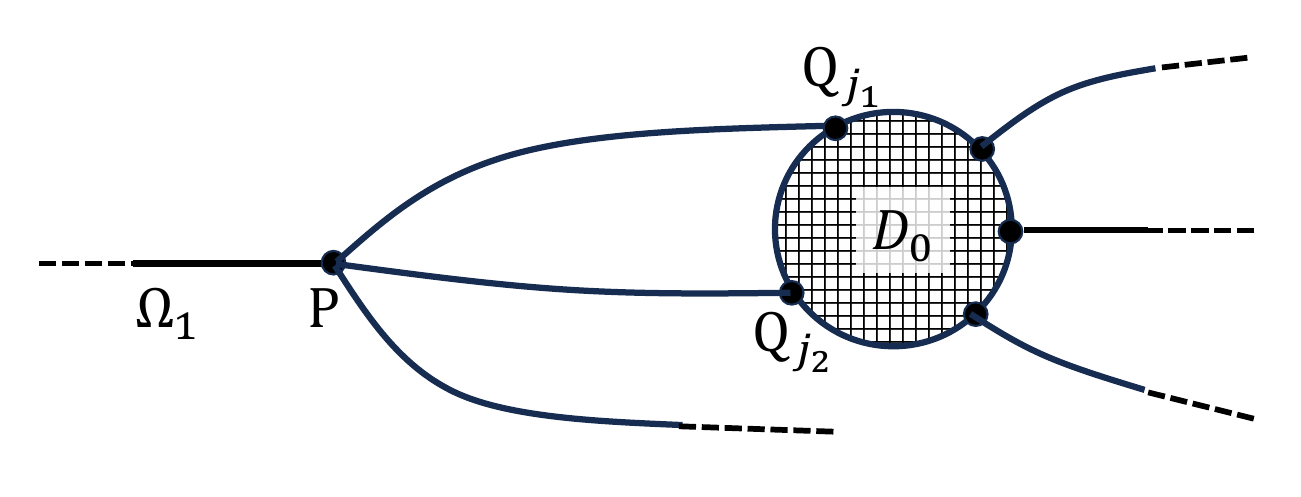}
\end{center}
\vspace{-5pt}
\caption{Joining an arbitrary graph $D_0$ to a star graph; in front ({\it left}) and behind ({\it right}).}
\label{fig:local-star}
\end{figure}

In both cases, the star graph has the center point ${\rm P}$ and $N'$ outer paths. For simplicity, we only consider the case where all the edges have the same thickness, but more general cases can be treated just similarly. We assume that \eqref{star-graph-equal-b} with $N$ replaced by $N'$ holds strictly, namely 
\begin{equation}\label{star-graph-equal-bb}
 F(1)+\left((N'-1)^2-1\right)F(a)< 0.
\end{equation}
Therefore, it is a blocking star graph in a strict sense. 
Our question is whether or not this blocking property remains to hold when another graph $D_0$ is attached at a far distance.

The following two theorems assert that the blocking property of the star graph is preserved if the distance between $D_0$ and the junction point ${\rm P}$ of the star graph is sufficiently large.

\begin{thm}\label{thm:local-star1}
Let $\Omega$ be as shown in Figure~\ref{fig:local-star} (left). More precisely, the center graph $D$ of $\Omega$ consists of an arbitrary bounded finite metric graph $D_0$ and an edge ${\rm QP}$, with the vertex ${\rm Q}$ belonging to $D_0$, 
that is, $D=D_0\cup {\rm QP}$ with ${\rm Q}\in D_0$, ${\rm P}\not\in D_0$. Assume that there are $N'-1$ outer paths $\Omega_{j_1},\ldots, \Omega_{j_{N'-1}}$ stretching from the vertex ${\rm P}$, while the outer path $\Omega_1$ stretches from a vertex of $D_0$.  Assume also that \eqref{star-graph-equal-bb} holds. If the length of the edge ${\rm QP}$ is sufficiently large, blocking occurs beyond ${\rm P}$, that is, $\PR(1,j_k)=0$ for all $k\in\{1,\ldots,N'-1\}$.
\end{thm}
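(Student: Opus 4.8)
The plan is to construct a stationary supersolution $w$ on $\Omega$ that is bounded above by $\beta$ everywhere on the outer paths $\Omega_{j_1},\ldots,\Omega_{j_{N'-1}}$, and in fact satisfies $w({\rm P})\le\beta$; then by the minimality theorem (Theorem~\ref{thm:minimal}) we get $\widehat v_1\le w$, hence $\widehat v_1({\rm P}_{j_k})=\widehat v_1({\rm P})\le\beta$, and the dichotomy theorem (Theorem~\ref{thm:dichotomy}) forces $\PR(1,j_k)=0$ for every $k$. The key point is that the underlying $N'$-star graph (consisting of ${\rm P}$ together with the $N'-1$ outer paths and the edge ${\rm QP}$, temporarily viewed as a half-line) is a blocking star graph in the strict sense \eqref{star-graph-equal-bb}, so by Theorem~\ref{thm:star} it admits a barrier stationary solution $V$ that blocks fronts arriving along ${\rm QP}$; concretely $V$ is symmetric among the $N'-1$ outer paths, decays to $0$ along them, and along the incoming edge ${\rm QP}$ it increases from the value $V({\rm P})<\beta$ at ${\rm P}$ toward $1$ as the coordinate along ${\rm QP}$ goes to $+\infty$. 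The heart of the argument is to glue this $V$ onto the actual graph $\Omega$: on each $\Omega_{j_k}$ and on the initial part of the edge ${\rm QP}$ we use $V$ itself, and on $D_0$ together with the remaining part of ${\rm QP}$ we cap $w$ by the constant $1$ (or by a slight modification near ${\rm Q}$), using the length of ${\rm QP}$ to absorb the mismatch.

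The steps, in order. First, record the one-dimensional phase-plane picture for $\partial_x^2 V+f(V)=0$ (Figure~\ref{fig:phase-portrait}): the relevant trajectory through $({\rm value}\ \beta,0)$ and the heteroclinic-type connection that starts at $V=V({\rm P})$ with the correct Kirchhoff-balanced slope and rises monotonically to $1$. From Theorem~\ref{thm:star} (applied to the $N'$-star formed by ${\rm P}$, the $N'-1$ outer paths, and ${\rm QP}$) under hypothesis \eqref{star-graph-equal-bb}, extract the barrier $V$ and note $V({\rm P})<\beta$ strictly (this strictness is exactly what the strict inequality in \eqref{star-graph-equal-bb} buys). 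Second, fix a large length $\ell$ and let $V$ govern $w$ on $\Omega_{j_1},\ldots,\Omega_{j_{N'-1}}$ and on the sub-edge of ${\rm QP}$ of length $\ell$ measured from ${\rm P}$; since $V\nearrow 1$ along ${\rm QP}$, for $\ell$ large we have $V$ close to $1$ at the point at distance $\ell$ from ${\rm P}$. Third, on $D_0$ and on the remaining part of ${\rm QP}$ (from that point up to ${\rm Q}$), set $w\equiv 1$, or, to retain a valid supersolution, interpolate with a small harmonic/linear correction near the splicing vertex on ${\rm QP}$; check that $\partial_x^2 w+f(w)\le 0$ on every edge (using $f(1)=0$, $f\le 0$ near $1$) and that the Kirchhoff inequality \eqref{super-Kirchhoff} holds at every vertex — at the interior splicing vertex on ${\rm QP}$ we need the slope to jump the right way, which is arranged by taking $\ell$ large so that $V'$ there is small and a tiny downward kink can be inserted; at ${\rm Q}$ and all vertices of $D_0$, $w\equiv 1$ trivially gives equality; at ${\rm P}$, $V$ already satisfies Kirchhoff for the star, and the extra edge ${\rm QP}$ carries the same value of $V$ so nothing changes. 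Fourth, verify $0\le w\le 1$ and $w{}_{\restr\Omega_1}\to 1$ (indeed $w\equiv 1$ on $\Omega_1$ since $\Omega_1$ emanates from $D_0$), so Theorem~\ref{thm:minimal} applies with $i=1$, giving $\widehat v_1\le w$; evaluate at ${\rm P}$ to get $\widehat v_1({\rm P})\le V({\rm P})<\beta$, and conclude via Theorem~\ref{thm:dichotomy} that $\lim_{x_{j_k}\to\infty}\widehat v_1{}_{\restr\Omega_{j_k}}=0$, i.e. $\PR(1,j_k)=0$.

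The main obstacle is the gluing on the edge ${\rm QP}$: one must show that a valid global supersolution can be assembled from the star-graph barrier $V$ (which lives on a half-line along ${\rm QP}$) and the constant $1$ on $D_0$, and that the only problematic junction — the interior point of ${\rm QP}$ where $V$ meets the constant $1$ — can be handled by a downward slope correction, which is legitimate precisely because a supersolution is allowed a concave (downward) kink. Making this rigorous requires a uniform lower bound on how fast $V$ approaches $1$ along ${\rm QP}$ and a matching quantitative choice of the correction; this is where the hypothesis that the length of ${\rm QP}$ be "sufficiently large" is consumed. A secondary technical point is confirming that $V({\rm P})$ stays strictly below $\beta$ uniformly — but this is immediate from the construction in Theorem~\ref{thm:star} together with the strict inequality \eqref{star-graph-equal-bb}, since under \eqref{star-graph-equal-b} with equality one would only get $V({\rm P})=\beta$.
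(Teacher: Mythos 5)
Your overall strategy is the same as the paper's: build a stationary supersolution that equals a blocking star-graph barrier near ${\rm P}$, cap it by the constant $1$ on $D_0$ and on $\Omega_1$, and conclude via Theorem~\ref{thm:minimal} and Theorem~\ref{thm:dichotomy}. However, the step you yourself flag as ``the main obstacle'' --- splicing the barrier to the constant $1$ on the edge ${\rm QP}$ --- does not work as you propose, and this is a genuine gap, not a technicality. The barrier $V$ produced by Theorem~\ref{thm:star} has its incoming branch \emph{on} the stable manifold of $(1,0)$, so at any finite distance $\ell$ from ${\rm P}$ one has $V(\ell)<1$ strictly, with $V'(\ell)=\sqrt{2(F(1)-F(V(\ell)))}>0$. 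A supersolution must be continuous, so you cannot simply jump to $1$ at the splice; and a ``harmonic/linear correction'' $\psi$ interpolating from $V(\ell)$ up to $1$ satisfies $\partial_x^2\psi+f(\psi)=f(\psi)>0$ because the interpolated values lie in $(a,1)$ where $f>0$, violating the supersolution inequality. Worse, no interpolation can work at all: if $\psi''+f(\psi)\le 0$ and $\psi'\ge 0$, the energy $\tfrac12(\psi')^2+F(\psi)$ is nonincreasing, and starting from the stable-manifold value $F(1)$ at the splice it can reach $\psi=1$ with $\psi'\ge 0$ only if it coincides with the stable-manifold solution itself, which takes infinite length (the integral $\int^1 dw/\sqrt{2(F(1)-F(w))}$ diverges at $w=1$). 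So taking $\ell$ large does not save the construction; a downward derivative kink is legitimate only once the \emph{values} already agree.

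The missing idea is to use the strictness of \eqref{star-graph-equal-bb} to tilt the incoming branch strictly \emph{above} the stable manifold, which is exactly what the paper does. Keeping the full homoclinic slope $-\sqrt{-2F(a)}$ on each of the $N'-1$ outer paths (value $a$ at ${\rm P}$), the Kirchhoff balance at ${\rm P}$ then forces the incoming slope to be $\sqrt{2(F(1)-F(a))}+(N'-1)\delta$ for some $\delta>0$ supplied by the strict inequality. The resulting trajectory $U_1$ along ${\rm QP}$ has energy exceeding $F(1)$, hence reaches the value $1$ with positive slope at a \emph{finite} distance $M_1$; there it can be capped by the constant $1$ with a genuine downward derivative gap, and the constant $1$ extends over the rest of ${\rm QP}$, over $D_0$, and over $\Omega_1$. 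The hypothesis that ${\rm QP}$ be sufficiently long is consumed precisely in requiring its length to exceed $M_1$. Your secondary remark that $V({\rm P})<\beta$ strictly is correct but is not where the strictness of \eqref{star-graph-equal-bb} is actually needed; it is needed to create the slope surplus $(N'-1)\delta$ that makes the incoming branch terminate at $1$ in finite length.
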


\begin{thm}\label{thm:local-star2}
Let $\Omega$ be as shown in Figure~\ref{fig:local-star} (right). More precisely, let $D_0$ be an arbitrary bounded finite metric graph and ${\rm P}$ be a vertex outside $D_0$, from which $N'$ edges emanate, including the outer path $\Omega_1$. Among the remaining $N'-1$ edges emanating from ${\rm P}$, some are connected to $D_0$ at vertices ${\rm Q}_{j_k}\,(k=1,\ldots,m)$ with $1\leq m\leq N'-1$, and the rest are outer paths, if any. Thus $D=D_0\cup{\rm PQ}_{j_1}\cup\cdots\cup{\rm PQ}_{j_m}$. 
There are $N-N'+m$ outer paths stretching from $D_0$, hence the total number of outer paths of $\Omega$ is $N$, where $N$ can be larger than or smaller than or equal to $N'$. 
Assume that \eqref{star-graph-equal-bb} holds. If the length of the edge ${\rm PQ}_{j_k}$ is sufficiently large for all $k=1,\ldots,m$, blocking occurs beyond ${\rm P}$, that is, $\PR(1,j)=0$ for all $j\in\{2,\ldots,N\}$.
\end{thm}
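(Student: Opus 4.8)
The proof runs parallel to that of Theorem~\ref{thm:local-star1} and, like the proof of Theorem~\ref{thm:perturbation-star2}, rests on constructing an explicit upper barrier and then invoking the minimality of the limit profile. By Theorems~\ref{thm:minimal} and \ref{thm:dichotomy} it suffices to exhibit a stationary supersolution $w$ of \eqref{stationary} on $\Omega$ with $0\le w\le1$, $\lim_{x_1\to\infty}w{}_{\restr\Omega_1}(x_1)=1$, and $w\le\beta$ at every exit point ${\rm P}_j$ with $j\ne1$; equivalently, since each such ${\rm P}_j$ lies in $D_0$ or equals ${\rm P}$, it suffices that $w\le\beta$ on $D_0$ and that $w({\rm P})\le\beta$. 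Indeed, Theorem~\ref{thm:minimal} then gives $\widehat v_1\le w$, hence $\widehat v_1({\rm P}_j)\le\beta$, and Theorem~\ref{thm:dichotomy} yields $\PR(1,j)=0$ for all $j\in\{2,\dots,N\}$.

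\emph{Choice of the junction value and the barrier.} Since \eqref{star-graph-equal-bb} forces $N'\ge3$, we have $(N'-1)^2-1>0$, and \eqref{star-graph-equal-bb} says precisely that $F(1)/\bigl(1-(N'-1)^2\bigr)$ lies strictly between $F(a)$ and $0$; by strict monotonicity of $F$ on $(a,\beta)$ there is a unique $c^*\in(a,\beta)$ with
\[
F(1)+\bigl((N'-1)^2-1\bigr)F(c^*)=0,\qquad\text{i.e.}\qquad \sqrt{2(F(1)-F(c^*))}=(N'-1)\sqrt{-2F(c^*)}.
\]
All edges have thickness $1$. Set $w({\rm P})=c^*$ and build $w$ edgewise from the orbits of $u''+f(u)=0$ in Figure~\ref{fig:phase-portrait}: on $\Omega_1$, $w$ is the energy-$F(1)$ orbit increasing from $c^*$ to $1$, so $\partial w/\partial\nu_1({\rm P})=\sqrt{2(F(1)-F(c^*))}$; on each of the $(N'-1)-m$ outer paths issuing from ${\rm P}$, $w$ is the decreasing branch of the peaking solution (Figure~\ref{fig:pulse-solution}), i.e.\ the homoclinic orbit, running from $c^*$ down to $0$, so $\partial w/\partial\nu({\rm P})=-\sqrt{-2F(c^*)}$; on each edge ${\rm P}{\rm Q}_{j_k}$ of length $\ell_k$, $w$ is that same decreasing homoclinic branch issuing from $c^*$, truncated at $x_{j_k}=\ell_k$, so that both $\delta_k:=w({\rm Q}_{j_k})$ and $\mu_k:=|\partial_x w({\rm Q}_{j_k})|=\sqrt{-2F(\delta_k)}$ tend to $0$ as $\ell_k\to\infty$; finally, on $D_0$ together with the outer paths issuing from $D_0$, $w$ is a supersolution with $0<w<\beta$, matching $w=\delta_k$ at each ${\rm Q}_{j_k}$, decaying to $0$ along the outer paths of $D_0$, and obeying the super-Kirchhoff condition at every vertex of $D_0$. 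With these choices $w$ is continuous (it equals $c^*$ at ${\rm P}$ and $\delta_k$ at ${\rm Q}_{j_k}$), is $C^1$ on each edge, satisfies $\Delta_\Omega w+f(w)\le0$ on every edge, and at ${\rm P}$ the slopes sum to $\sqrt{2(F(1)-F(c^*))}-(N'-1)\sqrt{-2F(c^*)}=0$ by the choice of $c^*$ — the key point being that the $m$ finite edges ${\rm P}{\rm Q}_{j_k}$ carry near ${\rm P}$ exactly the slope $-\sqrt{-2F(c^*)}$ of a genuine outer path, so the balance that is needed at ${\rm P}$ is precisely the $N'$-star criterion \eqref{star-graph-equal-bb}. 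Hence $w$ is a supersolution with $0\le w\le1$, $w\to1$ on $\Omega_1$, and $w\le c^*<\beta$ at every exit point other than ${\rm P}_1$, which finishes the proof modulo the last piece.

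\emph{The main difficulty} is the construction of that last piece: a supersolution on the \emph{arbitrary} bounded graph $D_0$ (with its outer paths) that stays below $\beta$, matches the small data $\delta_k$ at the junctions ${\rm Q}_{j_k}$, and sheds enough flux there to absorb the contribution $+\mu_k$ of the edge ${\rm P}{\rm Q}_{j_k}$ while respecting super-Kirchhoff. This is exactly where the hypothesis ``$\ell_k$ sufficiently large'' (hence $\delta_k,\mu_k$ small) is essential, and where the strictness of \eqref{star-graph-equal-bb} supplies slack. A clean way to organize it: on an edge of $D_0$ adjacent to ${\rm Q}_{j_k}$ continue the same homoclinic branch past ${\rm Q}_{j_k}$ (for a degree-$2$ junction $w$ is then just a smooth homoclinic profile and the balance is automatic), route the tiny total flux $\sum_k\mu_k$ through $D_0$ toward its outer paths along profiles of energy $\le0$ (these stay below $\beta$ and decay to $0$ on the outer paths), and on the remaining edges at each vertex use the small-amplitude orbits through $(\delta,0)$, which have zero slope and so contribute nothing to the Kirchhoff sum; since $f<0$ on $(0,a)$ there is plenty of room for small positive supersolutions on $D_0$, and the exact-solution pieces above carry no flux imbalance at interior vertices.

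\emph{An alternative route} is a limiting argument: suppose $\PR_m(1,j)=1$ along a sequence of graphs with all $\ell_k^{(m)}\to\infty$. The limit profiles $\widehat v_1^{(m)}$ converge, near ${\rm P}$, to the limit profile $\widehat v_1^\star$ of the $N'$-star graph obtained by replacing each ${\rm P}{\rm Q}_{j_k}$ by a half-infinite path; by Theorem~\ref{thm:star} (blocking case) this profile is the barrier built above, so $\widehat v_1^\star({\rm P})=c^*<\beta$. Thus $\widehat v_1^{(m)}({\rm P})<\beta$ for large $m$, which already gives $\PR_m(1,j)=0$ for the $j$'s with exit point ${\rm P}$; and since the profile is then blocked at ${\rm P}$ it decays along the long edges to a small value at each ${\rm Q}_{j_k}$, which by a comparison on $D_0$ forces $\widehat v_1^{(m)}({\rm P}_j)<\beta$ for the remaining $j$'s as well — contradicting $\PR_m(1,j)=1$. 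Either way the delicate step is the behavior at the junctions ${\rm Q}_{j_k}$; everything else is bookkeeping with the phase portrait of Figure~\ref{fig:phase-portrait}.
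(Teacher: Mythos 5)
Your overall strategy---build a stationary supersolution that tends to $1$ along $\Omega_1$ and stays below $\beta$ at every other exit point, then invoke Theorems~\ref{thm:minimal} and \ref{thm:dichotomy}---is the right one and is the paper's strategy too, and your Kirchhoff balance at ${\rm P}$ is correct. But the step you yourself flag as ``the main difficulty,'' namely extending the barrier over the \emph{arbitrary} graph $D_0$, is exactly the step that is not proved, and your sketch of it does not go through. Because you put the exact homoclinic orbit on each finite edge ${\rm P}{\rm Q}_{j_k}$, the barrier reaches ${\rm Q}_{j_k}$ with a strictly negative slope $-\mu_k$ for every finite $\ell_k$; the super-Kirchhoff condition at ${\rm Q}_{j_k}$ then forces the continuation into $D_0$ to shed a total outgoing slope of at least $\mu_k$, and this flux must be routed through $D_0$ while keeping a supersolution below $\beta$ with super-Kirchhoff at every vertex of a graph of arbitrary structure (arbitrary degrees, loops, short edges). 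The devices you propose do not accomplish this: exact ODE solutions on individual edges do not automatically balance at vertices of degree $\geq 3$, and the periodic orbit through $(\delta,0)$ has zero slope only at isolated points, not along an edge of prescribed length. Your alternative limiting argument has a separate gap: the limit of $\widehat v_1^{(m)}$ as $\ell_k^{(m)}\to\infty$ is a stationary solution on the limiting $N'$-star graph tending to $1$ along $\Omega_1$, but Theorem~\ref{thm:minimal} only bounds it from \emph{below} by the star graph's minimal profile $\widehat v_1^\star$; it could be identically $1$, so $\widehat v_1^\star({\rm P})<\beta$ gives no upper bound on $\widehat v_1^{(m)}({\rm P})$. (Blocking passes to limits of graphs, as in Theorem~\ref{thm:limit-blocking}, but blocking of the limit graph does not in general pass back to the approximating graphs.)

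The paper closes the gap by spending the strictness of \eqref{star-graph-equal-bb} differently. Rather than moving the junction value to $c^*$ and keeping the homoclinic, it keeps the value $a$ at ${\rm P}$ and tilts the outgoing slope to $-\sqrt{-2F(a)}+\delta$, where $\delta>0$ is precisely the slack in \eqref{star-graph-equal-bb} (see \eqref{Kirchhoff-delta-1}). The resulting trajectory $V_2$ lies strictly inside the homoclinic orbit, hence on a periodic orbit, and attains \emph{zero} slope at a finite distance $M_2$ with value $V_2(M_2)<a$. The barrier is then continued by the constant $V_2(M_2)$ on all of $D_0$, its outer paths, and beyond the points at distance $M_2$ from ${\rm P}$: a constant below $a$ is a supersolution (since $f<0$ there) and satisfies the Kirchhoff condition at every vertex of $D_0$ trivially, whatever $D_0$ looks like. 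This is the one idea your construction is missing; with it, everything else in your first route goes through.
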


We shall prove the above theorems by the phase plane analysis.

\subsection{Examples of partial and one-way propagation}\label{ss:examples}

In this section we give an example of partial propagation and that of one-way propagation. Throughout this section, we assume that all the edges have the same thickness since otherwise it would be much easier to construct such examples.

\begin{prop}[Partial propagation]\label{prop:partial}
Let $\Omega$ be as shown in Figure~\ref{fig:partial-one-way} ({\it left}). Assume that
\begin{equation}\label{partial-F}
F(1)+(2^2-1)F(a)>0>F(1)+(3^2-1)F(a).
\end{equation}
If the length of the edge ${\rm P}_1{\rm P}_2$ is sufficiently large, then the following holds:
\begin{equation}\label{partial}
\PR(1,5)=1,\quad \PR(1,j)=0\ \ (j=2,3,4).
\end{equation}
\end{prop}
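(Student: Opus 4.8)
\textbf{Proof proposal for Proposition~\ref{prop:partial}.}

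The plan is to identify the graph in Figure~\ref{fig:partial-one-way} (\emph{left}) so that its center graph $D$ consists of the single edge ${\rm P}_1{\rm P}_2$ of length $\ell$, with the outer path $\Omega_1$ emanating from ${\rm P}_1$, the outer paths $\Omega_2,\Omega_3,\Omega_4$ emanating from ${\rm P}_2$, and the outer path $\Omega_5$ also emanating from ${\rm P}_1$ (so ${\rm P}_1$ has degree $3$: edge to ${\rm P}_2$, plus $\Omega_1$ and $\Omega_5$; and ${\rm P}_2$ has degree $4$: edge to ${\rm P}_1$, plus $\Omega_2,\Omega_3,\Omega_4$). First I would establish $\PR(1,5)=1$. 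Locally around ${\rm P}_1$, if $\ell$ is very large the edge ${\rm P}_1{\rm P}_2$ behaves like an infinite outer path as far as the front coming from $\Omega_1$ is concerned, so near ${\rm P}_1$ the graph ``looks like'' a $3$-star graph with outer paths $\Omega_1,\Omega_5$ and (a long approximation to) the ${\rm P}_1{\rm P}_2$-edge. The first inequality in \eqref{partial-F}, namely $F(1)+(2^2-1)F(a)>0$, is precisely \eqref{star-graph-equal-a} for $N'=3$, so on the genuine $3$-star graph we have propagation to all outer paths and $\widetilde v\equiv 1$; by the faraway-perturbation philosophy of Theorem~\ref{thm:local-star1}/\ref{thm:local-star2}, or more directly by a phase-plane barrier from below, the value $\widehat v_1({\rm P}_1)$ is forced above $\beta$ when $\ell$ is large, whence by the Dichotomy Theorem~\ref{thm:dichotomy} propagation occurs from $\Omega_1$ to $\Omega_5$ (and into the ${\rm P}_1{\rm P}_2$-edge up to ${\rm P}_2$).

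Next I would establish $\PR(1,j)=0$ for $j=2,3,4$. This is the heart of the argument. Once the front has traveled along the long edge ${\rm P}_1{\rm P}_2$ and reached ${\rm P}_2$, the local configuration at ${\rm P}_2$ is that of a $4$-star graph: one incoming long edge (from ${\rm P}_1$) carrying a front near value $1$, and three outgoing outer paths $\Omega_2,\Omega_3,\Omega_4$. The second inequality in \eqref{partial-F}, $F(1)+(3^2-1)F(a)<0$, is exactly the strict blocking condition \eqref{star-graph-equal-bb} with $N'=4$ (one incoming edge, three outgoing, $N'=4$). So on a genuine $4$-star graph blocking occurs, and the strategy is to build an explicit upper barrier (a stationary supersolution) $w$ on $\Omega$ that equals $1$ on $\Omega_1$, stays below $\beta$ on $\Omega_2\cup\Omega_3\cup\Omega_4$ near ${\rm P}_2$, and tends to $0$ along $\Omega_2,\Omega_3,\Omega_4$. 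On the long edge ${\rm P}_1{\rm P}_2$ the barrier transitions from near $1$ down to a value below $\beta$; this requires length $\ell$ large enough, which is where the hypothesis on the length of ${\rm P}_1{\rm P}_2$ is used. The barrier on each of $\Omega_2,\Omega_3,\Omega_4$ is taken to be the monotone decreasing solution of $\phi''+f(\phi)=0$ emanating from a value $\le\beta$ (as in the phase portrait of Figure~\ref{fig:phase-portrait}), and on the ${\rm P}_1{\rm P}_2$-edge a suitable solution of the same ODE with the appropriate boundary values; matching the Kirchhoff (super-Kirchhoff \eqref{super-Kirchhoff}) inequality at ${\rm P}_2$ is arranged by the blocking computation of Theorem~\ref{thm:star}/\eqref{star-graph-equal-bb} applied to the $4$-star at ${\rm P}_2$. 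Having produced such an upper barrier $w$, the Minimality Theorem~\ref{thm:minimal} (with the roles adjusted, using that $w$ is a supersolution with $w\equiv1$ on $\Omega_1$, hence $w\ge\widehat v_1$) gives $\widehat v_1\le w$ on $\Omega_2\cup\Omega_3\cup\Omega_4$, so $\lim_{x_j\to\infty}\widehat v_1{}_{\restr\Omega_j}(x_j)\le\lim_{x_j\to\infty}w{}_{\restr\Omega_j}(x_j)=0$, i.e. $\PR(1,j)=0$ for $j=2,3,4$ by the Dichotomy Theorem.

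A technical point to watch: the barrier on the ${\rm P}_1{\rm P}_2$-edge must simultaneously sit above $\widehat v_1$ near ${\rm P}_1$ (where $\widehat v_1$ is close to $1$) and be compatible, through the Kirchhoff inequality at ${\rm P}_2$, with the strictly-blocking $4$-star estimate. The phase-plane analysis of Figures~\ref{fig:pulse-solution}/\ref{fig:phase-portrait} provides the needed family of ODE solutions; one chooses on the ${\rm P}_1{\rm P}_2$-edge a heteroclinic-like piece decreasing from a value close to $1$ at ${\rm P}_1$ to the value dictated at ${\rm P}_2$ by the $4$-star blocking barrier of Theorem~\ref{thm:perturbation-star2}, and the length $\ell$ being large ensures such a decreasing piece exists and that the derivative at ${\rm P}_2$ has the correct sign for the super-Kirchhoff inequality. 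I expect this gluing of the two local pictures --- the non-blocking $3$-star at ${\rm P}_1$ forcing $\widehat v_1({\rm P}_1)>\beta$, versus the blocking $4$-star at ${\rm P}_2$ forcing decay into $\Omega_2,\Omega_3,\Omega_4$ --- along a single long edge, with all Kirchhoff inequalities consistent, to be the main obstacle; everything else is a combination of Theorems~\ref{thm:dichotomy}, \ref{thm:minimal}, \ref{thm:star} and the standard phase-plane constructions already used elsewhere in the paper.
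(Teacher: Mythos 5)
Your reading of the graph is correct, and your treatment of the blocking half ($\PR(1,j)=0$ for $j=2,3,4$) is essentially the paper's: the second inequality in \eqref{partial-F} is \eqref{star-graph-equal-bb} with $N'=4$ at ${\rm P}_2$, and the glued upper barrier you describe (identically $1$ on $\Omega_1$, $\Omega_5$ and most of ${\rm P}_1{\rm P}_2$, decreasing to $a$ over a fixed length $M_1$ before ${\rm P}_2$, then following the homoclinic orbit down to $0$ on $\Omega_2,\Omega_3,\Omega_4$, with a negative derivative gap where it meets the constant $1$) is exactly the supersolution built in the proof of Theorem~\ref{thm:local-star1}; combined with Theorem~\ref{thm:minimal} it gives $\widehat v_1\le w$ and hence blocking. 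The paper simply cites Theorem~\ref{thm:local-star1} for this. So the part you flag as ``the heart of the argument'' is in fact already packaged in the paper, and your worry about the consistency of the gluing is resolved there.

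The genuine gap is in the part you treat as easy, namely $\PR(1,5)=1$. Everything in your toolkit --- supersolutions, the super-Kirchhoff condition, Theorem~\ref{thm:minimal}, Theorems~\ref{thm:local-star1}--\ref{thm:local-star2} --- produces only \emph{upper} bounds on $\widehat v_1$, i.e.\ blocking conclusions; to get $\widehat v_1({\rm P}_1)>\beta$ you need a \emph{lower} bound, and neither the ``faraway-perturbation philosophy'' (no theorem in the paper gives propagation for a faraway attachment; Corollary~\ref{cor:propagation-open} concerns perturbations with vanishing total length, not a long added edge) nor an unspecified ``phase-plane barrier from below'' delivers this: a stationary subsolution only bounds $\widehat v_1$ from below after you have separately shown that $\widehat u_1(t,\cdot)$ dominates it at some time, which is not addressed. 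The paper closes this gap with a compactness argument: if $\widehat v_1^{L_k}({\rm P}_1)\le\beta$ along a sequence $L_k\to\infty$, the profiles converge (up to a subsequence, using the uniform gradient bounds of Proposition~\ref{prop:gradient-max}) to a stationary solution $v^\infty$ on the limiting $3$-star graph at ${\rm P}_1$ with $v^\infty\to1$ along $\Omega_1$ and $v^\infty({\rm P}_1)\le\beta$, contradicting the sharp non-blocking criterion of Theorem~\ref{thm:star} under $F(1)+3F(a)>0$. Some argument of this kind (or an explicit dynamic lower barrier) is needed; as written, your proof of $\PR(1,5)=1$ does not go through.
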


\begin{proof}
We first remark that the condition \eqref{partial-F} implies that propagation occurs for a 3-star graph while blocking occurs for a 4-star graph. The fact that $\PR(1,j)=0\,(j=2,3,4)$ is therefore a direct consequence of Theorem~\ref{thm:local-star1}. It remains to show that $\PR(1,5)=1$.

Let $L$ denote the length of the edge ${\rm P}_1{\rm P}_2$. For each given $L>0$, the limit profile associated with the outer path $\Omega_1$ is denoted by $\widehat{v}_1^L$, as it depends on the parameter $L$. By Theorem~\ref{thm:dichotomy}. in order to prove $\PR(1,5)=1$, it suffices to show that $\widehat{v}_1^L({\rm P}_1)>\beta$ if $L$ is sufficiently large. Assume the contrary. Then there exists a sequence $L_k\to\infty$ such that $\widehat{v}_1^{L_k}({\rm P}_1)\leq\beta$ for $k=1,2,3,\ldots$. By choosing a subsequence if necessary, we may assume that $\widehat{v}_1^{L_k}$ converges to a stationary solution $v^\infty$ with $v^\infty({\rm P}_1)\leq\beta$, and this stationary solution is defined on a 3-star graph with the center point ${\rm P}_1$, since ${\rm P}_2$ escapes to infinity.  Furthermore, it is clear that $v^\infty$ tends to $1$ at infinity along the outer path $\Omega_1$. However, by \eqref{partial-F}, no blocking occurs for a 3-star graph, therefore it is impossible to have $v^\infty({\rm P}_1)\leq\beta$. This contradiction shows that $\widehat{v}_1^L({\rm P}_1)>\beta$ if $L$ is sufficiently large, hence $\PR(1,5)=1$. The proposition is proved.
\end{proof}

\begin{figure}[h]
\begin{center}
\includegraphics[scale=0.33]
{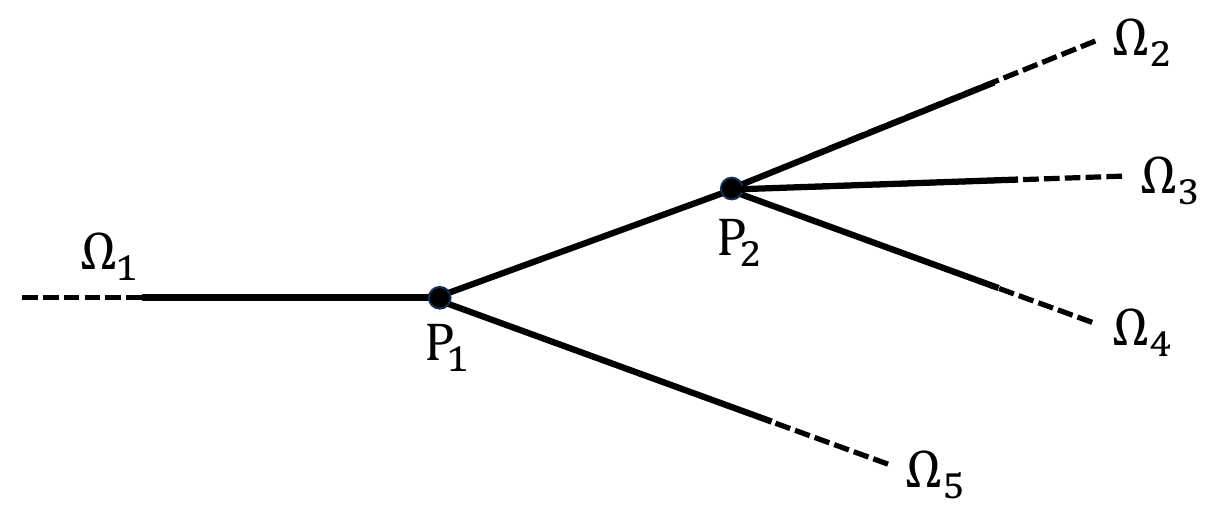}
\hspace{8pt}
\includegraphics[scale=0.33]
{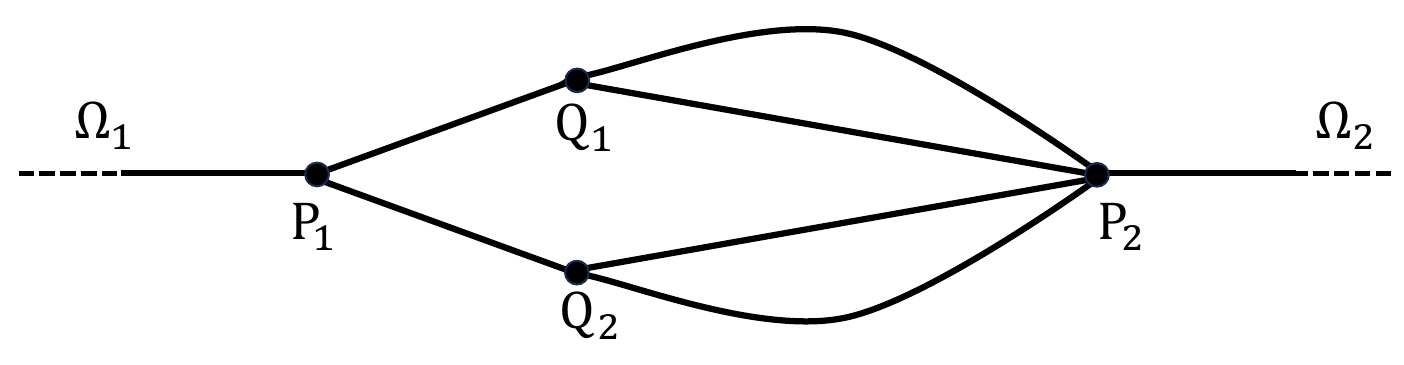}
\end{center}
\vspace{-5pt}
\caption{An example of partial propagation ({\it above}) and one-way propagation ({\it below}).}
\label{fig:partial-one-way}
\end{figure}

Next we discuss the example given in Figure~\ref{fig:partial-one-way} ({\it right}). Our result is the following:

\begin{prop}[One-way propagation]\label{prop:one-way}
Let $\Omega$ be as shown in Figure~\ref{fig:partial-one-way} ({\it right}). Assume that
\begin{equation}\label{one-way-F}
F(1)+(2^2-1)F(a)>0>F(1)+(4^2-1)F(a).
\end{equation}
If the edges ${\rm P}_1{\rm Q}_1$, ${\rm P}_1{\rm Q}_2$, ${\rm P}_2{\rm Q}_1$ and ${\rm P}_2{\rm Q}_2$ are all sufficiently long, then the following holds:
\begin{equation}\label{one-way}
\PR(1,2)=1,\quad\ \PR(2,1)=0.
\end{equation}
\end{prop}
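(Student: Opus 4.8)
Here is how I would organize the argument for \eqref{one-way}. The two assertions are driven by entirely different mechanisms. As one reads off from Figure~\ref{fig:partial-one-way} (\emph{right}), the vertex ${\rm P}_1$ carries only the outer path $\Omega_1$, while ${\rm P}_2$ has degree $5$ (it carries $\Omega_2$ together with the further outer paths attached there and the two joining edges ${\rm P}_2{\rm Q}_1,{\rm P}_2{\rm Q}_2$), and ${\rm P}_1,{\rm P}_2$ are linked by the two ``corridors'' ${\rm P}_1{\rm Q}_1{\rm P}_2$ and ${\rm P}_1{\rm Q}_2{\rm P}_2$; this degree imbalance is what produces the one-way behavior.

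\emph{Blocking $\PR(2,1)=0$.} This is a direct application of Theorem~\ref{thm:local-star2}. I would take ${\rm P}={\rm P}_2$ and let $D_0$ be the portion of $\Omega$ reached through the corridors (${\rm P}_1$, ${\rm Q}_1$, ${\rm Q}_2$, the edges ${\rm P}_1{\rm Q}_1$, ${\rm P}_1{\rm Q}_2$, $\Omega_1$, and the outer paths issuing from ${\rm Q}_1,{\rm Q}_2$). Then $N'=5$ edges emanate from ${\rm P}_2$: $\Omega_2$, the two edges ${\rm P}_2{\rm Q}_1,{\rm P}_2{\rm Q}_2$ joining $D_0$, and the remaining outer paths. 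By \eqref{one-way-F} we have $F(1)+((N'-1)^2-1)F(a)=F(1)+15F(a)<0$, so \eqref{star-graph-equal-bb} holds, and Theorem~\ref{thm:local-star2} gives $\PR(2,j)=0$ for every $j\ne2$ as soon as ${\rm P}_2{\rm Q}_1$ and ${\rm P}_2{\rm Q}_2$ are long enough; in particular $\PR(2,1)=0$.

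\emph{Propagation $\PR(1,2)=1$.} By Theorem~\ref{thm:dichotomy} it suffices to show $\widehat v_1({\rm P}_2)>\beta$ when all four joining edges are long. I would argue by contradiction, in the spirit of the proof of Proposition~\ref{prop:partial}: writing $\widehat v_1^L$ for the limit profile when the joining edges have (large) lengths $L$, suppose there are $L_k\to\infty$ with $\widehat v_1^{L_k}({\rm P}_2)\le\beta$. Taking a subsequential limit along which ${\rm P}_2,{\rm Q}_1,{\rm Q}_2$ escape to infinity, $\widehat v_1^{L_k}$ converges to a stationary solution on the $3$-star centered at ${\rm P}_1$ that tends to $1$ along $\Omega_1$; since \eqref{one-way-F} gives $F(1)+3F(a)>0$ this $3$-star is non-blocking, so by Theorem~\ref{thm:minimal} (together with Theorem~\ref{thm:star}) the limit is $\equiv1$, hence $\widehat v_1^{L_k}\to1$ uniformly on compacta of this $3$-star, in particular near ${\rm P}_1$ along ${\rm P}_1{\rm Q}_1$ and ${\rm P}_1{\rm Q}_2$. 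The crucial remaining step is a ``corridor'' estimate: on each long joining edge $\widehat v_1^{L_k}$ solves $v''+f(v)=0$, stays in $[0,1]$, and by Corollary~\ref{cor:no-two-stationary} has at most one critical point, while at its ${\rm P}_1$-end it lies near the saddle $(v,v')=(1,0)$ of the phase portrait (Figure~\ref{fig:phase-portrait}); since the unstable manifold of that saddle leaves the strip $\{0<v<1\}$ (it reaches $v=0$ with speed $\sqrt{2F(1)}>0$, by the energy identity $\tfrac12(v')^2+F(v)\equiv F(1)$), a solution confined to $[0,1]$ over an interval of length $L_k\to\infty$ that is close to $1$ at one end must stay close to $1$ throughout. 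Consequently $\widehat v_1^{L_k}\to1$ on all of ${\rm P}_1{\rm Q}_1$ and ${\rm P}_1{\rm Q}_2$, so $\widehat v_1^{L_k}({\rm Q}_1),\widehat v_1^{L_k}({\rm Q}_2)\to1$; repeating the same compactness-plus-corridor argument at the vertices ${\rm Q}_1,{\rm Q}_2$ (each a non-blocking $3$-star, $R=2$, if it is a genuine vertex; otherwise ${\rm P}_1{\rm Q}_\ell{\rm P}_2$ is a single long edge) and then along ${\rm Q}_1{\rm P}_2,{\rm Q}_2{\rm P}_2$ carries the value $\approx1$ all the way to ${\rm P}_2$, forcing $\widehat v_1^{L_k}({\rm P}_2)\to1>\beta$ --- a contradiction. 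Hence $\widehat v_1({\rm P}_2)>\beta$, and $\PR(1,2)=1$ by Theorem~\ref{thm:dichotomy}.

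The main obstacle is precisely this corridor estimate: ruling out that the front, once pinned near $1$ at the near end of a long edge, slides off toward small values before reaching the far vertex. This is a phase-plane statement (no bounded orbit in $\{0<v<1\}$ can linger near the saddle $v=1$), made quantitative with the one-critical-point bound of Corollary~\ref{cor:no-two-stationary}; it is the same ingredient underlying Theorems~\ref{thm:local-star1}--\ref{thm:local-star2}, so I would isolate it once as a lemma and reuse it. Note finally that \eqref{one-way-F} is used asymmetrically: the strict blocking inequality $F(1)+15F(a)<0$ enters only through Theorem~\ref{thm:local-star2} for $\PR(2,1)=0$, whereas only $F(1)+3F(a)>0$ is needed to launch and sustain the front coming from $\Omega_1$.
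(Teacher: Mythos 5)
Your argument for $\PR(2,1)=0$ is correct and is exactly the paper's: apply Theorem~\ref{thm:local-star2} at ${\rm P}_2$ with $N'=5$, using the strict inequality $F(1)+15F(a)<0$ from \eqref{one-way-F} and the assumption that the edges joining ${\rm P}_2$ to ${\rm Q}_1,{\rm Q}_2$ are long.

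The propagation half, however, contains a genuine gap: the ``corridor estimate'' is false. A solution of $v''+f(v)=0$ on $[0,L]$ taking values in $[0,1]$ and lying arbitrarily close to the saddle $(1,0)$ at $x=0$ need not remain close to $1$ up to $x=L$, however large $L$ is. The stable-manifold orbit of $(1,0)$ is itself a counterexample: normalize it so that $v\to1$ as $x\to-\infty$ and $v(x^*)=0$ at the finite point where the energy identity $\tfrac12(v')^2+F(v)=F(1)$ forces $v'=-\sqrt{2F(1)}$; its restriction to $[x^*-L,\,x^*]$ is $O(e^{-cL})$-close to $(1,0)$ at the left end, monotone (so Corollary~\ref{cor:no-two-stationary} excludes nothing), confined to $[0,1]$, and equal to $0$ at the right end. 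Orbits with energy slightly below $F(1)$ behave the same way after lingering near the saddle for a long but finite distance. Since your subsequential-limit step yields no rate for the convergence $\widehat v_1^{L_k}\to1$ near ${\rm P}_1$, you cannot exclude that $\widehat v_1^{L_k}$ is only $e^{-cL_k}$-close to $(1,0)$ at the near end of the corridor and slides down to values near $0$ before reaching ${\rm Q}_1$. That this really happens on some graphs is precisely the content of Theorems~\ref{thm:local-star1}--\ref{thm:local-star2}: there the limit profile is close to $1$ along most of the long edge and collapses to $\le\beta$ in an $O(1)$ neighborhood of the far vertex. An argument that is purely local to the edge and blind to the structure at its far end therefore cannot work.

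For comparison, the paper closes this half by a global mechanism: the graph is obtained from the double-branching graph of Figure~\ref{fig:IJM-JM} (right) by unifying its four outer paths at ${\rm P}_2$; propagation from $\Omega_1$ to those four paths for long ${\rm P}_1{\rm Q}_1,{\rm P}_1{\rm Q}_2$ is taken from \cite{JM2024}, and Theorem~\ref{thm:unification} then transfers propagation to the unified path $\Omega_2$. In the proof of Theorem~\ref{thm:unification} the far end is coupled to the corridor through the Kirchhoff balance at ${\rm P}_2$: a hypothetical value $\le\beta$ there produces a negative derivative gap, which lets one fold the profile back into a supersolution on the un-unified graph that would block, contradicting (via Theorem~\ref{thm:minimal}) the propagation already established. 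You would need to replace your corridor estimate by this, or an equivalent, argument.
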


\begin{proof}
We fist note that the condition \eqref{one-way-F} implies that propagation occurs for a 3-star graph while blocking occurs for a 5-star graph. Next we remark that  the graph in Figure~\ref{fig:partial-one-way} ({\it right}) is obtained by unifying the four outer paths in Figure~\ref{fig:IJM-JM} ({\it right}) beyond certain points that are far away. Before this unification, the graph is exactly the same as in Figure~\ref{fig:IJM-JM} ({\it right}), therefore propagation occurs from $\Omega_1$ to the remaining four outer paths if  ${\rm P}_1{\rm Q}_1$ and ${\rm P}_1{\rm Q}_2$ are sufficiently long. Therefore, by Theorem~\ref{thm:unification}, propagation still occurs after unification, namely from $\Omega_1$ to $\Omega_2$ in Figure~\ref{fig:partial-one-way} ({\it right}), provided that the edges connecting ${\rm Q}_1, {\rm Q_2}$ and ${\rm P}_2$ are sufficiently long. This proves $\PR(1,2)=1$. Next we discuss propagation/blocking from $\Omega_2$ to $\Omega_1$. Since the graph is locally close to a 5-star graph around ${\rm P}_2$, we see from Theorem~\ref{thm:local-star2} that the propagation from $\Omega_2$ is blocked, if the edges connecting ${\rm P}_2$ and ${\rm Q}_1, {\rm Q_2}$ are all sufficiently long, hence $\PR(2,1)=0$. The proposition is proved. 
\end{proof}

\subsection{Graph with a reservoir and incomplete invasion}\label{ss:reservoir}

In this section, we consider a graph that has a ``reservoir'' type subgraph as shown in Figure~\ref{fig:3star-reservoir}. For the clarity of the arguments, we assume that all the edges in $\Omega$ have the same thickness. Our goal is to show that the value of the limit profile $\widehat{v}_i$ within the reservoir is always close to $0$ regardless of the configurations of the remaing parts of the graph.

\begin{figure}[h]
\vspace{5pt}
\begin{center}
\includegraphics[scale=0.45]
{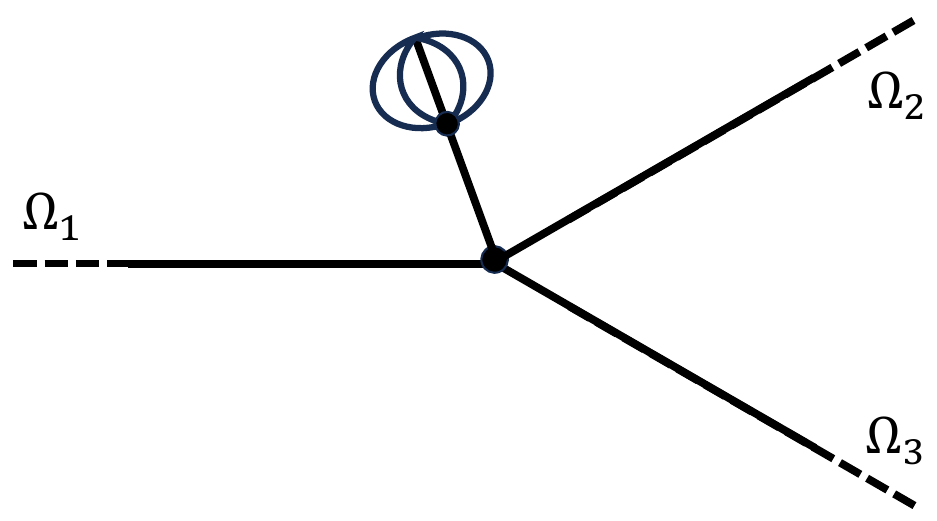}
\end{center}
\vspace{-5pt}
	\caption{A graph with a ``reservoir''.}
\label{fig:3star-reservoir}
\end{figure}

We first give a detailed description of the reservoir, which we call ${\mathcal R}_0$. As shown in Figure~\ref{fig:reservoir}. ${\mathcal R}_0$ consists of an edge $E_0={\rm P}_0{\rm Q}_0$ and a bounded finite graph $\Delta_0$ that is attached to $E_0$ via the vertex ${\rm P}_0$ of $E_0$. The graph ${\mathcal R}_0$ is connected to the rest of the graph $\Omega$ via the other vertex ${\rm P}_0$ of $E_0$, and there is no other common point between ${\mathcal R}_0$ and the rest of $\Omega$.
Let $L$ denote the length of $E_0$ and $|\Delta_0|$ the total length of the edges of $\Delta_0$. Let $\mu_1:=\mu_1(\Delta_0)$ be the smallest positive eigenvalue of the Laplacian $-\Delta$ on $\Delta_0$ as defined in Section~\ref{ss:poincare} by \eqref{mu-eigenvalue}.

\begin{figure}[h]
\vspace{5pt}
\begin{center}
\includegraphics[scale=0.45]
{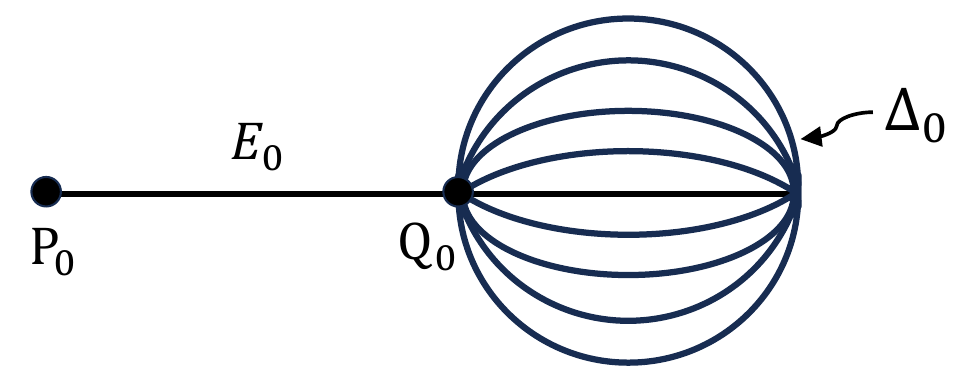}
\end{center}
\vspace{-5pt}
\caption{Detailed view of the reservoir.}
\label{fig:reservoir}
\end{figure}

Next we introduce some constants associated with the nonlinearity $f$. Let $F(s)=\int_0^s f(r)dr$ and $W(s):=-F(s)$. Then, by \eqref{f}, \eqref{F} and \eqref{beta}, we have
\[
W(0)=0,\quad W(s)>0\ (0<s<\beta),\quad W(\beta)=0,\quad W(s)<0\ (\beta<s\leq 1). 
\] 
Let $\delta,$ be a constant such that $0<\delta<a$. Then there exists a constant $\mu^*>0$ such that
\begin{equation}\label{delta-mu}
\frac{\mu^*}{2} (s-\delta)^2 + W(s) \geq \sigma\quad (\,{}^\forall s\in[0,1]\,) 
\end{equation}
for some constant $\sigma>0$. Fix such constants $\delta, \mu^*, \sigma$. 
(Note that the choice of such constants is certainly not unique, in particular, \eqref{delta-mu} is satisfied if $\mu^*$ is sufficiently large, but we try not to choose a too large $\mu^*$, the reason being clear from the statement of the theorem below.)

The following theorem holds:

\begin{thm}[Value of the solution in the reservoir]\label{thm:reservoir}
Suppose that $\Omega$ contains a reservoir type subgraph ${\mathcal R}_0:=\Delta_0\cup E_0$ as described above, and assume that $\mu_1(\Delta_0)\geq \mu^*$ and that
\begin{subnumcases}{\label{reservoir-condition}}
\label{reservoir-condition-a} 
\ \frac{1}{2L}+L\left(F(1)-F(a)\right)\leq \sigma |\Delta_0| \quad \hbox{if}\ \ L\leq \left(\sqrt{2(F(1)-F(a))}\right)^{-1},\\
\label{reservoir-condition-b} 
\ \sqrt{2(F(1)-F(a))} \leq \sigma |\Delta_0|\quad\ \  \hbox{if}\ \ L> \left(\sqrt{2(F(1)-F(a))}\right)^{-1}.
\end{subnumcases}
Then, for any $i\in\{1,\ldots,N\}$, the limit profile $\widehat{v}_i$ satisfies 
\begin{equation}\label{v-reservoir}
\frac{1}{|\Delta_0|}\int_{\Delta_0} \widehat{v}_i \, dx \leq \delta.
\end{equation}
\end{thm}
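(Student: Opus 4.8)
\textbf{Proof plan for Theorem~\ref{thm:reservoir}.}

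The plan is to construct an explicit upper barrier for $\widehat{v}_i$ on the reservoir $\mathcal{R}_0$ by a variational (energy) argument, using the minimality of $\widehat{v}_i$ established in Theorem~\ref{thm:minimal} to reduce matters to building \emph{some} supersolution of \eqref{stationary} whose average over $\Delta_0$ is at most $\delta$. More precisely, I would first seek a function $w$ on $\Omega$ with $0\le w\le 1$ that equals $1$ along every outer path near infinity, satisfies $\Delta_\Omega w + f(w)\le 0$ on each edge in the weak sense, and satisfies the super-Kirchhoff condition at every vertex; by Theorem~\ref{thm:minimal} we then get $\widehat{v}_i\le w$ on $\Omega$, so it suffices to arrange $|\Delta_0|^{-1}\int_{\Delta_0} w\, dx\le \delta$. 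The natural choice is to take $w\equiv 1$ outside $\mathcal{R}_0$ and on the edge $E_0={\rm P}_0{\rm Q}_0$ take $w$ to be a one-dimensional profile that decreases from $1$ at ${\rm P}_0$ (the vertex joining the rest of $\Omega$) down to a small value near ${\rm Q}_0$ — wait, the geometry in Figure~\ref{fig:reservoir} has $\Delta_0$ attached at ${\rm P}_0$ and the connection to $\Omega$ at the \emph{other} endpoint, so $w$ should stay near $1$ only at the far end and be driven down across $E_0$ so that it is small on all of $\Delta_0$; I would set $w\equiv$ const $=\tau$ on $\Delta_0$ with $\tau\le\delta$, which automatically satisfies Kirchhoff inside $\Delta_0$, and let $w$ interpolate on $E_0$.

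The key computation is then to check the supersolution inequality on $E_0$ and the (super-)Kirchhoff condition at ${\rm P}_0$ and at the $\Omega$-side vertex. On $E_0$, identified with $[0,L]$, I want a decreasing function $w(x)$ with $w(0)\le 1$ near the $\Omega$ side, $w(L)=\tau$, $w'(L)=0$ (so Kirchhoff holds where $E_0$ meets $\Delta_0$, since $w$ is constant on $\Delta_0$), $w'(0)\ge 0$ on the $\Omega$-side is impossible if $w$ is decreasing — so instead the super-Kirchhoff inequality $\sum_i \partial w/\partial\nu_i\ge 0$ at the $\Omega$-side vertex must be arranged by letting $w\equiv 1$ on the $\Omega$ side with derivative $0$ there and $w'=0$ at that endpoint of $E_0$ too, i.e. $w$ should be a ``bump down'' profile on $E_0$ that touches $1$ with zero slope at the $\Omega$-end and reaches $\tau$ with zero slope at the $\Delta_0$-end. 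For such a profile the supersolution inequality $w''+f(w)\le 0$ forces, via multiplying by $w'$ and integrating, the Hamiltonian-type bound relating $L$ and the ``potential barrier'' $F(1)-F(\tau)\le F(1)-F(a)$ (using $\tau\le\delta<a$ and the monotonicity of $F$ on $[0,a]$): this is exactly where the two alternatives \eqref{reservoir-condition-a}--\eqref{reservoir-condition-b} in terms of the threshold length $(\sqrt{2(F(1)-F(a))})^{-1}$ come from — short edges cannot make $w$ drop via the reaction term alone, so one pays a $1/(2L)$ cost, whereas for long edges a traveling-wave-like connection suffices and the cost saturates at $\sqrt{2(F(1)-F(a))}$.

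The remaining and, I expect, \emph{main obstacle} is the Kirchhoff balance at ${\rm P}_0$: a purely one-dimensional ``bump'' on $E_0$ that is constant ($=\tau$) on $\Delta_0$ will generically violate Kirchhoff at ${\rm P}_0$ unless $w'$ vanishes there from the $E_0$ side, and forcing $w'({\rm P}_0\text{-side})=0$ while also $w(0)=1$, $w'(0)=0$ over a short interval of length $L$ is precisely what costs energy. The honest fix is to allow $w$ to be slightly \emph{non-constant} on $\Delta_0$: solve, on $\Delta_0$, the linear equation $-\Delta w = \mu^* (w-\delta)$ type comparison using the eigenvalue hypothesis $\mu_1(\Delta_0)\ge\mu^*$ together with the coercivity inequality \eqref{delta-mu} $\tfrac{\mu^*}{2}(s-\delta)^2+W(s)\ge\sigma$, so that an energy estimate on $\Delta_0$ (Poincar\'e inequality with constant $\mu_1(\Delta_0)$, as in Section~\ref{ss:poincare}) shows $\int_{\Delta_0} W(w)$ can be controlled and in turn $\int_{\Delta_0} w \le \delta |\Delta_0|$; the $\sigma|\Delta_0|$ on the right-hand side of \eqref{reservoir-condition} is exactly the energy budget that $\Delta_0$ can absorb. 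So the argument is a two-piece energy bookkeeping — a Hamiltonian estimate on $E_0$ giving the $L$-dependent cost on the left of \eqref{reservoir-condition}, and a Poincar\'e/coercivity estimate on $\Delta_0$ giving the gain $\sigma|\Delta_0|$ on the right — glued by the Kirchhoff condition at ${\rm P}_0$, and the delicate point is choosing the matching data at ${\rm P}_0$ so that both estimates close simultaneously while the global $w$ remains a genuine supersolution in the sense of Definition~\ref{def:super-sub}; once that barrier exists, Theorem~\ref{thm:minimal} gives $\widehat{v}_i\le w$ and hence \eqref{v-reservoir}.
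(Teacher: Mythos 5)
Your high-level bookkeeping is exactly the paper's: the cost $\tfrac{1}{2L}+L(F(1)-F(a))$ (saturating at $\sqrt{2(F(1)-F(a))}$ for $L$ beyond the optimal length) is weighed against the gain $\sigma|\Delta_0|$ coming from the Poincar\'e--Wirtinger inequality with $\mu_1(\Delta_0)\geq\mu^*$ and the coercivity \eqref{delta-mu}, and the conclusion is reached by feeding a supersolution equal to $1$ outside ${\mathcal R}_0$ into Theorem~\ref{thm:minimal}. But the route you propose for \emph{producing} that supersolution --- an explicit static profile on $E_0$ glued to a (near-)constant $\tau\leq\delta$ on $\Delta_0$ --- has a genuine gap, and it is the one you yourself flag. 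A profile on $E_0$ with value $1$ at the $\Omega$-side end and value $\tau<a$ with zero slope at the $\Delta_0$-side end, subject to $w''+f(w)\leq 0$, is over-determined: the exact trajectory of $\partial_x^2w+f(w)=0$ through $(\tau,0)$ in the phase plane is a closed periodic orbit around $(a,0)$ whose maximal height is strictly below $\beta<1$, so no exact solution connects $\tau$ to $1$; and a continuous supersolution must have a \emph{negative} derivative gap at any corner, which rules out the obvious piecewise fixes. Your suggestion to ``solve a linear comparison equation on $\Delta_0$'' does not resolve this; the matching at the $E_0$--$\Delta_0$ junction is left open, so the barrier whose existence the whole argument rests on is never actually exhibited.

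The paper sidesteps the construction entirely by a parabolic argument. It solves the initial-boundary value problem on ${\mathcal R}_0$ with Dirichlet data $u\equiv 1$ held at the junction vertex, initial data $1-y/L$ on $E_0$ and $0$ on $\Delta_0$, and uses the energy monotonicity of Proposition~\ref{prop:energy} (via Remark~\ref{rem:energy}) to get $J[u(t,\cdot)]\leq J[u_0]<\tfrac{1}{2L}-LF(a)$ for all $t$. A contradiction argument at the first time the average of $u$ over $\Delta_0$ reaches $\delta$ --- combining Poincar\'e--Wirtinger with \eqref{delta-mu} to force $J\geq\sigma|\Delta_0|-LF(1)$ --- shows the average stays below $\delta$ forever under \eqref{reservoir-condition-a}. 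The $\omega$-limit $V$ of this flow is then a stationary solution of the mixed problem, automatically satisfying all Kirchhoff conditions inside ${\mathcal R}_0$; extending $V$ by $1$ outside ${\mathcal R}_0$ creates only a negative derivative gap at the junction, hence a supersolution, and Theorem~\ref{thm:minimal} finishes. The case $L>L^*$ is handled by truncating $E_0$ to effective length $L^*=(2(F(1)-F(a)))^{-1/2}$, which is precisely why the hypothesis switches to \eqref{reservoir-condition-b} there. If you want to rescue your static construction you would have to reproduce something like this limiting procedure anyway, so the dynamic argument is the missing idea.
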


\begin{rem}\label{rem:reservoir}
In the special case where $\Delta_0$ is a melon-shaped graph given in Example~\ref{ex:melon}, the assumption $\mu_1(\Delta_0)\geq \mu^*$ in the above theorem is expressed as follows.
\[
\frac{\pi^2}{L_0^2}\geq \mu^*.
\]
On the other hand, we have $|\Delta_0|=mL_0$, where $m$ is the number of edges in $\Delta_0$. Therefore, the assumptions of Theorem~\ref{thm:reservoir} are fulfilled if $L_0$ is relatively small and $m$ is sufficiently large.  \qed
\end{rem}


\vskip 8pt\noindent
\underbar{\bf Complete and incomplete invation}

If propagation occurs from $\Omega_i$ to any other outer paths, that is, if $\PR(i,j)=1$ for any $j\in\{1,\ldots,N\}$ with $j\ne i$, then by definition we have
\begin{equation}\label{propagation-all}
\lim_{x_j\to\infty}\widehat{v}_i{}_{\restr \Omega_j}(x_j)=1\quad (\,{}^\forall j\in\{1,\ldots,N\}).
\end{equation}

\begin{df}\label{def:complete}
Let $i\in\{1,\ldots,N\}$.  
We say that {\it complete invasion} occurs from $\Omega_i$ if $\widehat{v}_i(x)\equiv 1$ on $\Omega$. We say that {\it incomplete invasion} occurs from $\Omega_i$ if $\widehat{v}_i$ satisfies \eqref{propagation-all} but $\widehat{v}_i(x)< 1$ on $\Omega$. 
\end{df}

If $\Omega$ has a reservoir-type subgraph satisfying the condition of Theorem~\ref{thm:reservoir}, then complete invasion cannot occur, therefore we have either blocking or incomplete invasion. 
On the other hand, if $\Omega$ is a star graph, then by Theorem~\ref{thm:star}, we have either blocking or complete invasion, hence incomplate invasion never occurs. The following theorem asserts that the same is true of perturbed star graphs:

\begin{thm}[Complete invasion on perturbed star graphs]\label{thm:complete}
Let the assumptions of Theorem~\ref{thm:perturbation-star1} hold. Then complete invasion occurs if the total length of the edges of the center graph $D$ is sufficiently small.
\end{thm}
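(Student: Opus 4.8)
The plan is to combine the robustness of propagation (Corollary~\ref{cor:propagation-open}, or rather its refinement Theorem~\ref{thm:perturbation-star1}) with the second assertion of Theorem~\ref{thm:star}, which tells us that on an \emph{unperturbed} non-blocking star graph the limit profile is identically $1$. The strategy is a compactness/limiting argument: suppose the conclusion fails, extract a sequence of perturbed graphs $D$ with total edge length shrinking to $0$ on which $\widehat v_i\not\equiv 1$, pass to the limit, and derive a contradiction with the fact that on the limit star graph $\widehat v_i\equiv 1$.

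\medskip
First I would set up the perturbation framework exactly as in Section~\ref{ss:perturbation}: let $\Omega^{(m)}$ be a sequence of graphs obtained from the star graph $\Omega$ by replacing the center point ${\rm P}$ by a bounded finite graph $D_m$ with $|D_m|\to 0$, so that $\Omega^{(m)}\to\Omega$ in the sense of Assumption~\ref{ass:perturbation-D}. By Theorem~\ref{thm:perturbation-star1} we already know that for all large $m$ we have $\PR_m(i,j)=1$ for all $j\ne i$, i.e.\ \eqref{propagation-all} holds for $\widehat v_i^{(m)}$ on $\Omega^{(m)}$. Suppose, for contradiction, that complete invasion fails along this sequence, i.e.\ $\widehat v_i^{(m)}\not\equiv 1$ on $\Omega^{(m)}$ for infinitely many $m$. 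Since $0\le\widehat v_i^{(m)}\le 1$ and each $\widehat v_i^{(m)}$ is a stationary solution of \eqref{RD-Omega} on $\Omega^{(m)}$, parabolic (elliptic) estimates give local $C^1$ compactness; after passing to a subsequence, $\widehat v_i^{(m)}$ converges locally uniformly, together with its first derivative on each edge, to a function $v_\infty$ on the limit graph $\Omega$ (the star graph), which is again a bounded stationary solution $0\le v_\infty\le 1$ satisfying the Kirchhoff condition at the center point ${\rm P}$ (the small graphs $D_m$ collapse to the point ${\rm P}$, and the Kirchhoff/flux balance passes to the limit because the total flux through $D_m$ is conserved and $|D_m|\to 0$).

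\medskip
Next I would identify $v_\infty$. Because propagation $\Omega_i\to\Omega_j$ holds for each $m$ uniformly (the monotone structure of $\widehat v_i^{(m)}$ on each outer path, from Theorem~\ref{thm:dichotomy}, is preserved), the limit $v_\infty$ satisfies $\lim_{x_i\to\infty} v_\infty{}_{\restr\Omega_i}(x_i)=1$, so by the minimality theorem (Theorem~\ref{thm:minimal}) applied on the \emph{star} graph $\Omega$ we get $v_\infty\ge\widehat v_i$, where $\widehat v_i$ is the limit profile of the base star graph. But by Theorem~\ref{thm:star} (the ``furthermore'' clause, using hypothesis \eqref{star-graph-equal-a}) we have $\widehat v_i\equiv 1$ on $\Omega$; combined with $v_\infty\le 1$ this forces $v_\infty\equiv 1$. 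Now the contradiction: since $\widehat v_i^{(m)}\to v_\infty\equiv 1$ locally uniformly and each $\widehat v_i^{(m)}$ is a stationary solution with $\widehat v_i^{(m)}\not\equiv 1$ yet $\widehat v_i^{(m)}\to 1$, one can argue via the strong maximum principle / a sweeping argument that for large $m$ the solution $\widehat v_i^{(m)}$ must actually equal $1$ identically. The cleanest route is: $\widehat v_i^{(m)}$ is linearly stable on every $\Omega^{(m),R}$ (Theorem~\ref{thm:stability}), and a stationary solution that is close to the constant $1$ in $C^1$ on a fixed large compact set, satisfies the equation, and obeys Kirchhoff, must coincide with $1$ there because $f'(1)<0$ makes $1$ a nondegenerate (hyperbolic) stationary state and there is no other solution in a $C^1$-neighborhood of $1$ (uniqueness for the elliptic problem near a nondegenerate solution, via implicit function theorem on the bounded truncation $\Omega^{(m),R}$ with the value of $\widehat v_i^{(m)}$ on $\partial\Omega^{(m),R}$ as data, which tends to $1$). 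Hence $\widehat v_i^{(m)}\equiv 1$ on a growing family of compacts, and then on all of $\Omega^{(m)}$, contradicting $\widehat v_i^{(m)}\not\equiv 1$.

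\medskip
The main obstacle I anticipate is the last step: passing from ``$\widehat v_i^{(m)}$ is globally close to $1$'' to ``$\widehat v_i^{(m)}\equiv 1$''. On an unbounded graph there is no immediate rigidity, so one must exploit the nondegeneracy of $1$ (from $f'(1)<0$) carefully, presumably by working on the truncated graphs $\Omega^{(m),R}$ with boundary data converging to $1$ and invoking a quantitative version of the implicit function theorem whose neighborhood size is uniform in $m$ (which is plausible since the geometry of $\Omega^{(m),R}$ is uniformly controlled: the total length and the number of edges are bounded, and $|D_m|\to 0$). A second, more minor, technical point is justifying that the Kirchhoff condition and the monotone/limit behavior on the outer paths genuinely pass to the limit as $D_m$ collapses; this is handled by the flux-conservation identity (the Gauss--Green formula \eqref{Green1}) applied on $D_m$, which bounds the oscillation of $\widehat v_i^{(m)}$ across $D_m$ by a quantity tending to $0$.
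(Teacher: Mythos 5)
Your overall plan (compactness plus a rigidity statement near the constant $1$) is not the paper's route, and the rigidity step, which you yourself flag as the main obstacle, does not work as described. The claim that a stationary solution which is $C^1$-close to the constant $1$ on a large compact set and obeys Kirchhoff must coincide with $1$ there is false: on a half-line, any point of the stable manifold of $(1,0)$ sufficiently near the equilibrium gives a non-constant solution of $v''+f(v)=0$ that is as $C^1$-close to $1$ as you like on any fixed compact set. Your implicit-function-theorem argument on the truncation $\Omega^{(m),R}$, with the solution's own boundary values as Dirichlet data, only yields uniqueness of the solution near $1$ \emph{for that data}; since the data is close to, but not equal to, $1$, the unique nearby solution is not the constant $1$, so nothing is concluded. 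Nondegeneracy of $1$ can only force $\widehat v_i^{(m)}\equiv 1$ if it is coupled to the behavior at infinity on \emph{all} outer paths together with a flux balance at the junction -- an ingredient your argument never supplies. (The compactness step giving $\widehat v_i^{(m)}({\rm P})\to 1$ is fine, but it turns out to be unnecessary.)

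The paper's proof is direct, and its key tool is exactly the one you relegate to a ``minor technical point'': the Gauss--Green formula \eqref{Green1} on the center graph $D$. Since $\PR(i,j)=1$ for all $j\ne i$ (Theorem~\ref{thm:perturbation-star1}), Theorem~\ref{thm:dichotomy} gives $\widehat v_i({\rm P}_j)>\beta$, and the uniform gradient bound of Proposition~\ref{prop:gradient-max} yields $\widehat v_i>\beta-C|D|>a$ on all of $D$ once $|D|$ is small. Then \eqref{Green1} gives
\[
\sum_{k=1}^N\frac{\partial \widehat v_i}{\partial \nu_k}({\rm P}_k)=-\int_D f\big(\widehat v_i\big)\,dx\le 0,
\]
while each outward derivative on the left is $\ge 0$ because $\widehat v_i$ is monotone increasing along every outer path (Theorem~\ref{thm:dichotomy} again). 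Hence all these derivatives vanish and $f(\widehat v_i)=0$ on $D$, which, since $\widehat v_i>a$ there, forces $\widehat v_i\equiv 1$ on $D$ and then on every outer path. If you wish to keep your framework, replace the IFT step by this flux-balance rigidity; but once you do, the compactness detour becomes superfluous.
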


\subsection{Behavior of more general solutions}\label{ss:general}

So far we discussed propagation and blocking in the sense of Definition~\ref{def:propagation}, which is based on the notion of limit profile $\widehat{v}_i(x):=\lim_{t\to\infty} \widehat{u}_i(t,x)$. In other words, our attention has been focused on whether or not the front-like solution $\widehat{u}_i(t,x)$ is blocked or not. In this section, we consider solutions of the Cauchy problem for \eqref{RD-Omega} by imposing the initial condition
\begin{equation}\label{u0}
u(0,x)=u_0(x) \quad \hbox{on}\ \ \Omega,
\end{equation}
where $u_0$ is a continuous function whose support is contained in $\Omega_i$. The following theorem shows that, if $u_0$ is relatively large, then the corresponding solution $u(t,x)$ convereges to $\widehat{v}_i(x)$ as $t\to\infty$. Thus $\widehat{v}_i$ is not just the limit of the front-like solution $\widehat{u}_i$ but is also the limit of a large class of solutions of \eqref{RD-Omega}. Therefore the notion of propagation and blocking defined in Definition~\ref{def:propagation} has much broader implications than initially intended.

Before stating our theorem, we define auxiliary functions $H(x), \Psi(x)$ on $\R$. Let $H(x), x\geq 0,$ be a function with $0\leq H<1$ that is defined uniquely by the condition:
\[
H''+f(H)=0\ \ (0<x<+\infty),\quad H(0)=0,\quad \lim_{x\to\infty}H(x)=1.
\]
This function corresponds to the portion of the stable manifold of $(1,0)$ in the phase-portrait in Figure~\ref{fig:phase-portrait} (in the region $0\leq v<1,\,v_x>0$). Next let $\Psi(x)$ be a solution of the problem:
\[
\Psi'' + f(\Psi)=0\ \ (|x|<R),\quad \Psi(\pm R)=0,\quad 0<\Psi(x)<1\ \ (|x|<R).
\]
It is well known that such a solution exists if $R>0$ is sufficiently large, and it corresponds to the portion of the orbit that lies between the homoclinic orbit and the stable and unstable manifolds of $(1,0)$ in the phase portrait. In particular, we have $\Psi(0)>\beta.$ 

Next, for each $b\in\R$, we define a function $\Psi^b(x)$ by
\begin{equation}\label{Psi-b}
\Psi^b(x)=
\begin{cases}
\,\Psi(x-b) \ & (b-R\leq x \leq b+R),\\
\ 0 \ & (|x-b|>R). 
\end{cases}
\end{equation}
Then $\Psi^b$ is a subsolution of the equation $\partial_t u = \partial_x^2 u+f(u)$ on $\R$. 
The following theorem holds:

\begin{thm}[Behavior of more general solutions]\label{thm:general}
Let $u(t,x)$ be a solution of \eqref{RD-Omega} whose initial data $u_0(x)$ has a support in $\Omega_i$ and satisfies either of the following conditions on $\Omega_i$:
\begin{itemize}\setlength{\itemsep}{0pt}
\item[{\rm (a)}] $\Psi^b(x_i)\leq u_0{}_{\restr \Omega_i}(x_i)\leq H(x_i)$ for all $x_i\in [0,\infty)$ for some $b\geq R$;
\item[{\rm (b)}] $0\leq u_0{}_{\restr \Omega_i}(x_i)\leq H(x_i)$ for all $x_i\in [0,\infty)$ and there exists $\sigma>0$ such that $u_0{}_{\restr \Omega_i}(x_i)\geq a+\sigma$ on some interval $I\subset (0,\infty)$ whose length is sufficiently large.
\end{itemize}
Then $\lim_{t\to\infty} u(t,x)=\widehat{v}_i(x)$ on $\Omega$.
\end{thm}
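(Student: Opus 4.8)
\textbf{Proof strategy for Theorem~\ref{thm:general}.}

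The plan is to sandwich the solution $u(t,x)$ between a subsolution that eventually exceeds $\widehat v_i$ near $\Omega_i$ and a supersolution that stays $\leq H$ along $\Omega_i$, then invoke the comparison principle together with the characterization of $\widehat v_i$ as the minimal stationary solution with limit $1$ along $\Omega_i$ (Theorem~\ref{thm:minimal}). First I would treat the upper bound, which is the same in both cases (a) and (b): since $u_0{}_{\restr\Omega_i}\leq H(x_i)$ and $u_0\equiv 0$ off $\Omega_i$, I would construct a stationary supersolution $\overline w$ of \eqref{RD-Omega} on $\Omega$ that equals the stable-manifold profile $H$ on $\Omega_i$ and is chosen large enough (e.g.\ constantly $1$, or $H$ suitably glued) on $D$ and the other outer paths so that it dominates $u_0$ and satisfies the super-Kirchhoff condition at the exit point ${\rm P}_i$ — here the one-sided derivative $H'(0)>0$ pointing into $\Omega_i$ helps make the Kirchhoff inequality go the right way. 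Actually the cleanest choice is to note that the function equal to $H(x_i)$ on $\Omega_i$ and $\equiv$ (value $H$ would take, extended by the phase portrait) is awkward, so instead I would use the constant supersolution $\overline w\equiv 1$ off a neighborhood of infinity and patch; in any case $u(t,x)\leq \overline w(x)$ for all $t\geq 0$ by comparison, hence $\limsup_{t\to\infty} u(t,x)\leq \overline w$, and more importantly this confines the $\omega$-limit set.

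Next I would handle the lower bound, which drives the convergence. In case (a), $\Psi^b(x_i)\leq u_0{}_{\restr\Omega_i}$ with $b\geq R$, and $\Psi^b$ is a (stationary, compactly supported) subsolution of the one-dimensional equation on $\Omega_i$, hence of \eqref{RD-Omega} on $\Omega$ after extending it by $0$ — the zero extension across ${\rm P}_i$ satisfies the sub-Kirchhoff condition because $\Psi^b$ vanishes with nonpositive slope there. By comparison $u(t,x)\geq \underline u(t,x)$ where $\underline u$ solves \eqref{RD-Omega} from initial data $\Psi^b$ (extended by $0$). Since $\Psi^b$ is a subsolution, $\underline u$ is time-increasing, so it converges to a stationary solution $\underline v$ with $\underline v\geq \Psi^b$ on $\Omega_i$, in particular $\underline v(x_i)\geq \Psi(0)>\beta$ at the point $x_i=b$. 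I would then argue that $\underline v$ must tend to $1$ along $\Omega_i$: by the dichotomy / phase-portrait analysis underlying Theorem~\ref{thm:dichotomy}, a bounded stationary solution on the half-line $\Omega_i$ that at some point exceeds $\beta$ and whose trajectory cannot escape the homoclinic loop (because it is bounded between $0$ and $1$) is forced to increase to $1$; the value $>\beta$ at an interior point rules out the decaying-to-$0$ branch. Hence $\underline v$ satisfies \eqref{Omega-i-1}, so Theorem~\ref{thm:minimal} gives $\underline v\geq \widehat v_i$, and therefore $\liminf_{t\to\infty} u(t,x)\geq \widehat v_i(x)$.

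To upgrade $\liminf\geq\widehat v_i$ and $\limsup\leq(\text{something }\geq\widehat v_i)$ to the exact limit $\widehat v_i$, I would pass to the $\omega$-limit set: by parabolic estimates $u(t,\cdot)$ is precompact locally uniformly, any element of the $\omega$-limit set is a stationary solution $v_\infty$ with $\widehat v_i\leq v_\infty\leq \overline w$, and $v_\infty$ still satisfies \eqref{Omega-i-1} (it is $\geq\widehat v_i$ which tends to $1$ along $\Omega_i$). The decisive point is then the \emph{linear stability} of $\widehat v_i$, Theorem~\ref{thm:stability}: since $\lambda^R>0$ for every $R$, $\widehat v_i$ is asymptotically stable under compactly supported perturbations, and combined with $\widehat v_i\leq v_\infty$ and the minimality, a standard argument (monotone iteration between $\widehat v_i$ and $v_\infty$, using that there is no stationary solution strictly between a stable solution and itself) forces $v_\infty=\widehat v_i$; since the $\omega$-limit set is a singleton, $u(t,x)\to\widehat v_i(x)$ locally uniformly, and the uniformity on all of $\Omega$ follows from the uniform decay estimates away from $\Omega_i$ built into the construction of $\overline w$. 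Finally, case (b) reduces to case (a): if $u_0{}_{\restr\Omega_i}\geq a+\sigma$ on a long interval $I$, then by the standard one-dimensional result for bistable equations the solution develops, after a bounded time $t_1$, a profile on $\Omega_i$ lying above some translate $\Psi^{b'}$ with $b'\geq R$ (an expanding plateau near $1$), while still staying $\leq H$ by the already-established upper barrier; applying case (a) from time $t_1$ finishes the proof.

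\textbf{Main obstacle.} The step I expect to be most delicate is the upper bound bookkeeping: constructing a genuine stationary supersolution on the whole graph $\Omega$ that (i) dominates $u_0$, (ii) respects the super-Kirchhoff condition at every vertex including ${\rm P}_i$, and (iii) yields enough uniform control near $x_i=\infty$ and on the other outer paths to make the convergence $u(t,x)\to\widehat v_i(x)$ uniform rather than merely locally uniform. The profile $H$ is tailor-made on $\Omega_i$ but one must check the Kirchhoff inequality where $\Omega_i$ meets $D$ and choose the extension over $D$ and $\Omega_j$ ($j\neq i$) compatibly; the phase-portrait picture (Figure~\ref{fig:phase-portrait}) and the fact that $H'>0$ make this workable, but it requires care.
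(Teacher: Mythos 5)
Your lower-bound half is essentially the paper's argument: extend $\Psi^b$ by zero (a time-independent subsolution), let the solution $U^b$ starting from it increase to a stationary limit $V^b$, note that $V^b(b)\geq\Psi(0)>\beta$ forces the trajectory of $V^b{}_{\restr\Omega_i}$ onto the stable manifold of $(1,0)$ so that $V^b\to 1$ along $\Omega_i$, and conclude $V^b\geq\widehat v_i$ from Theorem~\ref{thm:minimal}; the reduction of case (b) to case (a) via the Fife--McLeod criterion is also exactly the paper's route. (The paper reaches ``$V^b>\beta$ for large $x_i$'' by sliding the family $\Psi^{b'}$, but your single-point version is legitimate, since homoclinic and periodic orbits never exceed $\beta$.)

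The genuine gap is in the upper bound, and it is not mere bookkeeping. The observation you are missing is that $H(x_i)<\widehat v_i{}_{\restr\Omega_i}(x_i)$ for all $x_i\geq 0$: both trajectories lie on the stable manifold of $(1,0)$ and $H(0)=0<\widehat v_i({\rm P}_i)$, so $\widehat v_i{}_{\restr\Omega_i}(x_i)=H(x_i+b_0)$ for some $b_0>0$, and $H'>0$ does the rest. Hence $u_0\leq H<\widehat v_i$ on $\Omega_i$ and $u_0=0<\widehat v_i$ off $\Omega_i$, so the stationary solution $\widehat v_i$ \emph{itself} is the upper barrier: $u(t,\cdot)\leq\widehat v_i$ for all $t$, and together with $U^b(t,\cdot)\leq u(t,\cdot)$ this gives $V^b\leq\widehat v_i\leq V^b$, i.e.\ $V^b=\widehat v_i$, and the two-sided squeeze finishes the proof. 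This is the whole point of the hypothesis $u_0\leq H$. Your substitute constructions do not work: gluing $H$ on $\Omega_i$ to anything nonnegative on $D$ violates the super-Kirchhoff condition at ${\rm P}_i$, because the derivative into $\Omega_i$ is $H'(0)=\sqrt{2F(1)}>0$ while the candidate is $\geq 0=H({\rm P}_i)$ on the adjacent edges, so all inward derivatives are nonnegative and $\sum\rho\,\partial w/\partial\nu>0$ --- the wrong sign (you asserted the opposite). The fallback $\overline w\equiv 1$ only yields $\limsup u\leq 1$, and the subsequent $\omega$-limit argument via Theorem~\ref{thm:stability} does not close the gap: linear stability of $\widehat v_i$ under compactly supported perturbations does not exclude stationary solutions above it (e.g.\ $v\equiv 1$), so without the bound $u\leq\widehat v_i$ nothing prevents the $\omega$-limit set from sitting strictly above $\widehat v_i$.
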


The above theorem is an analogue of \cite[Theorem 3]{BHM2025}, in which one of the present authors studied front propagation in multi-dimensional bistable reaction-diffusion equations through a perforated wall.

\section{Preliminaries}\label{s:preliminaries}

\subsection{Phase portrait and the pulse solution}\label{ss:phase-portrait}

Let us recall that a function $v(x)$ is a solution of the stationary problem
\begin{equation}\label{stationary2}
\Delta_\Omega v+ f(v)=0\quad\hbox{on}\ \ \Omega
\end{equation}
if it is continuous on $\Omega$ and satisfies the equation $\partial_x^2 v + f(v)=0$ on each edge and the Kirchhoff condition \eqref{Kirchhoff} (or, more generally, \eqref{Kirchhoff2}) at each vertex.  Figure~\ref{fig:phase-portrait} shows a typical phase-portrait for the equation $\partial_x^2 v + f(v)=0$ on $\R$ when $f$ is a bistable nonlinearity satisfying \eqref{f}. Thus the behavior of the solution of equation \eqref{stationary2} on each edge of $\Omega$ is represented by an orbit, or its portion, in this phase portrait. Here $\beta$ is the constant that appears in \eqref{beta}. 
The vertical dotted line at the center of Figure~\ref{fig:phase-portrait} indicates the line $v=a$. Since $\partial_x(\partial_x v)=\partial_x^2 v =-f(a)=0$ on this line, all the orbits in the phase portrait cross this line horizontally, and, along each orbit, $v_x$ attains either the maximum or the minimum on this line.  

\begin{figure}[h]
\begin{center}
\includegraphics[scale=0.45]
{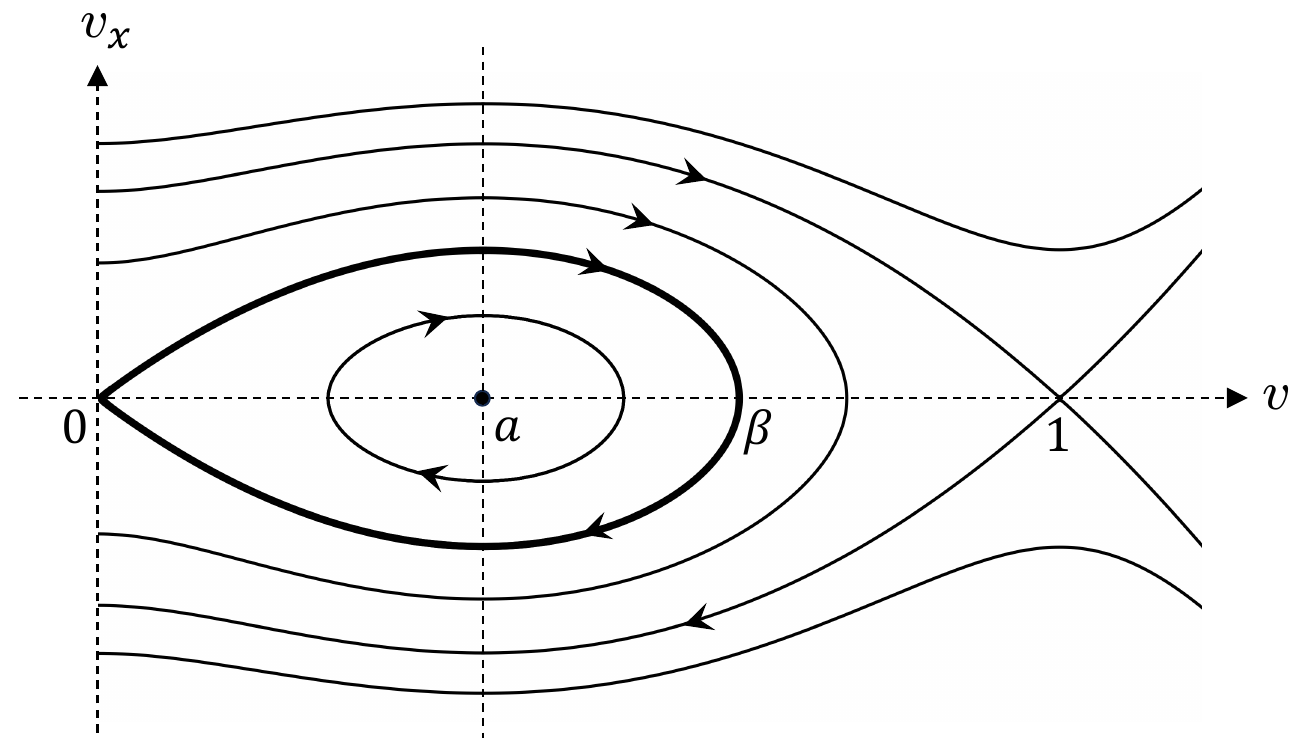}
\end{center}
\vspace{-8pt}
\caption{Phase portrait for $\partial_x^2 v + f(v)=0$ on $\R$.}
\label{fig:phase-portrait}
\end{figure}

The orbit that is marked in a thick line in Figure~\ref{fig:phase-portrait} is the homoclinic orbit emanating from $(0,0)$, which corresponds to the symmetrically decreasing solution $V(x)$, which we call the {\it pulse solution} (Figure~\ref{fig:pulse-solution}). Clearly, any spatial shift of $V$ corresponds to the same homoclinic orbit, but we define $V$ to be the one that takes its maximun at $x=0$. As one can see from the phase portrait, no other stationary solution tends to $0$ as $x\to +\infty$ nor as $x\to -\infty$.

\begin{figure}[h]
\begin{center}
\includegraphics[scale=0.45]
{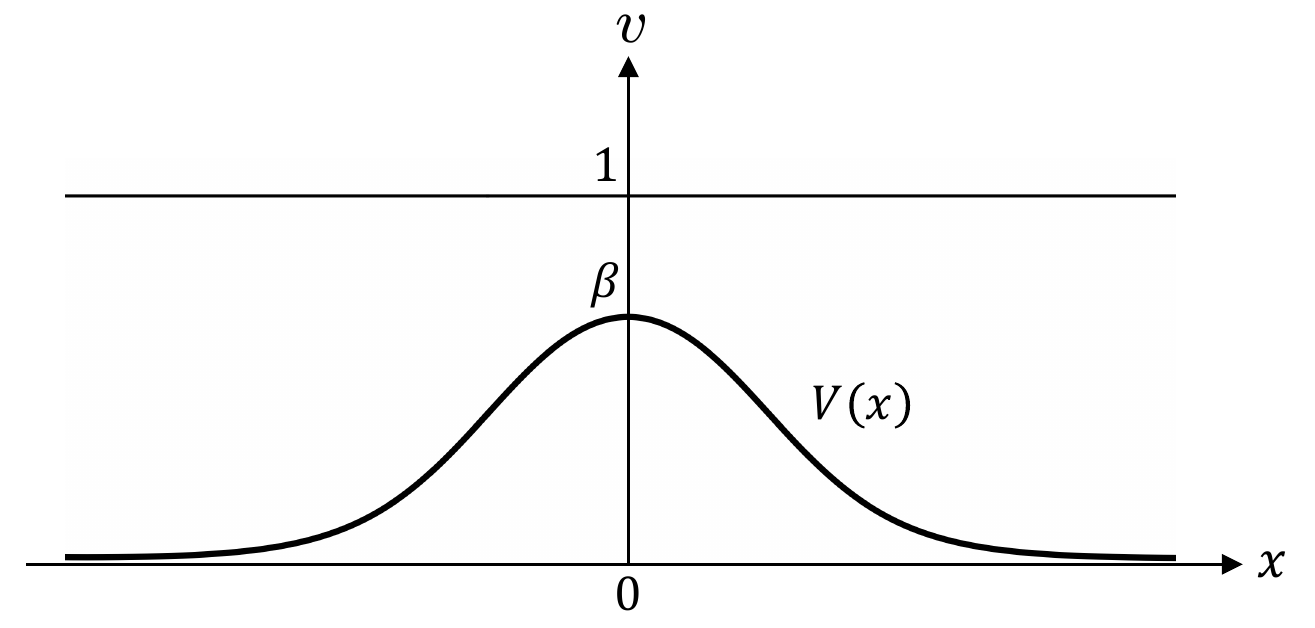}
\end{center}
\vspace{-8pt}
\caption{Symmetrically decreasing stationary solution (pulse solution).}
\label{fig:pulse-solution}
\end{figure}

\subsection{Continuity of $\partial_t u$ and $\Delta u$}\label{ss:ut-continuous}

Let $G$ be any metric graph. 
In Section~\ref{ss:metric-graph}, we defined the notion of a solution of \eqref{RD-G} as a function satisfying the conditions (a), (b), (c). In this section we show that if $u(t,x)$ satisfies (a), (b), (c), then automatically $\partial_t u$ and $\Delta u$ are continuous on the entire graph $G$. The continuity of $\partial_t u$ will be needed in the proof of Proposition~\ref{prop:energy}.

\begin{lem}\label{lem:ut-continuous}
Let $u(t,x)$ be a solution of \eqref{RD-G}. Then $\partial_t u$ and $\Delta_{G}u$ are continuous on $(t_0,t_1)\times G$.
\end{lem}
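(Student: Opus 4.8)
The plan is to work locally near a point $x_0 \in G$. Away from the vertices the claim is classical: on the interior of each edge $E$, identified with an interval in $\R$, $u$ solves the one-dimensional equation $\partial_t u = \partial_x^2 u + f(u)$, and standard parabolic regularity gives that $u$ is smooth there, so $\partial_t u$ and $\partial_x^2 u = \Delta_G u$ are continuous. Hence the only issue is continuity at a vertex $V$. Fix such a $V$ and let $E_1,\dots,E_m$ be the incident edges, each identified with $[0,L_i]$ (or $[0,\infty)$) with $0 \leftrightarrow V$. Continuity of $\partial_t u$ at $V$ means that $\partial_t u(t, \cdot)$ restricted to each $E_i$ has a limit as $x_i \to 0^+$, and these $m$ limits all coincide with a common value; likewise for $\Delta_G u = \partial_x^2 u$ on each edge. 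Since $u$ itself is continuous at $V$ by hypothesis (a), and $\partial_t u = \partial_x^2 u + f(u)$ on each edge, it suffices to handle one of the two quantities — say $\partial_t u$ — and the other follows because $f(u)$ is continuous at $V$.

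The first step is to show each one-sided limit exists and that the edge-restrictions of $\partial_t u$ extend continuously up to $V$. I would do this by a localization/reflection argument: pick a small $\delta>0$ so that $[0,\delta] \subset E_i$ for every $i$, and consider $u$ on the small star-shaped neighborhood $\mathcal{N} = \bigcup_i \{x_i \in E_i : 0 \le x_i \le \delta\}$ of $V$. On $\mathcal{N}$, $u$ solves a linear parabolic problem $\partial_t u = \partial_x^2 u + g(t,x)$ with $g = f(u(t,x))$ bounded and continuous (continuous on all of $\mathcal{N}$ by (a)), together with the Kirchhoff condition $\sum_i \rho_i \partial_{\nu_i} u(V) = 0$ at $V$ and continuity of $u$ across $V$. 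The key point: these vertex conditions are exactly the natural transmission conditions that make the heat semigroup on the metric star well-behaved, so one can represent $u$ near $V$ via the heat kernel on the star graph (Duhamel's formula, as referenced via \cite{Ca1999}) plus a smooth contribution from the boundary data on $\partial \mathcal{N}$. From that representation, parabolic estimates give that $\partial_t u$ (and $\Delta_G u$) are continuous up to $V$ along each edge with matching limits. Alternatively, and perhaps more elementarily, one can run a bootstrap: $\partial_x u$ is continuous on each $E_i$ up to $V$ by hypothesis (b), and $u$ is continuous across $V$, so along $E_i$ the function $w_i := \partial_t u$ is a weak solution of the heat equation with a continuous forcing term, and the Kirchhoff relation, being linear in $\partial_{\nu_i} u$, is inherited in a limiting sense; interior Schauder estimates on each edge combined with the compatibility of the traces at $V$ yield the continuity.

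I expect the main obstacle to be making the passage to the vertex rigorous — that is, upgrading the pointwise Kirchhoff condition \eqref{Kirchhoff2}, which a priori is just a statement about one-sided derivatives of $u$, into genuine regularity of $\partial_t u$ and $\partial_x^2 u$ up to $V$ with matching one-sided limits across all incident edges. The cleanest route is the heat-kernel representation on the finite metric star, for which the Kirchhoff vertex condition is precisely the self-adjointness condition that produces a smoothing semigroup; once $u$ is written as (heat kernel convolved with data) $+$ (Duhamel term with continuous source $f(u)$), differentiating under the integral and using standard kernel decay estimates gives continuity of $\partial_t u$ on $\mathcal{N}$ including at $V$, and then $\Delta_G u = \partial_t u - f(u)$ is continuous as well. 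The degree-one vertex case is a special instance where the Kirchhoff condition becomes the homogeneous Neumann condition and the argument is the familiar even-reflection one.
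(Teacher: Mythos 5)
Your plan correctly isolates the real difficulty (regularity up to a vertex, where the Kirchhoff condition couples the edges), but your route is genuinely different from the paper's, and as written it has a gap at the decisive step. The paper does not invoke the heat kernel or Duhamel's formula at all. Instead it ``unfolds'' the vertex: for a weight vector $\vec\theta=(\theta_1,\dots,\theta_m)$ with $\sum_j\theta_j\rho_j=\sum_j(1-\theta_j)\rho_j$, it sets $w(t,x)=\sum_j\theta_j\rho_j u_j(t,x)$ for $x\ge0$ and $w(t,x)=\sum_j(1-\theta_j)\rho_j u_j(t,-x)$ for $x<0$. The continuity of $u$ at $V$ and the Kirchhoff condition make $w$ and $\partial_x w$ match at $x=0$, so $w$ is a weak solution of $\partial_t w=\partial_x^2w+g$ on a full interval $(-L,L)$ with the vertex now an \emph{interior} point. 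Interior estimates with bounded forcing give $C^{\frac{\alpha}{2},1+\alpha}$; taking $m$ linearly independent choices of $\vec\theta$ recovers each $u_j$ as a linear combination of the $w^{(k)}$, hence each $u_j$ is H\"older up to $x_j=0$, the forcings become H\"older, and interior Schauder then gives $C^{1+\frac{\alpha}{2},2+\alpha}$. This is entirely elementary and sidesteps any construction of the Kirchhoff heat kernel on the star.

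The gap in your version is twofold. First, in the Duhamel route you claim that ``differentiating under the integral'' with the merely \emph{continuous} source $f(u)$ yields continuity of $\partial_t u$; this step fails in general, since $\partial_t p_{t-s}\sim(t-s)^{-3/2}$ is not absolutely integrable against a source that is only continuous -- one needs H\"older (or Dini) continuity of $f(u)$ near $V$ first, i.e.\ exactly the intermediate bootstrap ($L^\infty$ forcing $\Rightarrow$ H\"older solution $\Rightarrow$ H\"older forcing $\Rightarrow$ Schauder) that the paper carries out in two passes. Your route can be repaired by inserting that bootstrap, at the cost of also justifying the requisite two-sided bounds on the star-graph heat kernel. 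Second, your ``more elementary'' alternative -- interior Schauder estimates on each edge plus compatibility of traces -- does not actually reach the vertex: $V$ is a boundary point of every incident edge, so interior estimates say nothing there, and the Kirchhoff condition is a coupled, non-scalar boundary condition for which no off-the-shelf single-edge boundary Schauder estimate applies. Supplying regularity \emph{at} the vertex is the entire content of the lemma, and it is precisely what the paper's $\vec\theta$-folding device is built to deliver.
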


\begin{proof}
Continuity of $\partial_t u$ and $\partial_x^2 u$ in the interior of each edge of $G$ is clear from the standard interior estimates for one-dimensional parabolic equations. Let us show that $\partial_t u$ and $\partial_x^2 u$ are continuous on each edge up to the end points. 

Let ${\rm Q}$ be an arbitrary vertex of $G$ and let $E_1,\ldots,E_m$ be the edges emanating from ${\rm Q}$ with thickness $\rho_j\,(j=1,\ldots,m)$. Let $x_j\,(j=1,\ldots,m)$ denote the coordinates of $E_j$ with $x_j=0$ corresponding to ${\rm Q}$, and let $u_j(t,x_j)$ denote the restriction of $u$ to the edge $E_j\,(j=1,\ldots,m)$. We fix a constant $L>0$ that does not exceed the minimal length of $E_1,\ldots,E_m$. 

By the continuity of $u$ and the Kirchhoff condition, we have
\begin{equation}\label{u-Q}
u_1(t,0)=\cdots=u_m(t,0),\qquad \sum_{j=1}^m \rho_j\partial_{x_j}u_j(t,0)=0.
\end{equation}
Now we choose a vector $\vec{\theta}:=(\theta_1,\ldots,\theta_m)$ such that 
\begin{equation}\label{theta}
0\leq\theta_j\leq 1\ \ (j=1,\ldots,m),\qquad \sum_{j=1}^m \theta_j\rho_j =\sum_{j=1}^m (1-\theta_j)\rho_j 
\end{equation}
and define a function $w(t,x)$ on $(t_0,t_1)\times(-L,L)$ by
\begin{equation}\label{w}
w(t,x)=
\begin{cases}
\, \sum_{j=1}^m \theta_j\rho_j u_j(t,x) & \hbox{on}\ \ (t_0,t_1)\times[0,L),\\[6pt]
\, \sum_{j=1}^m (1-\theta_j)\rho_j u_j(t,-x) & \hbox{on}\ \ (t_0,t_1)\times(-L,0).
\end{cases}
\end{equation}
Then $w$ satisfies the following equation on the intervals $0<x<L$, $-L<x<0$:
\[
\partial_t w =\partial_x^2 w + g_+(t,x)\ \ \hbox{if}\ \ x>0,\quad\ 
\partial_t w =\partial_x^2 w + g_-(t,x)\ \ \hbox{if}\ \ x<0,
\]
where
\begin{equation}\label{g+-}
g_+(t,x)=\sum_{j=1}^m \theta_j\rho_j f(u_j(t,x)),\quad
g_-(t,x)=\sum_{j=1}^m (1-\theta_j)\rho_j f(u_j(t,-x)).
\end{equation}
Note also that
\[
\partial_x w(t,+0)=\sum_{j=1}^m \theta_j\rho_j u_j(t,0),\quad
\partial_x w(t,-0)=-\sum_{j=1}^m (1-\theta_j)\rho_j \partial_x u_j(t,0)
\]
Thus, by \eqref{u-Q} and \eqref{theta}, $w$ and $\partial_x w$ are both continuous at $x=0$. Therefore $w$ is a weak solution of the following equation on $(t_0,t_1)\times(-L,L)$:
\begin{equation}\label{w-eq}
\partial_t w = \partial_x^2 w + g(t,x),
\end{equation}
where $g(t,x)=g^+(t,x)$ for $x\geq 0$ and $g(t,x)=g^-(t,x)$ for $x<0$. Since $g$ and $w$ are uniformly bounded, standard interior parabolic estimates show that, for any $0<\alpha<1$ and $t'_0\in(t_0,t_1)$, $w$ is bounded in $C^{\frac{\alpha}{2}, 1+\alpha}$ on the region $(t_0',t_1)\times[-L/2,L/2]$. 
The above function $w$ depends on the choice of the vector $\vec{\theta}$ satisfying \eqref{theta}. We now choose linearly independent vectors $\vec{\theta}^{(k)}\,(k=1,\ldots,m)$ satisfying \eqref{theta} and consider corresponding functions $w^{(k)}(t,x)$ for $k=1,\ldots,m$. (Such vectors can be obtained, for example, by setting $\vec{\theta}^{(1)}=(\frac12,\ldots,\frac12)$ and $\vec{\theta}^{(k)}=\vec{\theta}^{(1)}+\ep \vec{\eta}^{(k)}$ for $k=2,\ldots,m$, where $\vec{\eta}^{(k)}\,(k=2,\ldots,m)$ form the basis of the orthogonal compliment of $(\rho_1,\ldots,\rho_m)$ and $\ep>0$ is a small real number.) 
Then $w^{(k)}(t,x)\,(k=1,\ldots,m)$ are all bounded in $C^{\frac{\alpha}{2}, 1+\alpha}$; hence, in particular, they are bounded in $C^{\frac{\alpha}{2}, \alpha}$ on $(t_0',t_1)\times[-L/2,L/2]$. 

Since each $u_j$ can be expressed as a linear combination of $w^{(k)}$, we see that the functions $u_j(t,x_j)\,(j=1,\ldots, m)$ belong to the class $C^{\frac{\alpha}{2}, \alpha}$ up to the boundary point $x_j=0$. It then follows that, for each choice of the vector $\vec{\theta}^{(k)}$, the corresponding function $g^{(k)}(t,x)$ defined as in \eqref{g+-} belongs to $C^{\frac{\alpha}{2}, \alpha}$ on the region $(t_0',t_1)\times[-L/2,L/2]$. 
Thus, by the interior Schauder estimates (see, for instance, \cite[Theorem 3, Chapter 3]{Fr2008}), we see that $w^{(k)}$ belongs to the class $C^{1+\frac{\alpha}{2}, 2+\alpha}$ around $x=0$, hence so do $u_1,\ldots,u_m$. This proves the continuity of $\partial_t u$ and $\partial_x^2 u$ on each edge up to the boundary points. This, together with the continuity of $u$ on $G$ implies the continuity of $\partial_t u$ at each vertex of $G$, hence it is continuous on the entire graph $G$. The continuity of $\Delta_{G}u=\partial_t u-f(u)$ on $G$ then follows. This completes the proof of the lemma. 
\end{proof}

\subsection{Comparison principle}\label{ss:comparison}

Like in classical reaction-diffusion equations on Euclidean domains, the comparison principle holds for the equation \eqref{RD-G} on any metric graph $G$. In this section we define the notion of super- and subsolutions for \eqref{RD-G} and state the comparison principle. 

\begin{df}[Super- and subsolutions]\label{def:super-sub}
A continuous function $w(t,x)$ defined on $[t_1,t_2)\times G$ for some time interval $[t_1,t_2)$ is called a {\it supersolution} of $\partial_t u=\Delta_{G}u+f(u)$ if it is piecewise $C^1$ on $G$ and satisfies $\partial_t w \geq \partial_x^2 w + f(w)$ on each edge of $G$ (either in the classical sense or in the weak sense), while it satisfies the following condition at every vertex of $G$:
\begin{equation}\label{super-Kirchhoff}
\sum_{i=1}^m \rho_i\frac{\partial w}{\partial \nu_i}(V) \leq 0 \quad \hbox{(super-Kirchhoff condition)},
\end{equation}
where $\partial u/\partial \nu_i(V)\,(i=1,\ldots,m)$ denotes the derivatives of $u$ along each edge $E_i$ that emanate from the vertex $V$ as shown in Figure~\ref{fig:Kirchhoff}, and $\rho_i$ denotes the thickness of the edges $E_i\,(i=1,\ldots,m)$. Similarly, $w(t,x)$ is called a {\it subsolution} of $\partial_t u=\Delta_{G}u+f(u)$ if it satisfies $\partial_t w \leq \partial_x^2 w + f(w)$ on each edge of $G$ and the following condition at every vertex of $G$:
\begin{equation}\label{sub-Kirchhoff}
\sum_{i=1}^m \rho_i\frac{\partial w}{\partial \nu_i}(V) \geq 0 \quad \hbox{(sub-Kirchhoff condition)},
\end{equation}
\end{df}

As one can see, the condition \eqref{super-Kirchhoff} implies that there is excessive math flux from the vertex $V$, while \eqref{sub-Kirchhoff} implies that there is absorption of mass at $V$. If a supersolution (resp. subsolution) $w$ is independent of $t$, we call it a {\it time-independent supersolution} (resp. {\it subsolution}).

\begin{prop}[Comparison principle]\label{prop:comparison}
Let $w^+(t,x), w^-(t,x)$ be a supersolution and a subsolution of \eqref{RD-G}, respectively, on some time interval $[t_0,t_1)$ and suppose that $w^+(t_0,x)\geq w^-(t_0,x)$ for all $x\in G$. Then $w^+(t,x)\geq w^-(t,x)$ for all $t\in[t_0,t_1),\,x\in G$.
\end{prop}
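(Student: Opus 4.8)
The plan is to adapt the classical maximum principle argument for semilinear parabolic equations, paying special attention to the behavior at the vertices, where the Kirchhoff-type conditions replace the usual interior regularity. First I would set $z(t,x) := w^-(t,x) - w^+(t,x)$; the goal is to show $z \leq 0$ on $[t_0,t_1)\times G$. Because $f\in C^1$ is only locally Lipschitz, I would first reduce to a bounded region in time and space: fix $T\in(t_0,t_1)$, and since $w^\pm$ are bounded on $[t_0,T]\times G$ (being continuous, and with $G$ either compact or with controlled behavior), there is a Lipschitz constant $\ell$ for $f$ on the relevant range. Then define $\widetilde z(t,x) := e^{-\kappa t} z(t,x)$ for $\kappa > \ell$, and also, to handle the noncompact case and to force a strict sign, introduce the perturbation $\widetilde z_\delta := \widetilde z - \delta(1 + |x|^2\text{-type weight})$ or simply $\widetilde z - \delta e^{\lambda t}$ for small $\delta>0$; it suffices to prove $\widetilde z_\delta \le 0$ and then let $\delta\to 0$. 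The combined effect of the weights is that, on each edge, $\widetilde z_\delta$ satisfies a strict differential inequality $\partial_t \widetilde z_\delta - \partial_x^2 \widetilde z_\delta + (\kappa - c(t,x))\widetilde z_\delta < 0$ with $\kappa - c(t,x) > 0$, where $c(t,x) = \int_0^1 f'(sw^- + (1-s)w^+)\,ds$ comes from the mean value theorem.

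Next I would argue by contradiction: suppose $\widetilde z_\delta > 0$ somewhere on $[t_0,T]\times G$. Since $\widetilde z_\delta(t_0,\cdot) \le -\delta e^{\lambda t_0} < 0$ and $\widetilde z_\delta$ is continuous (hence attains its maximum on the compact set $[t_0,T]\times \overline{G^R}$ for large $R$, with the weight killing contributions near spatial infinity), there is a first time $\tau > t_0$ and a point $x_*$ at which the maximum is zero, i.e. $\widetilde z_\delta(\tau,x_*) = 0$ and $\widetilde z_\delta \le 0$ for $t \le \tau$. Two cases arise. If $x_*$ lies in the interior of an edge, the standard one-dimensional parabolic maximum principle applies: at an interior maximum we have $\partial_t \widetilde z_\delta \ge 0$, $\partial_x \widetilde z_\delta = 0$, $\partial_x^2 \widetilde z_\delta \le 0$, which together with $\widetilde z_\delta(\tau,x_*)=0$ contradicts the strict inequality $\partial_t \widetilde z_\delta - \partial_x^2 \widetilde z_\delta + (\kappa - c)\widetilde z_\delta < 0$.

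The main obstacle — and the step requiring care — is the case where $x_*$ is a vertex $V$ of $G$. Here I would use the super-/sub-Kirchhoff conditions: subtracting \eqref{super-Kirchhoff} for $w^+$ from \eqref{sub-Kirchhoff} for $w^-$ gives $\sum_{i=1}^m \rho_i \,\partial z/\partial\nu_i(V) \ge 0$, hence $\sum_{i=1}^m \rho_i\, \partial \widetilde z_\delta/\partial\nu_i(V) \ge 0$ as well (the weights are smooth and the perturbation term contributes a controllable amount, which I would arrange to be $\le 0$ by choosing the spatial weight to have the right sign of derivative, or handle by an extra $O(\delta)$ term absorbed in the limit). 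On the other hand, since $\widetilde z_\delta$ attains an interior-in-time maximum value $0$ at $V$ along each edge $E_i$, Hopf's lemma (applied on a small one-sided parabolic neighborhood of $V$ inside $E_i$, using the strict differential inequality and the fact that $\widetilde z_\delta < 0$ just inside — or, if $\widetilde z_\delta \equiv 0$ on part of an edge, a connectedness/propagation argument) forces $\partial \widetilde z_\delta/\partial\nu_i(V) < 0$ for at least one $i$ and $\le 0$ for all $i$, whence $\sum_i \rho_i\,\partial\widetilde z_\delta/\partial\nu_i(V) < 0$, contradicting the Kirchhoff inequality. This contradiction establishes $\widetilde z_\delta \le 0$; letting $\delta \to 0$ and then $T \to t_1$ and $\kappa$ arbitrary gives $w^- \le w^+$ on all of $[t_0,t_1)\times G$. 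The technical subtleties to watch are (i) the precise form of the spatial weight in the unbounded case so that the maximum is genuinely attained and the extra terms have the right sign, and (ii) ruling out the degenerate situation where $\widetilde z_\delta$ vanishes identically on a whole edge abutting $V$, for which a standard strong-maximum-principle continuation argument along the graph suffices.
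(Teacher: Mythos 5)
Your proposal is correct and follows essentially the route the paper intends: the paper omits the proof of Proposition~\ref{prop:comparison}, stating only that it is proved ``by a method similar to that for equations on Euclidean domains'' with the Kirchhoff condition, the one-dimensional maximum principle, and the Hopf boundary lemma as the key ingredients, and these are exactly the tools your first-touching-time argument deploys (interior points via the strict parabolic inequality, vertices via Hopf's lemma on each incident edge combined with the super-/sub-Kirchhoff inequalities). The technical points you flag (the spatial weight on unbounded outer paths and the degenerate case on an edge, the latter being automatically excluded by the strict inequality) are handled in the standard way and do not constitute gaps.
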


The above proposition can be shown by a method similar to that for equations on Euclidean domains, so we omit the poof. The Kirchhoff condition and the maximum principle for equations on one-dimensional intervals (including the Hopf boundary lemma) play a key role in the proof. Incidentally, \cite{Be1991} proves the maximum principle for quasilinear equations on metric graphs.

\subsection{Gauss--Green theorem and harmonic functions}\label{ss:harmonic}

It is well known that the Gauss-Green theorem, or simply Green's theorem, holds for metric graphs. In this section we recall this theorem and discuss basic properties of harmonic functions on metric graphs. 
The proof of these results is rather easy, so we omit it.

Let $G$ be a metric graph and $D_0$ be its bounded finite subgraph that are connected to the rest of $G$ by edges $E_1,\ldots,E_m$ that do not belong to $D_0$ but stretch from some vertices ${\rm Q}_1,\ldots,{\rm Q}_m$ of $D_0$, as shown in Figure~\ref{fig:Green}. Here the points ${\rm Q}_1,\ldots,{\rm Q}_m$ do not need to be distinct. 

\begin{figure}[h]
\vspace{5pt}
\begin{center}
\includegraphics[scale=0.43]
{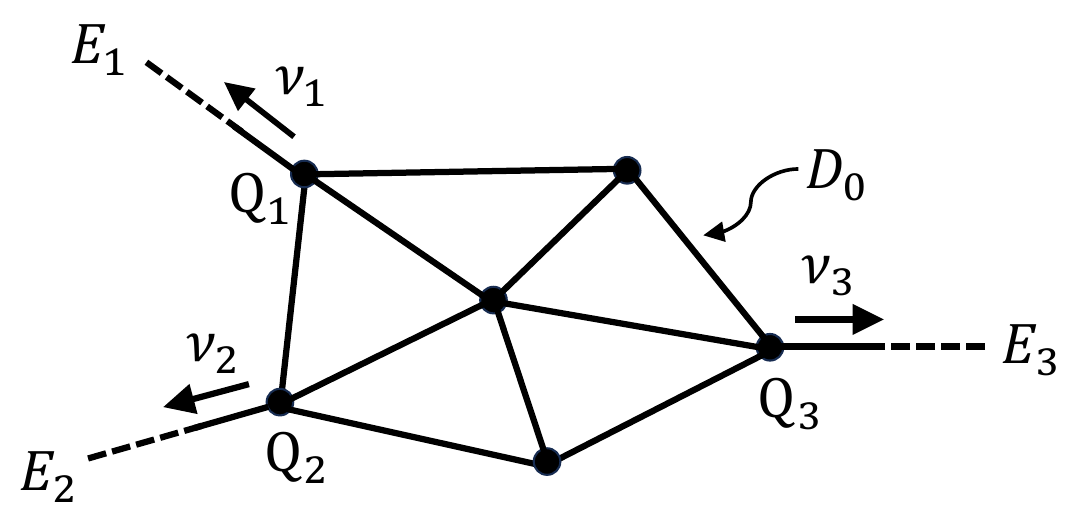}
\end{center}
\vspace{-8pt}
\caption{Graph with exit points ${\rm Q}_1,\ldots,{\rm Q}_m$.}
\label{fig:Green}
\end{figure}

Now let $u$ be a continuous function on $G$ that are $C^2$ except at the vertices and piecewise $C^1$ on $G$ and satisfies the Kirchhoff condition \eqref{Kirchhoff2} at every vertex. Then the following holds:
\begin{equation}\label{Green1}
\int_{D_0}\Delta u \hspace{1pt}dx =\sum_{i=1}^m \rho_i \hspace{1pt}\frac{\partial u}{\partial \nu_i}({\rm Q}_i),
\end{equation}
Next, let $u$ be as above and let $v$ be continuous and piecewise $C^1$. Then 
\begin{equation}\label{Green2a}
\int_{D_0}\left(\nabla v\cdot\nabla u +v\Delta u\right)dx =\sum_{i=1}^m \rho_i v({\rm Q}_i)\frac{\partial u}{\partial \nu_i}({\rm Q}_i).
\end{equation}
Here the integrals on the left-hand side of \eqref{Green1}, \eqref{Green2a} denote the sum of the integrals of $\partial^2_x u$, $\partial_x v \hspace{1pt}\partial_x u$, $v\hspace{1pt}\partial^2_x u$ on each edge of $D_0$ in the sense of \eqref{integral-E}, 
and each $\partial/\partial \nu_i\,(i=1,\ldots,m)$ denotes the derivative of the function $u$ at ${\rm Q}_i$ along the edge $E_i$ in the direction indicated in Figure~\ref{fig:Green},  
and finally $\rho_i\,(i=1,\ldots,m)$ denote the thickness of the edges $E_1,\ldots,E_m$.

The above formulas follow easily from integration by parts on each edge, namely,
\[
{\rm (a)}\ \ \int_0^L \rho \hspace{1pt}\partial_x^2 u\hspace{1pt}dx = \rho\Big[\partial_x u\Big]^{x=L}_{x=0},
\quad {\rm (b)}\ \ \int_0^L \rho\left(\partial_x v\hspace{1pt}\partial_x u +v \hspace{1pt}\partial_x^2 u\right)dx =\rho\Big[v\hspace{1pt}\partial_x u\Big]^{x=L}_{x=0},
\]
and summing them up over all the edges of $D_0$. More precisely, \eqref{Green1} follows by summing up (a) above, and the boundary terms cancel each other by the Kirchhoff condition \eqref{Kirchhoff2}, except at the vertices ${\rm Q}_i\,(i=1,\ldots,m)$. Similarly, \eqref{Green2a} follows from (b) above and the boundary terms cancell by the continuity of $u$ and the Kirchhoff condition for $v$.
By setting $u=v$ in \eqref{Green2}, we obtain
\begin{equation}\label{Green2}
\int_{D_0}\left(|\nabla u|^2 +u\Delta u\right)dx =\sum_{i=1}^m \rho_iu({\rm Q}_i)\frac{\partial u}{\partial \nu_i}({\rm Q}_i).
\end{equation}

Next we consider harmonic functions on metric graphs. A fuction $u$ on a graph $G$ is called {\it harmonic} if it satisfies Laplace's equation $\Delta_G u=0$ on $G$. This means that $u$ satisfies the equation $\partial_x^2 u=0$ on each edge along with the Kirchhoff condition at every vertex.  Therefore $u$ is a linear function on each edge. Furthermore, \eqref{Green1} and \eqref{Green2} imply the following:
\begin{equation}\label{harmonic-derivatives}
{\rm (a)}\ \ \ \sum_{i=1}^m \rho_i\frac{\partial u}{\partial \nu_i}({\rm Q}_i)=0,\qquad 
{\rm (b)}\ \ \int_{D_0}|\nabla u|^2 dx = \sum_{i=1}^m \rho_iu({\rm Q}_i)\frac{\partial u}{\partial \nu_i}({\rm Q}_i).
\end{equation}
The formula (a) means that the total math inflow and the total math outflow are equal.

\vskip 3pt\noindent
\underbar{\bf Boundary value problems for Laplace's equation}

\vspace{5pt}
We now consider boundary value problems for Laplace's equation on a bounded finite metric graph $D_0$.  We assume that there exist vertices ${\rm Q}_1,\ldots,{\rm Q}_m$ of $D_0$ where the Kirchhoff condition is not required to hold. We call these vertices {\it boundary vertices}. It is as in Figure~\ref{fig:Green}, but the outer edges $E_1,\ldots,E_m$ are just symbolic edges that are placed to make the Kirchhoff condition hold formally at each ${\rm Q}_1,\ldots,{\rm Q}_m$. In other words, $\partial u/\partial \nu_i({\rm Q}_i)\,(i=1,\ldots,m)$ are formal outer derivatives that measure the discrepancy of the Kirchhoff condition at ${\rm Q}_1,\ldots,{\rm Q}_m$.

With this in mind, we consider the following two boundary value problems. 
\begin{equation}\label{harmonic-BVP}
({\rm D})\ \begin{cases}\, \Delta_{D_0} u=0 \ \ \hbox{in}\ \ D_0,\\[2pt] 
\, u({\rm Q}_i) = a_i\ \ (i=1,\ldots,m),
\end{cases}
\quad
({\rm N})\ \begin{cases}\, \Delta_{D_0} u=0 \ \ \hbox{in}\ \ D_0,\\[2pt] 
\, \dfrac{\partial u}{\partial \nu_i}({\rm Q}_i)= b_i\ \ (i=1,\ldots,m).
\end{cases}
\end{equation}

Note that, in the problem (D) above, we assume that ${\rm Q}_1,\ldots,{\rm Q}_m$ are distinct points. On the other hand, in the problem (N), we do not require that ${\rm Q}_1,\ldots,{\rm Q}_m$ be distinct. If ${\rm Q}_{i_1},\ldots,{\rm Q}_{i_k}$ represent the same point, then the discrepancy of the Kirchhoff condition at this point is the sum of $b_{i_1},\ldots,b_{i_k}$. 
These are analogues of the Dirichlet and the Neuman problems for harmonic functions on Euclidean domains. As expected, the following propositions hold:

\begin{prop}[Dirichlet problem on a graph]\label{prop:Dirichilet-Laplace}
For any real numbers $a_1,\ldots,a_m$, the boundary value problem {\rm (N)} in \eqref{harmonic-BVP} has a unique solution.
\end{prop}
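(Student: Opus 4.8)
The plan is to establish uniqueness first, by an energy identity, and then to deduce existence from it by a short finite-dimensional linear-algebra argument (with a variational argument available as an alternative). Throughout I would assume, as seems implicit in the setup, that $D_0$ is connected; if it is not, one should require that each connected component of $D_0$ contain at least one boundary vertex ${\rm Q}_i$, and run the argument component by component.

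For uniqueness I would take two solutions $u_1,u_2$ of the Dirichlet problem {\rm (D)} in \eqref{harmonic-BVP}, set $w:=u_1-u_2$, and apply the Green identity \eqref{Green2} with $u=v=w$. Since $\Delta w=0$ on $D_0$ and $w({\rm Q}_i)=0$ for every $i$, the right-hand side of \eqref{Green2} vanishes, so $\int_{D_0}|\nabla w|^2\,dx=0$; hence $\partial_x w\equiv0$ on every edge, so $w$ is constant on each edge, and by continuity and connectedness of $D_0$ it is a single constant on $D_0$, which must be $0$ because $w({\rm Q}_1)=0$. Thus $u_1\equiv u_2$.

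For existence the key observation is that a harmonic function on $D_0$ is linear on each edge, hence completely determined by its vertex values, and on each edge its outward derivative at a vertex equals the difference of the two endpoint values divided by the edge length. Writing the Kirchhoff condition \eqref{Kirchhoff2} at each of the $n$ interior vertices $V_1,\dots,V_n$ therefore produces a linear system $A\xi=b$, where $\xi=(u(V_1),\dots,u(V_n))$ and $b$ depends linearly on the prescribed data $(a_1,\dots,a_m)$. By the uniqueness just proved, applied with $a=0$, the homogeneous system $A\xi=0$ has only the solution $\xi=0$; since $A$ is square, $A$ is invertible, so $\xi=A^{-1}b$ exists and is unique, and the piecewise-linear function with these vertex values is the desired solution. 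Alternatively, one may minimize the Dirichlet energy $u\mapsto\int_{D_0}|\nabla u|^2\,dx$ over the continuous piecewise-linear functions satisfying $u({\rm Q}_i)=a_i$; this quadratic functional is coercive on that affine set (again using connectedness to kill the constant directions), so a minimizer exists, and its first variation is precisely the assertion that the minimizer is linear on each edge and satisfies the Kirchhoff condition at every interior vertex.

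The argument is essentially routine, and the only step requiring a little care — the main (minor) obstacle — is the use of connectedness of $D_0$ in passing from $\nabla w\equiv0$ on each edge to $w\equiv0$ (equivalently, from coercivity of the Dirichlet energy on the constraint set to invertibility of $A$). Without it, uniqueness, and hence solvability, can fail on a component carrying no boundary vertex, so one must assume each component of $D_0$ meets $\{{\rm Q}_1,\dots,{\rm Q}_m\}$.
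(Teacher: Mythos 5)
Your proposal is correct. The uniqueness argument is exactly the paper's: the paper disposes of uniqueness by citing the energy identity \eqref{harmonic-derivatives}(b) (equivalently \eqref{Green2} applied to $w=u_1-u_2$), which is what you do. For existence the paper takes only the variational route — minimize $\int_{D_0}|\nabla u|^2\,dx$ subject to $u({\rm Q}_i)=a_i$ and check that the minimizer is edge-wise linear and satisfies Kirchhoff at interior vertices — which you list as your alternative; your primary route instead reduces the problem to a square linear system in the interior vertex values and invokes the Fredholm alternative (injectivity of the homogeneous system, supplied by your uniqueness step, implies invertibility). Both arguments are sound; the linear-algebra version is arguably the cleanest way to see that existence is literally a consequence of uniqueness here, while the variational version generalizes more readily (e.g.\ to the nonlinear energies used elsewhere in the paper). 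Two minor remarks: the statement of the proposition contains a typo, referring to problem (N) where the Dirichlet problem (D) is meant, and you correctly read it as (D); and your caveat about connectedness (each component of $D_0$ must contain a boundary vertex ${\rm Q}_i$, else constants on a boundary-free component destroy uniqueness) is a legitimate hypothesis that the paper leaves implicit.
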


\begin{prop}[Neumann problem on a graph]\label{prop:Neumann-Laplace}
The boundary value problem {\rm (N)} in \eqref{harmonic-BVP} has a solution if and only if $\sum_{i=1}^m \rho_ib_i=0$. Furthermore, if $u$ is a solution, any solution of {\rm (N)} is given in the form $u+C$ for some constant $C$.
\end{prop}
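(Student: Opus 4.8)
The plan is to prove the Neumann problem (N) by reducing it to a finite-dimensional linear-algebra statement, exactly as one does on Euclidean domains but with the combinatorics of the graph taking the place of analysis. First I would observe that any harmonic function $u$ on $D_0$ is affine on each edge, hence is completely determined by its values at the (finitely many) vertices of $D_0$. Writing $u(V)$ for these nodal values, the condition $\partial_x^2 u=0$ on each edge is automatic, and the Kirchhoff condition \eqref{Kirchhoff2} at each interior vertex becomes a linear equation in the nodal values: on an edge of length $L$ and thickness $\rho$ joining vertices $V,W$, the inward derivative at $V$ is $(u(W)-u(V))/L$, so Kirchhoff at $V$ reads $\sum \rho_e (u(W_e)-u(V))/L_e = 0$. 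Thus the whole problem is the linear system $\mathcal{L} \mathbf{u} = \mathbf{b}$, where $\mathcal{L}$ is the (weighted) graph Laplacian of $D_0$ acting on nodal values, the right-hand side encodes the prescribed defects $b_i$ at ${\rm Q}_1,\ldots,{\rm Q}_m$ (summed when several ${\rm Q}_i$ coincide), and there is no equation at the boundary vertices other than the prescribed one.

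The key step is then the standard characterization of the image and kernel of a weighted graph Laplacian on a connected graph. I would show $\mathcal{L}$ is symmetric positive semidefinite with respect to the thickness-weighted inner product: this is precisely Green's identity \eqref{Green2}, which for a harmonic function gives $\int_{D_0}|\nabla u|^2\,dx = \sum_i \rho_i u({\rm Q}_i)\,\partial u/\partial\nu_i({\rm Q}_i)$, i.e. the Dirichlet form is the quadratic form of $\mathcal{L}$ paired against the boundary data. Connectivity of $D_0$ (which we may assume, treating components separately, or which follows since $\Omega$ is connected) forces the kernel of $\mathcal{L}$ to consist exactly of the constants: if $\int_{D_0}|\nabla u|^2\,dx=0$ then $u$ is constant on each edge and, by continuity across vertices, globally constant. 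By the rank–nullity theorem and symmetry, the image of $\mathcal{L}$ is the orthogonal complement of the constants, which is exactly the set of right-hand sides $\mathbf{b}$ satisfying $\sum_i \rho_i b_i = 0$ — this is the solvability condition claimed. Uniqueness up to an additive constant is immediate from $\ker\mathcal{L} = \mathbb{R}\cdot\mathbf{1}$.

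I do not anticipate a serious obstacle: everything reduces to the elementary spectral theory of a symmetric positive-semidefinite matrix once the translation "harmonic function = nodal values" is in place. The only point requiring a little care is bookkeeping when the boundary vertices ${\rm Q}_{i_1},\ldots,{\rm Q}_{i_k}$ coincide — then the relevant equation at that physical vertex involves the Kirchhoff sum over the genuine incident edges of $D_0$ plus the combined defect $b_{i_1}+\cdots+b_{i_k}$, so the compatibility condition still reads $\sum_{i=1}^m \rho_i b_i = 0$ with the understanding that repeated ${\rm Q}_i$'s contribute additively, exactly as stated in the paragraph preceding the proposition. One should also note that the thicknesses $\rho_e$ of edges emanating from a boundary vertex toward the interior enter $\mathcal{L}$, while the "symbolic" outer edges $E_i$ contribute only the weight $\rho_i$ in the compatibility relation; this is consistent with the formal-derivative interpretation of $\partial u/\partial\nu_i({\rm Q}_i)$ introduced above. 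Hence the proof is essentially: set up the linear system, identify its quadratic form with the Dirichlet energy via \eqref{Green2}, invoke connectivity to pin down the kernel, and read off image and uniqueness.
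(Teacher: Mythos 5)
Your proof is correct, but it takes a genuinely different route from the paper's. You reduce the whole problem to finite-dimensional linear algebra: since a harmonic function is affine on each edge, it is determined by its nodal values, the Kirchhoff/defect conditions become the system $\mathcal{L}\mathbf{u}=\mathbf{b}$ for the weighted graph Laplacian (edge weights $\rho_e/L_e$), and the Fredholm alternative for a symmetric positive semidefinite matrix with kernel equal to the constants (via connectivity and the identification of the quadratic form with the Dirichlet energy) yields both the solvability condition $\sum_i\rho_i b_i=0$ and uniqueness up to a constant in one stroke. The paper instead argues constructively: it reduces existence to the case $m=2$ with a unit source--sink pair $b_i=\rho_i^{-1}$, $b_j=-\rho_j^{-1}$, obtains that solution by rescaling the solution of the Dirichlet problem (D) with data $u({\rm Q}_i)=1$, $u({\rm Q}_j)=0$ (Proposition~\ref{prop:Dirichilet-Laplace}), and superimposes such solutions with appropriate weights; uniqueness up to constants follows, as in your argument, from the energy identity \eqref{harmonic-derivatives}~(b). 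Your approach is more systematic and makes the compatibility condition transparent as orthogonality to the kernel of a symmetric operator, at the cost of setting up the nodal-value formalism; the paper's is shorter given that the Dirichlet problem has already been solved, and is explicitly constructive. Your two caveats --- handling coincident boundary vertices by summing the defects, and assuming $D_0$ connected (otherwise the compatibility condition must hold on each component and the constant is only componentwise) --- are exactly the right points of care, and the second is implicitly assumed by the paper as well.
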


As in the case of Dirichlet problems on a Euclidean domain, the existence of the solution of (D) in \eqref{harmonic-BVP} can be shown by minimizing the integral $\int_{D_0}|\nabla u|^2 dx$ under the given boundary conditions. One easily sees that the minimizer automatically satisfies the Kirchhoff condition. The uniqueness follows from \eqref{Green2} (b). 

To prove the existence of a solution for problem (N), it suffices to consider the case $m=2$, for each pair of points ${\rm Q}_i, {\rm Q}_j$, with $b_i=\rho_i^{-1}, b_j=-\rho_j^{-1}$, then to superimpose those solutions with appropriate weights. The solution for the case $m=2$ can be obtained by solving the problem (D) with the condition $u({\rm Q}_i)=1, u({\rm Q}_j)=0$, and by multiplying this solution with an appropriate constant. The fact that $u+C$ exhausts all the solutions follows from \eqref{Green2} (b).

\vskip 8pt\noindent
\underbar{\bf Estimate of the maximal flux of harmonic functions}

\vspace{5pt}
Next we derive an estimate on the maximal flux of harmonice functions. Recall that the mass flux on each edge is given by $-\rho\hspace{1pt}\partial_x u$, where $\rho$ denotes the thickness of the edge. 

\begin{prop}\label{prop:maximal-flux-harmonic}
Let $D$ be the center graph of $\Omega$, and let $u$ be a harmonic function on $D$. Then
\begin{equation}\label{max-flux-harmonic}
\max_{D} \rho\left|\partial_x u\right| \leq \sum_{i=1}^N \max\left(\rho_i\frac{\partial u}{\partial \nu_i}({\rm P}_i),0\right)=\frac12 \sum_{i=1}^N \rho_i\left|\frac{\partial u}{\partial \nu_i}({\rm P}_i)\right|,
\end{equation}
where $\partial/\partial_{\nu_i}$ denotes the derivative along the outer path $\Omega_i$ at ${\rm P}_i$.
\end{prop}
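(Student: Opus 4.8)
The plan is to establish the right-hand equality first and then prove the inequality by a level-set argument based on the Gauss--Green identity \eqref{Green1}.

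Write $a_i := \rho_i\,\frac{\partial u}{\partial \nu_i}({\rm P}_i)$. The identity \eqref{harmonic-derivatives}\,(a), applied with $D_0 = D$ (whose formal exit points are ${\rm P}_1,\dots,{\rm P}_N$, the outer paths $\Omega_1,\dots,\Omega_N$ playing the role of the symbolic external edges), gives $\sum_{i=1}^N a_i = 0$. For any real numbers summing to zero one has $\sum_i \max(a_i,0) - \sum_i \max(-a_i,0) = \sum_i a_i = 0$ and $\sum_i \max(a_i,0) + \sum_i \max(-a_i,0) = \sum_i |a_i|$, hence $\sum_i \max(a_i,0) = \tfrac12\sum_i|a_i|$, which is the stated equality.

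For the inequality, I would first note that a harmonic $u$ is affine on each edge, so $\rho\,\partial_x u$ is constant along each edge; writing $q_E := \rho_E|\partial_x u|$ on an edge $E$, we have $\max_D \rho|\partial_x u| = \max_E q_E$. If this maximum is $0$ there is nothing to prove, the right-hand side being a sum of nonnegative terms. Otherwise pick an edge $E^{\ast} = {\rm AB}$ attaining the maximum; then $u({\rm A}) \ne u({\rm B})$, say $u({\rm A}) > u({\rm B})$, and I would fix a level $c$ with $u({\rm B}) < c < u({\rm A})$ distinct from the value of $u$ at every vertex of $D$ (possible since $D$ is a finite graph). Consider $D_c := \{x \in D : u(x) > c\}$. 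Because $u$ is affine and monotone on each edge, $D_c$ is a bounded finite subgraph of $D$ whose exit points are of two types: (i) the finitely many points, interior to edges, where $u = c$; and (ii) those ${\rm P}_i$ with $u({\rm P}_i) > c$. The edge $E^{\ast}$ is crossed by $\{u=c\}$, so its crossing point is an exit point of type (i). At every \emph{interior} vertex of $D_c$ the Kirchhoff condition for $u$ still holds — this is immediate at vertices of $D$ other than the ${\rm P}_i$, and at a vertex of $D$ where $D_c$ retains only a stub of an incident edge the relevant one-sided derivative of $u$ is unchanged since $u$ is affine on that edge. Applying \eqref{Green1} on $D_c$ and using $\Delta u = 0$,
\[
0 \;=\; \int_{D_c}\Delta u\,dx \;=\; \sum_{\text{type (i)}}\rho\,\frac{\partial u}{\partial\nu} \;+\; \sum_{\{i\,:\,u({\rm P}_i)>c\}} a_i .
\]
At an exit point of type (i) lying on an edge $E$, the outward normal points toward smaller values of $u$, so $\rho\,\frac{\partial u}{\partial\nu} = -q_E$; in particular the crossing point of $E^{\ast}$ contributes $-q_{E^{\ast}}$. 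Hence $q_{E^{\ast}} \le \sum_{\text{type (i)}} q_E = \sum_{\{i\,:\,u({\rm P}_i)>c\}} a_i \le \sum_{i=1}^N \max(a_i,0)$, which is precisely $\max_D \rho|\partial_x u| \le \sum_{i=1}^N \max\bigl(\rho_i\,\frac{\partial u}{\partial\nu_i}({\rm P}_i),0\bigr)$.

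The step I expect to require the most care is the bookkeeping for $D_c$: correctly listing its exit points, checking that the hypotheses of the Gauss--Green identity hold after cutting edges at the level $c$ (continuity, piecewise $C^1$, Kirchhoff at all interior vertices, including the ``stub'' vertices), and tracking the signs of the outward normal derivatives. The genericity of $c$ is what makes $D_c$ an honest finite metric subgraph — the cut never lands on a vertex, so the crossing points become temporary degree-$1$ boundary vertices — and the monotonicity of $u$ on each edge keeps its combinatorial structure transparent; beyond these points the argument is elementary.
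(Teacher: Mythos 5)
Your proof is correct, and its core decomposition is genuinely different from the paper's, even though both arguments ultimately rest on the Gauss--Green identity \eqref{Green1} applied to a subgraph of $D$ that contains the flux-maximizing edge and on whose boundary every contribution other than those at the exit points ${\rm P}_i$ has a favourable sign. The paper builds that subgraph as the maximal set reachable from the maximizing edge $E_0$ along paths on which $u$ is strictly increasing; the outward derivatives on the omitted edges are then $\leq 0$ by maximality, and the boundary term at the lower endpoint ${\rm Q}_0$ of $E_0$ produces the quantity to be estimated. You instead take the super-level set $\{u>c\}$ for a generic level $c$ cutting through the maximizing edge, so that every level-crossing contributes exactly $-q_E\leq 0$, with $-q_{E^{\ast}}$ among these terms. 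Your version avoids the slightly delicate maximality argument and the special role of ${\rm Q}_0$ (each boundary term is computed exactly rather than merely sign-estimated), at the modest cost of the genericity of $c$ and the stub-vertex bookkeeping, which you handle correctly. What the paper's construction buys is that it transfers verbatim to the non-harmonic setting of Proposition~\ref{prop:gradient-max}, where $v_0$ is no longer affine (nor monotone) on edges and a super-level set no longer locates the point of maximal gradient; your argument as written is specific to the harmonic case. Your explicit derivation of the equality $\sum_i\max(a_i,0)=\tfrac12\sum_i|a_i|$ from $\sum_i a_i=0$ via \eqref{harmonic-derivatives}\,(a) is also fine, and is a step the paper leaves implicit.
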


The estimate \eqref{max-flux-harmonic} states that the maximal mass flux on $D$ does not exceed the total sum of incoming mass flux from the outer paths. As $u$ is harmonic, there is no creation or absorption of mass in $D$, therefore the above statement is intuitively clear. We shall prove this estimate by using the Gauss-Green formula \eqref{Green1}.

\begin{proof}
Let $E_0$ denote the edge where the maximum of the flux is attained. (Since $u$ is harmonic, the flux is constant on each edge.) If $\partial_x u=0$ on $E_0$, then there is nothing we need to prove. So we assume that $\partial_x u\ne 0$ and choose the direction of $E_0$ along which $u$ is strictly increasing.

Now we connect edges of $D$ to construct a path that starts from $E_0$ and along which $u$ is strictly increasing.  We consider all such paths, and define a subgraph $D_0$ of $D$ by collecting all the edges and vertices of $D$ that belong to at least one of those paths. Thus any edge and vertex of $D_0$ can be reached from $E_0$ by a path along which $u$ is strictly increasing, and $D_0$ is the maximal subgraph of $D$ with this property.

Let $E_1,\ldots,E_m$ denote the edges of $D$ that emanate from some vertices of $D_0$ but do not belong to $D_0$, and let ${\rm Q}_j\,(j=1,\ldots,m)$ be the vertex of $D_0$ from which the edge $E_j$ emanates. (See Figure~\ref{fig:D0}, in which the edges of $D_0$ are shown in thick lines and the edges $E_1,\ldots,E_m$ are marked in thick dotted lines.)  
Then, by the construction of $D_0$, the derivative of $u$ at each ${\rm Q}_j$ along the edge $E_j\,(j=1,\ldots,m)$ in the direction toward the interior of $E_j$ is either $0$ or negative, since otherwise it would contradict the maximality of $D_0$. The outer paths of $\Omega$ emanating from some vertices of $D_0$ are denoted by $\Omega_{i_1},\ldots, \Omega_{i_k}$, with exit points ${\rm P}_{i_1},\ldots,{\rm P}_{i_k}$. 

\begin{figure}[h]
\vspace{8pt}
\begin{center}
\includegraphics[scale=0.46]
{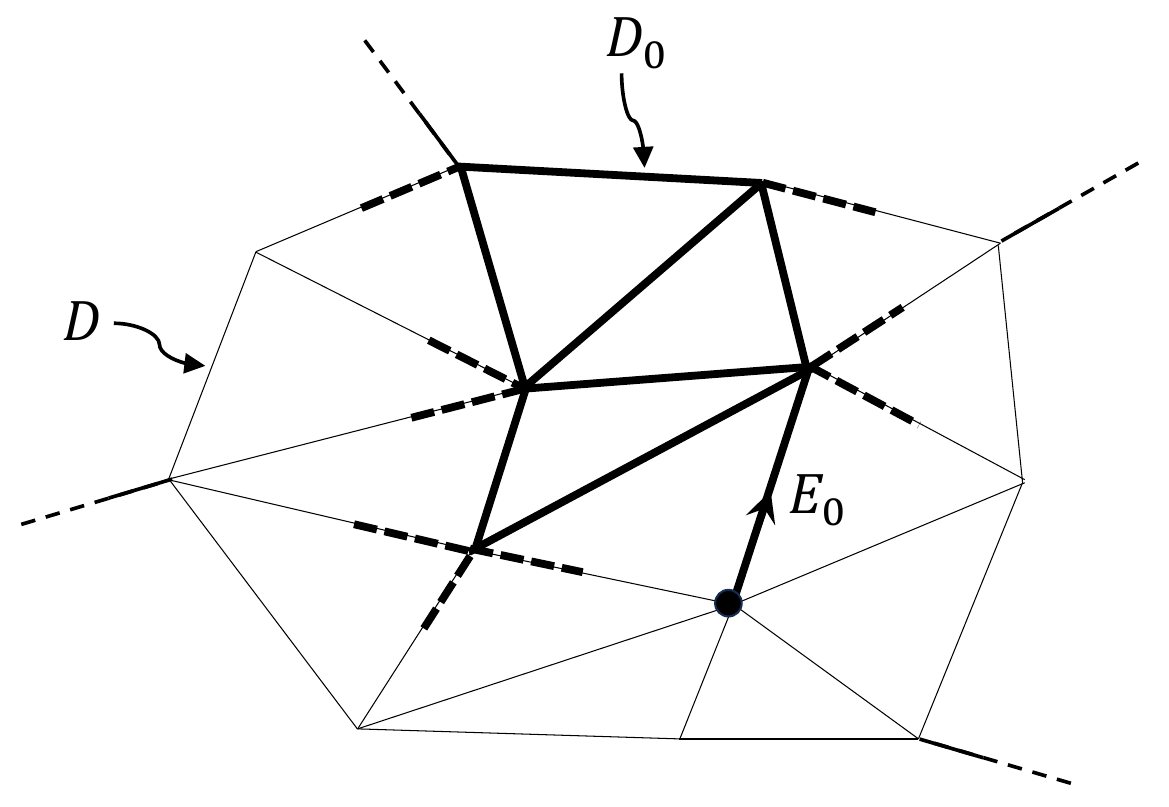}
\end{center}
\vspace{-8pt}
\caption{Image of the subgraph $D_0$ created by the edge $E_0$..}
\label{fig:D0}
\end{figure}

Let ${\rm Q}_0$ be an endpoint of the edge $E_0$. By \eqref{harmonic-derivatives} (a), we have
\[
-\hat\rho_0\hspace{1pt}\partial_x u({\rm Q}_0)+\sum_{j=1}^m \hat\rho_{j}\partial_{\nu} u({\rm Q}_{j})+\sum_{r=1}^k \rho_{i_r}\partial_{\nu} u({\rm P}_{i_r})=0.
\]
Here $\partial_{\nu}$ denotes the derivative along the edges $E_j\,(j=1,\ldots,m)$ or along the outer paths $\Omega_{i_1},\ldots, \Omega_{i_k}$ toward the exterior direction, and $\hat\rho_j\,(j=0,\ldots,m)$ denote the thickness of the edges $E_j$, while $\rho_i$ denotes that of the outer path $\Omega_i$.  
Since $\partial_{\nu} u({\rm Q}_{j})\leq 0$, we obtain
\[
\hat\rho_0\hspace{1pt}\partial_x u({\rm Q}_0) \leq \sum_{r=1}^k \rho_{i_r}\partial_{\nu} u({\rm P}_{i_r}),
\]
which implies \eqref{max-flux-harmonic}. The proof of the proposition is complete.
\end{proof}

The estimate \eqref{max-flux-harmonic} will be used in the proof of Theorem~\ref{thm:perturbation-star2}, which is concerned with pertubation of blocking star graphs.
Note that this estimate is a special case of \eqref{gradient-max1}, which deals with a solution of $\Delta_\Omega u+f(u)=0$.

\subsection{Poincar\'e inequality and eigenvalue problems}\label{ss:poincare}

It is well known that Poincar\'e inequality holds on metric graphs. More precisely, 
let $D_0$ be a bounded finite metric graph with boundary vertices ${\rm Q}_1,\ldots,{\rm Q}_m$ as in the previous section. Then there exists a constant $\lambda>0$ such that for any continuous and piecewise $C^1$ function $w$ on $D_0$ satisfying $w({\rm Q}_i)=0\,(i=1,\ldots,m)$, the following inequality holds:
\begin{equation}\label{Poincare}
\int_{D_0}|\nabla w|^2 dx \geq \lambda \int_{D_0} w^2 dx.\quad \hbox{(Poincar\'e inequality)}
\end{equation}
The existence of such a constant $\lambda$ can easily be proved by using the standard Poincar\'e inequality in one space dimension. The optimal such constant $\lambda$ coincides with the first eigenvalue of Laplacian on $D_0$ under the $0$ Dirichlet boundary conditions at ${\rm Q}_1,\ldots,{\rm Q}_m$:
\begin{equation}\label{lambda-eigenvalue}
-\Delta_{D_0}\varphi=\lambda\varphi \ \ \hbox{on}\ \ D_0,\quad 
\varphi({\rm Q}_i)=0\ \ (i=1,\ldots,m).
\end{equation}

Next we consider the case where $D_0$ is a general bounded finite metric graph for which no boundary vertices are specified. Then there exists a constant $\mu>0$ such that
\begin{equation}\label{Poincare2}
\int_{D_0}|\nabla w|^2 dx \geq \mu \int_{D_0} w^2 dx.\quad \hbox{(Poincar\'e--Wirtinger inequality)}
\end{equation}
for any continuous and piecewise $C^1$ function $w$ on $D_0$ satisfying
$\int_{D_0} w \hspace{1pt}dx =0.$

The above inequalities can easily be entended to $H^1$ functions by the limiting procedure. As one can anticipate, the optimal constant $\mu$ in \eqref{Poincare2} coincides with the second eigenvalue (or the smallest positive eigenvalue) of $-\Delta$ on $D_0$, that is,
\begin{equation}\label{mu-eigenvalue}
 -\Delta_{D_0} \varphi=\mu\varphi \ \ \hbox{on}\ \ D_0.
\end{equation} 
Note that no boundary conditions are imposed on the problem \eqref{mu-eigenvalue}, which means that the Kirchhoff condition is satisfied at every vertex of $D_0$. 
In what follows, we shall denote the smallest positive eigenvalue of \eqref{mu-eigenvalue} by $\mu_1(D_0)$.

As one can see, \eqref{lambda-eigenvalue} and \eqref{mu-eigenvalue} are an analogue of Dirichlet and Neumann eigenvalue problems on Euclidean domains. General eigenvalue problems on metric graphs that include \eqref{lambda-eigenvalue} and \eqref{mu-eigenvalue} as special cases are studied in detail by von Below~\cite{Be1988}. Among other things, it is shown that eigenvalues are discrete.

\begin{exmp}[Melon-shaped graph]\label{ex:melon}
Let $D_0$ be a graph consisting of two vertices ${\rm A}, {\rm B}$ and edges $E_1,\ldots,E_m$ of equal length $L_0$ and thickness connecting ${\rm A}, {\rm B}$ as shown in Figure~\ref{fig:melon}. Then
\[
\mu_1(D_0)=\frac{\pi^2}{L_0^2}
\]
and the corresponding eigenfunction $\varphi(x)$ of \eqref{mu-eigenvalue} is given in the form
\[
\varphi{}_{\restr E_i}(x_i)=C \cos\left(\frac{\pi x_i}{L_0}\right)+ C_i\sin\left(\frac{\pi x_i}{L_0}\right)\ \ \hbox{on}\ \ E_i\quad (x_i\in[0,L_0],\,i=1,\ldots,m),
\]
where $\varphi{}_{\restr E_i}$ denotes the restriction of $\varphi$ on each edge $E_i$ and $x_i$ denotes the variable on $E_i$ when it is identified with the interval $[0,L_0]$. The constants $C,C_1,\ldots,C_m$ are not all zero simultaneously, and $C_1+\cdots+C_m=0$. Thus $\mu_1(D_0)$ is an eigenvalue of multiplicity $m$. 

The above formula can be shown easily by recalling the continuity of $\varphi$ and the Kirchhoff condition at ${\rm A, B}$. The details are omitted.
 From the above formula, we see that $\mu_1(D_0)$ does not depend on the number of edges $m$.
\qed
\end{exmp}

\begin{figure}[h]
\begin{center}
\includegraphics[scale=0.45]
{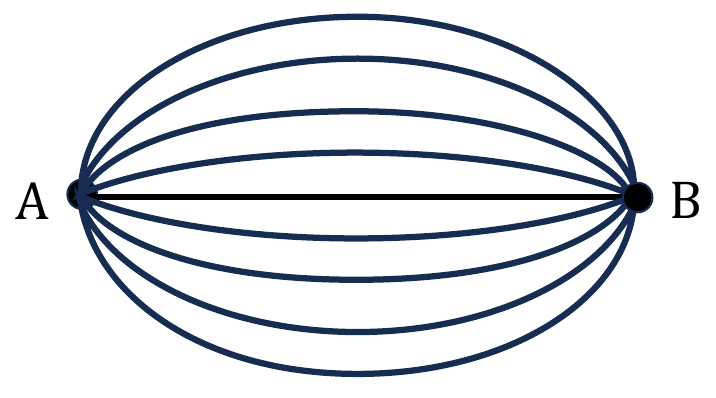}
\end{center}
\vspace{-8pt}
\caption{Melon-shaped graph.}
\label{fig:melon}
\end{figure}

\subsection{Local energy}\label{ss:energy}

Let $D_0$ be a bounded finite metric graph with boundary points ${\rm Q}_1,\ldots,{\rm Q}_m$. As we explained before \eqref{harmonic-BVP}, these are the points where the Kirchhoff condition is not required to hold. We consider the equation
\begin{equation}\label{RD-D0}
\partial_t u=\Delta_{D_0} u+f(u)\quad\hbox{on}\ \ D_0
\end{equation}
under the Dirichlet boundary conditions
\begin{equation}\label{Dirichlet-BC}
u(t,{\rm Q}_i)=b_i\quad (i=1,\ldots,m),
\end{equation}
where $b_i\,(i=1,\ldots,m)$ are given constants. 
As an analogue of the energy functional for classical reaction-diffusion equations on Euclidean domains, we define the energy functional for \eqref{RD-D0} by
\begin{equation}\label{energy}
J[u]:=\int_{D_0} \left(\frac{1}{2}|\nabla u|^2 -F(u)\right) dx,
\end{equation}
where $F$ is the primitive of $f$ as defined in \eqref{F}. Just as in the case of the energy functional on Euclidean domains, the following proposition holds:

\begin{prop}\label{prop:energy}
Let $u(t,x)$, with $(t,x)\in [0,T)\times D_0$, be a solution of \eqref{RD-D0} under the boundary conditions \eqref{Dirichlet-BC}. Then the following holds for all $t\in(0,T)$:
\begin{equation}\label{dJdt}
\frac{d}{dt} J[u(t,\cdot)]=-\int_{D_0} \left(\partial_t u\right)^2 dx.
\end{equation}
Hence $J[u(t,\cdot)]$ is strictly monotone decreasing in $t\in(0,T)$ unless $u$ is a stationary solution. 
\end{prop}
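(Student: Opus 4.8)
The plan is to mimic the classical energy-dissipation computation for scalar reaction--diffusion equations, paying attention to the vertex terms that arise from integration by parts on a metric graph. First I would differentiate $J[u(t,\cdot)]$ under the integral sign, which is legitimate because Lemma~\ref{lem:ut-continuous} guarantees that $\partial_t u$ and $\Delta_{D_0}u=\partial_t u-f(u)$ are continuous on $D_0$, and standard parabolic estimates give the regularity needed to differentiate and integrate by parts on each edge. This yields
\[
\frac{d}{dt}J[u(t,\cdot)]=\int_{D_0}\bigl(\nabla u\cdot\nabla\partial_t u - f(u)\,\partial_t u\bigr)\,dx.
\]

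Next I would apply the Gauss--Green formula on the graph, in the form \eqref{Green2a} with the roles of the two functions played by $v=\partial_t u$ and $u$ itself, to rewrite $\int_{D_0}\nabla u\cdot\nabla\partial_t u\,dx$ as $-\int_{D_0}\partial_t u\,\Delta_{D_0}u\,dx$ plus boundary contributions at the vertices ${\rm Q}_1,\ldots,{\rm Q}_m$. The interior Kirchhoff vertices contribute nothing, exactly as in the derivation of \eqref{Green2a}. At the boundary vertices ${\rm Q}_i$ the contribution is $\sum_i \rho_i\,\partial_t u(t,{\rm Q}_i)\,\partial u/\partial\nu_i({\rm Q}_i)$, and here the Dirichlet condition \eqref{Dirichlet-BC} enters: since $u(t,{\rm Q}_i)\equiv b_i$ is constant in $t$, we have $\partial_t u(t,{\rm Q}_i)=0$, so every boundary term vanishes. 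Substituting $\Delta_{D_0}u=\partial_t u - f(u)$ then gives
\[
\frac{d}{dt}J[u(t,\cdot)]=-\int_{D_0}\partial_t u\,\bigl(\partial_t u - f(u)\bigr)\,dx-\int_{D_0}f(u)\,\partial_t u\,dx=-\int_{D_0}(\partial_t u)^2\,dx,
\]
which is \eqref{dJdt}. Monotonicity is then immediate: the right-hand side is $\leq 0$, and it equals $0$ at some time only if $\partial_t u(t,\cdot)\equiv 0$ on $D_0$, i.e. $u$ is stationary; by the backward uniqueness (or simply analyticity in $t$ of solutions of the parabolic equation) this forces $u$ to be stationary for all $t$, so $J$ is strictly decreasing unless $u$ is a stationary solution.

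The only genuinely delicate point is the justification of the manipulations at $t=0$ versus $t\in(0,T)$: without extra regularity of $u_0$ one only knows $u(t,\cdot)$ is smooth up to the boundary vertices for $t>0$, which is why the statement is phrased for $t\in(0,T)$. For $t>0$ parabolic smoothing supplies enough regularity ($\partial_t u$ continuous up to the ${\rm Q}_i$, $\nabla u$ and $\nabla\partial_t u$ square-integrable on each edge) that all integrations by parts are valid; the main obstacle is simply bookkeeping the vertex boundary terms correctly and confirming they cancel — interior Kirchhoff vertices by the graph Green identity, boundary vertices by the time-independence of the Dirichlet data. I expect no conceptual difficulty beyond this.
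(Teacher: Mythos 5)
Your proof is correct and follows essentially the same route as the paper: differentiate $J$ under the integral, apply the graph Gauss--Green formula \eqref{Green2a} with $v=\partial_t u$ (using Lemma~\ref{lem:ut-continuous} for the continuity of $\partial_t u$ and the Kirchhoff condition for $u$), and observe that the boundary terms vanish since the Dirichlet data are time-independent. Your explicit remark that $\partial_t u(t,{\rm Q}_i)=0$ kills the vertex contributions is exactly the point the paper leaves implicit.
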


\begin{proof}
By the Gauss--Green formula \eqref{Green2a}, we have
\[
\frac{d}{dt} J[u(t,\cdot)]
=\int_{D_0} \big(\nabla \partial_t u\cdot\nabla u-f(u)\partial_t u\big) dx
=\int_{D_0} \big(-\partial_t u\Delta_{D_0} u-f(u)\partial_t u\big) dx=-\int_{D_0} \left(\partial_t u\right)^2 dx.
\]
Here, in applying the formula \eqref{Green2a}, we used the fact that $\partial_t u$ is continuous, as stated in Lemma~\ref{lem:ut-continuous}, and that $u$ satisfies the Kirchhoff condition \eqref{Kirchhoff2}. The proposition is proved.
\end{proof}

If the initial data $u(0,x)$ is $C^1$ on $D_0$, then $u(t,\cdot)\to u(0,\cdot)$ in $C^1(D_0)$ as $t\to 0$, hence $J[u(t,\cdot)]\to J[u(0,\cdot)]$ as $t\to 0$. Thus we have the following corollary:

\begin{cor}\label{cor:energy}
Let $u$ be as in Proposition~\ref{prop:energy} and assume that $u(0,x)$ is $C^1$ on $D_0$. Then 
$J[u(0,\cdot)]\geq J[u(t,\cdot)]$ for $t\in[0,T)$. 
\end{cor}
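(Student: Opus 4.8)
The plan is to combine the strict monotonicity of the energy on positive times, already established in Proposition~\ref{prop:energy}, with a continuity argument at the initial time. First I would fix $t\in(0,T)$ and, for any $s$ with $0<s<t$, integrate the identity \eqref{dJdt} over the interval $[s,t]$ to obtain
\[
J[u(t,\cdot)]-J[u(s,\cdot)]=-\int_s^t\!\!\int_{D_0}\bigl(\partial_\tau u\bigr)^2\,dx\,d\tau\le 0,
\]
so that $J[u(t,\cdot)]\le J[u(s,\cdot)]$ for every $s\in(0,t)$. This already proves the desired inequality among any two positive times; what remains is to let $s\to 0^+$ and to identify the limit of $J[u(s,\cdot)]$ with $J[u(0,\cdot)]$.

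Next I would invoke the regularity of the solution at $t=0$. Since $u_0:=u(0,\cdot)$ is assumed to be $C^1$ on $D_0$ (up to the endpoints, and satisfying the boundary condition \eqref{Dirichlet-BC}, which is automatic here because the $b_i$ are constants), the well-posedness theory for \eqref{RD-D0}--\eqref{Dirichlet-BC} recalled in Section~\ref{ss:metric-graph} guarantees that $u(s,\cdot)\to u_0$ as $s\to 0^+$ in the $C^1$ sense on each edge up to the endpoints. The functional $J$ defined in \eqref{energy} is continuous with respect to this mode of convergence: the term $\frac12\int_{D_0}|\nabla u|^2\,dx$ depends continuously on $\partial_x u$ in the uniform norm on each edge, and $\int_{D_0}F(u)\,dx$ depends continuously on $u$ in the uniform norm since $F\in C^2$. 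Hence $J[u(s,\cdot)]\to J[u_0]$ as $s\to 0^+$. Passing to this limit in $J[u(t,\cdot)]\le J[u(s,\cdot)]$ gives $J[u(t,\cdot)]\le J[u(0,\cdot)]$ for every $t\in(0,T)$, while at $t=0$ the inequality is a trivial equality; this establishes the corollary.

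I do not expect any genuine obstacle here: the monotonicity on $(0,T)$ is immediate from Proposition~\ref{prop:energy}, and the only point requiring care is the convergence $u(s,\cdot)\to u_0$ in $C^1$ up to $s=0$ — and hence the continuity of $J$ at $t=0$ — which rests on the $C^1$ well-posedness of the Cauchy--Dirichlet problem on $D_0$ with compatible $C^1$ data. If one preferred to avoid appealing to that $C^1$ convergence, an alternative route would be to use only $u(s,\cdot)\to u_0$ uniformly together with weak lower semicontinuity of the Dirichlet part of $J$, which would still yield $\liminf_{s\to 0^+}J[u(s,\cdot)]\ge J[u_0]$; combined with the fact that $J[u(s,\cdot)]$ is nonincreasing and hence has a limit as $s\to 0^+$, and with an upper bound coming from testing against $u_0$, one again reaches $\lim_{s\to 0^+}J[u(s,\cdot)]=J[u_0]$. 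The direct argument above is cleaner and mirrors the remark already made in the paragraph preceding the corollary.
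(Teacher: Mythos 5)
Your proposal is correct and follows essentially the same route as the paper: monotone decrease of $J$ on $(0,T)$ from Proposition~\ref{prop:energy}, combined with $u(s,\cdot)\to u(0,\cdot)$ in $C^1(D_0)$ and the resulting continuity of $J$ at $t=0$, which is precisely the argument sketched in the paragraph preceding the corollary. Your extra remarks on lower semicontinuity are a reasonable fallback but not needed.
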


\begin{rem}\label{rem:energy}
If $w$ is continuous and piecewise $C^1$ on $D_0$, there is a sequence of $C^1$ functions $w_n\,(n=1,2,3,\ldots)$ such that $w_n\to w$ uniformly on $D_0$ and that $J[w_n]\to J[w]$ as $n\to\infty$. Therefore, the conclusion of Corollary~\ref{cor:energy} holds if $u(0,x)$ is continuous and piecewise $C^1$ on $D_0$.
\end{rem}

\subsection{Gradient estimates}\label{ss:maximal}

In this section we derive estimates of the gradient $\partial_x v$ of solutions of \eqref{stationary2} that depend only on the nonlinearity $f$, the number of outer paths $N$, and the thickness and the total length of the edges of the center graph $D$. These estimates will play an important role in the perturbation analysis in Section~\ref{ss:proof-perturbation}. We first establish an $H^1$ estimate, then we derive an $L^\infty$ estimate.

Let $v_0$ be a solution of the stationary problem \eqref{stationary2} satisfying $0\leq v_0\leq 1$. Here $\Omega$ is a graph defined in \eqref{Omega}. Then by the Gauss-Green formula \eqref{Green2}, we get
\[
\int_{D} |\nabla v_0|^2 dx = \int_{D} v_0 f(v_0) dx + \sum_{i=1}^N \rho_i v_0({\rm P}_i)\frac{\partial v_0}{\partial \nu_i}({\rm P}_i),
\]
where $\rho_i$ denote the thickness of the outer paths $\Omega_i\,(i=1,\ldots,N)$. 
Since $v_0$ lies between $0$ and $1$ on the outer edges $\Omega_1,\ldots,\Omega_N$ all the way to infinity, we see from the phase-portrait in Figure~\ref{fig:phase-portrait} that the value of $(v_0,\partial_x v_0)$ along each $\Omega_i$ forms a portion of the orbit that lies between the stable and the unstable manifolds of $(1,0)$. These manifolds are characterized by
\[
\frac12 \left(\partial_x v\right)^2 + F(v)=F(1),
\]
or
\[
\left|\partial_x v\right|=\sqrt{2(F(1)-F(v))} \leq \sqrt{2(F(1)-F(a))}.
\]
Consequently, we have
\[
\left|\frac{\partial v_0}{\partial \nu_i}({\rm P}_i)\right|\leq \sqrt{2\left(F(1)-F(a)\right)}\quad (i=1,\ldots,N).
\]
Thus we obtain 
\begin{equation}\label{gradient-estimate}
\int_{D} |\nabla v_0|^2 dx \leq f_{\max}|D| + N\rho_{\max}\sqrt{2\left(F(1)-F(a)\right)},
\end{equation}
where $|D|$ denotes the total weighted sum of the length of the edges of $D$, namely
\[
|D|:=\int_D dx = \sum_{E\in{\mathcal E}_D} \mu(E)L(E),
\]
where ${\mathcal E}_D$ denotes the set of all the edges of $D$, while $\mu(E)$ and $L(E)$ denote, respectively, the thickness and the length of the edge $E$  
and $f_{\max}:=\max_{0\leq s\leq 1}f(s)$, $\rho_{\max}:=\max(\rho_1,\ldots,\rho_N)$.

Next we derive an $L^\infty$ estimate of $\partial_x v_0$ on the center graph $D$. This is a generalization of the estimate \eqref{max-flux-harmonic} for harmonic functions. 
As $D$ is compact, and since $v_0$ is piecewise $C^1$ by the definition of solutions of \eqref{stationary2}, $|\partial_x v_0|$ attains a maximum on $D_0$. 
Let ${\rm Q}_0$ be the point where the maximum is attained. If ${\rm Q}_0$ is not a vertex of $D$, we add this point as a new vertex of $D$.

Now we construct a subgraph $D_0$ of $D$ following the procedures similar to the proof of Proposition~\ref{prop:maximal-flux-harmonic}. More precisely: 
\begin{itemize}
\item We consider all the paths in $D$ starting from ${\rm Q}_0$ along which $v_0$ is strictly increasing.
\end{itemize}
We extend each path as long as possible until it is no longer possible to extend it. There are two cases where the path terminates. The first case is when $\partial_x v_0$ becomes $0$ at some point. If that point is not a vertex of $D$, we add this point as a new vertex of $D$. The other case is when the path comes to one of the exit points ${\rm P}_1,\ldots, {\rm P}_N$ and there is no way to extend the path within $D$ while keeping $v_0$ strictly increasing. As one can see from the Kirchhoff conditions, there is no other situation where the path terminates. We consider all such paths, and define a subgraph $D_0$ of $D$ by collecting all the edges and vertices of $D$ that belong to at least one of those paths, as we have done in the proof of Proposition~\ref{prop:maximal-flux-harmonic}. Thus any edge and vertex of $D_0$ can be reached from ${\rm Q}_0$ by a path along which $v_0$ is strictly increasing, and $D_0$ is the maximal subgraph of $D$ with this property.

Let $E_1,\ldots,E_m$ denote the edges of $D$ that emanate from some vertices of $D_0$ but do not belong to $D_0$, and let ${\rm Q}_j$ be the vertex of $D_0$ from which the edge $E_j$ emanates. (The points ${\rm Q}_j\,(j=1,\ldots,m)$ may not necessarily be distinct.) 
Then, by the maximality of $D_0$, the derivative of $v_0$ at each ${\rm Q}_j$ in the direction toward the interior of $E_j$ is either $0$ or negative. Next let ${\rm P}_{i_1},\ldots,{\rm P}_{i_k}$ denote the exit points of $\Omega$ that belong to $D_0$ and let $\Omega_{i_1},\ldots, \Omega_{i_k}$ be the corresponding outer paths.

Now we apply the Gauss-Green formula \eqref{Green1} to $v_0$ on $D_0$, to obtain
\[
-\hat\rho_0\hspace{1pt}\partial_x v_0({\rm Q}_0)+\sum_{j=1}^m \hat\rho_{j}\partial_{\nu} v_0({\rm Q}_{j})+\sum_{r=1}^k \rho_{i_r}\partial_{\nu} v_0({\rm P}_{i_r})
=\int_{D_0}\Delta_{D_0}v_0(x)dx=-\int_{D_0} f(v_0(x)) dx.
\]
Here $\partial_{\nu}$ denotes the derivative along the edges $E_j\,(j=1,\ldots,m)$ or along the outer paths $\Omega_{i_1},\ldots, \Omega_{i_k}$ toward the exterior direction, and $\hat\rho_j\,(j=0,\ldots,m)$ denote the thickness of the edges $E_j$, while $\rho_i$ denotes that of the outer path $\Omega_i$.  
Since $\partial_{\nu} v_0({\rm Q}_{j})\leq 0$, we obtain
\[
\hat\rho_0\hspace{1pt}\partial_x v_0({\rm Q}_0) \leq \sum_{r=1}^k \rho_{i_r}\partial_{\nu} v_0({\rm P}_{i_r})+\int_{D_0} f(v_0(x)) dx,
\]
hence
\begin{equation}\label{gradient-max1}
\max_{D} \rho\left|\partial_x v_0\right| \leq \sum_{i=1}^N \max\left(\rho_i\frac{\partial v_0}{\partial \nu_i}({\rm P}_i),0\right)+f_{\max}|D|.
\end{equation}
As we have seen above, $\partial_{\nu}v_0(P_i)\leq \sqrt{2(F(1)-F(a))}$. Hence the following proposition holds:

\begin{prop}\label{prop:gradient-max}
Let $v_0$ be a solution of \eqref{stationary2}. Then
\begin{equation}\label{gradient-max}
\max_{D} \rho\left|\partial_x v_0\right| \leq f_{\max}|D|+N\rho_{\max}\sqrt{2(F(1)-F(a))}.
\end{equation}
where $f_{\max}$ denotes the maximum of $f(s)$ on $0\leq s\leq 1$ and $\rho_{\max}:=\max(\rho_1,\ldots,\rho_N)$.
\end{prop}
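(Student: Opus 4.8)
The plan is to combine the two ingredients that have just been assembled immediately before the statement. The first is the flux inequality \eqref{gradient-max1}, which bounds $\max_D \rho\,|\partial_x v_0|$ by the total \emph{positive} incoming flux from the outer paths plus the term $f_{\max}|D|$. This is obtained exactly as in the proof of Proposition~\ref{prop:maximal-flux-harmonic}: one locates the point ${\rm Q}_0$ where $|\partial_x v_0|$ is maximal, builds the maximal subgraph $D_0$ reachable from ${\rm Q}_0$ by a path along which $v_0$ is strictly increasing, and applies the Gauss--Green formula \eqref{Green1} on $D_0$. The only difference from the harmonic case is the appearance of the source term $-\int_{D_0} f(v_0)\,dx$, which is controlled in absolute value by $f_{\max}|D|$; the boundary derivatives at the internal exit vertices ${\rm Q}_j$ are $\le 0$ by maximality of $D_0$ and therefore only help.

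The second ingredient is the uniform bound on the flux at each exit point ${\rm P}_i$ coming from the phase portrait of $\partial_x^2 v + f(v)=0$ (Figure~\ref{fig:phase-portrait}). Since $0\le v_0\le 1$ holds on each outer path $\Omega_i$ all the way to $x_i=+\infty$, the trajectory $(v_0,\partial_x v_0)$ restricted to $\Omega_i$ must lie in the region enclosed by the stable and unstable manifolds of the saddle $(1,0)$; along those manifolds $\tfrac12(\partial_x v)^2 + F(v) = F(1)$, so on $\Omega_i$ we have $\tfrac12(\partial_x v_0)^2 + F(v_0)\le F(1)$, whence $|\partial_{\nu_i} v_0({\rm P}_i)| \le \sqrt{2(F(1)-F(v_0))}\le \sqrt{2(F(1)-F(a))}$ using $F(a)=\min_{[0,1]}F$ from \eqref{F}.

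Putting the two together, for each $i$ I would estimate $\max\!\big(\rho_i \tfrac{\partial v_0}{\partial \nu_i}({\rm P}_i),\,0\big) \le \rho_i\sqrt{2(F(1)-F(a))} \le \rho_{\max}\sqrt{2(F(1)-F(a))}$, and summing over $i=1,\dots,N$ bounds $\sum_{i=1}^N \max\!\big(\rho_i\tfrac{\partial v_0}{\partial \nu_i}({\rm P}_i),0\big)$ by $N\rho_{\max}\sqrt{2(F(1)-F(a))}$. Substituting this into \eqref{gradient-max1} yields \eqref{gradient-max} directly. I expect no serious obstacle at this last stage: once \eqref{gradient-max1} and the phase-plane flux bound are in hand, the proposition is an immediate arithmetic consequence. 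The only delicate point in the whole argument is, as in Proposition~\ref{prop:maximal-flux-harmonic}, verifying that the maximal increasing path used to build $D_0$ can terminate \emph{only} at an interior critical point of $v_0$ or at an exit point ${\rm P}_i$ — and this is precisely where the Kirchhoff condition \eqref{Kirchhoff2} is invoked to exclude any other termination scenario at a vertex of $D$.
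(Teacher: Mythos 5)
Your proposal is correct and follows essentially the same route as the paper: establish \eqref{gradient-max1} via the maximal increasing subgraph $D_0$ and the Gauss--Green formula \eqref{Green1}, then bound each exit-point flux by $\sqrt{2(F(1)-F(a))}$ using the phase portrait, and sum over $i$. The only cosmetic remark is that one needs only the one-sided bound $\int_{D_0} f(v_0)\,dx \le f_{\max}|D|$ rather than a bound on the absolute value, which is what your argument in fact uses.
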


\section{Proof of the main results}\label{s:proof-main}

\subsection{Existence and uniqueness of front-like solutions}\label{ss:proof-front}

In this section we prove Theorem~\ref{thm:u-hat} on the existence and uniqueness of the front-like solution $\widehat{u}_i$ satisfying \eqref{u-hat}.  The existence is already proved in \cite{JM2019} for star graphs, and basically the same proof applies to our case without much modifying the arguments. The idea is to construct an ordered pair of super- and subsolutions both of which converge to the traveling wave solution $\phi(-x_i-ct)$ as $t\to-\infty$. So we omit the existence proof and focus on the uniqueness and time monotonicity. Note that a very similar result on the existence of a front-like entire solution can also be found in \cite{BHM2009, BHM2025}, which deal with front propagation in $\R^N$ in the presence of obstacles.

We shall prove the proposition below, from which the rest of Theorem~\ref{thm:u-hat} follows:

\begin{prop}\label{prop:uniqueness}
Let $\widehat{u}_i(t,x)$ be an entire solution of \eqref{RD-Omega} satisfying \eqref{u-hat}. Then $\widehat{u}_i$ is strictly monotone increasing in $t$. Furthermore, there exists only one entire solution that satisfies \eqref{u-hat}.
\end{prop}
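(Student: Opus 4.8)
The plan is to establish both the time-monotonicity and the uniqueness simultaneously, by first proving a comparison principle tailored to ancient solutions that are squeezed between $0$ and $1$ and that behave like the traveling wave $\phi(-x_i-ct)$ as $t\to-\infty$. This is the ``special comparison principle'' alluded to in the statement of Theorem~\ref{thm:dichotomy} (Lemmas~\ref{lem:comparison-ancient}, \ref{lem:comparison-ancient2}). The key point is that although the usual comparison principle (Proposition~\ref{prop:comparison}) requires an ordered pair of initial data at some finite time, here we want to compare two entire solutions using only their common asymptotics at $t=-\infty$. The mechanism that makes this work is the exponential approach of $\phi$ to its limits $0$ and $1$ together with the fact that $f'(0)<0$ and $f'(1)<0$: near the two equilibria the linearization is contractive, so a small initial discrepancy propagated from $t=-\infty$ cannot grow.

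First I would prove that $\widehat{u}_i$ is monotone increasing in $t$. Fix $h>0$ and compare $\widehat{u}_i(t+h,x)$ with $\widehat{u}_i(t,x)$. Because $\phi'<0$, the shifted traveling wave $\phi(-x_i-c(t+h))=\phi(-x_i-ct-ch)$ is pointwise larger than $\phi(-x_i-ct)$ on $\Omega_i$, and on $\Omega\setminus\Omega_i$ both solutions tend to $0$; so from \eqref{u-hat} one gets that for every $\ep>0$ there is $T_\ep$ with $\widehat{u}_i(t+h,x)\geq\widehat{u}_i(t,x)-\ep$ for all $t\leq T_\ep$ and all $x\in\Omega$. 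The hard part is to remove the $\ep$. The standard device is to insert a small downward shift in the equilibrium near $0$ and $1$: since $f'(0)<0$ and $f'(1)<0$, there is $\rho>0$ such that for all small $\eta>0$ the function $s\mapsto s-\eta e^{\rho t}$ (appropriately cut and combined near the two equilibria) is a subsolution of the one-dimensional equation wherever the solution is close to $0$ or $1$; one then builds a subsolution of the form $\widehat{u}_i(t,x)-\eta e^{\rho t}$ in those regions and uses the ordinary comparison principle (Proposition~\ref{prop:comparison}) on a truncation $\Omega^R$ with $R\to\infty$, exploiting that away from the equilibria the convergence in \eqref{u-hat} is uniform and strict. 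Letting $\eta\to0$ yields $\widehat{u}_i(t+h,x)\geq\widehat{u}_i(t,x)$ for all $t$, and strictness follows from the strong maximum principle on each edge together with the Kirchhoff condition (a solution that touches from above at an interior point or propagates equality across a vertex must be identically equal, contradicting \eqref{u-hat}).

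Next I would prove uniqueness. Suppose $u^{(1)},u^{(2)}$ are two entire solutions satisfying \eqref{u-hat} (with the same $i$). The idea is to ``sliding'' one past the other in time: for $\tau\in\R$ compare $u^{(1)}(t,x)$ with $u^{(2)}(t+\tau,x)$. By the same $\ep$-argument as above, for every $\ep>0$ and every $\tau$ there is $T$ such that $u^{(2)}(t+\tau,x)\geq u^{(1)}(t,x)-\ep$ for $t\leq T$ — this uses only that both have the same limit profile $\phi(-x_i-ct)$ on $\Omega_i$ and $0$ elsewhere, and that $\phi$ is strictly decreasing (so shifting in $\tau$ only helps). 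Then the same barrier construction with $\eta e^{\rho t}$ promotes this to $u^{(2)}(t+\tau,x)\geq u^{(1)}(t,x)$ for \emph{every} $\tau>0$ and all $(t,x)$. Letting $\tau\downarrow0$ gives $u^{(2)}(t,x)\geq u^{(1)}(t,x)$, and by symmetry the reverse inequality, hence $u^{(1)}\equiv u^{(2)}$.

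The main obstacle is the $\ep$-removal step, i.e. constructing the exponentially decaying barrier that is simultaneously a valid sub/supersolution on each edge and compatible with the Kirchhoff condition at the vertices of $\Omega$, and controlling it uniformly up to $x_i=+\infty$ on the infinite outer path $\Omega_i$. On a bounded Euclidean domain this is routine, but here one must be careful that the barrier's spatial profile does not destroy the super-/sub-Kirchhoff inequalities \eqref{super-Kirchhoff}--\eqref{sub-Kirchhoff} at the exit points ${\rm P}_1,\ldots,{\rm P}_N$ and at the internal vertices of $D$; the cleanest route is to take the barrier spatially constant (equal to $\eta e^{\rho t}$) in the region where $\widehat{u}_i$ is near an equilibrium, so that its spatial derivative vanishes and the Kirchhoff inequalities are trivially satisfied, and to handle the intermediate ``front region'' — which moves off to $x_i=+\infty$ as $t\to-\infty$ — by the uniform strict ordering coming directly from \eqref{u-hat}. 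Once this technical lemma (Lemma~\ref{lem:comparison-ancient}) is in place, both monotonicity and uniqueness follow by the sliding arguments above, and I expect the remaining details to be standard.
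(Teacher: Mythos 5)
Your overall strategy is exactly the one the paper follows: a comparison principle for ancient solutions that exploits $f'\leq-\sigma_0$ near the equilibria $0$ and $1$, a spatially constant exponential barrier so that the Kirchhoff inequalities are automatic at every vertex (this is precisely Lemmas~\ref{lem:comparison-ancient} and~\ref{lem:comparison-ancient2}), a three-region decomposition of $\Omega$ (near $1$ on $\{x_i\geq a_1(t)\}$, near $0$ on $\Omega_i[b(t)]\cup(\Omega\setminus\Omega_i)$, and the front region handled by the strict ordering $\phi(-x-c(t+\tau))>\phi(-x-ct)$ coming from \eqref{u-hat}), followed by the sliding-in-$\tau$ and symmetry argument for uniqueness.

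The one step that would fail as literally written is the $\ep$-removal barrier $\eta e^{\rho t}$ with $\rho>0$. That function is \emph{smallest} in the far past, which is exactly where you need to seed the comparison; since \eqref{u-hat} only gives that the discrepancy $w(t,\cdot)$ tends to $0$ as $t\to-\infty$ with no rate, you cannot guarantee $w(T_1,\cdot)\leq\eta e^{\rho T_1}$ for $T_1$ very negative, and letting $\eta\to0$ only makes this worse. The correct orientation is the paper's: start the barrier at a \emph{fixed} height $\delta_0$ (an a priori bound on $w$) at an arbitrary finite time $T_1$ and let it decay \emph{forward} in time, $\eta(t)=\delta_0 e^{-\sigma_0(t-T_1)}$, which is a supersolution of the linearized equation $w_t\leq\Delta w-\sigma_0 w$ wherever $w\geq0$; then the conclusion $w(t,\cdot)\leq\delta_0 e^{-\sigma_0(t-T_1)}$ at a fixed $t$ gives $w\leq0$ upon sending $T_1\to-\infty$. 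With that reorientation of the exponential, the rest of your argument (including the treatment of the moving front region and the $\tau\downarrow0$ limit in the uniqueness step) goes through exactly as in the paper.
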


A result very similar to the above proposition can be found in the recent paper by one of the present authors \cite[Proposition 5.1]{BHM2025}. Our arguments below are adopted from \cite{BHM2025} with some modifications.
Let $\delta_0\in(0,\frac12)$ and $\sigma_0>0$ be constants such that 
\begin{equation}\label{delta0}
f'(s)\leq -\sigma_0 \quad \hbox{for}\ \ s\in [0,\delta_0]\cup[1-\delta_0,1].
\end{equation}
Such constants $\delta_0, \sigma_0$ exist since $f$ satisfies \eqref{f}. 
The following two lemmas will play a key role in the proof of the above proposition, but these lemmas will also play an important role in later sections, such as in the proof of the minimality of the limit profile $\widehat{v}_i$ in Section~\ref{ss:minimality}.

\begin{lem}[Comparison of ancient solutions]\label{lem:comparison-ancient}
Let $u(t,x)$ be a subsolution and $\widetilde{u}(t,x)$ be a supersolution of the equation $\partial_t u=\partial_x^2 u + f(u)$ on $(-\infty, T]\times[0,+\infty)$ for some $T\in\R$ satisfying $0\leq u\leq 1,\, 0\leq \widetilde{u}\leq 1$. 
 \begin{itemize}
\item[{\rm (i)}] 
Assume that there exists a smooth function $a_1(t)\geq 0$ such that
\[
u(t, a_1(t))<\widetilde{u}(t, a_1(t))\quad ({}\forall t\in (-\infty,T]),
\]
\[
1-\delta_0\leq \widetilde{u}(t,x)\leq 1 \ \ \left({}\forall t\in (-\infty,T], \, x\geq a_1(t)\right).
\]
Then
\[
u(t,x)<\widetilde{u}(t,x) \quad \hbox{for all}\ \ t\in (-\infty,T] \ \, \hbox{and}\ \, x\geq a_1(t).
\]
\item[{\rm (ii)}] 
Assume that there exists a smooth function $a_2(t)\geq 0$ such that
\[
u(t, a_2(t))<\widetilde{u}(t, a_2(t))\quad ({}\forall t\in (-\infty,T]),
\]
\[
0\leq u(t,x)\leq \delta_0 \ \ \left({}\forall t\in (-\infty,T], \, x\geq a_2(t)\right).
\]
Then
\[
u(t,x)<\widetilde{u}(t,x) \quad \hbox{for all}\ \ t\in (-\infty,T] \ \, \hbox{and}\ \, x\geq a_2(t).
\]
\end{itemize}
\end{lem}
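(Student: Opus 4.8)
It suffices to treat case (i); case (ii) is entirely analogous, with the interval $[1-\delta_0,1]$ replaced by $[0,\delta_0]$ (if $u>\widetilde u$ and $0\le u\le\delta_0$, then both $u$ and $\widetilde u$ lie in $[0,\delta_0]$). Since the statement involves only functions on the half-line $[0,\infty)$, no Kirchhoff condition enters, and everything reduces to a one-dimensional parabolic comparison on the ancient interval $(-\infty,T]$. Put $w:=u-\widetilde u$ on $\mathcal D:=\{(t,x):t\le T,\ x\ge a_1(t)\}$. From $1-\delta_0\le\widetilde u\le1$ and $0\le u\le1$ on $\mathcal D$ we get $w\le\delta_0$ on $\mathcal D$ and $w<0$ on the lateral curve $\{x=a_1(t)\}$. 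Writing $f(u)-f(\widetilde u)=\hat c\,w$ with $\hat c:=\int_0^1 f'(\widetilde u+sw)\,ds$, the sub/supersolution inequalities give $\partial_t w\le\partial_x^2 w+\hat c\,w$ on $\mathcal D$. The crucial point is that on the open set $\{w>0\}$ one has $u>\widetilde u\ge 1-\delta_0$, so $u,\widetilde u\in[1-\delta_0,1]$ and hence, by \eqref{delta0}, $\hat c\le-\sigma_0<0$ there.

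A direct application of the usual comparison principle (Proposition~\ref{prop:comparison}) on a finite interval $[\tau,T]$ is not available, because we have no sign information on $w$ at the initial time $\tau$. Instead, I would run a barrier argument backward in time, exploiting the favorable sign of $\hat c$. Fix $t_*\le T$ and suppose, for contradiction, that $w(t_*,x_*)>0$ for some $x_*\ge a_1(t_*)$. For $\tau<t_*$ set $\overline w_\tau(t):=\delta_0\,e^{-\sigma_0(t-\tau)}$ on $[\tau,t_*]$; since $-(\sigma_0+\hat c)\ge 0$ on $\{w>0\}$, the function $\overline w_\tau$ is a supersolution there of $\partial_t V=\partial_x^2 V+\hat c\,V$. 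The goal is to show $w\le\overline w_\tau$ on $[\tau,t_*]\times\{x\ge a_1(t)\}$. This follows from a maximum-principle argument applied to $v:=w-\overline w_\tau$, localized to $\{v>0\}\subset\{w>0\}$: there $\hat c\le-\sigma_0<0$, $v$ is a subsolution of $\partial_t v=\partial_x^2 v+\hat c\,v$, and $v\le 0$ on the boundary pieces $\{t=\tau\}$ (where $w\le\delta_0=\overline w_\tau$) and $\{x=a_1(t)\}$ (where $w<0<\overline w_\tau$). The non-compactness in $x$ is handled by the classical Phragm\'en--Lindel\"of device: subtracting $\eta\,(x^2+2(t-\tau))$, which is a supersolution on $\{v>0\}$ because $\hat c\le 0$ there and $x^2+2(t-\tau)\ge 0$ for $t\ge\tau$, makes the competitor tend to $-\infty$ as $x\to\infty$, so its supremum is attained at a finite point; at such a point a positive supremum would contradict the strict sign $\hat c\le-\sigma_0$, so $v\le\eta(x^2+2(t-\tau))$, and letting $\eta\to0$ gives $v\le0$. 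Thus $0<w(t_*,x_*)\le\delta_0\,e^{-\sigma_0(t_*-\tau)}$, and sending $\tau\to-\infty$ yields a contradiction. Hence $w\le0$, i.e. $u\le\widetilde u$, throughout $\mathcal D$.

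Finally, the strict inequality $u<\widetilde u$ on $\mathcal D$ follows from the strong maximum principle and the Hopf boundary lemma for one-dimensional parabolic equations applied to $\widetilde u-u\ge0$, which satisfies a linear equation with bounded coefficients: if the two solutions touched at an interior point of $\mathcal D$ they would coincide on the whole parabolic past of that point, contradicting $u<\widetilde u$ on $\{x=a_1(t)\}$. I expect the main obstacle to be precisely the combination of two non-compactness features — the domain reaches $t=-\infty$, so there is no initial time from which to start a comparison, and it reaches $x=+\infty$ — compounded by the fact that the good sign $\hat c\le-\sigma_0$ holds only on the subset $\{w>0\}$ (resp. $\{u>\widetilde u\}$ near $0$ in case (ii)). The way around it is to localize every comparison to that subset and to combine the backward-in-time exponential barrier $\delta_0 e^{-\sigma_0(t-\tau)}$ with the quadratic spatial barrier, both of which are legitimate only because of this sign.
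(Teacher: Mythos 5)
Your proposal is correct and follows essentially the same route as the paper's proof: the same decomposition $w=u-\widetilde u$, the same observation that the zeroth-order coefficient is $\leq-\sigma_0$ on $\{w>0\}$, the same backward exponential barrier $\delta_0 e^{-\sigma_0(t-\tau)}$ with $\tau\to-\infty$, and the strong maximum principle for strictness. The only difference is that you spell out, via the quadratic Phragm\'en--Lindel\"of correction, why the comparison with the barrier is valid on the spatially unbounded domain — a detail the paper leaves implicit — which is a welcome but not substantive addition.
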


\begin{proof}
We first prove (i). Let $w(t,x):=u(t,x)-\widetilde{u}(t,x)$.  Then $w$ is a subsolution of
\[
\partial_t w=\partial_{x_i}^2 w+h(t,x)w \quad (t\in (-\infty,T],\, x > a_1(t)),
\]
where
\[
h(t,x)=\int_0^1 f'\left(su(t,x)+(1-s) \widetilde{u}(t,x)\right) ds,
\]
satisfying the boundary condition
\begin{equation}\label{w(alpha)}
w(t,a_1(t))<0\quad (t\in(-\infty,T]).
\end{equation}
Also, since $1-\delta_0\leq \widetilde u\leq 1$ and $u\leq 1$, we have $w\leq \delta_0$. 
It suffices to show that $w<0$.

Suppose that $w(t,x)\geq 0$ for some $t\in(-\infty,T]$, $x > a_1(t)$. Then we have $1-\delta_0\leq \widetilde{u}(t,x)\leq u(t,x)\leq 1$, which implies $h(t,x)\leq -\sigma_0$.  It follows that
\begin{equation}\label{w-sub}
w_t\leq\Delta w -\sigma_0 w \quad \hbox{wherever}\ \ w\geq 0.
\end{equation}
Now we choose $T_1<T$ arbitrarily and define a function $\eta(t,x)=\delta_0 e^{-\sigma_0(t-T_1)}$. Then
\[
\eta_t=-\sigma_0 \eta=\Delta\eta -\sigma_0 \eta,\quad (t\in [T_1,T],\, x>a_1(t)).
\]
Note also that
\[
\begin{split}
& \eta(T_1,x)=\delta_0 \geq w(T_1,x)\quad  (x\geq a_1(T_1)),\\
& \eta(t, a_1(t))>0>w(t, a_1(t))\ \ (t\in[T_1,T]).
\end{split}
\]
Thus, in view of \eqref{w-sub}, $\eta$ acts as an upper barrier for $w$. Consequently,
\[
w(t,x)\leq \eta(t,x)=\delta_0 e^{-\sigma_0 (t-T_1)}\quad (t\in [T_1,T],\, x\geq a_1(t)).
\]
Recall that $T_1\in (-\infty,T)$ is arbitrary. Letting $T_1\to -\infty$, we obtain
\[
w(t,x)\leq 0 \quad \hbox{for all}\ \ t\in (-\infty,T],\, x\geq  a_1(t).
\]
Since $w$ is not identically $0$, the strong maximum principle implies $w<0$. This proves statement~(i). 
Statement (ii) can be proved similarly. Define $w(t,x):=u(t,x)-\widetilde{u}(t,x)$ as above. Then $w(t,a_2(t))<0$ for $t\in(-\infty,T]$, and, since $0\leq u\leq\delta_0$ and $\widetilde{u}\geq 0$, we have $w\leq \delta_0$. The rest of the proof is precisely the same as above. This completes the proof of Lemma~\ref{lem:comparison-ancient}.
.\end{proof}

The next lemma is similar to the above lemma, but it is concerned with a solution on $\Omega$. 
Before stating the lemma, we introduce the following notation for each $i\in\{1,\ldots,N\}$:
\[
\Omega_i[b]:=\{x\in\Omega_i \mid 0\leq x_i \leq b\}. 
\]

\begin{lem}[Comparison of ancient solutions on $\Omega$]\label{lem:comparison-ancient2}
Let $u(t,x)$ be a subsolution and $\widetilde{u}(t,x)$ be a supersolution of \eqref{RD-Omega} that are defined for $t\in(-\infty, T]$ for some $T\in\R$ and satisfy $0\leq u\leq 1,\, 0\leq \widetilde{u}\leq 1$. Fix $i\in\{1,\ldots,N\}$ and let $u{}_{\restr \Omega_i}(t,x_i), \widetilde{u}{}_{\restr \Omega_i}(t,x_i)$ denote, respectively, the restriction of $u,\widetilde{u}$ onto the outer path $\Omega_i$.
Assume that there exists a smooth function $b(t)\geq 0$ such that
\[
u_{\restr \Omega_i}(t, b(t))<\widetilde{u}_{\restr \Omega_i}(t, b(t))\quad ({}\forall t\in (-\infty,T]),
\]
\[
0\leq u(t,x)\leq \delta_0 \ \ \left({}\forall t\in (-\infty,T],\,  x\in \Omega_i[b(t)]\cup\left(\Omega\setminus\Omega_i\right)\right).
\]
Then
\[
u(t,x)<\widetilde{u}(t,x) \quad \hbox{for all}\ \ t\in (-\infty,T], \, x\in \Omega_i[b(t)]\cup\left(\Omega\setminus\Omega_i\right)
\]
\end{lem}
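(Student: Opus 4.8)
The plan is to run the barrier argument from the proof of Lemma~\ref{lem:comparison-ancient}(ii), now on the subgraph $\Omega_i[b(t)]\cup(\Omega\setminus\Omega_i)$ of $\Omega$ with the single moving boundary point $x_i=b(t)$ on $\Omega_i$. Set $w:=u-\widetilde u$. On each edge of this region, $w$ is a (weak) subsolution of the linear equation $\partial_t w=\partial_x^2 w+h(t,x)w$, where $h(t,x)=\int_0^1 f'\big(su(t,x)+(1-s)\widetilde u(t,x)\big)\,ds$; at every vertex lying in the region, subtracting the super-Kirchhoff condition \eqref{super-Kirchhoff} for $\widetilde u$ from the sub-Kirchhoff condition \eqref{sub-Kirchhoff} for $u$ gives $\sum_k\rho_k\,\partial w/\partial\nu_k\ge 0$, so $w$ is a subsolution at the junctions as well. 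Finally $w(t,b(t))<0$ on the moving boundary, and $w\le\delta_0$ throughout the region because there $0\le u\le\delta_0$ and $\widetilde u\ge 0$.

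The key point is the same one-sided estimate as in Lemma~\ref{lem:comparison-ancient}: wherever $w\ge 0$ one has $0\le\widetilde u\le u\le\delta_0$, so the convex combination $su+(1-s)\widetilde u$ stays in $[0,\delta_0]$ and \eqref{delta0} yields $h\le-\sigma_0$; hence
\[
\partial_t w\le\Delta_\Omega w-\sigma_0 w\qquad\text{wherever}\ \ w\ge 0.
\]
Fix $T_1<T$ and set $\eta(t):=\delta_0 e^{-\sigma_0(t-T_1)}$, a spatially constant supersolution of $\partial_t\eta=\Delta_\Omega\eta-\sigma_0\eta$ which trivially satisfies the Kirchhoff condition at every vertex. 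Since $\eta>0$ everywhere, $w$ can reach $\eta$ only at points where $w=\eta>0$, i.e. in the regime where the displayed inequality holds; combining this with $w(T_1,\cdot)\le\delta_0=\eta(T_1)$ on the region, with $w(t,b(t))<0<\eta(t)$ on the moving boundary, and with the vertex conditions above, the comparison principle (Proposition~\ref{prop:comparison}, used in the one-sided form exactly as in the proof of Lemma~\ref{lem:comparison-ancient}) gives $w\le\eta$ on $[T_1,T]\times\big(\Omega_i[b(t)]\cup(\Omega\setminus\Omega_i)\big)$. Here the a priori bound $0\le u,\widetilde u\le 1$ makes $w$ bounded, which is what legitimizes the maximum principle on the unbounded pieces $\Omega_j\ (j\ne i)$, with no growth condition needed at infinity.

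Since $T_1<T$ is arbitrary and $\delta_0 e^{-\sigma_0(t-T_1)}\to 0$ as $T_1\to-\infty$ for each fixed $t$, letting $T_1\to-\infty$ yields $w\le 0$ throughout the region. As $w$ is a subsolution of $\partial_t w=\Delta_\Omega w+h(t,x)w$ on all of the region, $w\le 0$, and $w\not\equiv 0$ (because $w(t,b(t))<0$), the strong maximum principle on the graph — together with the Hopf boundary lemma applied at each vertex via the sub-Kirchhoff condition for $w$ — upgrades this to the strict inequality $w<0$ at every point of $\Omega_i[b(t)]\cup(\Omega\setminus\Omega_i)$, which is the assertion of the lemma.

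I expect the only delicate point to be carrying the barrier comparison simultaneously across the junctions and up to the moving boundary point $x_i=b(t)$: one must check that the argument survives the passage across each vertex, which is precisely where the (sub-/super-)Kirchhoff conditions enter, and that the unbounded outer paths $\Omega_j\ (j\ne i)$ require nothing beyond boundedness of $w$. Both are routine given Proposition~\ref{prop:comparison} and the proof of Lemma~\ref{lem:comparison-ancient}, so the lemma follows with essentially no new ideas.
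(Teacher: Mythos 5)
Your proposal is correct and follows essentially the same route as the paper: the paper likewise reduces to the argument of Lemma~\ref{lem:comparison-ancient}(ii), using $w=u-\widetilde{u}\leq\delta_0$, the one-sided bound $h\leq-\sigma_0$ wherever $w\geq 0$, and the spatially uniform barrier $\eta(t)=\delta_0 e^{-\sigma_0(t-T_1)}$, which satisfies the Kirchhoff condition trivially, before letting $T_1\to-\infty$ and invoking the strong maximum principle. Your extra remarks on subtracting the super-/sub-Kirchhoff conditions at the vertices and on the boundedness of $w$ on the unbounded paths are exactly the routine checks the paper leaves implicit.
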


\begin{proof}
The proof is basically the same as that of Lemma~\ref{lem:comparison-ancient} (ii). We define  $w(t,x):=u(t,x)-\widetilde{u}(t,x)$ as before. Then $w$ is a subsolution of the equation
\[
\partial_t w=\Delta w+h(t,x)w \quad (\,t\in (-\infty,T],\, x \in \Omega_i[b(t)]\cup\left(\Omega\setminus\Omega_i\right)\,).
\]
What we have to show is $w<0$. By the assumption, we have $w\leq \delta_0$. As before, we choose $T_1<T$ arbitrarily and define a function $\eta(t,x)=\delta_0 e^{-\sigma_0(t-T_1)}$. Since $\eta$ is spatially uniform, it satisfies the Kirchhoff condition on $\Omega_i[b(t)]\cup\left(\Omega\setminus\Omega_i\right)$. Therefore, $\eta$ acts as an upper barrier for $w$ on this part of the graph $\Omega$. The rest of the proof is the same as that of Lemma~\ref{lem:comparison-ancient}, so we omit it. The proof of the lemma is complete.
\end{proof}

Now we are ready to prove the time monotonicity and uniqueness of $\widehat{u}_i$. 

\begin{proof}[Proof of Proposition~\ref{prop:uniqueness}] 
We begin with the proof of the time-monotonicity. Let $\delta_0, \sigma_0$ be as in \eqref{delta0}. We recall that $\phi$ satisfies the condition \eqref{phi(0)}, that is, $\phi(0)=a$. Let $L>0$ be such that
\[
1-\frac{\delta_0}{2}\leq \phi(z)<1 \ \ \hbox{for}\ \ z\in(-\infty, -L],\quad 
0<\phi(z)\leq \frac{\delta_0}{2}\ \ \hbox{for}\ \ z\in [L,+\infty)
\]
and define $a_1(t)=-ct+L$, $b(t)=-ct-L$. Then
\[
1-\frac{\delta_0}{2}\leq\phi(-x-ct)< 1\ \ \hbox{if}\ \ x\geq a_1(t),\quad 
 0<\phi(-x-ct)\leq \frac{\delta_0}{2}\ \ \hbox{if}\ \ x\leq b(t).
\]
Now let $\tau>0$ be a constant. Then, since $\phi$ is monotone decreasing, we have
\[
  \phi(-x-c(t+\tau))-\phi(-x-ct)>0.
\]
As $\widehat{u}_i{}_{\restr \Omega_i}(t,x_i)$ converges to $\phi(-x_i-ct)$ and $\widehat{u}_i{}_{\restr\Omega_i}(t+\tau,x_i)$ to $\phi(-x_i-c(t+\tau))$ as $t\to-\infty$ uniformly on $\Omega_i $, we see that, for any sufficiently small $\tau>0$, there exists $T<0$ such that
\begin{equation}\label{ubar-ubar-a}
  1-\delta_0 \leq \widehat{u}_i{}_{\restr\Omega_i}(t,x_i),\,\widehat{u}_i{}_{\restr\Omega_i}(t+\tau,x_i)<1\quad \hbox{for}\ \ t\in(-\infty,T],\, x_i\geq a_1(t),
\end{equation}
\begin{equation}\label{ubar-ubar-b}
  0< \widehat{u}_i(t,x),\,\widehat{u}_i(t+\tau,x)\leq\delta_0 \quad \hbox{for}\ \ t\in(-\infty,T],\, x\in \Omega_i[b(t)]\cup\left(\Omega\setminus\Omega_i\right),
\end{equation}
\begin{equation}\label{ubar-ubar-ab}
  \widehat{u}_i{}_{\restr\Omega_i}(t+\tau,x_i)-\widehat{u}_i{}_{\restr\Omega_i}(t,x_i)>0 \quad \hbox{for}\ \ t\in(-\infty,T],\, a_1(t)\geq x_i\geq b(t).
\end{equation}
Combining \eqref{ubar-ubar-a} and \eqref{ubar-ubar-ab}, and applying Lemma~\ref{lem:comparison-ancient} (i) to $\widetilde{u}(t,x_i):=\widehat{u}_i{}_{\restr\Omega_i}(t+\tau,x_i)$, $u(t,x_i):=\widehat{u}_i{}_{\restr\Omega_i}(t,x_i)$, we see that
\[
\widehat{u}_i{}_{\restr\Omega_i}(t+\tau,x_i)>\widehat{u}_i{}_{\restr\Omega_i}(t,x_i) \quad \hbox{for}\ \ t\in(-\infty,T],\, x_i\in [b(t),\infty).
\]
Similarly, combining \eqref{ubar-ubar-b} and \eqref{ubar-ubar-ab} (with $x_i=b(t)$, and applying Lemma~\ref{lem:comparison-ancient2}, we get
\[
\widehat{u}_i(t+\tau,x)>\widehat{u}_i(t,x)\hbox{for}\ \ t\in(-\infty,T],\, x\in \Omega_i[b(t)]\cup\left(\Omega\setminus\Omega_i\right).
\]
Combining the above two inequalities, we obtain $\widehat{u}_i(t+\tau,x)>\widehat{u}_i(t,x)$ on $\Omega$ for all $t\in(-\infty,T]$. The same inequality holds for all $t>T$ by the comparison principle. Hence $\widehat{u}_i(t+\tau,x)>\widehat{u}_i(t,x)$ for all $t\in\R$ and $x\in \Omega$. Since $\tau>0$ is arbitrary, this prove the monotonicity of $\widehat{u}_i$ in $t$.

Next we prove the uniqueness. Suppose that $\widehat{U}_i$ satisfies the same condition as \eqref{u-hat}. 
Then, by setting $\widetilde{u}(t,x)=\widehat{U}_i(t+\tau,x)$, $u(t,x)=\widehat{u}_i(t,x)$ and repeating the same argument as above, we get $\widehat{U}_i(t+\tau,x)>\widehat{u}_i(t,x)$ on $\Omega$. Letting $\tau\to 0$, we obtain $\widehat{U}_i(t,x)\geq \widehat{u}_i(t,x)$. By reversing the role of $\widehat{U}_i$ and $\widehat{u}_i$ and repeating the same argument, we see that $\widehat{u}_i(t,x)\leq \widehat{U}_i(t,x)$ on $\Omega$. Consequently, $\widehat{U}_i=\widehat{u}_i$. This proves uniquness. 
The proof of Proposition~\ref{prop:uniqueness} is complete.
\end{proof}

\subsection{Proof of dichotomy and minimality}\label{ss:proof-dichotomy}

In this section we prove Theorem~\ref{thm:dichotomy} (Dichotomy theorem) and Theorem~\ref{thm:minimal} (Minimality theorem). We start with the following lemma:

\begin{lem}\label{lem:below-V}
Let $i,j\in\{1,\ldots,N\}$ with $i\ne j$. Suppose that $\widehat{v}_i({\rm P}_j)\leq \beta$. Then
\begin{equation}\label{vi<V}
\widehat{v}_i{}_{\restr\Omega_j}(x_j)\leq V(x_j)\quad\hbox{for all}\ \ x_j\geq 0,
\end{equation}
where $V$ denotes the pulse solution shown in Figure~\ref{fig:pulse-solution}.
\end{lem}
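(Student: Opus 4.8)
I would not work directly with the stationary equation on $\Omega_j$ (the behaviour of $\widehat v_i{}_{\restr\Omega_j}$ at $x_j=\infty$ is precisely what is in question), but instead compare the ancient solution $\widehat u_i$, restricted to $\Omega_j$, with the pulse solution $V$, and then pass to the limit $t\to\infty$. The comparison is done in two stages: first for very negative times, where Lemma~\ref{lem:comparison-ancient}(ii) applies because $\widehat u_i$ is uniformly close to $0$ on the whole outer path; then for the remaining times by an ordinary forward parabolic comparison. Throughout, $V$ is viewed on $\Omega_j\cong[0,\infty)$ as a $t$-independent solution — hence in particular a supersolution — of $\partial_t w=\partial_x^2 w+f(w)$, with $0<V\le\beta<1$ and $V(0)=\beta$.

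\textbf{The two ingredients.} By \eqref{u-hat} we have $\sup_{\Omega\setminus\Omega_i}|\widehat u_i(t,\cdot)|\to0$ as $t\to-\infty$; since $\Omega_j\setminus\{{\rm P}_j\}\subset\Omega\setminus\Omega_i$, and (in the case ${\rm P}_i={\rm P}_j$) $\widehat u_i(t,{\rm P}_j)\to\phi(-ct)\to0$ as $t\to-\infty$, there is $T_0\in\R$ such that $0<\widehat u_i(t,x)\le\delta_0$ for all $x\in\Omega_j$ and all $t\le T_0$, where $\delta_0$ is the constant of \eqref{delta0}. On the other hand, since $\widehat u_i$ is strictly increasing in $t$ (Theorem~\ref{thm:u-hat}) and $\widehat u_i(t,{\rm P}_j)\uparrow\widehat v_i({\rm P}_j)\le\beta$, we get $\widehat u_i(t,{\rm P}_j)<\beta=V(0)$ for every $t\in\R$. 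This is the only place the hypothesis $\widehat v_i({\rm P}_j)\le\beta$ is used.

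\textbf{The comparison.} Apply Lemma~\ref{lem:comparison-ancient}(ii) on $(-\infty,T_0]\times[0,\infty)$ with $u:=\widehat u_i{}_{\restr\Omega_j}$ (a solution, hence a subsolution, of the one‑dimensional equation on $(0,\infty)$), $\widetilde u:=V$, and the constant curve $a_2(t)\equiv0$: the boundary comparison $u(t,0)=\widehat u_i(t,{\rm P}_j)<V(0)$ and the smallness $0\le\widehat u_i{}_{\restr\Omega_j}\le\delta_0$ on $\Omega_j$ for $t\le T_0$ are exactly the hypotheses, so $\widehat u_i{}_{\restr\Omega_j}(t,x_j)<V(x_j)$ for all $t\le T_0$, $x_j\ge0$. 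To extend this to all $t$, set $w:=\widehat u_i{}_{\restr\Omega_j}-V$ on $[T_0,\infty)\times[0,\infty)$; then $w$ is bounded and solves a linear equation $\partial_t w=\partial_x^2 w+h(t,x)w$ with $h(t,x)=\int_0^1 f'\!\big(s\,\widehat u_i{}_{\restr\Omega_j}+(1-s)V\big)\,ds$ bounded, with $w(T_0,\cdot)\le0$ on $[0,\infty)$ and $w(t,0)=\widehat u_i(t,{\rm P}_j)-\beta<0$; by the maximum principle for bounded solutions on the half‑line, $w\le0$. Combining the two time ranges gives $\widehat u_i(t,x_j)\le V(x_j)$ on $\Omega_j$ for all $t\in\R$, and letting $t\to\infty$ (using $\widehat v_i=\lim_{t\to\infty}\widehat u_i$) yields \eqref{vi<V}.

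\textbf{Main obstacle.} The delicate point is the stitching of the two stages. Lemma~\ref{lem:comparison-ancient}(ii) can only be invoked while $\widehat u_i$ is below $\delta_0$ on the \emph{entire} outer path $\Omega_j$, i.e.\ for $t\le T_0$; since $\widehat u_i$ is increasing in $t$, the inequality at $t=T_0$ does not by itself propagate forward, so the separate forward comparison on $[T_0,\infty)$ is genuinely needed. Conversely, one cannot simply run a forward comparison starting ``at $t=-\infty$'' without the ancient‑solution lemma, which is designed precisely to handle that limiting initial condition in the regime near the stable state $0$. All remaining points — that $V{}_{\restr[0,\infty)}$ is an admissible supersolution with $0<V\le\beta<1$, and that the boundary comparison at $x_j=0$ is strict — are routine.
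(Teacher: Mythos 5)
Your proof is correct and follows essentially the same route as the paper's: apply Lemma~\ref{lem:comparison-ancient}(ii) with $\widetilde u=V$ on $(-\infty,T_0]$ where $\widehat u_i\le\delta_0$ on all of $\Omega_j$, then use the time-monotonicity together with $\widehat v_i({\rm P}_j)\le\beta=V(0)$ to run an ordinary forward comparison on $[T_0,\infty)$, and finally let $t\to\infty$. Your explicit handling of the case ${\rm P}_i={\rm P}_j$ and of the stitching at $t=T_0$ only makes precise what the paper leaves implicit.
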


\begin{proof}
Let $\delta_0>0$ be as defined in \eqref{delta0}. Since $\widehat{u}_i{}_{\restr\Omega_j}(t,x_j)$ converges to $0$ as $t\to-\infty$ uniformly in $x_j\geq 0$, there exists $T\in\R$ such that
\[
0<\widehat{u}_i{}_{\restr\Omega_j}(t,x_j)\leq \delta_0 \quad \hbox{for all} \ \ t\in(-\infty,T],\,x_j\in[0,\infty).
\] 
Consequently, by Lemma~\ref{lem:comparison-ancient} (ii), we get
\[
\widehat{u}_i{}_{\restr\Omega_j}(t,x_j)\leq V(x_j)\quad \hbox{for all} \ \ t\in(-\infty,T],\,x_j\in[0,\infty).
\]
By the assumption $\widehat{v}_i({\rm P}_j)\leq \beta$ and the time monotonicity of $\widehat{u}_i(t,x_j)$, we have
\[
\widehat{u}_i{}_{\restr\Omega_j}(t,0)\leq V(0)\quad \hbox{for all}\ \ t\in\R.
\]
Thus, by the comparison principle, we see that $\widehat{u}_i{}_{\restr\Omega_j}(t,x_j)\leq V(x_j)$ for all $t\in\R, \,x_j\in[0,\infty)$. Letting $t\to\infty$, we obtain \eqref{vi<V}. The lemma is proved.
\end{proof}

\begin{proof}[Proof of Theorem~\ref{thm:dichotomy}]
If $\widehat{v}_i({\rm P}_j)\leq \beta$, then, by \eqref{vi<V}, we have $\widehat{v}_i{}_{\restr\Omega_j}(x_j)\to 0$ as $x_j\to \infty$. Next suppose that $\widehat{v}_i({\rm P}_j)> \beta$. In this case, for $\widehat{v}_i(x_j)$ to stay betweeen $0$ and $1$ for all $x_j\geq 0$, this solution must lie on the stable manifold of $(1,0)$ as we see from the phase portrait in Figure~\ref{fig:phase-portrait}. Consequently $\widehat{v}_i(x_j)\to 1$ as $x_j\to \infty$. This proves the dichotomy \eqref{dichotomy}.  Furthermore, if $\widehat{v}_i({\rm P}_j)\leq \beta$, the solution  $\widehat{v}_i{}_{\restr\Omega_j}$ must lie on the homoclinic orbit since otherwise it cannot stay positive. Therefore $\widehat{v}_i{}_{\restr\Omega_j}(x_j)=V(x_j+b)$ for some $b\geq 0$. Finally, the monotonicity of $\widehat{v}_i$ on each outer path $\Omega_j$ is clear from the above arguments. The theorem is proved.
\end{proof}

\begin{proof}[Proof of Theorem~\ref{thm:minimal}]
By the assumption on $v$, there exists $a_1\geq 0$ such that $1-\delta_0\leq v_{\restr\Omega_i}(x_i)\leq 1$ for all $x_i\geq a_1$. Choose $T_i\in\R$ such that $\widehat{u}_i{}_{\restr\Omega_i}(t,a_1)<v{}_{\restr\Omega_i}(a_1)$ for all $t\in(-\infty,T_i]$. Then, by Lemma~\ref{lem:comparison-ancient} (i), we have
\begin{equation}\label{ui<v}
\widehat{u}_i{}_{\restr\Omega_i}(t,x_i)<v{}_{\Omega_i}(x_i)\quad \hbox{for all} \ \ t\in(-\infty,T_i],\ x_i\in[a_1,\infty).
\end{equation}

Next we compare $v$ and $\widehat{u}_i$ on each outer path $\Omega_j\,(j\ne i)$. As we see from the phase portrait in Figure~\ref{fig:phase-portrait}, the solution $v{}_{\restr\Omega_j}(x_j)$ either tends to $0$ as $x_j\to\infty$ or stays away from $0$. If $v{}_{\restr\Omega_j}(x_j)$ tends to $0$ as $x_j\to\infty$, then by the same argument as in the proof of Lemma~\ref{lem:below-V}, we have
\begin{equation}\label{ui<v2}
\widehat{u}_i{}_{\restr\Omega_j}(t,x_j)<v{}_{\restr\Omega_j}(x_j)\quad \hbox{for all} \ \ t\in(-\infty,T_j],\ x_j\in[0,\infty).
\end{equation}
for some $T_j\in\R$. If, on the other hand, $v{}_{\restr\Omega_j}(x_j)$ stay away from $0$ for all $x_j\geq 0$, then there certainly exists some $T_j\in\R$ such that \eqref{ui<v2} holds. Thus, in either case, \eqref{ui<v2} holds for for all $j\ne i$ and for some $T_j\in\R$. Choose $T\in\R$ such that $T\leq \min(T_1,\ldots,T_N)$ and that 
\[
\widehat{u}_i(t,x)<v(x)\quad\hbox{for all}\ \ t\in (-\infty,T],\ x\in D\cup \Omega_i[a_1],
\]
where $D$ is the center graph and $\Omega_i[a_1]:=\{x\in\Omega_i\mid 0\leq x_i\leq a_1\}$. Such $T$ cetainly exists since $\widehat{u}(t,x)$ converges to $0$ as $t\to-\infty$ uniformly on $D\cup\Omega[a_i]$. Combining the above inequality and \eqref{ui<v}, \eqref{ui<v2}, we obtain $\widehat{u}_i(t,x)<v(x)$ for all $t\in(-\infty,T],\,x\in\Omega$. Thus, by the comparison principle, we have $\widehat{u}_i(t,x)<v(x)$ for all $t\in\R,\,x\in\Omega$. Letting $t\to\infty$, we get $\widehat{v}_i\leq v$. The theorem is proved.
\end{proof}

\subsection{Proof of stability and transient properties}\label{ss:proof-stability}

\begin{proof}[Proof of Theorem~\ref{thm:stability}]
Assume that $\lambda^R<0$ for some $R>0$, and let $\varphi^R>0$ be the corresponding eigenfunction. Let $\Phi^R$ denote the function on $\Omega$ obtained by extending $\varphi^R$ by $0$ outside $\Omega^R$. Then, by the standard linearization arguments, $\widehat{v}_i-\ep\Phi^R$ is a time-independent supersolution of \eqref{RD-Omega} for any sufficiently small $\ep>0$. Choose $\ep$ small enough so that $\widehat{v}_i-\ep\Phi^R>0$ on $\Omega$. Since $\widehat{u}_i(t,x)<\widehat{v}_i(x)$ and since $\widehat{u}_i(t,x)\to 0$ as $t\to-\infty$ uniformly on $\Omega^R$, there exists $T\in\R$ such that $\widehat{u}_i(T,x)<\widehat{v}_i-\ep\Phi^R$ on $\Omega$. Then, by the comparison principle, $\widehat{u}_i(t,x)<\widehat{v}_i-\ep\Phi^R$ for all $t\geq T$, which contradicts the fact that $\widehat{u}_i(t,x)\to \widehat{v}_i(x)$ as $t\to +\infty$. This contradiction shows that $\lambda^R\geq 0$ for all $R>0$. Since $\lambda^R$ is strictly decreasing in $R>0$, we have $\lambda^R>0$ for all $R>0$. The theorem is proved.
\end{proof}

\begin{proof}[Proof of Corollary~\ref{cor:no-two-stationary}]
Let us first show that, on any edge $E$ of $\Omega$, the set of points where $\partial_x\widehat{v}_i$ vanishes is a discrete set (or empty).  To see this, suppose that this set has an accumulation point ${\rm Q}$ on $E$. Then $\partial_x\widehat{v}_i$ and $\partial_x^2\widehat{v}_i$ both vanish at ${\rm Q}$. Since $\widehat{v}_i$ is a solution of the equation $\partial_x^2 v+f(v)=0$, $\widehat{v}_i$ must be a constant. Furthermore, since $\widehat{v}_i>0$, we have either $\widehat{v}_i\equiv a$ or $\widehat{v}_i\equiv 1$, but neither of these is possible by the assumption. This proves the above claim.

Now suppose that there exists an edge $E$ of $\Omega$ such that $\partial_x\widehat{v}_i$ vanishes at more than one point. Then we can choose distinct points ${\rm A}, {\rm B}$ on $E$ such that $\partial_x\widehat{v}_i$ vanishes at these points but does not vanish on the portion of $E$ between ${\rm A}, {\rm B}$. We identify $E$ with an interval $[0,L]\in\R$, and let $0\leq x_{\rm A}, x_{\rm B}\leq L$ correspond to the points ${\rm A}, {\rm B}$. Then the function $\psi(x):=\partial_x \widehat{v}_i(x)$ satisfies
\begin{equation}\label{eigen-AB}
\partial_x^2 \psi + f'(\widehat{v}_i)\psi = 0,\ \ \psi\ne 0 \ \ (x_{\rm A}<x<x_{\rm B}),\quad \psi(x_{\rm A})=\psi(x_{\rm B})=0.
\end{equation}
Let $\lambda_{{\rm A}{\rm B}}$ denote the principal eigenvalue of the problem \eqref{EP-OmegaR} with $\Omega^R$ replaced by the line segment ${\rm A}{\rm B}$. Then \eqref{eigen-AB} implies $\lambda_{{\rm A}{\rm B}}=0$. As one can easily see by the maximum principle (and as is the case with classical Dirichlet eigenvalue problems), the principal eigenvalue decreases as the domain expands. Consequently, we have $0=\lambda_{{\rm A}{\rm B}}>\lambda^R$, but this contradicts Theorem~\ref{thm:stability}. The corollary is proved.
\end{proof}

Next we prove transient properties: This is an easy consequence of the minimality theorem.

\begin{proof}[Proof of Theorem~\ref{thm:transient}]
By the assumption $\PR(i,j)=1$, we have $\lim_{x_j\to\infty}\widehat{v}_i{}_{\restr\Omega_j}(x_j)=1$. Since, by Theorem~\ref{thm:minimal}, $\widehat{v}_j$ is minimal among all the stationary solutions satisfying $\lim_{x_j\to\infty}v{}_{\restr\Omega_j}(x_j)=1$, we see that $\widehat{v}_j\leq \widehat{v}_i$. Next, since $\PR(j,k)=1$, we have  $\lim_{x_k\to\infty}\widehat{v}_j{}_{\restr\Omega_k}(x_k)=1$. Consequently, $\lim_{x_k\to\infty}\widehat{v}_i{}_{\restr\Omega_k}(x_k)=1$. The theorem is proved.
\end{proof}

\subsection{Perturbation analysis}\label{ss:proof-perturbation}

\begin{proof}[Proof of Theorem~\ref{thm:limit-blocking}]
Let $\widehat{v}_{m,i}$ denote the limit profile of the front-like solution of \eqref{RD-m} that approaches from the outer path $\Omega_{m,i}$. 
For notational simplicity, in what follows, we write $\widehat{v}_{m,i}$ as $v_m$. Since we are assuming $\PR_m(i,j)=0$ for $m=1,2,3,\ldots$, we have
\begin{equation}\label{v_m(P)}
v_m({\rm P}_{m,j})\leq\beta\quad (m=1,2,3,\ldots).
\end{equation}

We first consider perturbations of type (a) in Assumption~\ref{ass:perturbation-D}, in which only the length of the edges of $D$ is perturbed, while the topology of $\Omega$ remains unchanged.
Let $E$ be an arbitrary edge of $D$ and $E_m\,(m=1,2,3,\ldots)$ be the corresponding edge of $D_m$. Let $L, L_m$ be the length of $E, E_m$, respectively. By the assumption, we have $L_m\to L$ as $m\to\infty$. We identify the edge $E$ with the interval $[0,L]\subset\R$, and $E_m$ with the interval $[0,L_m]\subset\R$. Then the restriction of $v_m$ on $E_m$ is a $C^1$ function on $[0,L_m]$ and satisfies the following equation:
\[
\partial_x^2 v_m + f(v_m)=0\quad (0<x<L_m).
\]
Hence $\partial_x^2 v_m$ can be extended as a continuous function on $[0,L_m]$. We differentiate this equation:
\[
\partial_x^3 v_m +f'(v_m)\partial_x v_m=0\quad (0<x<L_m).
\]
Since the total length of the edges of $D_m$ is uniformly bounded as $m$ varies, we see from Proposition~\ref{prop:gradient-max} that $|\partial_x v_m|$ is uniformly bounded, hence so is $|\partial_x^3 v_m|$. Now we define
\[
\tilde{v}_m(x):=v_m(\eta_m x),\quad \hbox{where}\ \ \eta_m=L_m/L.
\]
Then $\tilde{v}_m$ is defined on the interval $[0,L]$ and satisfies the equation
\[
\partial_x^2 \tilde{v}_m + \eta_m^2 f(\tilde{v}_m)=0\quad(0<x<L).
\]
Since $L_m\to L$ as $m\to\infty$, we have $\eta_m\to 1$. The uniform boundedness of  $|\partial_x^3 v_m|$ implies that $\tilde{v}_m$ is uniformly bounded in $C^3([0,L])$. Consequently, after replacing $\tilde{v}_m$ by its subsequence if necessary, $\tilde{v}_m$ converges to a function $v_\infty$ as $m\to\infty$, uniformly in $C^2$, and it satisfies
\[
\partial_x^2 v_\infty + f(v_\infty)=0\quad (0<x<L).
\]
By repeating the same procedure on every edge of $D_m$, and arguing similarly on the outer paths $\Omega_{m,1},\ldots,\Omega_{m.N}$ (for which there is no need to consider $\eta_m$), we obtain a function $v_\infty$ that is defined on every edge of $\Omega$. Since $v_m$ is continuous on $\Omega^{(m)}$ and satisfies the Kirchhoff condition at every vertex of $\Omega^{(m)}$, and since $\eta_m\to 1$, the same properties are inherited by $v_\infty$ on $\Omega$. Consequently $v_\infty$ is a solution of \eqref{stationary} on the entire $\Omega$. This means that
\[
\Delta_{\Omega}v_\infty +f(v_\infty)=0\quad\hbox{on}\ \ \Omega.
\]
It also follows from \eqref{v_m(P)} that
\begin{equation}\label{v_infty(P)}
v_\infty({\rm P}_j)\leq\beta.
\end{equation}
Furthermore, the restriction of $v_m$ on $\Omega_{m,i}$ lies on the stable manifold of $(1,0)$ in the phase portrait for every $m=1,2,3,\ldots$, the same is true of the restriction of $v_\infty$ on $\Omega_i$. Therefore 
\[
\lim_{x_i\to\infty}v_\infty{}_{\restr\Omega_i}(x_i)=1.
\]
Thus, by Theorem~\ref{thm:minimal}, $\widehat{v}_i\leq v_\infty$ on $\Omega$. Combining this and \eqref{v_infty(P)}, we obtain $\widehat{v}_i({\rm P}_j)\leq\beta$, which implies $\PR(i,j)=0$, as claimed.

Next we consider perturbations of type (b) in Assumption~\ref{ass:perturbation-D}. As before, we choose an arbitrary edge $E$ of $D$, and let ${\rm A}, {\rm B}, {\rm C}_m, {\rm C}'_m, \Sigma_m$ be as in Figure~\ref{fig:perturbation-edge} ({\it above}). As $m\to\infty$, the total length of the edges of $\Sigma_m$, denoted by $|\Sigma_m|$, tends to $0$, and the points ${\rm C}_m, {\rm C}'_m$ both converge to the same point on $E$, which we call ${\rm C}_\infty$. This can be regarded as a newly added vertex, so $E$ is divided into two edges ${\rm A}{\rm C}_\infty$, ${\rm C}_\infty{\rm B}$ after the limiting procedure. Arguing similarly to the case (a) above, we can construct a function $v_\infty$ that satisfies the equation $\partial_x^2 v_\infty + f(v_\infty)=0$ on the edges ${\rm A}{\rm C}_\infty$, ${\rm C}_\infty{\rm B}$. In order for $v_\infty$ to be a solution of this equation on the entire edge $E$, we have to show that $v_\infty$ and $\partial_x v_\infty$ are continuous at ${\rm C}_\infty$. The continuity of $v_\infty$ can easily be shown by using the fact that $|\partial_x v_m|$ is uniformly bounded on $D_m$ (see Proposition~\ref{prop:gradient-max}) and that $|\Sigma_m|\to 0$ as $m\to\infty$. The continuity of $\partial_x v_\infty$ follows from the Gauss--Green formula \eqref{Green1} and the fact that $|\Sigma_m|\to 0$ as $m\to\infty$. Hence $\partial_x^2 v_\infty + f(v_\infty)=0$ on the entire edge $E$. The rest of the proof is the same as that for the case (a) above, so we omit the details.

Finally, we consider perturbations of type (c) in Assumption~\ref{ass:perturbation-D}. In this case, $|\Sigma_m|\to 0$ as $m\to\infty$, and $\Sigma_m$ shrinks to the vertex ${\rm Q}$. As before,  by a limiting procedure we obtain a function $v_\infty$ that satisfies $\partial_x^2 v_\infty + f(v_\infty)=0$ on each edge emanating from ${\rm Q}$. What we have to show is the continuity of $v_\infty$ at ${\rm Q}$ and the Kirchhoff conditions. Again the former can be shown by using the uniform boundedness of $\partial_x v_m$ and the fact that $|\Sigma_m|\to 0$ as $m\to\infty$, while the Kirchhoff condition follows from the Gauss--Green formula \eqref{Green1} and $|\Sigma_m|\to 0$ as $m\to\infty$. The rest of the proof is the same as above. The proof of Theorem~\ref{thm:limit-blocking} is complete.
\end{proof}

The proof of Theorem~\ref{thm:limit-blocking2} is similar to the case (a) of the proof of Theorem~\ref{thm:limit-blocking} but is simpler, so we leave the details of the proof to the reader. Corollary~\ref{cor:propagation-open} is just a contraposition of the above theorems, so we omit the proof. 

\begin{proof}[Proof of Theorem~\ref{thm:unification}]
Let $\widetilde{\Omega}$ be the graph obtained by unifying the outer paths $\Omega_{j_1}\ldots,\Omega_{j_m}$, and $L$ be the minimum of the length of the paths ${\rm P}_{j_k}{\rm Q}_{j_k}\,(k=1,\ldots,m)$, that is,
\[
L:=\min\big(|{\rm P}_{j_1}{\rm Q}_{j_1}|,\ldots,|{\rm P}_{j_m}{\rm Q}_{j_m}|\big).
\]
Let $\widetilde{V}_i$ denote the limit profile associated with the outer path $\Omega_i$ for the equation on $\widetilde{\Omega}$. We first show that the following holds if $L$ is sufficiently large:
\begin{equation}\label{V(P-jk)}
\widetilde{V}_i({\rm P}_{j_k})>\beta \quad(k=1,\ldots,m).
\end{equation}
Suppose the contrary. Then there exists a sequence of points $({\rm Q}^n_{j_1}\ldots,{\rm Q}^n_{j_m})$ with $L_n \to\infty$ and some $k\in\{1,\ldots,m\}$ such that the corresponding limit profile $\widetilde{V}_i^n\,(n=1,2,3,\ldots)$ satisfies
\[
\widetilde{V}_i^n({\rm P}_{j_k})\leq \beta.
\]
As we let $n\to \infty$, the function $\widetilde{V}_i^n$ converges (after taking a subsequence) to a stationary solution $\widetilde{V}_i^\infty$ on $\Omega$.  Clearly $\widetilde{V}_i^\infty\to 1$ at infinity along $\Omega_i$. Consequently, by Theorem~\ref{thm:minimal}, we have $\widehat{v}_i\leq \widetilde{V}_i^\infty$. This, together with the above inequality, implies $\widehat{v}_i({\rm P}_{j_k})\leq \beta$, but this contradicts the assumption that $\PR(i,j_k)=1$ on $\Omega$. This contradiction proves \eqref{V(P-jk)}. 

Next we prove that \eqref{V(P-jk)} implies
\begin{equation}\label{V(Q-j0)}
\widetilde{V}_i(\widetilde{\rm Q}_{j_0}) >\beta.
\end{equation}
Suppose the contrary and assume that
\begin{equation}\label{V(Q-j0)2}
\widetilde{V}_i(\widetilde{\rm Q}_{j_0}) \leq \beta.
\end{equation}
Then, since $\widetilde{V}_i({\rm P}_{j_k})>\beta$, the solution trajectories of $\widetilde{V}_i$ along the eges ${\rm P}_{j_k}\widetilde{\rm Q}_{j_0}\,(k=1,\ldots,m)$ lie outside the homoclinic orbit (see Figure~\ref{fig:phase-portrait}), therefore, in order for \eqref{V(Q-j0)2} to hold, $\widetilde{V}_i(x)$ must be monotone decreasing along the edge ${\rm P}_{j_k}\widetilde{\rm Q}_{j_0}\,(k=1,\ldots,m)$ at least near the point $\widetilde{\rm Q}_{j_0}$. Hence $\partial_{x_k}\widetilde{V}_i(\widetilde{\rm Q}_{j_0})<0\,(k=1,\ldots,m)$, where $\partial_{x_k}$ denotes the derivative along the edge ${\rm P}_{j_k}\widetilde{\rm Q}_{j_0}$ in the direction from ${\rm P}_{j_k}$ to $\widetilde{\rm Q}_{j_0}$. By the Kirchhoff condition, we have
\begin{equation}\label{Kirchhoff-Vi}
\sum_{k=1}^m \partial_{x_k}\widetilde{V}_i\widetilde{\rm Q}_{j_0})=\partial_{x_0}\widetilde{V}_i(\widetilde{\rm Q}_{j_0}),
\end{equation}
where $\partial_{x_0}$ denotes the derivative along the outer path $\widetilde{\Omega}_{j_0}$ in the direction from $\widetilde{\rm Q}_{j_0}$ to infinity. Since $\partial_{x_k}\widetilde{V}_i(\widetilde{\rm Q}_{j_0})<0\,(k=1,\ldots,m)$, the following inequality holds:
\begin{equation}\label{derivative-Q-j0}
\partial_{x_k}\widetilde{V}_i(\widetilde{\rm Q}_{j_0})> \partial_{x_0}\widetilde{V}_i(\widetilde{\rm Q}_{j_0}) \quad\hbox{for}\ \ k=1,\ldots,m.
\end{equation}

Next we define a map $h:\Omega\to \widetilde{\Omega}$ as follows. If $x\in\Omega$ does not belong to the outer paths $\Omega_{j_1}\ldots,\Omega_{j_m}$, then $h(x)=x$, where this portion of $\Omega$ and the corresponding portion of $\widetilde{\Omega}$ are identified.. If $x$ belongs to the paths ${\rm P}_{j_k}{\rm Q}_{j_k}\,(k=1,\ldots,m)$, then $h$ is an isometry from the path ${\rm P}_{j_k}{\rm Q}_{j_k}$ onto the edge ${\rm P}_{j_k}\widetilde{\rm Q}_{j_0}$ of $\widetilde{\Omega}$. Finally, if $x$ belongs to the portion of the outer paths $\Omega_{j_1}\ldots,\Omega_{j_m}$ beyond the points ${\rm Q}_{j_1},\ldots,{\rm Q}_{j_m}$, then $h$ is an isometry from these portions onto $\widetilde{\Omega}_{j_0}$. 
Now we define a function $V(x)$ on $\Omega$ as follows:
\[
V(x)=\widetilde{V}_i(h(x)).
\]
Then $V$ is continuous and satisfies the equation $\Delta_{\Omega}V+f(V)=0$ on $\Omega$ except at the points ${\rm Q}_{j_1},\ldots,{\rm Q}_{j_m}$, where $V$ has a negative derivative gap because of \eqref{derivative-Q-j0}. Consequently, $V$ is a supersolution of equation \eqref{stationary}. Furthermore, $V$ tends to $1$ at infinity along $\Omega_i$, since $V$ coincides with $\widetilde{V}_i$ on this outer path. Therefore, by Theorem~\ref{thm:minimal}, we have $\widehat{v}_i\leq V$. As we are assuming \eqref{V(Q-j0)2}, $V$ tends to $0$ at infinity along the outer paths $\Omega_{j_1},\ldots,\Omega_{j_m}$, hence so does $\widehat{v}_i$, but this contradicts the assumption that $\PR(i,j_1)=\cdots=\PR(i,j_m)=1$. This contradiction proves \eqref{V(Q-j0)}, which implies that $\widetilde{V}_i\to 1$ along $\widetilde{\Omega}_{j_0}$. The theorem is proved.
\end{proof}

\begin{rem}\label{rem:symmetric-binding2}
If $\Omega$ is symmetric with respect to $\Omega_{j_1},\ldots,\Omega_{j_m}$ as mentioned in Remark~\ref{rem:symmetric-binding}, then we have $\partial_{x_1}\widetilde{V}_i({Q}_{j_0})=\cdots=\partial_{x_m}\widetilde{V}_i({Q}_{j_0})$, hence \eqref{Kirchhoff-Vi} immediately implies \eqref{derivative-Q-j0}, from which the conclusion of the theorem follows as we have seen above. Therefore, under such symmetry, we do not need to assume that the lengths of the paths ${\rm P}_{j_k}{\rm Q}_{j_k}\,(k=1,\ldots,m)$ be long. 
\qed
\end{rem}

\subsection{Analysis of star graphs}\label{ss:proof-star-graphs}

We start with proving Theorem~\ref{thm:star} and its corollaries. We first present the following lemma:

\begin{lem}\label{lem:star-v}
Let $\Omega$ be a star graph. Then the following two conditions are equivalent:
\begin{itemize}\setlength{\itemsep}{0pt}
\item[{\rm (a)}] $\PR(i,j)=0$ for all $j\ne i$;
\item[{\rm (b)}] There exists a stationary solution $v$ on $\Omega$ such that $v\to 1$ along the outer path $\Omega_i$, while $v\to 0$ along all other outer paths $\Omega_j$.
\end{itemize}
\end{lem}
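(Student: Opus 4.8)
\textbf{Proof plan for Lemma~\ref{lem:star-v}.}

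The plan is to prove the two implications separately. The direction $(a)\Rightarrow(b)$ is the easy half: assuming $\PR(i,j)=0$ for all $j\ne i$, the limit profile $\widehat{v}_i$ itself is the desired $v$. Indeed, $\widehat{v}_i$ is a stationary solution of \eqref{stationary2} by \eqref{eq:v-hat}; along $\Omega_i$ it is the restriction of the monotone limit of $\widehat{u}_i$, and since $\widehat{u}_i{}_{\restr\Omega_i}(t,x_i)\to\phi(-x_i-ct)$ tends to $1$ as $t\to-\infty$ for fixed large $x_i$, one gets $\widehat{v}_i{}_{\restr\Omega_i}\not\equiv$ small; more directly, by Theorem~\ref{thm:dichotomy} applied with $j=i$ (or by the monotonicity argument of Lemma~\ref{lem:comparison-ancient}), $\widehat{v}_i\to1$ along $\Omega_i$, while the hypothesis $\PR(i,j)=0$ says precisely that $\widehat{v}_i\to0$ along every $\Omega_j$ with $j\ne i$. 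So $v:=\widehat{v}_i$ works.

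For $(b)\Rightarrow(a)$, suppose such a stationary solution $v$ exists. Since $v\to1$ along $\Omega_i$, the pair $(v,\partial_x v)$ along $\Omega_i$ must lie on the stable manifold of $(1,0)$ in the phase portrait (Figure~\ref{fig:phase-portrait}), so $0<v<1$ on $\Omega_i$ and $v$ is monotone increasing there; in particular $v\le1$ on all of $\Omega_i$. Along each $\Omega_j$ ($j\ne i$), $v\to0$ forces $(v,\partial_x v)$ onto the homoclinic orbit, so $v{}_{\restr\Omega_j}(x_j)=V(x_j+b_j)$ for some $b_j\ge0$; in particular $0\le v\le1$ there too, and $v({\rm P}_j)\le\beta$ since the homoclinic orbit stays in $\{v\le\beta\}$. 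On the (compact) center vertex $\rm P$ the value $v({\rm P})$ is just a number; combined with the maximum principle on each edge and $0\le v\le1$ at both ends of every edge, one gets $0\le v\le1$ on all of $\Omega$. Then $v$ is a supersolution of \eqref{stationary} (in fact an exact solution) with $\lim_{x_i\to\infty}v{}_{\restr\Omega_i}(x_i)=1$, so by the Minimality Theorem~\ref{thm:minimal}, $\widehat{v}_i\le v$ on $\Omega$. In particular $\widehat{v}_i({\rm P})\le v({\rm P})=v({\rm P}_j)\le\beta$ for every $j\ne i$ (here I use that in a star graph all exit points coincide with the single center vertex $\rm P$). By the second part of Theorem~\ref{thm:dichotomy}, $\widehat{v}_i({\rm P}_j)\le\beta$ implies $\lim_{x_j\to\infty}\widehat{v}_i{}_{\restr\Omega_j}(x_j)=0$, i.e.\ $\PR(i,j)=0$, for every $j\ne i$. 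This is (a).

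The only subtlety — and what I expect to be the main point to get right — is the use of the star-graph structure in the implication $(b)\Rightarrow(a)$: because $\Omega$ is a star graph, all the exit points ${\rm P}_j$ are the \emph{same} vertex $\rm P$, so the single inequality $\widehat{v}_i({\rm P})\le\beta$ handed to us by minimality at once controls \emph{every} outer path. (For a general center graph $D$ this step would fail, which is exactly why partial propagation can occur there.) One should also note that the comparison $\widehat{v}_i\le v$ from Theorem~\ref{thm:minimal} requires only that $v$ be a supersolution with $v\to1$ along $\Omega_i$ and $0\le v\le1$; we have verified all of these. With that observation in place the rest is a direct appeal to Theorems~\ref{thm:dichotomy} and \ref{thm:minimal}.
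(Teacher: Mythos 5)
Your proof is correct and follows essentially the same route as the paper: take $v=\widehat{v}_i$ for (a)$\Rightarrow$(b), and invoke the Minimality Theorem~\ref{thm:minimal} to get $\widehat{v}_i\le v$ for (b)$\Rightarrow$(a). The paper's version of the second implication is slightly more direct---from $0\le\widehat{v}_i\le v$ and $v\to 0$ along each $\Omega_j$ it concludes $\widehat{v}_i\to 0$ immediately, without your detour through $\widehat{v}_i({\rm P})\le\beta$ and Theorem~\ref{thm:dichotomy}---but your variant is equally valid.
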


\begin{proof}
If (a) holds, then (b) holds for $v=\widehat{v}_i$, therefore the claim (a) $\Rightarrow$ (b) is obvious. 
Next suppose that (b) holds. Then by Theorem~\ref{thm:minimal}, $\widehat{v}_i\leq v$. Therefore $\widehat{v}_i\to 0$ along all the outer paths $\Omega_j$ with $j\ne i$, which implies (a). The lemma is proved.
\end{proof}

\begin{proof}[Proof of Theorem~\ref{thm:star}] 
We first note that, by Theorem~\ref{thm:dichotomy},
\begin{equation}\label{v(P)}
\begin{cases}
\,\widehat{v}_i({\rm P})\leq \beta \ \Leftrightarrow\ \PR(i,j)=0 \ \hbox{for all} \ \ j\ne i,\\
\,\widehat{v}_i({\rm P})> \beta \ \Leftrightarrow\ \PR(i,j)=1 \ \hbox{for all} \ \ j\ne i.
\end{cases}
\end{equation}

Suppose that there exists a stationary solution $v$ satisfying the condition (b) in Lemma~\ref{lem:star-v}. Then the solution trajectory of $v$ along the outer path $\Omega_i$ lies on the stable manifold of $(1,0)$ in the phase portrait (Figure~\ref{fig:phase-portrait}), while the trajectories of $v$ along all other outer paths $\Omega_j$ lie on the homoclinic orbit. Observe that the indentity (a) below holds on the stable manifold of $(1,0)$, while (b) holds on the homoclinic orbit:
\[
{\rm (a)}\ \ \ \frac12 \left(\partial_x v\right)^2 + F(v)=F(1),\qquad
{\rm (b)}\ \ \ \frac12 \left(\partial_x v\right)^2 + F(v)=0.
\]
Now we set $\xi:=\widehat{v}_i({\rm P})$, where ${\rm P}$ denotes the center point of $\Omega$, and let $0\leq x_i<\infty$ be the coordinates on $\Omega_i$, and $0\leq x_j<\infty$ be those on $\Omega_j$, $j\ne i$. Then we have
\[
\partial_{x_i}\widehat{v}_i({\rm P})=\sqrt{2(F(1)-F(\xi))},\quad 
\partial_{x_j}\widehat{v}_j({\rm P})=-\sqrt{-2F(\xi)}\ \ (j\ne i).
\]
Thus, by the Kirhhoff condition, it holds that
\[
\rho_i \sqrt{2(F(1)-F(\xi))}=\sum_{j\ne i} \rho_j \sqrt{-2F(\xi)},
\]
or, equivalently,
\[
\rho_i^2 \left(F(1)-F(\xi)\right)=-\Big(\sum_{j\ne i}\rho_j\Big)^2 F(\xi). 
\]
Using the constant $R_i$, we can rewrite the above equality as
\begin{equation}\label{xi}
F(1)+\big(R_i^2-1\big)F(\xi)=0.
\end{equation}
Thus, if a stationary solution $v$ satisfying (b) exists, then there exists $0<\xi<\beta$ such that \eqref{xi} holds. The converse is also true since \eqref{xi} is equivalent to the Kirchhoff condition at ${\rm P}$. As $\xi$ varies over the interval $(0,\beta)$, $F(\xi)$ varies over $[F(a),0)$ and attains its minimum at $\xi=a$. Therefore, \eqref{xi} holds for some $\xi\in(0,\beta)$ if and only if 
\begin{equation}\label{F(a)}
F(1)+\big(R_i^2-1\big)F(a)\leq 0.
\end{equation}
Consequently, \eqref{F(a)} is a necessary and sufficiently condition for a stationary solution $v$ satisfying (b) to exist, and, by Lemma~\ref{lem:star-v}, this is equivalent to $\PR(i,j)=0$ for all $j\ne i$. By \eqref{v(P)}, this is also equivalent to $\widehat{v}_i({\rm P})\leq \beta$. In other words, if $F(1)+\big(R_i^2-1\big)F(a)> 0$, we have  $\widehat{v}_i({\rm P})> \beta$, which implies $\PR(i,j)=1$ for all $j\ne i$. This proves the criterion \eqref{star-graph-unequal}.

Finally, let us show that $\widehat{v}_i\equiv 1$ if $\PR(i,j)=1$ for all $j\ne i$. In this case, the solution trajectories of $\widehat{v}_i$ along the outer paths $\Omega_1,\ldots,\Omega_N$ all lie on the stable manifold of $(1,0)$, therefore $\partial_{x_j}\widehat{v}_i({\rm P})\geq 0$ for all $j=1,\ldots,N$. In the mean while, by the Kirchhoff condition, we have
\[
\sum_{j=1}^N \partial_{x_j}\widehat{v}_i({\rm P})=0.
\]
Consequently, $\partial_{x_j}\widehat{v}_i({\rm P})=0$ for $j=1,\ldots,N$, which implies $\widehat{v}_i\equiv 1$ on $\Omega$. This completes the proof of Theorem~\ref{thm:star}.
\end{proof}

Next we prove the results on perturbation of star graphs. As Theorem~\ref{thm:perturbation-star1} is a special case of Corollary~\ref{cor:propagation-open}, there is no need for a proof, so we begin with proving Theorem~\ref{thm:perturbation-star2}. 

\begin{proof}[Proof of Theorem~\ref{thm:perturbation-star2}] 
Let $\Omega'_1,\ldots,\Omega'_N$ denote the outer paths of $\Omega'$ and ${\rm P}_j\,(j=1,\ldots,N)$ be the exit points from which $\Omega'_j$ emanates. (These points are not necessarily distinct.). We shall construct a supersolution $v^+$ of the stationary problem $\Delta_{\Omega'}v+f(v)=0$ on $\Omega'$ such that $v^+\to 1$ at infinity along $\Omega'_i$, while $v^+\to 0$ at infinity along $\Omega'_j$ for all $j\ne i$. Once such a supersolution is shown to exist, then by Theorem~\ref{thm:minimal}, the desired conclusion follows.

We first note that the condition \eqref{star-graph-unequal-bb} implies that
\[
\rho_i \sqrt{2(F(1)-F(a))}<\sum_{j\ne i} \rho_j \sqrt{-2F(a)},
\]
therefore there exists a constant $\delta>0$ such that
\begin{equation}\label{Kirchhoff-delta}
\rho_i \sqrt{2(F(1)-F(a))}=\sum_{j\ne i} \rho_j \left(\sqrt{-2F(a)}-\delta\right).
\end{equation}
Clearly, $\sqrt{-2F(a)}-\delta>0$. 
Now we define constants $b_1,\ldots, b_N$ by
\[
b_i=\sqrt{2(F(1)-F(a))}>0,\quad b_j=-\sqrt{-2F(a)}+\delta<0\ \ (j\ne i)
\]
and consider a solution $h(x)$ of the following boundary value problem:
\begin{equation}\label{h-equation}
\begin{cases}
\, \Delta_{D} h=0 \ \ \hbox{in}\ \ D,\\[2pt] 
\, \dfrac{\partial h}{\partial \nu_i}({\rm P}_j)= b_j\ \ (j=1,\ldots,N),
\end{cases}
\end{equation}
where $\partial h/\partial\nu_i({\rm P}_j)$ denotes the formal outer derivative of $h$ at ${\rm P}$ in the direction of the outer path $\Omega'_j\,(j=1,\ldots,N)$. More precisely, these are symbolic outer derivatives that measure the discrepancy of the Kirchhoff condition at ${\rm P}_1,\ldots,{\rm P}_N$. By \eqref{Kirchhoff-delta}, we have $\sum_{j=1}^N \rho_j b_j=0$, therefore, by Proposition~\ref{prop:Neumann-Laplace}, such a solution exists and is unique under the conditon
\[
h({\rm P}_i)=a.
\]
Since a harmonic function on a bounded finite metric graph attains its maximum at the boundary points, and since $b_j<0$ for $j\ne i$, the maximum of $h$ on $D$ is attained at ${\rm P}_i$. Consequently, 
\[
h(x)\leq h({\rm P}_i)=a\quad (x\in D).
\]
Let $\xi_j:=h({\rm P}_j)$ for $j=1,\ldots,N,\,j\ne i$. Then $\xi_j\leq a\,(j\ne i)$. Since the gradient of $h$ is uniformly bounded by Proposition~\ref{prop:gradient-max}, there exists a constant $C>0$ such that
\begin{equation}\label{a-xi}
a-C|D|\leq \xi_j\leq a\quad(j\ne i),
\end{equation}
where $|D|$ denotes the total length of the edges of $D$. In particular, $\xi_j>0$ if $|D|$ is sufficiently small.  Since a harmonic function attains its minimum at the boundary points, we obtain
\[
0<\min_{j\ne i} \xi_j \leq h(x) \leq h({\rm P}_i)=a\quad (x\in D),
\]
provided that $|D|$ is sufficiently small. 
Hence $f(h(x))\leq 0$ on $D$, which implies
\begin{equation}\label{h-super}
\Delta_{D}h+f(h)\leq 0\quad \hbox{on}\ \ D.
\end{equation}

Next let $U$ be a solution of the following problem:
\begin{equation}\label{U}
\partial_x^2 U + f(U)=0\ \ (x>0),\quad U(0)=a,\quad U(x)\to 1\ \ \hbox{as}\ \ x\to\infty.
\end{equation}
Clearly such a solution exists uniquely. Furthermore,
\begin{equation}\label{U'}
\partial_x U(0)=\sqrt{2(F(1)-F(a))} = b_i.
\end{equation}

Now we define $v^+(x)$ on $\Omega'$ as follows:
\[
\begin{cases}
\, v^+{}_{\restr\Omega'_i}(x_i)=U(x_i)\ \ (0\leq x_i<\infty),\\[2pt]
\, v^+{}_{\restr D}(x)=h(x)\ \ (x\in D), \\[2pt]
\, v^+{}_{\restr\Omega'_j}(x_j)=V(x_j+c_j)\ \ (0\leq x_j<\infty),\ j\ne i,
\end{cases}
\]
where $V$ denotes the pulse solution shown in Figure~\ref{fig:pulse-solution}, and $c_j\,(j\ne i)$ are constants such that
\[
V(c_j)=\xi_j.
\]
This pulse solution corresponds to the homoclinic orbit, and \eqref{a-xi} implies
\[
\partial_{x}V(c_j)=-\sqrt{-2F(\xi_j)}=-\sqrt{-2F(a)} +{\mathcal O}(|D|).
\]
Consequently, if $|D|$ is sufficiently small, it holds that
\[
\partial_{x_j}v^+{}_{\restr\Omega'_j}({\rm P}_j)=-\sqrt{-2F(a)} +{\mathcal O}(|D|)<-\sqrt{-2F(a)}+\delta=b_j \ \ \ (j\ne i).
\]
The above inequality and the boundary condition of $h$ at ${\rm P}_j\,(j\ne i)$ imply that $v^+$ satisfies the super-Kirchhoff condition at ${\rm P}_j\,(j\ne i)$, while \eqref{U'} implies that $v^+$ satisfies the Kirchhoff condition at ${\rm P}_i$. In view of these and \eqref{h-super}, we see that $v^+$ is a supersolution of $\Delta_{\Omega'}v+f(v)=0$ on $\Omega'$ and from the construction it is clear that $v^+\to 1$ at infinity along $\Omega'_i$, while $v^+\to 0$ at infinity along $\Omega'_j\,(j\ne i)$. Thus, by Theorem~\ref{thm:minimal}, we have $\PR'(i,j)=0$ for $j\ne i$. This completes the proof of the theorem.
\end{proof}

Before starting to prove Theorems~\ref{thm:local-star1} and \ref{thm:local-star2}, we introduce some notation. Let $\delta$ be as in \eqref{Kirchhoff-delta}, which reduces to the form under the assumption of equal edge thickness:
\begin{equation}\label{Kirchhoff-delta-1}
\sqrt{2(F(1)-F(a))}=(N'-1)\big(\sqrt{-2F(a)}-\delta\big),
\end{equation}
or, equivalently,
\begin{equation}\label{Kirchhoff-delta-2}
\sqrt{2(F(1)-F(a))}+(N'-1)\delta=(N'-1)\sqrt{-2F(a)}.
\end{equation}
Let $U_1(x)$ be a solution of the following initial-value problem:
\[
\partial_x^2 U_1 + f(U_1)=0\ \ (x>0),\quad U_1(0)=a,\quad \partial_x U_1(0)=\sqrt{2(F(1)-F(a))}+(N'-1) \delta.
\]
Then the solution trajectory of $U_1$ lies above the stable manifold of $(1,0)$, and there exists $M_1>0$ such that
\begin{equation}\label{M1}
a\leq U_1<1\ \ (0\leq x<M_1),\quad U_1(M_1)=1. 
\end{equation}

Next let $V_1(x)$ be a solution of the following initial-value problem:
\[
\partial_x^2 V_1 + f(V_1)=0\ \ (x>0),\quad V_1(0)=a,\quad \partial_x V_1(0)=-\sqrt{-2F(a)}.
\]
The solution trajectory of $V_1$ forms a portion of the homoclinic orbit, and $V_1(x)\to 0$ as $x\to\infty$. We also define a function $V_2$ which is a solution of 
\[
\partial_x^2 V_2 + f(V_2)=0\ \ (x>0),\quad V_2(0)=a,\quad \partial_x V_2(0)=-\sqrt{-2F(a)}+\delta.
\]
The solution trajectory of $V_2$ lies inside the homoclinic orbit, and it corresponds to a periodic orbit. Therefore, there exists $M_2>0$ such that
\begin{equation}\label{M2}
\partial_x V_2(x)<0\ \ (0\leq x<M_2),\quad \partial_x V_2(M_2)=0. 
\end{equation}

\begin{proof}[Proof of Theorem~\ref{thm:local-star1}]
Assume that the length of the edge ${\rm Q}{\rm P}$ is larger than $M_1$, where $M_1$ is as in \eqref{M1}, and let ${\rm R}$ denote the point on the edge ${\rm Q}{\rm P}$ such that the length of the path ${\rm R}{\rm P}$ is equal to $M_1$. Next we define coordinates $0\leq y\leq M_1$ on the path ${\rm Q}{\rm P}$, with $y=0$ corresponding to the point ${\rm P}$, and $y=M_1$ to ${\rm R}$. The coordinates on the outer path $\Omega_{j_k}$ are denoted by $0\leq x_{j_k}<\infty$. Now we define a function $v^+_1$ on $\Omega$ as follows.
\[
\begin{cases}
\,v^+_1{}_{\restr\Omega_{j_k}}(x_{j_k})=V_1(x_{j_k})\ \ (0\leq x_{j_k}<\infty)\ \ \hbox{for} \ k=1,\ldots,N'-1,\\[2pt]
\,v^+_1{}_{\restr{\rm P}{\rm Q}}(y)=U_1(y)\ \ (0\leq y\leq M_1)\\[2pt]
\,v^+_1 (x)=1 \quad\hbox{otherwise}
\end{cases}
\]
Then $v^+_1$ satisfies the Kirchhoff condition at ${\rm P}$ by \eqref{Kirchhoff-delta-2}, has a negative derivative gap at ${\rm R}$, and it is a stationary solution in the rest of $\Omega$. Hence $v^+_1$ is a supersolution of $\Delta_{\Omega}v+f(v)=0$ on $\Omega$ and satisfies $v^+_1=1$ on $\Omega_i$ while $v^+_1\to 0$ along the outer paths $\Omega_{j_k}\,(k=1,\ldots,N'-1)$. Thus, by Theorem~\ref{thm:minimal}, we have $\PR(i,j_k)=0$ for $k=1,\ldots,N'-1$. The theorem is proved.
\end{proof}

\begin{proof}[Proof of Theorem~\ref{thm:local-star2}]
Assume that the length of the edge ${\rm P}{\rm Q}_{j_k}$ is larger than $M_2$ in \eqref{M2} for $k=1,\ldots,m$. If $m<N'-1$, let $\Omega_{j_{m+1}},\ldots,\Omega_{j_{N'-1}}$ denote the remaining outer paths emanating from ${\rm P}$. (If $m=N'-1$, then no such outer paths exist.) Let ${\rm R}_k\,(k=1,\ldots,N'-1)$ denote the points on the edges ${\rm P}{\rm Q}_{j_k}$ (for $1\leq k\leq m$) or on the outer paths $\Omega_{j_k}$ (for $m+1\leq k \leq N'-1$) such that the length of the path ${\rm P}{\rm R}_{j_k}$ is equal to $M_2$ for $k=1,\ldots,N'-1$. The coordinates on the edge ${\rm P}{\rm Q}_{j_k}$ is denoted by $x_{j_k}$ for $k=1,\ldots,m$ and those on the outer path $\Omega_{j_k}$ are also denoted by $x_{j_k}$ for $m+1\leq k \leq N'-1$. Now we define a function $v^+_2$ on $\Omega$ as follows.
\[
\begin{cases}
\,v^+_2{}_{\restr\Omega_1}(x_1)=U(x_1)\ \ (0\leq x_1<\infty),\\[2pt]
\,v^+_2{}_{\restr{\rm P}{\rm Q}_{j_k}}(x_{j_k})=V_2(x_{j_k})\ \ (0\leq x_{j_k}\leq M_2)\ \ \hbox{for} \ k=1,\ldots,m,\\[2pt]
\,v^+_2{}_{\restr\Omega_{j_k}}(x_{j_k})=V_2(x_{j_k})\ \ (0\leq x_{j_k}\leq M_2)\ \ \hbox{for} \ k=m+1,\ldots,N'-1,\\[2pt]
\,v^+_2 (x)=V_2(M_2) \quad\hbox{otherwise},
\end{cases}
\]
where $U$ is the function defined in \eqref{U}. Then $v^+_2$ satisfies the Kirchhoff condition at ${\rm P}$ by \eqref{Kirchhoff-delta-1}, is $C^1$ at ${\rm R}_{j_k}\,(k=1,\ldots,N'-1)$ and is a super solution beyond the points ${\rm R}_{j_k}\,(k=1,\ldots,N'-1)$ since $V_2(M_2)<a$. Hence $v^+_2$ is a supersolution of $\Delta_{\Omega}v+f(v)=0$ on $\Omega$ and satisfies $v^+_2\to 1$ along $\Omega_1$ while $v^+_2\to V_2(M_2)<a$ along all other outer paths. Thus, by Theorem~\ref{thm:minimal}, the desired conclusion follows. The theorem is proved.
\end{proof}



\subsection{Estimates on the reservoir and incomplete invasion}\label{ss:estimates-reservoir}

In this section we first estimate the value of the limit profile $\widehat{v}_i$ on the reservoir type subgraph ${\mathcal R}_0$ shown in Figures~\ref{fig:3star-reservoir}, \ref{fig:reservoir} and prove Theorem~\ref{thm:reservoir}. Our method is to construct an upper barrier with a variational argument. We shall then prove Theorem~\ref{thm:complete}.

\begin{proof}[Proof of Theorem~\ref{thm:reservoir}]
Let $0\leq y\leq L$ denote the coordinates on the edge $E_0$, with $y=0$ corresponding to ${\rm P}_0$ 
and $y=L$ to ${\rm Q}_0$. 
Let $L^*:=(2(F(1)-F(a))^{-1/2}$. We divide the proof between the case $L\leq L^*$ and the case $L>L^*$. 

In the case $L\leq L^*$, we consider the following initial-boundary value problem on ${\mathcal R}_0$.
\begin{equation}\label{IBV-R0}
\begin{cases}
\,\partial_t u=\Delta_{{\mathcal R}_0} u + f(u) & (t>0,\,x\in{\mathcal R}_0),\\
\,u(0,x)=u_0(x)& (x\in{\mathcal R}_0)\\
\,u(t,{\rm P}_0)=1 & (t\geq 0).
\end{cases}
\end{equation}
Let $u(t,x)$ be the solution of \eqref{IBV-R0} for the following initial data:
\[
\begin{cases}
\, u_0{}_{\restr E_0}(y)=1-y/L \ \ (0\leq y \leq L),\\
\, u_0{}_{\restr\Delta_0}=0.
\end{cases}
\]
We define the local energy $J[u]$ on ${\mathcal R}_0$ by \eqref{energy} with $D_0$ replaced by ${\mathcal R}_0$. Then, by Remark~\ref{rem:energy}, we have
\begin{equation}\label{J[u]}
J[u(t,\cdot)]\leq J[u_0] =\frac{1}{2L}-\int_{E_0}F(u_0)dx < \frac{1}{2L}-L F(a).
\end{equation}
We claim that
\begin{equation}\label{u<delta}
\frac{1}{|\Delta_0|}\int_{\Delta_0} u(t,x)dx <\delta \quad\hbox{for all}\ \ t\geq 0,
\end{equation}
where $|\Delta_0|$ denotes the total length of the edges of $\Delta_0$ and $\delta>0$ is the constant in \eqref{delta-mu}. Suppose the contrary. Then, since \eqref{u<delta} holds for $t=0$, there exists $t_1>0$ such that
\[
\frac{1}{|\Delta_0|}\int_{\Delta_0} u(t_1,x)dx =\delta,
\]
which implies
\[
\int_{\Delta_0} \left(u(t_1,x)-\delta\right)dx =0.
\]
Then, by \eqref{Poincare2}, we have
\[
\int_{\Delta_0}|\nabla u(t_1,x)|^2 = \int_{\Delta_0}|\nabla \left(u(t_1,x)-\delta\right)|^2 dx
\geq \mu_1(\Delta_0)\int_{\Delta_0}\left(u(t_1,x)-\delta\right)^2 dx,
\]
where $\mu_1(\Delta_0)$ is the smallest positive eigenvalue of $-\Delta$ on $\Delta_0$ (see \eqref{mu-eigenvalue}) Hence, by \eqref{delta-mu},
\[
\int_{\Delta_0}\left(\frac12|\nabla u(t_1,x)|^2-F(u(t_1,x))\right)dx\geq \sigma|\Delta_0|. 
\] 
Combining this and \eqref{J[u]}, we obtain
\[
\frac{1}{2L}-L F(a)> J[u(\cdot,t_1)] \geq \sigma|\Delta_0|+\int_{E_0}\left(\frac12|\nabla u(t_1,x)|^2-F(u(t_1,x))\right)dx\geq \sigma|\Delta_0|-L F(1).
\]
Hence
\[
\frac{1}{2L}+L (F(1)-F(a)) > \sigma|\Delta_0|,
\]
but this contradicts the assumption in \eqref{reservoir-condition-a}. This contradiction proves \eqref{u<delta}. 

As in the case of reaction-diffusion equations on bounded Euclidean domains, the existence of the local energy $J[u]$ implies that there exists a sequence $0<t_1<t_2<\cdots\to\infty$ that $u(t_k,x)$ converges to a stationary solution $V(x)$ of \eqref{IBV-R0}. 
By \eqref{u<delta}, $V$ satisfies
\begin{equation}\label{V<delta}
\frac{1}{|\Delta_0|}\int_{\Delta_0} V(x)dx \leq \delta.
\end{equation}
Clearly $V({\rm P}_0)=1$. We extend $V$ outside ${\mathcal R}_0$ by $V=1$, to form a function $\widetilde{V}$ on $\Omega$. Then $\widetilde{V}$ is continuous on $\Omega$, has a negative derivative gap at ${\rm P}_0$ and satisfies $\Delta\widehat{V}+f(V)=0$ elsewhere. Therefore $\widetilde{V}$ is a super solution of the equation $\Delta_{\Omega}v+f(v)=0$ on $\Omega$. Furthermore, $\widetilde{V}=1$ on $\Omega_i$. Hence, by Theorem~\ref{thm:minimal}, we have $\widehat{v}_i\leq \widetilde{V}$. 
This, together with \eqref{V<delta}, proves \eqref{v-reservoir} for the case $L\leq L^*$.  

Next we consider the case where $L>L^*$. In this case, let ${\rm P}'_0$ be the point on $E_0$ where $y=L-L^*$, and let ${\mathcal R}'_0$ be the union of $\Delta_0$ and the path ${\rm P}'_0{\rm Q}_0$, whose length is $L^*$. We consider the same initial-boudary value problem as \eqref{IBV-R0} on ${\mathcal R}'_0$ with ${\rm P}_0$ replaced by ${\rm P}'_0$. Arguing precisely as above, we see that if \eqref{u<delta} does not hold, then we would have
\[
\frac{1}{2L^*}+L^* (F(1)-F(a)) > \sigma|\Delta_0|.
\]
The left-hand side of the above inequality is equal to $\sqrt{2(F(1)-F(a))}$, hence it contradicts the assumption in \eqref{reservoir-condition-b}. This contradiction proves \eqref{u<delta}. The rest of the arguments is precisely the same as above. This completes the proof of Theorem~\ref{thm:reservoir}.
\end{proof}

\begin{proof}[Proof of Theorem~\ref{thm:complete}]
Since we are assuming $\PR(i,j)=1$, we have $\widehat{v}_i({\rm P})>\beta$ for $j\ne i$ (see Theorem~\ref{thm:dichotomy}). Fix such $j$. By Proposition~\ref{prop:gradient-max}, the gradient of $\widehat{v}_i$ is uniformly bounded by a constant that is independent of the choice of the center graph $D$ so long as $|D|$ is not too large. Therefore, there is a constant $C>0$ such that
\[
\min_{x\in D} \widehat{v}_i(x)\geq \widehat{v}_i({\rm P}_j)-C|D|>\beta -C|D|.
\]
Therefore, $\widehat{v}_i(x)>a$ for all $x\in D$ if $|D|$ is sufficiently large. By the Gauss-Green formula \eqref{Green1}, we have
\[
\sum_{k=1}^N\frac{\partial \widehat{v}_i}{\partial \nu_k}({\rm P}_k)=\int_D \Delta_{D}\widehat{v}_i(x)dx
=-\int_D f\left(\widehat{v}_i(x)\right)dx.
\]
Since $\widehat{v}_i(x)>a$ on $D$, we have $-f(\widehat{v}_i(x))\leq 0$ on $D$. In the mean while, the outer derivatives $\partial \widehat{v}_i/\partial \nu_k({\rm P}_k)$ are all non-negative. Therefore, for the above equality to hold, we have
\[
\frac{\partial \widehat{v}_i}{\partial \nu_k}({\rm P}_k)=0\ \ (k=1,\ldots,N),\quad f(\widehat{v}_i(x))=0\ \ \hbox{on}\ \ D.
\]
This is possible only if $\widehat{v}_i\equiv 1$ on $\Omega$. The theorem is proved.
\end{proof}

\subsection{Proof of Theorem \ref{thm:general} on the Cauchy problem}\label{ss:proof-general}

In this section we prove Theorem~\ref{thm:general} on the long-time behavior of solutions of the Cauchy problem \eqref{RD-Omega}, \eqref{u0}. 

\begin{proof}[Proof of Theorem~\ref{thm:general}]
We first note that the following inequality holds:
\begin{equation}\label{H<v}
H(x_i)<\widehat{v}_i{}_{\restr\Omega_i}(x_i)\quad(0\leq x_i<\infty).
\end{equation}
Indeed, since the solution trajectories of $H$ and $\widehat{v}_i{}_{\restr\Omega_i}$ both lie on the stable manifold of $(1,0)$ in the phase portrait and since $0=H(0)<\widehat{v}_i{}_{\restr\Omega_i}(0)$, there exists $b>0$ such that $\widehat{v}_i{}_{\restr\Omega_i}(x_i)=H(x_i+b)$. This, together with $H'>0$, implies \eqref{H<v}. 

We begin with the proof under assumption (a). We extend $\Psi^b$ as a function on $\Omega$ by setting 
$\Psi^b(x)=\Psi^b(x_i)$ if $x\in\Omega_i$ and $\Psi^b(x)=0$ if $x\in \Omega\setminus \Omega_i$. 
Then this is a time-independent subsolution of \eqref{RD-Omega}. Let $U^b(t,x)$ denote the solution of \eqref{RD-Omega} with initial data $\Psi^b$. Then $U^b(t,x)$ is monotone increasing in $t$, therefore it converges to a stationary solution $V^b(x)$. By \eqref{H<v}, we have $\Psi^b<\widehat{v}_i$, therefore
\[
\Psi^b(x)<V^b(x)\leq \widehat{v}_i(x)\quad\hbox{on}\ \ \Omega.
\] 
Now we consider a family of subsolutions $\Psi^{b'}$ for $b'\geq b$. When $b'=b$, we have $\Psi^{b'}<V^b$ as mentioned above. As we continuously increase the parameter $b'$, the strong maximum principle on the half line $\Omega_i$ implies that the graph of $\Psi^{b'}$ can never touch that of $V^b$ from below. Hence
\[
\Psi^{b'}(x)<V^b(x)\quad \hbox{for all}\ \ b'\geq b.
\]
Consequently, $V^b_{\restr\Omega_i}(x_i)>\beta$ for all sufficiently large $x_i>0$. This is possible only if the solution trajectory of $V^b_{\restr\Omega_i}$ lies on the stable manifold of $(1,0)$, therefore
\[
\lim_{x_i\to\infty}V^b_{\restr\Omega_i}(x_i)=1.
\]
Then, by Theorem~\ref{thm:minimal}, we have $V^b\leq \widehat{v}_i$, hence $V^b=\widehat{v}_i$. Since $u(t,x)$ lies between $U^b(t,x)$ and $\widehat{v}_i$ by \eqref{H<v}, we see that $u(t,x)\to\widehat{v}_i(x)$ as $t\to\infty$.

Next we assume (b). This is a condition that is well known as a sufficient condition for a solution of the equation $\partial_t u=\partial_x^2 u + f(u)$ on $\R$ to converge to $1$ as $t\to\infty$ (\cite[Theorem 3.1]{FM1977}). Using the same argument, one can show that there exists $b>0$ and $T>0$ such that
\[
u(x,T)> \Psi^b(x).
\]
We also have $u(x,T)<\widehat{v}_i(x)$ by \eqref{H<v}. Thus, by the comparison principle,
\[
U^b(t,x)<u(t+T,x)<\widehat{v}_i(x)\quad\hbox{for}\ \ t\geq 0,\ x\in\Omega.
\]
Letting $t\to\infty$, we obtain $u(t,x)\to\widehat{v}_i(x)$ as $t\to\infty$. The proof of the theorem is complete.
\end{proof}

\vskip 15pt


\noindent
{\bf Acknowledgements}

\vspace{5pt}
The present work was initiated in 2023, when H.~M. visited S.~J. at Hokkaido University. H.~M. was partially supported by JSPS KAKENHI 21H00995 and 23K20807.





\vskip 15pt
\noindent
\underbar{Hiroshi Matano}: (corresponding author)\\
Meiji Institute for Advanced Study of Mathematical Sciences, Meiji University, 4-21-1 Nakano, Tokyo 164-8525, Japan\\
email: matano@meiji.ac.jp 

\vskip 10pt
\noindent
\underbar{Shuichi Jimbo}:\\
Department of Mathematics, Hokkaido University, Sapporo 060-0810, Japan\\
email: jimbo@math.hokudai.ac.jp 

\end{document}